\documentclass[11pt]{amsart}

\RequirePackage[OT1]{fontenc}
\RequirePackage{amsthm,amsmath,mathrsfs}
\RequirePackage[numbers]{natbib}
\usepackage[colorlinks=true,linkcolor = blue, citecolor=blue, urlcolor=blue]{hyperref}
\usepackage[utf8]{inputenc}
\usepackage{nicefrac,dsfont,bbm,amstext,amsfonts,amssymb,verbatim}
\usepackage{floatrow,rotating}
\usepackage{multicol}
\usepackage{multirow,tabularx,lscape}
\usepackage{booktabs}
\usepackage{comment}
\usepackage{enumitem}

 \newtheorem{thm}{Theorem}[section]
 \newtheorem{cor}[thm]{Corollary}
 \newtheorem{lem}[thm]{Lemma}
 \newtheorem{prop}[thm]{Proposition}

 \theoremstyle{definition}
 
 \newtheorem{rem}[thm]{Remark}
 \newtheorem{ex}[thm]{Example}
 \newtheorem{ass}[thm]{Assumption}
 \newtheorem{conv}[thm]{Convention}

\numberwithin{equation}{section}

\DeclareFontEncoding{FMS}{}{}
\DeclareFontSubstitution{FMS}{futm}{m}{n}
\DeclareFontEncoding{FMX}{}{}
\DeclareFontSubstitution{FMX}{futm}{m}{n}
\DeclareSymbolFont{fouriersymbols}{FMS}{futm}{m}{n}
\DeclareSymbolFont{fourierlargesymbols}{FMX}{futm}{m}{n}
\DeclareMathDelimiter{\VERT}{\mathord}{fouriersymbols}{152}{fourierlargesymbols}{147}

\SetLabelAlign{parright}{\parbox[t]{\labelwidth}{\raggedleft#1}}
\setlist[description]{style=multiline,topsep=10pt,%
    align=parright}

 \newcommand{\ind}{\mathbbm{1}}

 \newcommand{\R}{\mathds{R}}
 \newcommand{\Z}{\mathds{Z}}

 \newcommand{\ic}{\mathrm{i}}

 \newcommand{\E}{\mathds{E}}

 \newcommand{\N}{\mathds{N}}
 
 \newcommand{\F}{\mathcal{F}}
 \newcommand{\OO}{\mathcal{O}}
 
 \newcommand{\oo}{\mbox{\scriptsize $\mathcal{O}$}}

 \newcommand{\MM}{\mathds{M}}

 \newcommand{\m}{{m}}
 \newcommand{\md}{\mathfrak{m}}
 
 \newcommand{\ad}{\mathfrak{a}}
 \newcommand{\bd}{\mathfrak{b}}
 \newcommand{\cd}{\mathfrak{c}}

 \newcommand{\sd}{\mathfrak{s}}

 \newcommand{\FF}{\mathcal{F}}
 \newcommand{\co}{\mathrm{c}}
 \newcommand{\Co}{\mathrm{C}}
 \newcommand{\Do}{\mathrm{D}}
 \newcommand{\ff}{{f}}
 \newcommand{\hh}{{h}}
 \newcommand{\Td}{\mathfrak{T}}
 \newcommand{\Cd}{\mathfrak{C}}
 \newcommand{\Id}{\mathfrak{I}}
 \newcommand{\Ad}{\mathfrak{A}}
 \newcommand{\kapp}{\kappa}

\renewcommand{\P}{\mathbb{P}}
\renewcommand{\ss}{\mathfrak{s}}

\newcommand{\Aone}{{\bf (A1)}}
\newcommand{\Atwo}{{\bf (A2)}}
\newcommand{\Athree}{{\bf (A3)}}

\newcommand{\Tone}{{\bf (T1)}}
\newcommand{\Ttwo}{{\bf (T2)}}
\newcommand{\Tthree}{{\bf (T3)}}

\newcommand{\Bone}{{\bf (B1)}}
\newcommand{\Btwo}{{\bf (B2)}}
\newcommand{\Bthree}{{\bf (B3)}}

\usepackage{epstopdf}
\DeclareGraphicsRule{.tif}{png}{.png}{`convert #1 `dirname #1`/`basename #1 .tif`.png}

\begin{document}

\title[]{Sharp connections between Berry-Esseen characteristics and Edgeworth expansions for stationary processes}
\author{Moritz Jirak}
\email[Moritz Jirak]{moritz.jirak@univie.ac.at}
\author{Wei Biao Wu}
\email[Wei Biao Wu]{wbwu@galton.uchicago.edu}
\author{Ou Zhao}
\email[Ou Zhao]{ouzhao@stat.sc.edu}
\subjclass{Primary 60F05, 60F25; Secondary 60G10}
\keywords{Berry-Esseen, Edgeworth expansions, weak dependence, exact transition, Berry-Esseen characteristic.}

\begin{abstract}
Given a weakly dependent stationary process, we describe the transition between a Berry-Esseen bound and a second order Edgeworth expansion in terms of the Berry-Esseen characteristic. This characteristic is sharp: We show that Edgeworth expansions are valid if and only if the Berry-Esseen characteristic is of a certain magnitude.
If this is not the case, we still get an optimal Berry-Esseen bound, thus describing the exact transition. We also obtain (fractional) expansions given $3 < p \leq 4$ moments, where a similar transition occurs. Corresponding results also hold for the Wasserstein metric $W_1$, where a related, integrated characteristic turns out to be optimal.
As an application, we establish novel weak Edgeworth expansion and CLTs in $L^p$ and $W_1$. As another application, we show that a large class of high dimensional linear statistics admit Edgeworth expansions without any smoothness constraints, that is, no non-lattice condition or related is necessary.  In all results, the necessary weak-dependence assumptions are very mild. In particular, we show that many prominent dynamical systems and models from time series analysis are within our framework, giving rise to many new results in these areas.
\end{abstract}

\maketitle








\tableofcontents

\section{Introduction}

Consider a strictly stationary sequence $(X_k)_{k \in \Z}$ of real-valued random variables with $\E X_k = 0$ and $\E X_k^2 < \infty$. If the sequence exhibits weak dependence in a certain sense, then the distribution \hypertarget{sn:eq2}{of}
\begin{align*}
n^{-1/2}S_n, \,\, \text{where $S_n = X_1 + X_2 + \ldots + X_n$},
\end{align*}
is asymptotically normal, see for instance ~\cite{maxwellwoodroofe} and the references therein. This fact has made the central limit theorem one of the most important tools in probability theory and statistics. On the other hand, it was already noticed by Chebyshev ~\cite{chebyshev1890} and Edgeworth ~\cite{Edgeworth1894} that normal approximations can be improved in terms \hypertarget{psin:eq2}{of} (Edgeworth) expansions $\Psi_n$ (see \eqref{defn_PSI_m} below), implying the approximation
\begin{align}\label{eq:edge}
\sup_{x \in \R}\big|\P\big(S_n \leq x\sqrt{n}\big) - \Psi_n(x)\big| = \oo\big(n^{-\frac{1}{2}}\big) \quad \text{(or even better).}
\end{align}

Motivated by applications in actuarial science, Cram\'{e}r gave rigorous proofs in ~\cite{cramer1928}, and ever since, Edgeworth expansions have been an indispensable tool in actuarial science and finance, see for instance ~\cite{alos_2006_fin_and_stoch}, ~\cite{fouque_2003} among others for applications regarding option and derivative pricing.
Edgeworth expansions and local limit theorems also play an important role in dynamical system theory, we refer to ~\cite{jacopoc_de_simoi_liverani_2018inv}, ~\cite{kasun:2018:edgeworth} and ~\cite{gouezel2009duke} for more contemporary results and accounts.
On the other hand, in a very influential work, Efron ~\cite{efron1979} broadened the view on resampling techniques (e.g. bootstrapping) and demonstrated their significant superior performance compared to normal approximations, see ~\cite{efron_1982}, ~\cite{hall_book_boot}, ~\cite{lahiri_book_2003} for an overview. Not surprisingly, the key tools for analysing, and, in particular, showing superiority of resampling methods, are again Edgeworth expansions.

As is well-known, approximation \eqref{eq:edge} is not for free and requires additional assumptions compared to a Berry-Esseen bound of order $n^{-1/2}$. If $(X_k)_{k \in \Z}$ are independent, a well developed theory is given in ~\cite{Bhattacharya_rao_1976_reprint_2010}, ~\cite{petrov_book_sums}. However, if (weak) dependence is present, the theory becomes much more complicated. In a Markovian (and dynamical systems) context, based on the spectral method initiated by ~\cite{nagaev:1959}, the validity of Edgeworth expansions has been shown by many authors subject to various additional assumptions, e.g. ~\cite{gouezel2009duke},~\cite{herve_pene_2010_bulletin},~\cite{jacopoc_de_simoi_liverani_2018inv} and ~\cite{meyn_kontoyiannis_2003aap}, ~\cite{kasun:2018:edgeworth}, ~\cite{kasun:pene:2020expansions} to name a few, but this list is by no means exhaustive. Results without relying on a Markovian structure are less common and were obtained, among others, in ~\cite{goetze_hipp_1983}, ~\cite{heinrich_1990}, ~\cite{lahiri_1993}. Particularly the work of ~\cite{goetze_hipp_1983} is considered as a breakthrough. A key assumption in all those results is a conditional Cram\'{e}r type condition, a version of which can be stated \hypertarget{gm:eq2}{as}
\begin{align}\label{cramer_condi}
\sup_{|u| \geq \mathrm{c}}\E \Big|\E\big[e^{\mathrm{i} u S_{2m}}\big| \mathcal{G}_m \big]\Big| \leq e^{-\mathrm{c}}, \quad \mathrm{c}^{-1} < m < n,
\end{align}
where $\mathrm{c} > 0$ is a constant and $\mathcal{G}_m$ are special $\sigma$-algebras. On one hand, \eqref{cramer_condi} allows for a modification of the Tikhomirov-Stein method, established in ~\cite{tikhomirov_1981}, to obtain Edgeworth expansions of any order $s \in \N$. On the other hand, \eqref{cramer_condi} is known to be suboptimal for truly dependent sequences (cf. ~\cite{goetze_Hipp_1989_filed_ptrf}). For i.i.d. sequences, \eqref{cramer_condi} is equivalent to the classical Cram\'{e}r condition, which is also suboptimal, see e.g. ~\cite{angst2017} for more recent advances.  Despite its significance, when and how Edgeworth expansions are valid for weakly dependent sequences has been an open problem for decades. The only exception appears to be ~\cite{goetze_Hipp_1989_filed_ptrf}, where a transition is described in the case of $m$-dependent ($m$ fixed) potential functions. Note that the method of proof in ~\cite{goetze_Hipp_1989_filed_ptrf} crucially hinges on the fact that $m$ is fixed and finite.

In this paper, we provide a neat solution to this open problem in the context of general, weakly dependent Bernoulli-shift sequences\footnote{The results can be extended to more general K-automorphisms subject to a related weak-dependence condition, see Section \ref{sec_K_aut} for details.}. In particular, we describe the exact transition between a Berry-Esseen bound and a second-order Edgeworth expansion. Our novel approach is fundamentally different from previous attempts (see Section \ref{sec_int_approach} for a brief overview), and works in a very general setup. Among other things, we show \hypertarget{psin:eq3}{that}
\begin{align*}
\sup_{x \in \R}\big|\P\big(S_n \leq x\sqrt{n}\big) - \Psi_n(x)\big| = \oo\big(n^{-\frac{1}{2}}\big) \quad  \text{iff $\Cd_{T_n} = \oo\big(n^{-\frac{1}{2}}\big)$},
\end{align*}
where $\Cd_{T_n}$ is the \textit{Berry-Esseen characteristic}, defined in \eqref{berry_esseen_char} below. Moreover, \hypertarget{littlelamkp:eq3}{we} only require the very mild algebraic decay assumption
\begin{align}\label{Bernoulli_condi_int}
\sum_{k = 1}^{\infty} k^2 \lambda_{k,p} < \infty, \quad p \geq 3,
\end{align}
where $\lambda_{k,p}$ are well-established dependence coefficients in the literature, see Section \ref{sec_main} for details. Typically, even if a conditional Cram\'{e}r condition is assumed to be valid, a geometric decay of the dependence coefficients is required in the literature (as is the case in the works mentioned above), ruling out many important processes, for instance linear processes with dependent innovations, functions of linear processes or Volterra processes. Algebraic results we are aware of, in this context, are ~\cite{jensen_1989_edge_markov}, ~\cite{lahiri_1996}, which, apart from a conditional Cram\'{e}r condition, require higher moment assumptions depending on the rate of decay, leading to rather strong conditions. A geometric decay typically allows for simpler arguments, as the additional loss is only of logarithmic order.  In contrast, algebraic bounds require more refined arguments, in particular, we find that relying only on \eqref{Bernoulli_condi_int} is a rather difficult problem. In the following, we describe additional features of our results.

\subsection{Fractional expansions}
Given $3 < p < 4$ non-integer moments, apart from the recent result in ~\cite{kasun:2018:edgeworth}\footnote{Requires an exponential type of mixing, and does not yield the sharp transition.}, our results appear to be the first fractional expansions for weakly-dependent sequences with almost optimal error terms, even in the case of $m$-dependent ($m$ fixed) Bernoulli-shift sequences. In addition, we describe the exact transition. Here, 'fraction' refers to the fact that $p$ is not an integer. Note that even for i.i.d. sequences, fractional expansions are not so easy to obtain, see ~\cite{Bobkov2017_survey},~\cite{petrov_book_sums}. The additional difficulty arises from the fact that the improvement can no longer solely be gained by 'matching moments', as is the case if $p$ is an integer.

\subsection{The Wasserstein metric $W_1$}
More recently, the Wasserstein distance $W_p$, $p \geq 1$ has attracted considerable attention, often in connection with Stein's method. Central limit theorems, alongside convergence rates, are given in ~\cite{chen_book_stein_2011}, ~\cite{dedecker_rio_mean_2008}, ~\cite{zhai_2017} to name a few. For independent sequences, Edgeworth expansions \hypertarget{scttn:eq4}{are} considered in ~\cite{Bobkov2017_wasser}. For the special case $W_1$, we are able to transfer our results and show that second-order Edgeworth expansions are valid if and only if the \textit{integrated characteristic} $\Id_{T_n}$, defined in \eqref{wasserstein_char}, satisfies $\Id_{T_n} = \oo(n^{-1/2})$.

{
\subsection{High dimensional linear statistics}\label{se_int_application_examples}

Given a separable Hilbert space $\mathds{H}$, many important statistics and tests in a (broad) high dimensional context are based on
\begin{align*}
\langle \theta, S_n \rangle, \quad \theta, S_n \in \mathds{H},
\end{align*}
see Section \ref{sec:applicationII} for some key examples. We show that under some natural conditions, statistics of this type admit Edgeworth expansions without any smoothness condition. In particular, no (conditional) Cram\'{e}r condition is necessary.
}

\subsection{Smooth functions and weak Edgeworth expansions}\label{se_int_weak_edge}

In a series of papers ~\cite{mykland1993aop},~\cite{mykland1992}, ~\cite{mykland1995}, Mykland (see also ~\cite{woodroofe1989}) derived Edgeworth expansions for $\E f(n^{-1/2}S_n)$ in a test function topology ($o_2$) for martingales subject to some regularity conditions, and gave many examples for their usefulness. For a function $f:\R \to \R$, this means, among other things, that the second derivative exists and is uniformly bounded. Related results have been established in ~\cite{goetze_hipp_1983}, ~\cite{lahiri_1993}, where also the second (or third) derivative is a minimum requirement, see also ~\cite{breuillard2005} and ~\cite{kasun:2018:edgeworth}, ~\cite{kasun:pene:2020expansions} for some recent results.

Being able to describe the exact transition allows us to considerably strengthen this result and recover the phenomenon from the i.i.d. case: If $f:\R \to \R$ possesses a continuous and uniformly bounded derivative, then $\E f(n^{-1/2}S_n)$ can be approximated by an Edgeworth-corrected expectation without any additional conditions, a so-called \textit{weak Edgeworth expansion} (cf. ~\cite{woodroofe1989}). In particular, no Cram\'{e}r condition is necessary, see for instance ~\cite{Bhattacharya_rao_1976_reprint_2010}. 
Based on the characteristic $\Cd_{T_n}$, we can show an analogous result for weakly dependent Bernoulli-shift sequences satisfying \eqref{Bernoulli_condi_int}. 

{
\subsection{Examples}\label{sec:int:examples}

We discuss a number of prominent examples from the literature in Section \ref{sec:examples}, where our results apply. Apart from the novel, sharp characterization of the transition, the CLTs in $L^p$ and $W_1$ with optimal rate appear to be new as well. As examples, we discuss well-known dynamical systems, random walks and time series models.
}

\subsection{The approach}\label{sec_int_approach}

For the proof, we develop recursive arguments, linking Edgeworth expansions either to Berry-Esseen type bounds ($p = 3$), or appropriately smoothed Edgeworth expansions ($3 < p \leq 4$), allowing for iterations. A number of delicate problems need, however, to be solved along this path, since very precise bounds need to be established subject to the mild weak dependence condition \eqref{Bernoulli_condi_int}. For the fractional case $3 < p < 4$, recall that an additional difficulty arises from the fact that 'matching moments only' is not sufficient for optimality, as is the case if $p$ is an integer. Key results in the context of our recursive arguments are, among others, Lemma \ref{lem_polylipschitz_distance}, Lemma \ref{lem_taylor_expansion_smooth_function}, 
Lemma \ref{lem_diamond_Sn_expansions_I}, 
Lemma \ref{lem_taylor_expansion_smooth_function_quad} 
and Lemma \ref{lem_diamond_Sn_expansions_quad}.

\subsection{Structure and outline}
This paper is structured as follows. In Section \ref{sec_main}, all results with respect to the Kolmogorov (uniform) metric are presented. In particular, the characteristic $\Cd_{T_n}$ is introduced. Section \ref{sec_wasserstein} is devoted to analogous results with respect to the Wasserstein metric $W_1$.
Applications and examples are given in Sections \ref{sec:applicationII}, \ref{sec:appl:I} and \ref{sec:examples}.
The proofs are structured into several sections. The core technical results and notations    are given in Section \ref{sec_main_tech}, whereas the proofs are relegated to Sections \ref{sec_proof_prop_m_dep}, \ref{sec_proof_dep_it} and \ref{sec_proof_prop_quad}. Some more technical key lemmas are stated and proven in Section \ref{sec_main_lemmas}. Finally, the proofs of the main results are given in Sections \ref{sec_proof_main}-\ref{sec:proof:applicationsII}. 

\subsection{Notation}
To improve the readability, all the relevant notations are gathered in  Table \hyperlink{table:fin}{11}.

\section{Main Results: Kolmogorov Metric}\label{sec_main}

 \hypertarget{normp:eq4}{For} a random variable $X$, we write $\E X $ for expectation, $\|X\|_p$ for $\big(\E |X|^p \big)^{1/p}$, $p \geq 1$ and sometimes $\E_{\mathcal{H}} X = \E [X | \mathcal{H}]$ for the conditional expectation. In addition, $\lesssim$, $\gtrsim$, ($\thicksim$) denote (two-sided) inequalities involving an absolute multiplicative constant. For $a,b \in \R$ we put $a \vee b = \max\{a,b\}$, $a \wedge b = \min\{a,b\}$. Finally, for two random variables $X,Y$ we write $X \stackrel{d}{=} Y$ for equality in distribution.

We assume that $(X_k)_{k \in \Z}$ is a real valued Bernoulli-shift process, that is, \hypertarget{SINF:eq5}{it} can be written \hypertarget{XSUBK:eq5}{as}
\begin{align}\label{defn_bernoulli}
X_k = f(\varepsilon_k,\varepsilon_{k-1},\ldots), \quad k \in \Z, \quad f:\mathbb{S}^{\N} \to \R,
\end{align}
where $\epsilon_k \in \mathbb{S}$ are i.i.d. random variables in some measurable space $\mathbb{S}$, and $f$ is measurable. We adopt the functional dependence measure (cf. ~\cite{wu_2005}), present in the literature also in many variants (e.g. ~\cite{aue_etal_2009}, ~\cite{billingsley_1968},~\cite{goetze_hipp_1995}, ~\cite{poetscher_1999}). To \hypertarget{bigXkstar:eq5}{this} end, let $(\varepsilon_k')_{k \in \Z}$ be an independent copy of $(\varepsilon_k)_{k \in \Z}$. We denote by
\begin{align}\label{defn_couple_star}
X_k^* = f\big(\varepsilon_k,\ldots, \varepsilon_{1}, \varepsilon_{0}', \varepsilon_{-1}', \ldots\big), \quad k \in \Z,
\end{align}
the coupled version, \hypertarget{littlelamkp:eq5}{and} measure the corresponding distance \hypertarget{lambdaqp:eq5}{with}
\begin{align*}
\lambda_{k,p}=\big\|X_k-X_k^*\big\|_p, \quad \Lambda_{q,p} = \big\|X_0\big\|_p +  \sum_{k = 1}^{\infty} k^q \lambda_{k,p}, \, q \geq 0.
\end{align*}

For an extension of our setup to more general K-automorphisms, we refer to Section \ref{sec_K_aut}.

We \hypertarget{BAD:eq5}{work} under \hypertarget{sss:eq5}{the} following conditions.

\begin{ass}\label{ass_dependence_main}
For $p \geq 3$, $\ad > 2$, $(X_k)_{k\in \Z}$ satisfies
\begin{enumerate}
\item[\Aone]\label{A1} $\E |X_k|^p\log(1 + |X_k|)^{\ad} < \infty$, $\E X_k = 0$,
\item[\Atwo]\label{A2} $\Lambda_{2,p} = \|X_0\|_p + \sum_{k = 0}^{\infty}k^{2}\|X_k-X_k^*\|_p < \infty$,
\item[\Athree]\label{A3} $\ss^2 > 0$, where $\ss^2 =  \sum_{k \in \Z}\E X_0X_k$.
\end{enumerate}
\end{ass}
Here and elsewhere $\ss$ is to be understood as $\ss = |\ss|$, that is, we only consider the positive root, and the same goes for the sample variance $s_n^2$ defined below. Existence of $\ss^2$ is guaranteed by \hyperref[A2]{\Atwo} (cf. Lemma \ref{lem_sig_expressions_relations}). A lot of dynamical systems and popular time series models are within our framework, 
see, for instance ~\cite{aue_etal_2009}, ~\cite{cuny_quickly_2020},~\cite{jirak_be_aop_2016}, ~\cite{korepanov2018}, ~\cite{poetscher_1999}, ~\cite{wu_2011_asymptotic_theory} and Section \ref{sec_K_aut} and Section \ref{sec:examples} for a discussion and examples. Define \hypertarget{snsquare:eq5}{the} second and third moment (or cumulant) \hypertarget{kappan3:eq5}{as}
\begin{align}\label{defn_moments}
s_n^2 = n^{-1}\E S_n^2, \quad \kapp_n^3 = n^{-\frac{3}{2}}\E S_n^3,
\end{align}
 \hypertarget{Phi:eq5}{and} the \hypertarget{littlephi:eq5}{formal} second-order Edgeworth expansion \hypertarget{psin:eq5}{as}
\begin{align}\label{defn_PSI_m}
\Psi_n\bigl(x\bigr) = \Phi\Bigl(\frac{x}{s_n}\Bigr) + \frac{1}{6} \Big(\frac{{\kapp}_n}{s_n}\Big)^3\Bigl(1 - \frac{x^2}{s_n^2} \Bigr) \phi\Bigl(\frac{x}{s_n}\Bigr), \quad x \in \R,
\end{align}
where $\Phi$ is the distribution function of a standard normal random variable and $\phi$ its density. For $0 \leq a \leq b$, \hypertarget{TTab:eq5}{define} the \textit{Berry-Esseen tail}
\begin{align}
\Td_{a}^{b}(x) = \int_{a \leq |\xi| \leq b} e^{-\ic \xi x}\E\bigl[e^{\ic \xi S_n/\sqrt{n}} \bigr] \Bigl(1 - \frac{|\xi|}{b}\Bigr) \frac{1}{\xi} d \, \xi,
\end{align}
which arises naturally in Berry's smoothing inequality. Similarly, for $a > 0$, we consider \hypertarget{ca:eq5}{the} \textit{Berry-Esseen characteristic}
\begin{align}\label{berry_esseen_char}
\Cd_{a} = \inf_{b \geq a}\Big(\sup_{x \in \R}\bigl|\Td_{a}^{b}(x) \bigr| + 1/b\Big).
\end{align}
Observe that by selecting $a = b$, we have the trivial upper bound
\begin{align*}
\Cd_{a} \leq a^{-1}.
\end{align*}
In order to define $\Cd_a$, we use the classical smoothing kernels, others could be taken here as well (cf. ~\cite{Bhattacharya_rao_1976_reprint_2010}). In addition, as will be apparent from the results, we only define $\Cd_{a}$ up to a multiplicative absolute constant for the sake of simplicity. In the sequel, \hypertarget{bigtn:eq6}{we} mainly consider $\Cd_{a}$ with $a = T_n$, \hypertarget{Deltasubn:eq6}{where}
\begin{align}\label{defn_Tn}
T_n = {\co_T} \sqrt{n}
\end{align}
with ${\co_T} > 0$ being a constant sufficiently small. The central object of our study is the difference
\begin{align*}
\Delta_n(x) = \P\bigl(S_n \leq x \sqrt{n}\bigr) -  \Psi_n\bigl(x\bigr),
\end{align*}
which we \hypertarget{BBD:eq6}{will} consider under \hypertarget{BAD:eq6}{both} the Kolmogorov and Wasserstein distances.

\begin{thm}\label{thm_edge}
Grant Assumption \ref{ass_dependence_main}. Then for $2\bd+2 < \ad$, we have
\begin{align*}
\sup_{x \in \R}\bigl|\Delta_n(x) \bigr| \lesssim \oo\big(n^{-\frac{1}{2}}\big) (\log n)^{-\bd} + \Cd_{T_n},
\end{align*}
where $T_n$ is as in \eqref{defn_Tn}. Moreover, \hypertarget{kappa3:eq6}{the} following statements are equivalent:
\begin{description}
\item[(i)] $\sup_{x \in \R}\bigl|\Delta_n(x) \bigr| = \oo\bigl(n^{-\frac{1}{2}}\bigr)$.
\item[(ii)] $\Cd_{T_n} = \oo\bigl(n^{-\frac{1}{2}}\bigr)$.
\end{description}
All involved constants only depend on $\ad$, $\ss$ and $\Lambda_{2,3}$.
\end{thm}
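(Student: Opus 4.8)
The plan is to reduce the whole statement to a single quantitative estimate on a truncated Fourier integral, from which the displayed bound and both directions of the equivalence will drop out.

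\textbf{Step 1 (Smoothing reduction).} Write $\widehat{F}_n(\xi) = \E\bigl[e^{\ic\xi S_n/\sqrt n}\bigr]$ and note that the Fourier--Stieltjes transform of $\Psi_n$ has the form $\widehat{\Psi}_n(\xi) = \int_{\R}e^{\ic\xi x}d\Psi_n(x) = e^{-s_n^2\xi^2/2}\bigl(1 + c_n(\ic\xi)^3\bigr)$ with $|c_n|\lesssim n^{-1/2}$; in particular $|\widehat{\Psi}_n(\xi)|\lesssim e^{-cn}$ once $|\xi|\geq T_n = \co_T\sqrt n$. Since $\Psi_n$ has a uniformly bounded derivative, Berry's smoothing inequality (see \cite{Bhattacharya_rao_1976_reprint_2010}) applied with the Fejér kernel of bandwidth $b\geq T_n$ gives
\begin{align*}
\sup_{x\in\R}|\Delta_n(x)| \,\lesssim\, \sup_{x\in\R}\Bigl|\int_{|\xi|\leq b}e^{-\ic\xi x}\frac{\widehat{F}_n(\xi) - \widehat{\Psi}_n(\xi)}{\xi}\Bigl(1 - \frac{|\xi|}{b}\Bigr)d\xi\Bigr| + \frac{1}{b}.
\end{align*}
Splitting the integral at $|\xi| = T_n$, the outer piece equals $\Td_{T_n}^{b}(x)$ up to an $\oo(e^{-cn})$ error coming from the negligible $\widehat{\Psi}_n$-tail, while the inner piece is dominated in absolute value by
\begin{align*}
\mathfrak{R}_n \,:=\, \int_{|\xi|\leq T_n}\frac{|\widehat{F}_n(\xi) - \widehat{\Psi}_n(\xi)|}{|\xi|}\,d\xi .
\end{align*}
Taking the infimum over $b\geq T_n$ yields $\sup_x|\Delta_n(x)|\lesssim\mathfrak{R}_n + \Cd_{T_n}$, so the displayed bound follows once we show $\mathfrak{R}_n = \oo(n^{-1/2})(\log n)^{-\bd}$.

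\textbf{Step 2 (The central estimate).} Fix $\delta_n\asymp(\log n)^{1/2}$ and split $\{|\xi|\leq T_n\}$ into $\{|\xi|\leq\delta_n\}$ and $\{\delta_n\leq|\xi|\leq T_n\}$. On the high-frequency part I would invoke the characteristic-function decay bound $|\widehat{F}_n(\xi)|\lesssim e^{-c\xi^2}$, valid for all $|\xi|\leq T_n$ precisely because $\co_T$ is chosen small in terms of $\ss$ and $\Lambda_{2,3}$: a block decomposition of $S_n$ into $\sim n/\ell$ nearly independent blocks, with \Atwo\ for the coupling error and \Athree\ for the per-block spectral gap, the smallness of $\co_T$ keeping $\xi/\sqrt n$ below the first ``resonance''. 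Together with $|\widehat{\Psi}_n(\xi)|\lesssim e^{-c\xi^2}$ this part contributes $\lesssim\int_{\delta_n}^{\infty}e^{-c\xi^2}\xi^{-1}d\xi\lesssim e^{-c\delta_n^2}$, which is $\oo(n^{-1/2})(\log n)^{-\bd}$ for a large enough constant in $\delta_n$. On the low-frequency part $|\xi|\leq\delta_n$ I would (i) truncate $X_k$ at a level $\tau_n$ of order $\sqrt n$ up to polylogarithmic factors, (ii) pass to an $m_n$-dependent approximation with $m_n\to\infty$, controlled by \Atwo, and (iii) perform a cumulant (Edgeworth) expansion of the resulting characteristic function. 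Since the second and third cumulants of $S_n/\sqrt n$ are exactly $s_n^2$ and $\kapp_n^3$, they are matched by $\widehat{\Psi}_n$, and the remaining error consists of a fourth-order cumulant term of size $\oo(n^{-1})$ on the window, the block-coupling error, and the truncation biases in $s_n^2$ and $\kapp_n^3$ --- the latter governed by $\E\bigl[|X_0|^3\ind(|X_0| > \tau_n)\bigr]\lesssim(\log n)^{-\ad}$, which is where \Aone\ enters. Running the recursive scheme sketched in Section~\ref{sec_int_approach}, which upgrades a crude control of $\mathfrak{R}_n$ into the sharp one via Lemma~\ref{lem_polylipschitz_distance} and Lemma~\ref{lem_diamond_Sn_expansions_I}, and balancing $\delta_n$, $m_n$, $\tau_n$ against the logarithmic moment, gives $\mathfrak{R}_n = \oo(n^{-1/2})(\log n)^{-\bd}$ exactly under the hypothesis $2\bd + 2 < \ad$.

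\textbf{Step 3 (The equivalence).} The implication (ii)$\Rightarrow$(i) is immediate from the displayed bound. For (i)$\Rightarrow$(ii) I would run Step~1 backwards with $b = \sqrt n\,\log n$: the same Fourier identity expresses $\Td_{T_n}^{b}(x)$ as the Fejér-smoothed version of $\Delta_n$ at $x$ --- whose supremum is $\leq\sup_x|\Delta_n(x)|$, since convolving the bounded function $F_n - \Psi_n$ with a probability density cannot increase its supremum --- minus the inner integral over $|\xi|\leq T_n$, bounded by $\mathfrak{R}_n$, which Step~2 already shows is $\oo(n^{-1/2})$ unconditionally, minus the negligible $\widehat{\Psi}_n$-tail. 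Hence $\sup_x|\Td_{T_n}^{b}(x)|\lesssim\sup_x|\Delta_n(x)| + \mathfrak{R}_n + \oo(e^{-cn}) = \oo(n^{-1/2})$ under (i), and therefore $\Cd_{T_n}\leq\sup_x|\Td_{T_n}^{b}(x)| + b^{-1} = \oo(n^{-1/2}) + n^{-1/2}(\log n)^{-1} = \oo(n^{-1/2})$, which is (ii).

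\textbf{Main obstacle.} The only genuinely hard point is the low-frequency half of Step~2 under the \emph{algebraic} decay \Atwo; a geometric decay would cost only logarithmic factors and is classical. Two difficulties compound. First, one needs a cumulant expansion of the truncated, $m_n$-dependent (with $m_n\to\infty$) partial sum whose fourth-order remainder on the polylogarithmic frequency window is genuinely $\oo(n^{-1})$; achieving this with only $\Lambda_{2,p}<\infty$ forces a careful coupling and blocking scheme, and this is the role of the technical lemmas cited in Section~\ref{sec_int_approach}. Second, the truncation level $\tau_n$ must be chosen so that every bias it introduces --- in $s_n^2$, in $\kapp_n^3$, and in $\widehat{F}_n$ itself, each carried through the $1/|\xi|$-weighted integral over $|\xi|\leq\delta_n$ --- stays below $\oo(n^{-1/2})(\log n)^{-\bd}$; the resulting trade-off against $\delta_n$ and the logarithmic moment \Aone\ is precisely what pins the requirement to $2\bd + 2 < \ad$. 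Once this estimate is in hand, the smoothing step, the control of the $\widehat{\Psi}_n$-tail, and the equivalence are all routine.
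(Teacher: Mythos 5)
Your Steps 1 and 3 do mirror the paper's actual argument: the upper bound is obtained via Berry's smoothing inequality after an $m$-dependent reduction, and (i)$\Rightarrow$(ii) is proved exactly as you propose, by Fourier-inverting the Fej\'er-smoothed difference $\Delta_n^T$ from \eqref{defn_DeltanT}, using that convolution with a probability density cannot increase the supremum (see \eqref{eq_thm_optimal_1}--\eqref{eq_thm_optimal_1.7}), and absorbing the inner integral with the unconditional estimate; comparing with $\Psi_n$ directly instead of with the surrogate law of $\overline{Z}+\widetilde{Z}$ is only cosmetic. The genuine gap is in Step 2, which is where the whole difficulty of the theorem lives (it is the content of Proposition \ref{prop_m_dependence} together with Lemmas \ref{lem_taylor_expansion_smooth_function}--\ref{lem_taylor_expansion_smooth_function_I_BE}, \ref{lem_edge_for_FF} and \ref{lem_bound_prod_cond_char}). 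The decay bound $|\E e^{\ic \xi S_n/\sqrt{n}}|\lesssim e^{-c\xi^2}$ for all $\delta_n\leq|\xi|\leq \co_T\sqrt{n}$ cannot simply be invoked: no Cram\'er-type condition is assumed, and your proposed derivation by coupling to nearly independent blocks fails quantitatively at frequencies of order $\sqrt{n}$. With algebraic decay $\sum_k k^2\lambda_{k,3}<\infty$, the coupling error in the characteristic function is of order $(|\xi|/\sqrt{n})(n/\ell)\sum_{k\geq\ell}\lambda_{k,3}\asymp|\xi|\sqrt{n}\,\ell^{-3}$, so keeping it $\oo(n^{-1/2})$ at $|\xi|\asymp\sqrt{n}$ forces block length $\ell\gtrsim\sqrt{n}$; but independent blocks of length $\ell$ only give Gaussian decay for $|\xi|\lesssim\sqrt{n/\ell}\asymp n^{1/4}$, and on $[n^{1/4},\co_T\sqrt{n}]$ nothing in \Aone--\Athree{} prevents the block characteristic functions from returning near one (lattice blocks). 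This is exactly why the paper never couples here: it conditions on $\FF_m$, under which the blocks $\overline{V}_j$ are exactly independent, and obtains the decay of $\|\prod_j\varphi_{j|m}(\xi_N)\|_1$ via the adaptive choice of $l$ in Lemma \ref{lem_bound_prod_cond_char}; even then the bound is $e^{-\co_{\varphi}\xi^2}+e^{-\sqrt{N/32}\log(8/7)}$, not the pure Gaussian bound you assume (the extra term is harmless after integration, but it shows the pure bound is not what the method delivers, and you give no other route to it).

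A secondary problem sits in your low-frequency sketch: with $p=3$ and only a logarithmic extra moment, there is no fourth cumulant, so "a fourth-order cumulant term of size $\oo(n^{-1})$" is not available. The paper instead truncates at the block level at $\tau_n\asymp\sqrt{\log n}$, controls the truncation loss through the Fuk--Nagaev-type bound of Lemma \ref{th:Moritz} (this is precisely where \Aone{} and the constraint $2\bd+2<\ad$ enter, producing the factor $(\log n)^{-\bd}$), and replaces the would-be fourth-order term by a Berry--Esseen input at scale $m^{-1/2}$ (Lemma \ref{lem_taylor_expansion_smooth_function_I_BE}) — the "link to Berry--Esseen bounds" announced in Section \ref{sec_int_approach}; the recursion through Lemma \ref{lem_diamond_Sn_expansions_I} that you cite belongs to the fractional case $3<p\leq 4$, not to this theorem. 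In short, your frame (smoothing, the weighted integral of $|\E e^{\ic\xi S_n/\sqrt{n}}-\widehat{\Psi}_n(\xi)|$, the inversion argument for the converse) coincides with the paper's, but the two estimates you treat as routine or quotable — uniform characteristic-function decay up to $\co_T\sqrt{n}$ and the cumulant matching on the low-frequency window with only three moments — are exactly what the conditional blocking and truncation machinery of Sections \ref{sec_main_lemmas}--\ref{sec_proof_prop_m_dep} is built to prove, and the mechanisms you sketch for them do not suffice.
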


\begin{rem}\label{rem_replace_s_sqrt_n}
 Lemma \ref{lem_sig_expressions_relations} shows that one may replace $s_n$ with $\ss$ as normalization. Similarly, one can replace $\kapp_n^3$ with $n^{-\frac{1}{2}}\kapp^3$, where
 $$
 \kapp^3 = \sum_{i,j \in \Z}\E X_0 X_i X_j,
 $$
  see Lemma \ref{lem_S_n_third_expectation}. This is true for all results in Sections \ref{sec_main} and \ref{sec_wasserstein}.
\end{rem}

\begin{rem}\label{rem_local_limit_theorem}
Edgeworth expansions are also an important tool for deriving local limit theorems, see e.g. ~\cite{shepp1964}. We refrain from developing this any further, just mentioning that by adopting the proof of Theorem 2 in ~\cite{shepp1964}, Theorem \ref{thm_edge} yields a local limit theorem if $\Cd_{T_n} = \oo\bigl(n^{-\frac{1}{2}}\bigr)$.
\end{rem}

Due to the trivial bound $\Cd_{T_n} \lesssim n^{-1/2}$, Theorem \ref{thm_edge} immediately yields the following corollary.

\begin{cor}\label{cor_transition}
Subject to the assumptions in Theorem \ref{thm_edge}, $\Cd_{T_n}$ with $T_n$ as in \eqref{defn_Tn} describes the exact transition between the Berry-Esseen bound $n^{-1/2}$ and a second-order Edgeworth expansion with error rate $\oo(n^{-\frac{1}{2}})$.
\end{cor}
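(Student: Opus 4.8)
The plan is to read off Corollary~\ref{cor_transition} directly from Theorem~\ref{thm_edge}: the whole content is that the single quantity $\Cd_{T_n}$ always lies, up to absolute constants, between the two endpoints $\oo(n^{-1/2})$ and $n^{-1/2}$ of the claimed transition, and that the residual term in the main inequality of Theorem~\ref{thm_edge} is negligible against both. First I would record the universal bound: combining $\sup_{x}|\Delta_n(x)| \lesssim \oo(n^{-1/2})(\log n)^{-\bd} + \Cd_{T_n}$ with the trivial estimate $\Cd_{T_n} \leq T_n^{-1} = (\co_T\sqrt{n})^{-1} \lesssim n^{-1/2}$, which holds uniformly in $n$ because $T_n = \co_T\sqrt{n}$ and $\Cd_a \leq a^{-1}$, gives $\sup_x|\Delta_n(x)| \lesssim n^{-1/2}$ for every $n$. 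By Remark~\ref{rem_replace_s_sqrt_n} the Edgeworth correction $\tfrac{1}{6}\kapp_n^3(1-x^2/s_n^2)\phi(x/s_n)$ is itself of order $n^{-1/2}$, so this is precisely a Berry-Esseen bound of order $n^{-1/2}$ for $\P(S_n \leq x\sqrt{n}) - \Phi(x/s_n)$; this is the lower endpoint of the transition.

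Next I would invoke the equivalence (i)$\iff$(ii) of Theorem~\ref{thm_edge}: the improved rate $\sup_x|\Delta_n(x)| = \oo(n^{-1/2})$ holds if and only if $\Cd_{T_n} = \oo(n^{-1/2})$. Putting the two steps together produces the dichotomy that constitutes the ``exact transition''. If $\Cd_{T_n} = \oo(n^{-1/2})$, then a second-order Edgeworth expansion holds with error $\oo(n^{-1/2})$; if instead $\limsup_n \sqrt{n}\,\Cd_{T_n} > 0$, then along that subsequence the Edgeworth expansion fails, by the same equivalence, while the universal bound of the first step still gives $\sup_x|\Delta_n(x)| \lesssim n^{-1/2}$, which is therefore attained and cannot be improved to $\oo(n^{-1/2})$. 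Hence $\Cd_{T_n}$ decides on which side of the $n^{-1/2}$ threshold $\Delta_n$ sits, and, through the main inequality, controls the actual size of $\sup_x|\Delta_n(x)|$ up to the negligible factor $\oo(n^{-1/2})(\log n)^{-\bd}$ --- which is what ``exact transition'' is meant to assert.

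I expect no genuine obstacle at this stage: all the substance --- the recursive argument linking Edgeworth expansions to Berry-Esseen bounds, and the delicate characteristic-function estimates under~\eqref{Bernoulli_condi_int} --- is already packaged in Theorem~\ref{thm_edge}. The only care needed is bookkeeping: checking that $\Cd_{T_n} \lesssim n^{-1/2}$ is uniform (immediate from $T_n \asymp \sqrt{n}$ and $\Cd_a \leq a^{-1}$), that the subleading term $\oo(n^{-1/2})(\log n)^{-\bd}$ is dominated by both endpoints so it never interferes, and, on the expository side, stating precisely what ``describes the exact transition'' is to mean, namely the dichotomy above together with the sharpness at the critical scale $n^{-1/2}$ that is furnished by the equivalence (i)$\iff$(ii).
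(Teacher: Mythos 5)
Your proposal is correct and follows essentially the same route as the paper: the paper obtains the corollary immediately from Theorem \ref{thm_edge} together with the trivial bound $\Cd_{T_n} \lesssim n^{-1/2}$ (from $\Cd_a \leq a^{-1}$ and $T_n = \co_T\sqrt{n}$), exactly as you do. Your additional bookkeeping — that $\kapp_n^3 \lesssim n^{-1/2}$ turns the universal bound into a genuine Berry--Esseen bound, and that the $\oo(n^{-1/2})(\log n)^{-\bd}$ term never interferes — is consistent with the paper and fills in what it leaves implicit.
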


Apart from the special case of $m$-dependent Bernoulli-shift sequences handled in ~\cite{goetze_Hipp_1989_filed_ptrf}, this appears to be the first such result for general, weakly dependent processes. Moreover, the moment condition \hyperref[A1]{\Aone} with $\ad > 2$ is almost optimal. Previous results (cf. ~\cite{goetze_hipp_1983},~\cite{lahiri_1993}) require stronger moment conditions, even though they utilize a conditional Cram\'{e}r condition and assume a geometric decay of the dependence coefficients. It should be mentioned though that ~\cite{goetze_hipp_1983},~\cite{lahiri_1993} also give expansion of any order $s \in \N$. On the other hand, for most (statistical) applications such as resampling, second-order Edgeworth expansions with an error term of magnitude $\OO(n^{-1+\delta})$, $\delta > 0$ are the relevant ones (cf. ~\cite{hall_horowitz_1996},~\cite{lahiri_book_2003} and the references mentioned in the introduction), particularly in the presence of weak dependence. In Theorems \ref{thm_edge_frac} and \ref{thm_edge_four} below, we provide such expansions.

Although the characteristic $\Cd_{T_n}$ appears quite naturally, we do not find it in the literature. At least for the i.i.d. case, one reason might be that more explicit results are desirable, and in some sense also available. Indeed, a somewhat related result is Theorem 19.1 in ~\cite{Bhattacharya_rao_1976_reprint_2010}. In ~\cite{angst2017}, the notion of a \textit{weak Cram\'{e}r condition} is introduced leading to $\Cd_{T_n} =  o(n^{-1/2})$ (and stronger), see also ~\cite{kasun:dolgo:2020expansions}. 

Verifying the conditional Cram\'{e}r condition is not an easy task, see e.g. ~\cite{hall_horowitz_1996} for a comment on this matter. It has been achieved though for special classes of (weakly-dependent) Bernoulli processes, e.g. ~\cite{goetze_hipp_1995}. In this context, it is worth mentioning that the conditional Cram\'{e}r condition (basically) implies $\Cd_{T_n} \lesssim  n^{-q}$ for any $q > 0$. The following \hypertarget{bigHk:eq7}{example}, essentially taken from ~\cite{goetze_Hipp_1989_filed_ptrf}, shows that it can fail even in \hypertarget{USUBK:eq7}{very} simple cases where $\Cd_{T_n} = 0$.
\begin{ex}\label{ex_1}
Let $(U_k)_{k \in \Z}$ be i.i.d. with $\P(U_0 = -1) = \P(U_0 = 1) = 1/2$, and $(H_k)_{k \in \Z}$ be i.i.d., independent of $(U_k)_{k \in \Z}$ with $\|H_k\|_4 < \infty$
 \hypertarget{phiHt:eq7}{and} characteristic function $\varphi_H(t)$ satisfying
\begin{align*}
\varphi_H(t) = 0 \quad \text{for $|t| > \co_{T}$},
\end{align*}
where $\co_{T}$ is the constant in \eqref{defn_Tn}. An example of such a distribution is given in Section \ref{sec_proof_dep_it}. Set $X_k = U_{k-1} + H_{k} - H_{k-1}$. In ~\cite{goetze_Hipp_1989_filed_ptrf}, failure of the conditional Cram\'{e}r condition is demonstrated. On the other hand, since $S_n = U_{0} + \ldots + U_{n-1} + H_n - H_0$, we have, with $T_n$ as in \eqref{defn_Tn}, 
\begin{align}\label{eq_example_1}
\sup_{x \in \R}\bigl|\Td_{T_n}^{N}(x) \bigr| = 0
\end{align}
for any $N \geq T_n$. We conclude $\Cd_{T_n} = 0$, and thus Theorem \ref{thm_edge} implies a second order Edgeworth expansion.
\end{ex}

Our next result provides fractional expansions for $3 < p < 4$.

\begin{thm}\label{thm_edge_frac}
Grant Assumption \ref{ass_dependence_main} with $3 < p < 4$. Then, for $\delta > 0$ arbitrarily small, we have
\begin{align*}
\sup_{x \in \R}\bigl|{\Delta}_n(x) \bigr| \lesssim n^{1-\frac{p}{2}+\delta} + \Cd_{T_n},
\end{align*}
where $T_n$ is as in \eqref{defn_Tn}. Moreover, the following statements are equivalent:
\begin{description}
\item[(i)] $\sup_{x \in \R}\bigl|\Delta_n(x) \bigr| \lesssim n^{1-\frac{p}{2}+\delta}$.
\item[(ii)] $\Cd_{T_n} \lesssim n^{1-\frac{p}{2}+\delta}$.
\end{description}
All involved constants only depend on $\delta > 0$, $p$, $\ad$, $\ss$ and $\Lambda_{2,p}$.
\end{thm}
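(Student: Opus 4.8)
\textbf{Proof strategy for Theorem \ref{thm_edge_frac}.}

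The plan is to mirror the architecture of the proof of Theorem \ref{thm_edge}, but to replace the single smoothing step by a \emph{recursion}, since for non-integer $p$ one cannot reach optimality by matching moments in a single pass. First I would fix the target rate $\rho_n = n^{1-p/2}$ and note that the claimed bound is trivially true for any $\delta$ if $\Cd_{T_n} \gtrsim \rho_n$, so the content is in showing that $\Cd_{T_n} \lesssim \rho_n$ forces the Edgeworth error to be $\lesssim n^{1-p/2+\delta}$, and conversely. The converse direction (that (i) implies $\Cd_{T_n} \lesssim n^{1-p/2+\delta}$) follows from Berry's smoothing inequality exactly as in Theorem \ref{thm_edge}: the Berry-Esseen tail $\Td_{T_n}^b$ is, up to the Edgeworth polynomial times a Gaussian factor (which is negligible on the relevant frequency range $|\xi|\geq T_n$ after integration), controlled by $\sup_x|\Delta_n(x)|$ plus $1/b$, and optimizing over $b\geq T_n$ gives $\Cd_{T_n}\lesssim \sup_x|\Delta_n(x)| + $ lower order. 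So the main work is the forward implication, i.e. the displayed inequality $\sup_x|\Delta_n(x)| \lesssim n^{1-p/2+\delta} + \Cd_{T_n}$.

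For that inequality I would run the following bootstrap. Start from the baseline bound available under $p=3$ moments (Theorem \ref{thm_edge}, or rather its core technical ancestor), which gives $\sup_x|\Delta_n(x)| \lesssim n^{-1/2}(\log n)^{-\bd} + \Cd_{T_n}$; in particular one has a crude Edgeworth/Berry-Esseen control of order $n^{-1/2}$ polylog. Feed this into the smoothing lemmas of Section \ref{sec_main_tech} — specifically the Taylor-expansion lemmas for smooth functions (Lemma \ref{lem_taylor_expansion_smooth_function}) and the $\diamond$-expansion lemmas (Lemma \ref{lem_diamond_Sn_expansions_I}) — to upgrade a bound of order $n^{-1/2-j\eta}$ on a suitably smoothed version of $\Delta_n$ to a bound of order $n^{-1/2-(j+1)\eta}$, where each iteration uses one more fractional order of the $(3<p<4)$-th moment and $\eta\approx(p-3)/2$ is the per-step gain. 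The block-decomposition of $S_n$ into $\ell \sim n/L$ big blocks of length $L$ with small separating gaps (controlled by $\Lambda_{2,p}$ via \hyperref[A2]{\Atwo}), combined with the conditioning $\sigma$-algebras $\G_m$, lets one write the characteristic function as a product and extract the smoothing gain block-by-block; the Poly-Lipschitz distance lemma (Lemma \ref{lem_polylipschitz_distance}) is what transfers smoothed estimates back to the Kolmogorov metric at each stage. After $\lceil 1/\eta\rceil$ iterations the accumulated gain reaches $n^{1-p/2}$, at the cost of a $(\log n)^{O(1)}$ factor from the chaining of block sizes and the moment condition \hyperref[A1]{\Aone} with $\ad>2$; absorbing the polylog into $n^\delta$ gives the stated bound. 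Once $\sup_x|\Delta_n(x)|\lesssim n^{1-p/2+\delta}+\Cd_{T_n}$ is established, the equivalence (i)$\Leftrightarrow$(ii) is immediate: (ii)$\Rightarrow$(i) is the inequality itself, and (i)$\Rightarrow$(ii) is the smoothing-inequality direction above.

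The main obstacle, I expect, is \emph{controlling the polylogarithmic loss across the recursion while keeping the frequency truncation at $T_n=\co_T\sqrt n$ fixed}. Each smoothing step trades a factor of a block length (or its logarithm) for a fractional power of a moment, and a naive iteration would blow the logarithm up to a genuine power of $n$; one has to choose the block lengths $L=L_j$ growing geometrically but tune the number of steps and the constants so that $\prod_j(\log L_j)$ stays sub-polynomial, which is exactly where the hypothesis $3<p<4$ (finitely many steps) and the $\log$-moment refinement in \hyperref[A1]{\Aone} are used. A secondary but genuine difficulty is that for non-integer $p$ the Edgeworth correction $\Psi_n$ still only carries the third cumulant, so the ``fractional remainder'' must be bounded directly via a fractional-moment Rosenthal/von Bahr--Esseen estimate applied to the block sums rather than cancelled — this is the part where the argument genuinely departs from the integer case and where Lemma \ref{lem_taylor_expansion_smooth_function} and its quadratic analogue do the heavy lifting.
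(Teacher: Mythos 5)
Your overall skeleton does match the paper's: the theorem is proved by re-running the proof of Theorem \ref{thm_edge} (reduction to an $m$-dependent approximation, Berry smoothing against the true $\Cd_{T_n}$, comparison of the Edgeworth polynomials), with the $p=3$ input Proposition \ref{prop_m_dependence} replaced by a recursive estimate (Proposition \ref{prop_m_dep_it}); and your treatment of (i)$\Rightarrow$(ii), via the $v_T$-smoothed difference and Fourier inversion, is exactly the paper's argument. You also point at the correct key lemmas (Lemma \ref{lem_taylor_expansion_smooth_function}, Lemma \ref{lem_diamond_Sn_expansions_I}).

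However, the mechanism you describe for the bootstrap is not the paper's and, as stated, would not work. You propose to gain $\eta\approx(p-3)/2$ in the exponent per iteration, ``using one more fractional order of the $p$-th moment'' at each step, over $\lceil 1/\eta\rceil$ iterations at the fixed scale $n$; but only $p-3<1$ moments beyond the third are available, so there is no supply of an extra fractional moment order per step, and nothing in Lemma \ref{lem_taylor_expansion_smooth_function} upgrades a bound of order $n^{-1/2-j\eta}$ by an extra factor $n^{-(p-3)/2}$. The paper's recursion is across \emph{scales}, not moment orders: Lemma \ref{lem_diamond_Sn_expansions_I} gives $\sup_x|\Delta_n^{\diamond}(x)|\lesssim \oo(n^{1-\frac p2})N^{\frac{p-3}{2}}(\log n)^{-\bd}+(\log n)^2N^{-1}\sup_x|\Delta_m^{\diamond}(x)|$ with $n=2Nm$; the full rate $n^{1-p/2}$ is produced in a single pass by the Taylor-expansion lemma together with the truncation/Fuk--Nagaev bound (Lemma \ref{th:Moritz}), and the recursion is needed only because the remainder involves the block-level quantity $\Delta_{j,m}^{\diamond}$, i.e.\ the same object at the smaller scale $m\sim n^{\md}$; iterating $\lfloor(1-\md)^{-1}\rfloor$ times with $\md$ close to $1$ absorbs the carried term and is precisely the source of the $n^{\delta}$ loss. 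More importantly, you never explain why the recursion can be closed at all: the enabling idea is the $\diamond$-smoothing by telescoping i.i.d.\ noise $H_k-H_{k-1}$ whose density $g_{a,b}$ has compactly supported Fourier transform, so the characteristic function of $S_n^{\diamond}$ vanishes for $|\xi|>ab\sqrt n$ and Berry's inequality applies with infinite cutoff — this annihilates the otherwise uncontrollable characteristic $\Cd$ at every intermediate stage, after which Lemmas \ref{lem_taylor_expansion_smooth_function_diamond} and \ref{lem_taylor_expansion_smooth_function_diamond_II} feed $\sup_x|\Delta_m^{\diamond}(x)|$ back into the bound for $\Ad_{T_n}$ of the unsmoothed sums. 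Two smaller inaccuracies: Lemma \ref{lem_polylipschitz_distance} decouples a block from $\FF_m$ inside the Taylor step, it does not transfer smoothed estimates back to the Kolmogorov metric; and the claim that the displayed bound is ``trivially true'' when $\Cd_{T_n}\gtrsim n^{1-p/2}$ is false, since for $\Cd_{T_n}$ between $n^{1-p/2}$ and its maximal order $n^{-1/2}$ the inequality still requires the full argument.
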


Unlike Theorem \ref{thm_edge}, we require the additional loss $n^{\delta}$ in the rate. The proof is delicate, and requires, among other things, a precise control of the truncated partial sums. If $p = 4$, we again only obtain a logarithmic loss.

\begin{thm}\label{thm_edge_four}
Grant Assumption \ref{ass_dependence_main} with $p \geq 4$. Then
\begin{align*}
\sup_{x \in \R}\bigl|{\Delta}_n(x) \bigr| \lesssim n^{-1}(\log n)^5 + \Cd_{T_n},
\end{align*}
where $T_n$ is as in \eqref{defn_Tn}. Moreover, the following statements are equivalent:
\begin{description}
\item[(i)] $\sup_{x \in \R}\bigl|\Delta_n(x) \bigr| = \oo\big(n^{-1}\big)(\log n)^5$.
\item[(ii)] $\Cd_{T_n} = \oo\big(n^{-1}\big)(\log n)^5$.
\end{description}
All involved constants only depend on $\ad$, $\ss$ and $\Lambda_{2,4}$.
\end{thm}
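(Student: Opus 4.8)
The plan is to run the recursive scheme of Theorems \ref{thm_edge} and \ref{thm_edge_frac}; the new feature is that $p\ge 4$ moments let one match the cumulants of $S_n/\sqrt n$ up to order four (the cumulants beyond the first being already carried by $\Psi_n$ through $\kapp_n^3$), which is what pushes the non-characteristic error down to $\oo(n^{-1})(\log n)^5$. \emph{Step 1 (smoothing reduction).} Write $F_n(x)=\P(S_n\le x\sqrt n)$ and let $\vartheta_n(\xi)=\int_\R e^{\ic\xi y}\,d\Psi_n(y)$ be the Fourier--Stieltjes transform of $\Psi_n$, which by \eqref{defn_PSI_m} equals $e^{-\xi^2 s_n^2/2}$ times a polynomial of degree at most three, so that $|\vartheta_n(\xi)|\lesssim e^{-\co n}$ on $|\xi|\ge T_n$. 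I would smooth $F_n-\Psi_n$ by a Fejér kernel $V_b$ of an arbitrary bandwidth $b\ge T_n$, split the resulting frequency integral at $|\xi|=T_n$ — bounding the $|\xi|\le T_n$ part in absolute value (this is $I_n$ below) and recognising the $|\xi|\ge T_n$ part as $\tfrac1{2\pi\ic}\Td_{T_n}^{b}(x)$ up to the negligible $\vartheta_n$-tail — and de-smooth using that $\Psi_n,\Psi_n'$ are uniformly bounded (cf. \eqref{defn_PSI_m}, Lemma \ref{lem_sig_expressions_relations}, and the de-smoothing lemma in \cite{Bhattacharya_rao_1976_reprint_2010}). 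This yields
\begin{align*}
\sup_{x\in\R}|\Delta_n(x)|\ \lesssim\ I_n+\sup_{x\in\R}\bigl|\Td_{T_n}^{b}(x)\bigr|+b^{-1}+e^{-\co n},\qquad I_n:=\int_{|\xi|\le T_n}\Bigl|\E e^{\ic\xi S_n/\sqrt n}-\vartheta_n(\xi)\Bigr|\frac{d\xi}{|\xi|},
\end{align*}
and taking $\inf_{b\ge T_n}$ converts the middle two terms into $\Cd_{T_n}$ (cf. \eqref{berry_esseen_char}); it then remains to show $I_n=\oo(n^{-1})(\log n)^5$.

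\emph{Step 2 (the low-frequency integral).} This is the heart of the matter. I would split $[0,T_n]$ at a threshold $\varrho_n\asymp\sqrt{\log n}$ with a large constant. On $|\xi|\le\varrho_n$, the ``diamond'' decomposition of $e^{\ic\xi S_n/\sqrt n}$ (Lemma \ref{lem_diamond_Sn_expansions_quad}) together with the Taylor-type Lemma \ref{lem_taylor_expansion_smooth_function_quad} should give a fourth-order expansion
\begin{align*}
\E e^{\ic\xi S_n/\sqrt n}=\vartheta_n(\xi)+R_n(\xi),\qquad |R_n(\xi)|\lesssim n^{-1}(1+|\xi|)^{K}e^{-\co\xi^2}(\log n)^{c},
\end{align*}
for fixed $K,c\ge 0$. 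The three facts that keep $R_n$ at size $n^{-1}$ are: (a) $\mathrm{cum}_4(S_n)=\OO(n)$ — the $\mathrm{cum}_4$-analogue of Lemma \ref{lem_S_n_third_expectation}, controlled by $\Lambda_{2,4}$ — so the fourth-cumulant Edgeworth term, which is \emph{not} part of $\Psi_n$, is itself only $\OO(n^{-1})$ and may be absorbed into $R_n$; (b) the square of the $n^{-1/2}$-correction is likewise $\OO(n^{-1})$; (c) with $p\ge 4$ moments the genuine fourth-order Taylor remainder of $t\mapsto e^{\ic t}$ is $\OO(n^{-1})$, not a fractional power. Integrating $|R_n(\xi)|/|\xi|$ over $|\xi|\le\varrho_n$ then contributes $\OO(n^{-1})(\log n)^{c}$. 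On $\varrho_n\le|\xi|\le T_n$ I would bound the two transforms separately: $|\vartheta_n(\xi)|\le e^{-\co\varrho_n^2}\le n^{-10}$ by the choice of $\varrho_n$, while the a priori Gaussian-type decay $|\E e^{\ic\xi S_n/\sqrt n}|\lesssim e^{-\co\xi^2}+n^{-10}$ — valid for \emph{all} $|\xi|\le T_n=\co_T\sqrt n$ precisely because $\co_T$ is small, and furnished by the core technical estimates of Section \ref{sec_main_tech} (Lemmas \ref{lem_diamond_Sn_expansions_I}, \ref{lem_diamond_Sn_expansions_quad}) — makes this range contribute $\lesssim n^{-10}\log n$. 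In practice Step 2 is run iteratively, the crude input $I_n\lesssim n^{-1/2}$ (essentially Theorem \ref{thm_edge}) being fed into the expansion and a bounded number of iterations converging to $I_n=\oo(n^{-1})(\log n)^5$, the $(\log n)^5$ being a comfortable over-estimate of the accumulated logarithmic losses.

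\emph{Step 3 (the equivalence).} (ii)$\Rightarrow$(i) is then immediate from Steps 1--2. For (i)$\Rightarrow$(ii) I would use that Berry's relation is an identity: for the Fejér kernel $V_b$,
\begin{align*}
(F_n-\Psi_n)*V_b(x)=\frac1{2\pi}\int_{|\xi|\le b}e^{-\ic\xi x}\,\frac{\E e^{\ic\xi S_n/\sqrt n}-\vartheta_n(\xi)}{\ic\xi}\Bigl(1-\tfrac{|\xi|}{b}\Bigr)\,d\xi .
\end{align*}
Splitting at $|\xi|=T_n$, the low-frequency part is $\le I_n=\oo(n^{-1})(\log n)^5$ by Step 2, the high-frequency part is $\tfrac1{2\pi\ic}\Td_{T_n}^{b}(x)$ up to $\OO(e^{-\co n})$, and $\|(F_n-\Psi_n)*V_b\|_\infty\le\|\Delta_n\|_\infty$ since $V_b\ge 0$ integrates to one. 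Rearranging and choosing $b=n$ (so $b^{-1}=\oo(n^{-1})(\log n)^5$) gives $\Cd_{T_n}\lesssim\|\Delta_n\|_\infty+\oo(n^{-1})(\log n)^5$, so (i) forces (ii). The dependence of all constants only on $\ad,\ss,\Lambda_{2,4}$ is read off from the cited lemmas.

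The hard part will be Step 2 under merely $\Lambda_{2,4}<\infty$: both the fourth-order expansion on moderate frequencies and the a priori decay on $|\xi|\le T_n$ must be established with only a $k^{-2}$-summable decay of the dependence coefficients, which rules out the usual geometric-decay shortcuts and, in particular, the naive $m$-dependent approximation with $m\sim\log n$ (whose $L^2$-error is of order $\sqrt n\,(\log n)^{-2}\to\infty$ here, rather than polynomially small). Controlling the characteristic function with only logarithmic loss — and in a form that iterates — is exactly the purpose of the diamond expansions (Lemmas \ref{lem_diamond_Sn_expansions_I}, \ref{lem_diamond_Sn_expansions_quad}) and of Lemmas \ref{lem_polylipschitz_distance}, \ref{lem_taylor_expansion_smooth_function_quad}. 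Relative to the case $p=3$, the additional subtlety is that the fourth-order term must be genuinely \emph{expanded} rather than merely dominated, with the remainder held at $n^{-1}$; this is where $p\ge 4$ and the ``quad'' versions of the lemmas are used.
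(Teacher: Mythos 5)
Your overall architecture is the paper's: a Berry/Fej\'er smoothing step reducing $\sup_x|\Delta_n(x)|$ to a low-frequency characteristic-function integral plus $\Cd_{T_n}$, control of that integral by the fourth-order (``quad'') expansion and recursion machinery, and a converse obtained from the exact Fourier-inversion identity for the smoothed difference; your Step 3 is essentially verbatim the argument the paper uses for (i)$\Rightarrow$(ii) in Theorem \ref{thm_edge}, and the paper's own proof of Theorem \ref{thm_edge_four} is exactly ``proof of Theorem \ref{thm_edge} with Proposition \ref{prop_m_dep_it_quad} in place of Proposition \ref{prop_m_dependence}''.

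The concrete gap is in Step 2. Everything you invoke there --- Lemmas \ref{lem_taylor_expansion_smooth_function_quad}, \ref{lem_taylor_expansion_smooth_function_quad_II}, \ref{lem_diamond_Sn_expansions_quad}, Proposition \ref{prop_m_dep_it_quad}, and in particular the Gaussian-type decay of the characteristic function on $\sqrt{\log n}\lesssim|\xi|\leq T_n$, which comes from Lemma \ref{lem_bound_prod_cond_char} applied to the conditional block characteristic functions (not from Lemmas \ref{lem_diamond_Sn_expansions_I}, \ref{lem_diamond_Sn_expansions_quad}, which bound the smoothed Kolmogorov distance $\Delta_n^{\diamond}$, not $\E e^{\ic\xi S_n/\sqrt n}$) --- is stated and proved under Assumption \ref{ass_dependence}, i.e.\ for $(m-1)$-dependent sequences with the blocking/conditioning structure. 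To use it under Assumption \ref{ass_dependence_main} you must first replace $X_k$ by $X_{km}=\E[X_k|\mathcal{E}_{k,k-m}]$, verify \Bone--\Bthree\ for $X_{km}$, and control the replacement error; this reduction (the first part of the paper's proof of Theorem \ref{thm_edge}) is entirely absent from your plan. Worse, your closing paragraph declares the $m$-dependent approximation unusable under $\Lambda_{2,4}<\infty$ because $m\sim\log n$ would give error $\sqrt n\,(\log n)^{-2}$; but the correct choice is $m=n(\log n)^{-\md}$ with $\md\geq 5$, so that only $N\sim(\log n)^{\md}$ blocks are used, $\|S_n-S_{nm}\|_p\lesssim\sqrt n\,m^{-2}=n^{-3/2}(\log n)^{2\md}$, the induced error in the frequency integral is $T_n m^{-2}$, and the surrogate-versus-$\Psi_n$ comparison (Lemma \ref{lem_edge_for_FF}, Lemma \ref{lem_psi_compare}) costs $m^{-1}=n^{-1}(\log n)^{\md}$ --- which is precisely where the $(\log n)^5$ in the statement comes from. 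Two smaller inaccuracies: your remainder bound $|R_n(\xi)|\lesssim n^{-1}(1+|\xi|)^K e^{-\co\xi^2}(\log n)^c$ must vanish as $\xi\to0$ (otherwise $\int|R_n(\xi)|\,|\xi|^{-1}d\xi$ diverges; the paper's per-block bounds carry factors $\xi_N^2 m^{-1}$ and higher), and the recursion behind Lemma \ref{lem_diamond_Sn_expansions_quad} (ii) is iterated $\lfloor\log n/\log(2N)\rfloor$ times, not a bounded number of times --- harmless once you cite that lemma, but your description of the iteration is not accurate.
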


\section{Main Results: Wasserstein Metric}\label{sec_wasserstein}

For two probability measures $\P_1,\P_2$, let $\mathcal{L}(\P_1,\P_2)$ be the set of probability measures on $\R^2$ with marginals $\P_1, \P_2$. The Wasserstein metric (of order one) is defined as the minimal coupling $\mathds{L}^1$-distance, \hypertarget{Wone:eq10}{that} is,
\begin{align}\label{wasserstein_1}
W_1(\P_1,\P_2) = \inf\Big\{ \int_{\R} |x-y| \P(dx, dy): \, \P \in \mathcal{L}(\P_1, \P_2)\Big\}.
\end{align}
By the Kantorovich-Rubinstein Theorem, this is equivalent \hypertarget{VVN:eq11}{to}
\begin{align}\label{wasserstein_2}
W_1(\P_1,\P_2) = \sup\Big\{\Bigl|\int_{\R} f(x)(\P_1 - \P_2)(dx)\Bigr|\,:\, f \in \mathcal{H}_1^{1} \Bigr\}.
\end{align}

Let $\mathds{V}_n$ be the (signed) measure induced by $\Psi_n$. Then a priori, the distance $W_1(\P_1, \mathds{V}_n)$ is not defined in general. In ~\cite{Bobkov2017_wasser}, generalized transport distances are introduced that also allow for signed measures. In order to maintain the original definition in terms of couplings, we replace $\Psi_n$ with a probability measure that is induced by a sequence of i.i.d. random variables. There \hypertarget{ALPHA:eq11}{are} many possible choices, \hypertarget{LSUBN:eq11}{and} a simple
 \hypertarget{BETA:eq11}{one} is the following. Let $Z$ be a \hypertarget{kappan3:eq10}{zero} mean Gaussian random variable $\mathcal{N}(0,\sigma^2)$ \hypertarget{TAUSUBN:eq11}{with} variance $\sigma^2 = s_n^2$, and $G$ follow a Gamma distribution $\Gamma(\alpha, \beta)$ with shape parameter $\alpha = s_n^2 \beta$ and rate $\beta = 2 s_n^{2} (\sqrt{n} \kapp_n)^{-3}$, independent of $Z$. 
 \hypertarget{bigMk:eq10}{Then} for $1\leq k \leq n$, \hypertarget{MMn:eq10}{let}
\begin{align}\label{defn_gauss_plus_gamma}
M_k \stackrel{d}{=}  \left\{
\begin{array}{ll}
\big(Z + G-\E G\big)/\sqrt{2} &\text{if $\kapp_n^3 > 0$},\\
\big(Z - G+\E G\big)/\sqrt{2} &\text{if $\kapp_n^3 < 0$},\\
Z &\text{if $\kapp_n^3 = 0$}
\end{array}
\right.
\end{align}
be i.i.d., and denote by $\MM_n$ the probability measure induced by $L_n = n^{-1/2}\sum_{k = 1}^n M_k$. Observe that $\E L_n^2 = s_n^2$ and $\E L_n^3 = \kapp_n^3$. Moreover, $(M_k)_{k \geq 1}$ is well defined for any $n \in \N$: Due to Lemma \ref{lem_S_n_third_expectation} {\bf (ii)} we have $|\kapp_n^3 - n^{-1/2} \kapp^3|\lesssim n^{-3/2}$, and Lemma \ref{lem_S_n_third_expectation} {\bf (iv)} implies that $\kapp^3$ exists, that is, $|\kapp^3| < \infty$. Note that $\sqrt{n}L_n \stackrel{d}{=} Z_0 \pm  (G_0 - \E G_0)$, with $Z_0,G_0$ independent and $Z_0 \stackrel{d}{=}\mathcal{N}(0,s_n^2)$, $G_0 \stackrel{d}{=}\Gamma(n \alpha, \beta/\sqrt{n})$ with $\alpha$, $\beta$ as above.

Next, we introduce the Wasserstein counterpart of the Berry-Esseen characteristic. Denote by $\tau_n = \co_{\tau} \sqrt{\log n}$ for $\co_{\tau} > 0$ large enough (corresponding to Theorem \ref{thm_wasserstein} below). We \hypertarget{scttn:eq10}{then} define \hypertarget{TTa:eq10}{the} \textit{integrated characteristic}

\begin{align}\label{wasserstein_char}
\Id_{a} = \inf_{b \geq a}\int_{-\tau_n}^{\tau_n}\Big(\big| \Td_{a}^b(x) \big| + \frac{1}{b}\Big) dx.
\end{align}
Note that we have the trivial bounds $\Id_a \leq 2\tau_n \Cd_a \leq 2 \tau_n a^{-1}$.


\begin{thm}\label{thm_wasserstein}
Grant Assumption \ref{ass_dependence_main} with $\ad > 2$. Then, for $2\bd + 2 < \ad$, we have
\begin{align*}
W_1\big(\P_{S_n/\sqrt{ n}}, \MM_n\big) \lesssim \oo\big(n^{-\frac{1}{2}}\big)(\log n)^{\frac{1}{2} - \bd} + n^{-\frac{1}{2}} \wedge \Id_{T_n},
\end{align*}
where $T_n$ is as in \eqref{defn_Tn}. If $\ad > 3$, the following statements are equivalent:
\begin{description}
\item[(i)] $W_1\big(\P_{S_n/\sqrt{ n}}, \MM_n\big) = \oo\bigl(n^{-\frac{1}{2}}\bigr)$.
\item[(ii)] $\Id_{T_n} = \oo\bigl(n^{-\frac{1}{2}}\bigr)$.
\end{description}
All involved constants only depend on $\ad$, $\ss$ and $\Lambda_{2,p}$.
\end{thm}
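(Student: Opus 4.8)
# Proof Proposal for Theorem~\ref{thm_wasserstein}

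The plan is to reduce the Wasserstein statement to the Kolmogorov-type machinery already developed for Theorem~\ref{thm_edge}, exploiting the dual formula \eqref{wasserstein_2} together with a smoothing inequality for $W_1$ analogous to Berry's inequality. First I would invoke the Kantorovich--Rubinstein representation to write $W_1(\P_{S_n/\sqrt n},\MM_n) = \sup_{f \in \mathcal{H}_1^1}|\E f(S_n/\sqrt n) - \E f(L_n)|$, and then pass to a Fourier/smoothing representation: for a Lipschitz test function one controls $\E f(S_n/\sqrt n) - \E f(L_n)$ by an integral over $x$ of the difference of distribution functions, which in turn is controlled (via Berry-type smoothing against a kernel with cutoff $b$) by $\int_{\R}|\Td_{T_n}^b(x)|\,dx$ plus a bias term of order $1/b$, plus the contribution of the low-frequency part $|\xi| \le T_n$. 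The key structural point is that the low-frequency part is exactly what the Edgeworth expansion $\Psi_n$ (equivalently the measure $\MM_n$, whose first three moments match those of $S_n/\sqrt n$ by construction) is designed to absorb: the difference of characteristic functions on $|\xi| \le T_n$ is $O(|\xi|^3/\sqrt n)$ after subtracting the matched cumulants, and integrating the resulting bound against the smoothing kernel over a window of width $\tau_n = \co_\tau\sqrt{\log n}$ yields a contribution of size $\oo(n^{-1/2})(\log n)^{1/2-\bd}$ once $\ad$ is large enough — this is where the factor $(\log n)^{1/2}$ (rather than the $(\log n)^{-\bd}$ of Theorem~\ref{thm_edge}) enters, reflecting the extra $dx$-integration over the window $[-\tau_n,\tau_n]$. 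The truncation of the $x$-integral to $[-\tau_n,\tau_n]$ is justified because both $\P_{S_n/\sqrt n}$ and $\MM_n$ have subexponential tails (Gaussian-type for the normal part, and Gamma tails for $G$, both controlled by $s_n$ via \hyperref[A1]{\Aone} and Lemma~\ref{lem_S_n_third_expectation}), so the tail of the test-function difference outside this window is negligible of polynomial order for $\co_\tau$ large.

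For the main bound, I would therefore combine three estimates: (a) the low-frequency Edgeworth contribution, giving $\oo(n^{-1/2})(\log n)^{1/2-\bd}$; (b) the high-frequency contribution $|\xi|\ge T_n$, which after the $x$-integration over $[-\tau_n,\tau_n]$ and optimizing over the cutoff $b \ge T_n$ is precisely $\inf_{b\ge T_n}\int_{-\tau_n}^{\tau_n}(|\Td_{T_n}^b(x)|+1/b)\,dx = \Id_{T_n}$; and (c) the trivial a priori bound $W_1 \lesssim n^{-1/2}$, which follows either from the existing Kolmogorov bound of Theorem~\ref{thm_edge} combined with tail control (since $W_1$ between two laws with matching first moments and uniformly subexponential tails is dominated by the Kolmogorov distance times a logarithmic factor, absorbed here because we already have $\oo(n^{-1/2})$ rates available, or more directly from a Wasserstein CLT with rate under \hyperref[A2]{\Atwo}, cf.\ the references cited in the introduction). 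Taking the minimum of (b)+(a) against (c) yields the stated bound $\oo(n^{-1/2})(\log n)^{1/2-\bd} + n^{-1/2}\wedge \Id_{T_n}$.

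For the equivalence (i) $\Leftrightarrow$ (ii) under the stronger hypothesis $\ad > 3$, the implication (ii) $\Rightarrow$ (i) is immediate from the main bound by choosing $\bd$ with $2\bd+2 < \ad$, i.e.\ $\bd > 1/2$, so that $(\log n)^{1/2-\bd} \to 0$ and the first term is genuinely $\oo(n^{-1/2})$. The reverse implication (i) $\Rightarrow$ (ii) is the delicate direction: here one must show that a fast $W_1$-rate forces $\Id_{T_n}$ to be small, which requires running the smoothing inequality "in reverse." Concretely, I would construct, for each candidate cutoff $b$, a Lipschitz test function (a smoothed version of a trigonometric kernel, or an explicit mollification tied to $\Td_{T_n}^b$) that nearly saturates the dual bound, so that $|\E f(S_n/\sqrt n) - \E f(L_n)|$ is bounded below by a constant multiple of $\int_{-\tau_n}^{\tau_n}|\Td_{T_n}^b(x)|\,dx$ minus lower-order errors; combined with the fact that the $1/b$ bias term is genuinely present (it cannot be cancelled because $\MM_n$ is absolutely continuous while the high-frequency behavior of $\P_{S_n/\sqrt n}$ is not), this forces $\Id_{T_n} \lesssim W_1 + \oo(n^{-1/2})$. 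The hypothesis $\ad > 3$ (rather than merely $\ad > 2$) is what guarantees enough logarithmic room for this lower-bound construction to beat the Edgeworth error term over the full window of width $\tau_n \sim \sqrt{\log n}$.

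I expect the main obstacle to be step (c) together with the reverse implication: getting a clean, unconditional a priori $W_1$-bound of order exactly $n^{-1/2}$ under only \hyperref[A1]{\Aone}--\hyperref[A3]{\Athree} (without smoothness), and then making the lower-bound saturation argument precise while keeping all error terms — the Edgeworth remainder, the tail truncation at $\tau_n$, and the discretization in $b$ — uniformly controlled over the window $[-\tau_n,\tau_n]$ whose width grows with $n$. The integrability assumption \hyperref[A2]{\Atwo} ($\Lambda_{2,p} < \infty$) enters precisely to make the characteristic-function estimates on $|\xi| \le T_n$ strong enough (cubic in $\xi$ with controlled constant) to survive this growing window, which is why the same $k^2$-weighted decay as in Theorem~\ref{thm_edge} is needed here.
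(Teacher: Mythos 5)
Your overall architecture matches the paper's: the $L^1$ representation $W_1(F,G)=\int_{\R}|F(x)-G(x)|\,dx$, truncation of the $x$-integral at $\tau_n\thicksim\sqrt{\log n}$ with Nagaev-type tail control, an integrated (rather than sup) smoothing inequality producing $\tau_n\Ad_{T_n}+\Id_{T_n}$, a classical Edgeworth expansion for the i.i.d.\ comparison sum $L_n$ to replace $\Psi_n$ by $\MM_n$ at cost $O(n^{-1})$, and for (i)$\Rightarrow$(ii) a reverse smoothing step where $\ad>3$ buys $\bd>1/2$ so the Edgeworth remainder is $\oo(n^{-1/2})$ over the window. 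The paper's reverse direction is carried out by convolving both laws with the kernel $v_T$, using that $W_1$ is an ideal metric together with the Fourier inversion formula \eqref{eq_thm_optimal_1.5}; your proposal of hand-built saturating Lipschitz test functions is vaguer but is essentially a re-derivation of the same mechanism, so I would not call it a gap by itself.

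The genuine gap is your step (c), the unconditional bound $W_1(\P_{S_n/\sqrt n},\MM_n)\lesssim n^{-1/2}$, which is exactly what produces the $n^{-1/2}\wedge\Id_{T_n}$ term. Neither of your two suggested routes delivers it. Dominating $W_1$ by the Kolmogorov distance over the window $[-\tau_n,\tau_n]$ inevitably costs a factor $\tau_n\thicksim\sqrt{\log n}$, giving only $n^{-1/2}\sqrt{\log n}$; the logarithm is not ``absorbed'' by anything, since the available Kolmogorov rate is exactly $n^{-1/2}$ (via $\Cd_{T_n}\le T_n^{-1}$) and no better in general. Citing external Wasserstein CLTs is also not available: the results of Dedecker--Rio and P\`ene require different dependence measures and higher moments (the paper says so explicitly), and the paper's own Corollary \ref{cor_clt_wasser} is a consequence of this very theorem, so invoking it would be circular. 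The missing idea is the band-limited smoothing device: set $S_n^{\diamond}=S_n+H_n-H_0$ with $H_k\stackrel{d}{=}G_{a,b}$ whose Fourier transform vanishes beyond $ab<\co_T$, so that $\Id_{T_n}^{\diamond}=0$ identically and the main bound applied to $S_n^{\diamond}$ gives $W_1(\P_{S_n^{\diamond}/\sqrt n},\MM_n^{\diamond})\lesssim\oo(n^{-1/2})(\log n)^{1/2-\bd}$, while the Kantorovich--Rubinstein dual shows the perturbation costs only $n^{-1/2}\|H_n-H_0\|_1\lesssim n^{-1/2}$ for both $\P_{S_n/\sqrt n}$ and $\MM_n$; the triangle inequality then yields the clean $n^{-1/2}$. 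Relatedly, your claim that on $|\xi|\le T_n$ the characteristic-function difference is $O(|\xi|^3/\sqrt n)$ ``after subtracting matched cumulants'' is an i.i.d.-style shortcut: under weak dependence this estimate is precisely the content of Proposition \ref{prop_m_dependence} (the bound $\Ad_{T_n}=\oo(n^{-1/2}(\log n)^{-\bd})$ for the $m$-dependent approximation, plus the $m$-approximation error $T_n m^{-2}$), and the proof must route through that machinery rather than a direct cumulant-matching Taylor bound.
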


\begin{rem}
The proof of Theorem \ref{thm_wasserstein} reveals that $\tau_n$ can be replaced by any larger value in the definition of $\Id_{a}$ in \eqref{wasserstein_char}. Whether $\tau_n$ is of the minimal order remains an open question.
\end{rem}

We are not aware of an analogous result for i.i.d. sequences in the literature. However, we conjecture that Theorem \ref{thm_wasserstein} can be extended to $W_p$, $p > 1$, at least for i.i.d. sequences.

\begin{cor}\label{cor_clt_wasser}
Grant Assumption \ref{ass_dependence_main} with $\ad > 3$. \hypertarget{GGn:eq11}{Then}
\begin{align*}
W_1\big(\P_{S_n/\sqrt{n}}, \mathds{G}_n\big) \lesssim n^{-\frac{1}{2}},
\end{align*}
where $\mathds{G}_n$ denotes the Gaussian measure induced by $\Phi(\cdot/s_n)$.
\end{cor}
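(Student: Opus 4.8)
The plan is to interpose the reference measure $\MM_n$ and split, via the triangle inequality for $W_1$,
\[
W_1\big(\P_{S_n/\sqrt{n}},\mathds{G}_n\big)\;\le\;W_1\big(\P_{S_n/\sqrt{n}},\MM_n\big)+W_1\big(\MM_n,\mathds{G}_n\big),
\]
and to bound each of the two terms by a constant multiple of $n^{-1/2}$. For the first term I would simply invoke Theorem \ref{thm_wasserstein}: since $\ad>3$, one may fix $\bd$ with $1/2\le\bd$ and $2\bd+2<\ad$, and then the theorem gives
\[
W_1\big(\P_{S_n/\sqrt{n}},\MM_n\big)\;\lesssim\;\oo\big(n^{-1/2}\big)(\log n)^{\frac12-\bd}+n^{-1/2}\wedge\Id_{T_n}\;\lesssim\;n^{-1/2},
\]
because $(\log n)^{\frac12-\bd}\le1$ for $\bd\ge1/2$ and $n^{-1/2}\wedge\Id_{T_n}\le n^{-1/2}$. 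This is precisely the point at which the hypothesis $\ad>3$ (rather than merely $\ad>2$) is used: it is what lets the logarithmic factor be absorbed.

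For the second term I would use that $\MM_n$ is the law of $L_n\stackrel{d}{=}Z_0\pm(G_0-\E G_0)$, where $Z_0\sim\mathcal{N}(0,s_n^2)$ and $G_0$ is an independent Gamma variable, so that $\mathds{G}_n$ is the law of $Z_0$. Coupling $L_n$ to its own Gaussian component and using that every $1$-Lipschitz $f$ satisfies $|f(Z_0\pm(G_0-\E G_0))-f(Z_0)|\le|G_0-\E G_0|$, the Kantorovich--Rubinstein formula \eqref{wasserstein_2} yields
\[
W_1\big(\MM_n,\mathds{G}_n\big)\;\le\;\E\big|G_0-\E G_0\big|\;\le\;\big(\var G_0\big)^{1/2},
\]
so it remains to check $\var G_0\lesssim n^{-1}$. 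This is a routine computation from the construction \eqref{defn_gauss_plus_gamma}: the centred Gamma encodes exactly the third-cumulant correction, hence its third moment is $\pm\kapp_n^3$ with $|\kapp_n^3|\lesssim n^{-1/2}$ by Lemma \ref{lem_S_n_third_expectation}, and the Gamma parametrisation then forces its variance to be $\lesssim n^{-1}$ (alternatively one reads this off directly from $\alpha,\beta$). If one prefers to avoid the coupling, note that the summands in \eqref{defn_gauss_plus_gamma} have a bounded continuous density, so the classical i.i.d.\ second-order Edgeworth expansion applies to $\MM_n$ with error $\OO(n^{-1})$, and then $W_1(\MM_n,\mathds{G}_n)=\int_{\R}|F_{\MM_n}(t)-\Phi(t/s_n)|\,dt$ is controlled by $\int_{\R}|\Psi_n(t)-\Phi(t/s_n)|\,dt\lesssim|\kapp_n^3|\lesssim n^{-1/2}$ plus an integrable remainder of order $n^{-1/2}$. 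Adding the two bounds gives the claim.

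This being a corollary, I do not expect a serious obstacle: the only things that genuinely need care are the elementary estimate $\var G_0\lesssim n^{-1}$ (equivalently, in the Edgeworth route, making the $L^1(dt)$-tail control yield a clean $n^{-1/2}$ with no spurious $\sqrt{\log n}$), and the bookkeeping observation that $\ad>3$ is exactly the threshold rendering the logarithmic factor in Theorem \ref{thm_wasserstein} negligible.
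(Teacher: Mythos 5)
Your decomposition and the treatment of the first term are exactly the paper's: triangle inequality through $\MM_n$, Theorem \ref{thm_wasserstein}, and the observation that $\ad>3$ lets one pick $\bd$ with $2\bd+2<\ad$ so the logarithmic factor is harmless. The problem is your preferred argument for $W_1(\MM_n,\mathds{G}_n)$. The claim $\var G_0\lesssim n^{-1}$ is not a routine consequence of the construction, and it is in fact false for any reading of \eqref{defn_gauss_plus_gamma} that is consistent with the rest of Section \ref{sec_wasserstein}. For a Gamma variable with shape $A$ and rate $B$ one has $\var = A/B^2$ and third central moment $2A/B^3$, so $\var = (B/2)\times(\text{third central moment})$: a third central moment of size $|\kapp_n^3|\lesssim n^{-1/2}$ does not force a small variance unless $B$ is small, and here $B$ is large. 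Structurally, $\E L_n^2=s_n^2$ forces $\var Z_0+\var G_0=s_n^2$, and since each summand $M_k$ must carry the third cumulant $\sqrt{n}\,\kapp_n^3$ (of constant order when $\kapp_n^3\thicksim n^{-1/2}$) while keeping bounded fourth (indeed eighth, as used in Lemma \ref{lem_edge_for_FF}) moments, the Gamma component necessarily has variance of constant order, not $O(n^{-1})$. Consequently $\E|G_0-\E G_0|\thicksim 1$, and the coupling that simply deletes the Gamma part only yields $W_1(\MM_n,\mathds{G}_n)=\OO(1)$. The reason $\MM_n$ is close to $\mathds{G}_n$ is not that the Gamma part is small in $L^1$, but that it is itself close to a Gaussian of the same variance; capturing this requires a CLT/Edgeworth-type comparison, not the naive coupling.

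Your fallback route is fine and is precisely the paper's proof: $|\kapp_n^3|\lesssim n^{-1/2}$ from Lemma \ref{lem_S_n_third_expectation}, the (nonuniform, $(|x|^3+1)$-weighted) Edgeworth expansion \eqref{eq_thm_wasserstein_edge_L} for $L_n$ to make the remainder integrable in $x$, the $L^1$ representation \eqref{wasserstein_rep_3} to convert this into $W_1(\MM_n,\mathds{G}_n)\lesssim n^{-1/2}$, and then the triangle inequality with Theorem \ref{thm_wasserstein}. So the proposal is acceptable only if you discard route (a), or replace it by a genuine comparison of the centred Gamma component with a matching-variance Gaussian; as written, the primary argument would fail.
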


\begin{rem}
The argument, used in the proof of Theorem \ref{thm_wasserstein}, yields $\Id_{T_n} \lesssim n^{-\frac{1}{2}}$ in conjunction with Corollary \ref{cor_clt_wasser}.
\end{rem}

Corresponding results to Corollary \ref{cor_clt_wasser} have been derived in ~\cite{dedecker_rio_mean_2008}, ~\cite{pene_2005} using different dependence measures and higher moment conditions.

\begin{cor}\label{cor_clt_Lp}
Grant Assumption \ref{ass_dependence_main} with $\ad > 3$. Then for any $q \geq 1$
\begin{align*}
\int_{\R}\big|\P(S_n \leq x \sqrt{ n}) - \Phi(x/s_n)\big|^q d x \lesssim n^{-\frac{q}{2}}.
\end{align*}
\end{cor}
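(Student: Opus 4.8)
The plan is to derive Corollary \ref{cor_clt_Lp} directly from Corollary \ref{cor_clt_wasser} together with the uniform Berry-Esseen bound implicit in Theorem \ref{thm_edge}. The key observation is that the $\mathds{L}^q$ norm of the deviation $F_n(x) := \P(S_n \leq x\sqrt{n}) - \Phi(x/s_n)$ can be interpolated between its $\mathds{L}^\infty$ norm (controlled by the Kolmogorov bound) and its $\mathds{L}^1$ norm (controlled by the $W_1$ bound, since by \eqref{wasserstein_2} and the layer-cake / integration-by-parts identity $W_1(\P_{S_n/\sqrt n},\mathds{G}_n) = \int_\R |F_n(x)|\,dx$ whenever the two measures have equal mean, which here holds because $\E S_n = 0 = \int x\,d\Phi(x/s_n)$). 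Concretely, for $q \geq 1$,
\begin{align*}
\int_\R |F_n(x)|^q\,dx \leq \Big(\sup_{x\in\R}|F_n(x)|\Big)^{q-1}\int_\R |F_n(x)|\,dx = \|F_n\|_\infty^{\,q-1}\, W_1\big(\P_{S_n/\sqrt n},\mathds{G}_n\big).
\end{align*}

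First I would record the identity $\int_\R |F_n(x)|\,dx = W_1(\P_{S_n/\sqrt n},\mathds{G}_n)$. This is standard: for real random variables with distribution functions $F,G$, one has $W_1 = \int_\R |F(x)-G(x)|\,dx$, and here $\mathds{G}_n$ is exactly the law with distribution function $\Phi(\cdot/s_n)$. Then Corollary \ref{cor_clt_wasser} gives $\int_\R |F_n(x)|\,dx \lesssim n^{-1/2}$. Next I would invoke the uniform bound: from Theorem \ref{thm_edge} we have $\sup_x|\Delta_n(x)| \lesssim n^{-1/2} + \Cd_{T_n} \lesssim n^{-1/2}$ using the trivial bound $\Cd_{T_n} \lesssim n^{-1/2}$; since $\Psi_n(x) - \Phi(x/s_n) = \tfrac16\kapp_n^3(1 - x^2/s_n^2)\phi(x/s_n)$ and $|\kapp_n^3| \lesssim n^{-1/2}$ by Lemma \ref{lem_S_n_third_expectation} (Remark \ref{rem_replace_s_sqrt_n}), the triangle inequality yields $\|F_n\|_\infty \lesssim n^{-1/2}$ as well. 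Plugging these two bounds into the interpolation inequality above gives $\int_\R|F_n(x)|^q\,dx \lesssim (n^{-1/2})^{q-1}\cdot n^{-1/2} = n^{-q/2}$, which is the claim.

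There is no serious obstacle here; the only point requiring a little care is the identity linking the $W_1$ distance to the $\mathds{L}^1$ norm of the difference of distribution functions, which relies on both measures having finite first moment and, for the clean form used, equal means — all of which hold under Assumption \ref{ass_dependence_main} (the mean of $\P_{S_n/\sqrt n}$ is $0$ since $\E X_k = 0$, and the mean of $\mathds{G}_n$ is $0$). One could alternatively avoid the equal-means subtlety entirely by noting that $\int_\R |F(x)-G(x)|\,dx$ bounds $W_1$ from above always, and that is the direction we need. Thus the argument is essentially a two-line interpolation once Corollaries \ref{cor_clt_wasser} and the Kolmogorov estimate of Theorem \ref{thm_edge} are in hand.
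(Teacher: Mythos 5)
Your argument is correct and is essentially the paper's own proof: the same interpolation $\int_{\R}|F_n|^q\,dx \leq \|F_n\|_\infty^{q-1}\int_{\R}|F_n|\,dx$, with the uniform bound from Theorem \ref{thm_edge} (via $|\kapp_n^3|\lesssim n^{-1/2}$ and the trivial bound on $\Cd_{T_n}$) and the $L^1$ term identified with $W_1$ through \eqref{wasserstein_rep_3} and controlled by Corollary \ref{cor_clt_wasser}. (Minor aside: the identity \eqref{wasserstein_rep_3} needs no equal-means hypothesis, and the direction you actually use is $\int|F-G|\,dx \leq W_1$, which the identity supplies, so your cautionary remark is harmless.)
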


A related result was established in ~\cite{jirak_be_aop_2016} under the additional assumption that $(X_k)_{k \in \Z}$ are martingale differences.

Finally, we have the analogous result if $3 < p \leq 4$ moments exist.

\begin{thm}\label{thm_wasserstein_frac}
Grant Assumption \ref{ass_dependence_main} with $3 < p < 4$. Then for any $\delta > 0$
\begin{align*}
W_1\big(\P_{S_n/\sqrt{n}}, \MM_n\big) \lesssim n^{1-\frac{p}{2}+\delta} + n^{-\frac{1}{2}} \wedge \Id_{T_n},
\end{align*}
where $T_n$ is as in \eqref{defn_Tn}. In addition, the following statements are equivalent:
\begin{description}
\item[(i)] $W_1\big(\P_{S_n/\sqrt{ n}}, \MM_n\big) \lesssim n^{1-\frac{p}{2}+\delta}$.
\item[(ii)] $\Id_{T_n} \lesssim n^{1-\frac{p}{2}+\delta}$.
\end{description}
All involved constants only depend on $\delta > 0$, $p$, $\ad$, $\ss$ and $\Lambda_{2,p}$. If $p = 4$, then we may replace $n^{\delta}$ with $(\log n)^5$ in all statements.
\end{thm}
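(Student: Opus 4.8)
# Proof Proposal for Theorem 2.13 (Wasserstein metric, fractional case $3 < p < 4$)

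The plan is to follow the same recursive strategy used for the Kolmogorov-metric results, but now working in $W_1$ rather than the uniform metric, and to leverage the already-established fractional Kolmogorov expansion (Theorem \ref{thm_edge_frac}) together with the $W_1$-specific machinery developed for Theorem \ref{thm_wasserstein}. First I would reduce the $W_1$-distance $W_1(\P_{S_n/\sqrt n}, \MM_n)$ to an integral over test functions via the Kantorovich--Rubinstein duality \eqref{wasserstein_2}, and then split the supremum over $\mathcal{H}_1^1$ into a low-frequency part and a high-frequency tail. For the low-frequency part, a smoothing argument (convolving with a kernel of bandwidth $\sim 1/\tau_n$, where $\tau_n = \co_\tau\sqrt{\log n}$) lets me replace the Lipschitz test function by a smooth one, introducing an error controlled by the smoothed Edgeworth bound from Theorem \ref{thm_edge_frac}; this is where the $n^{1-p/2+\delta}$ term enters. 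For the high-frequency tail, Berry's smoothing inequality expresses the contribution in terms of the Berry-Esseen tail $\Td_{T_n}^b(x)$ integrated over $x \in [-\tau_n,\tau_n]$, which is exactly $\Id_{T_n}$ after optimizing over $b \geq T_n$; together with the trivial bound $\Id_{T_n} \leq 2\tau_n \Cd_{T_n}$ and $\Cd_{T_n}\lesssim n^{-1/2}$ this yields the claimed upper bound $n^{1-p/2+\delta} + n^{-1/2}\wedge \Id_{T_n}$.

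For the equivalence of (i) and (ii), the implication (ii) $\Rightarrow$ (i) is the upper bound just sketched. The reverse implication (i) $\Rightarrow$ (ii) requires showing that the integrated characteristic is genuinely necessary: if $W_1(\P_{S_n/\sqrt n},\MM_n) \lesssim n^{1-p/2+\delta}$, then $\Id_{T_n}$ must be of this magnitude too. Here I would argue by a lower-bound/inversion argument analogous to the one used in Theorem \ref{thm_wasserstein}: one constructs, for each large $n$, an explicit $1$-Lipschitz test function (roughly, a primitive of $\sgn$ composed with an appropriately modulated oscillatory factor tuned to the frequency band $[T_n, b]$) that extracts a contribution comparable to $\int_{-\tau_n}^{\tau_n}|\Td_{T_n}^b(x)|\,dx$ from the difference of the two distributions, while the corresponding contribution of the Edgeworth correction $\MM_n$ is negligible — this uses that $\MM_n$ is built from the Gamma-smoothed variable $L_n$ whose characteristic function decays, so its high-frequency content is controlled. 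Taking the infimum over $b \geq T_n$ then forces $\Id_{T_n} \lesssim W_1(\P_{S_n/\sqrt n},\MM_n) + (\text{negligible})$.

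The main obstacle I anticipate is the fractional-moment bookkeeping in the smoothing step — precisely the same difficulty flagged in the paper's discussion of Theorem \ref{thm_edge_frac}: when $p$ is not an integer, matching the third moment (which is what the construction of $M_k$ in \eqref{defn_gauss_plus_gamma} does, since $\E L_n^2 = s_n^2$ and $\E L_n^3 = \kapp_n^3$) does not by itself deliver the $n^{1-p/2}$ rate, and one must control truncated partial sums very carefully to avoid spurious logarithmic (or worse) losses, which is what produces the unavoidable $n^\delta$ factor. A secondary technical point is verifying that the Gamma-smoothing of $L_n$ does not itself contribute a term larger than $n^{1-p/2+\delta}$ in $W_1$; since $\Gamma(n\alpha, \beta/\sqrt n)$ concentrates at rate $n^{-1/2}$ around its mean and its own Edgeworth-type correction is $O(n^{-1})$, this should be harmless, but it must be checked against the $\tau_n$-window in the definition of $\Id_{T_n}$. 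Finally, for the case $p = 4$ one replaces all appeals to the fractional expansion by Theorem \ref{thm_edge_four}, trading $n^\delta$ for $(\log n)^5$; the structure of the argument is otherwise identical, so I would phrase the proof to treat $3 < p \leq 4$ uniformly and only at the end substitute the sharper $p=4$ input.
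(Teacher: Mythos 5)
Your outline for the upper bound is close in spirit to what the paper does (which is simply to repeat the proof of Theorem \ref{thm_wasserstein} with Proposition \ref{prop_m_dep_it}, resp.\ Proposition \ref{prop_m_dep_it_quad} for $p=4$, in place of Proposition \ref{prop_m_dependence}), but two steps as you state them do not close. First, the term $n^{-\frac{1}{2}}\wedge \Id_{T_n}$ does \emph{not} follow from $\Id_{T_n}\leq 2\tau_n\Cd_{T_n}$ and $\Cd_{T_n}\lesssim n^{-1/2}$: that chain only yields $\sqrt{\log n}\,n^{-1/2}$. The cap $n^{-1/2}$ requires the separate argument of \eqref{eq_thm_wasserstein_1}: replace $S_n$ by $S_n^{\diamond}=S_n+H_n-H_0$ with $H_k\stackrel{d}{=}G_{a,b}$ band-limited so that $\Id_{T_n}^{\diamond}=0$, bound $W_1(\P_{S_n^{\diamond}/\sqrt n},\P_{S_n/\sqrt n})$ and $W_1(\MM_n,\MM_n^{\diamond})$ by $n^{-1/2}$ via Kantorovich--Rubinstein, and use the triangle inequality; your proposal omits this entirely. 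Second, feeding the Kolmogorov bound of Theorem \ref{thm_edge_frac} into the central region $|x|\leq\tau_n$ gives $\tau_n\big(n^{1-\frac p2+\delta}+\Cd_{T_n}\big)$, and $\tau_n\Cd_{T_n}$ is \emph{not} dominated by $\Id_{T_n}$ (the inequality runs the other way), so you would prove a weaker statement that cannot support the equivalence with $\Id_{T_n}$. The correct route is to integrate Berry's smoothing inequality over $x\in[-\tau_n,\tau_n]$ as in \eqref{thm_wasser_smoothing}, producing $\tau_n\Ad_{T_n}+\tau_n T_n m^{-2}+\Id_{T_n}$, and then apply Proposition \ref{prop_m_dep_it} to $\Ad_{T_n}$; in addition the region $|x|>\tau_n$ (equivalently, the linear growth of the Lipschitz test functions in your duality formulation) needs the Nagaev-type tail bound of Lemma \ref{th:Moritz} for both $S_n$ and $L_n$, which you never supply.

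The more serious gap is the necessity direction (i)$\Rightarrow$(ii). Your plan to build, for each $n$, a $1$-Lipschitz oscillatory test function tuned to the frequency band $[T_n,b]$ whose pairing with the difference measure recovers $\int_{-\tau_n}^{\tau_n}|\Td_{T_n}^{b}(x)|\,dx$ is not developed and is unlikely to work as stated: the sign of $\Td_{T_n}^{b}(x)$ oscillates in $x$, and a single Lipschitz function would have to track this sign pattern while simultaneously annihilating the low-frequency content of $\P_{S_n/\sqrt n}-\MM_n$; nothing in the sketch explains how to do both. The paper sidesteps this completely by working with the $L^1$ representation \eqref{wasserstein_rep_3}: convolving both laws with the smoothing density $v_T$ can only decrease $W_1$ (ideal-metric property, or Tonelli), the convolved distance dominates $\int|\Delta_n^{T}(x)|\,dx$ up to $\OO(n^{-1})$ by the Edgeworth expansion \eqref{eq_thm_wasserstein_edge_L} for $L_n$, and Fourier inversion \eqref{eq_thm_optimal_1.5} together with the bound on $\Ad_{T_n}$ from Proposition \ref{prop_m_dep_it} isolates $\int_{-\tau_n}^{\tau_n}|\Td_{T_n}^{n}(x)|\,dx$, hence $\Id_{T_n}$. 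Replacing your construction by this argument (identical to the one already used in Theorem \ref{thm_wasserstein}) repairs the proof; your concluding remark on the $p=4$ case, substituting the $(\log n)^5$ input, is then correct.
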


\section{More general K-automorphisms}\label{sec_K_aut}

We demanded the Bernoulli-shift representation \eqref{defn_bernoulli} and stationarity largely for notational reasons. In fact, what we really need is that
\begin{align}\label{defn_K_aut}
X_k = f_k(\varepsilon_k,\varepsilon_{k-1},\ldots), \quad k \in \Z, \quad f_k:\mathbb{S}^{\N} \to \R,
\end{align}
where $\epsilon_k \in \mathbb{S}$ are i.i.d. random variables in some measurable space $\mathbb{S}$, $f_k$ is measurable for all $k \in \Z$, and a non-degeneracy condition (essentially) ensuring $\liminf_{n\to \infty} s_n^2 >0$. The crucial difference here is the dependence of the function $f_k$ on $k$. While it is well-known in the literature that representation \eqref{defn_bernoulli} does not hold in general for any K-automorphism (e.g. \cite{ornstein_1973_example}), representation \eqref{defn_K_aut} is always valid, a consequence of Vershik's famous \textit{theorem on lacunary isomorphism}, see for instance ~\cite{emery_schachermayer_2001},~\cite{vershik_doc_transl}. A concrete example marking the difference is given in ~\cite{feldman_rudolph_1998}. While \eqref{defn_bernoulli} and \eqref{defn_K_aut} are very different from a dynamical system point of view, \hypertarget{xiklstar:eq13}{it} makes little difference for our approach. In fact, defining $\xi_{k}^{(l,*)}$ as
\begin{align}\label{defn_strich_depe_0}
\xi_{k}^{(l,*)} = \bigl(\varepsilon_{k}, \varepsilon_{k - 1},\ldots,\varepsilon_{k - l}',\varepsilon_{k - l - 1}',\varepsilon_{k-l-2}',\ldots\bigr)
\end{align}
for an independent copy $(\varepsilon_k')_{k \in \Z}$ of $(\varepsilon_k)_{k \in \Z}$ and putting $X_k^{(l,\ast)} = f_k(\xi_{k}^{(l,*)})$, we can modify \hypertarget{lambdaLPP:eq13}{the} weak dependence measure by setting
\begin{align*}
\lambda_{l,p}= \sup_{k \in \Z}\big\|X_k-X_k^{(l,\ast)}\big\|_p.
\end{align*}
It is then an easy (but tedious) task to see that all of our results are equally valid. Since the proofs already require a notation based on multiple indices, we want to spare the reader any further complication. Similarly, a \textit{quenched} version is possible. That is, instead of $X_k$ we have
\begin{align*}
X_{kx}=f_k\big(\epsilon_k, \epsilon_{k-1},\ldots, \epsilon_0,x\big),
\end{align*}
where $x$ is some initial value. Again, the modifications are straightforward. Finally, let us mention that in the case of more concrete models, it appears that the conditions may be further relaxed, see for instance ~\cite{CUNY20181347spa} for special Markov processes. 


\section{Application I: High dimensional linear statistics}\label{sec:applicationII}

Throughout this section, we are given a separable Hilbert space $\mathds{H}$ with scalar product $\langle \cdot, \cdot \rangle$ and induced norm $\| \cdot \|_{\mathds{H}}$. For an orthonormal basis $u_1, u_2, \ldots$ and $x \in \mathds{H}$, we write $x_i$ for the coordinates $x_i = \langle x, u_i \rangle u_i$. For example, $X_{ki}$ for $X_k \in \mathds{H}$ and likewise $S_{ni}$ for $S_n \in \mathds{H}$. Except for constants, all random variables, sets and so on may depend on $n$, which is particularly important in a high dimensional context, where the dimension (if not already infinite) is typically allowed to grow in $n$. To keep the notation simple, we do not express this in any particular way.

Many important statistics and tests appearing in a high dimensional context\footnote{More precisely, this refers to the information theoretic complexity. Random variables may take values in a finite dimensional space, but the statistical complexity can nonetheless be high dimensional.} are based on the linear map
\begin{align}
\theta \mapsto \langle \theta, S_n \rangle, \quad \theta, S_n \in \mathds{H}.
\end{align}

Let us list a few prominent examples:

\begin{itemize}
\item[{\bf (i)}] Aggregation of estimators, weighted estimators, distributed estimation and parallel computation,  e.g. ~\cite{bunea2007}, ~\cite{ingster_book_2003}.
\item[{\bf (ii)}] Aggregation of processes, super linear processes, e.g. ~\cite{granger1980}, ~\cite{jirak_limit_aggregation}, ~\cite{volny_2010_superlinear_clt}.
\item[{\bf (iii)}] Chi-square statistics and tests in high dimension, e.g. ~\cite{ingster_book_2003}.
\item[{\bf (iv)}] (Random) Projections in high dimension, e.g. ~\cite{Li:2006:VSR:1150402.1150436}.
\end{itemize}

The above examples typically have the following features: In {\bf (i)}, {\bf (ii)}, $(X_{ki})_{k \in \Z}$, $(X_{kj})_{k \in \Z}$ are assumed to be independent for $i \neq j$ (subject to a specific basis). If $\theta$ is random in {\bf (i)}, {\bf (ii)}, {\bf (iv)}, then it is assumed to be independent of $(X_k)_{k \in \Z}$. Moreover, $\theta$ satisfies a tail condition of the type (possibly after reordering $\theta_i$)
\begin{align}\label{feature:decay}
\sum_{i \geq m} \theta_i^2 \to 0, \quad  \text{as $m \to \infty$}.
\end{align}
Below, we discuss {\bf (iii)} a little more detailed as it is not immediately obvious how this fits into the linear framework.

In light of these features, we make the following assumptions.

\begin{ass}[Global Assumptions]\label{ass:global}
Given $\theta \in \mathds{H}$, the process $(\langle \theta, X_k \rangle)_{k \in \Z}$ satisfies Assumption \ref{ass_dependence_main} for $p \geq 4$.
\end{ass}

This assumption is quite general and can easily be verified in many cases.

\begin{ass}[Tail Assumptions]\label{ass:tail}
Subject to a given orthonormal basis $u_1, u_2, \ldots$, there exists an index set $\mathcal{I}\subseteq \N$ (with complement $\mathcal{I}^c$) such that
\begin{itemize}
   \item[\Tone]\label{T1}$(X_{ki})_{k \in \Z, i \in \mathcal{I}}$ and  $(X_{ki})_{k \in \Z, i \in \mathcal{I}^c}$ are independent, and $(X_{ki})_{i \in \mathcal{I}}$ form a martingale difference sequence (with respect to some filtration) for any $k \in \Z$. 
  \item[\Ttwo]\label{T2} $(X_{ki})_{k \in \Z, i \in \mathcal{I}}$ uniformly satisfies Assumption \ref{ass_dependence_main}.
  \item[\Tthree]\label{T3} Given $\theta \in \mathds{H}$, there exist $\alpha > 0, \beta > 1$ and constants $c,C > 0$ such that
  \begin{align*}
  c n^{-1} \log^{\beta} n \leq   \sum_{i \in \mathcal{I}} \theta_i^2 \leq C n^{-\alpha}.
  \end{align*}
\end{itemize}
\end{ass}

\begin{rem}
In \hyperref[T2]{\Ttwo}, uniformly means that there exist absolute constants $c,C > 0$ such that
\begin{align*}
&\sup_{i \in \mathcal{I}}\big\{\sum_{k \in \Z} \E |X_{ki}|^p\log(1 + |X_{ki}|)^{\ad}, \, \|X_{0i}\|_p + \sum_{k = 0}^{\infty}k^{2}\|X_{ki}-X_{ki}^*\|_p\big\} \leq C, \\
&\inf_{i \in \mathcal{I}} \sum_{k \in \Z}\E X_{0i}X_{ki} \geq c > 0.
\end{align*}
\end{rem}

In view of the above examples and their features, the Tail Assumptions \ref{ass:tail} are quite natural and fairly general. Let us specifically mention here that the set $\mathcal{I}$ in \hyperref[T3]{\Tthree} may depend on $n$. Subject to these assumptions, the following result shows that high dimensional linear statistics always admit Edgeworth expansions.

\begin{thm}\label{thm:high:linear:stat}
Given $\theta \in \mathds{H}$, assume that Assumptions \ref{ass:global} and \ref{ass:tail} hold. Then
\begin{align*}
\sup_{x \in \R}\big|\P\big( \langle S_n, \theta \rangle \leq x \sqrt{n}\big) - \Psi_n(x)\big| \lesssim n^{-1}\log^5 n + n^{-(1 + \alpha)/2}.
\end{align*}
An analogous result holds for the metric $W_1$.
\end{thm}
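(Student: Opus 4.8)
The plan is to transfer the statement to the scalar theory of Section \ref{sec_main}. By Assumption \ref{ass:global} the process $(\langle\theta,X_k\rangle)_{k\in\Z}$ satisfies Assumption \ref{ass_dependence_main} with $p\ge 4$, so Theorem \ref{thm_edge_four} applied to $\langle S_n,\theta\rangle=\sum_{k=1}^n\langle\theta,X_k\rangle$ yields
\[
\sup_{x\in\R}\bigl|\P\bigl(\langle S_n,\theta\rangle\le x\sqrt{n}\bigr)-\Psi_n(x)\bigr|\;\lesssim\;n^{-1}(\log n)^5+\Cd_{T_n},
\]
with $\Cd_{T_n}$ the Berry--Esseen characteristic \eqref{berry_esseen_char} formed from $\xi\mapsto\E e^{\ic\xi\langle S_n,\theta\rangle/\sqrt{n}}$. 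Hence it suffices to prove $\Cd_{T_n}\lesssim n^{-(1+\alpha)/2}$. The Wasserstein part will follow in exactly the same way from the $p\ge 4$ results of Section \ref{sec_wasserstein} and the trivial bound $\Id_{T_n}\le 2\tau_n\Cd_{T_n}$, the factor $\tau_n=\co_\tau\sqrt{\log n}$ being absorbed into the admissible polylogarithmic loss.

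To bound $\Cd_{T_n}$ I first exploit the coordinate split: put $V_n=\sum_{i\in\mathcal I}\theta_iS_{ni}$ and $V_n'=\sum_{i\in\mathcal I^c}\theta_iS_{ni}$, so that $\langle S_n,\theta\rangle=V_n+V_n'$. By \hyperref[T1]{\Tone} the variables $V_n$ and $V_n'$ are independent, whence $\bigl|\E e^{\ic\xi\langle S_n,\theta\rangle/\sqrt{n}}\bigr|\le\bigl|\E e^{\ic\xi V_n/\sqrt{n}}\bigr|$ for all $\xi$, and therefore
\[
\Cd_{T_n}\;\le\;\inf_{b\ge T_n}\Bigl(\int_{T_n\le|\xi|\le b}\bigl|\E e^{\ic\xi V_n/\sqrt{n}}\bigr|\,\frac{d\xi}{|\xi|}+\frac1b\Bigr).
\]
Write $W_k=\sum_{i\in\mathcal I}\theta_iX_{ki}$, so $V_n=\sum_{k=1}^nW_k$, and set $v_n=\sum_{i\in\mathcal I}\theta_i^2$. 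Using the martingale-difference structure of $(X_{ki})_{i\in\mathcal I}$ in \hyperref[T1]{\Tone} --- which in particular renders the coordinate partial sums $S_{ni}$, $i\in\mathcal I$, orthogonal in $L^2$ --- together with the uniform bounds of \hyperref[T2]{\Ttwo}, a computation of the type of Lemma \ref{lem_sig_expressions_relations} shows that $(W_k)_k$ satisfies Assumption \ref{ass_dependence_main} with long-run variance $\ss_\theta^2\thicksim v_n$, that $\|W_0\|_4\lesssim v_n^{1/2}$, and that the weak-dependence characteristics of $(W_k)_k$ are, after division by $v_n^{1/2}$, bounded uniformly in $n$. By \hyperref[T3]{\Tthree}, $c\,n^{-1}(\log n)^\beta\le v_n\le C\,n^{-\alpha}$.

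The heart of the matter is the characteristic-function estimate
\[
\bigl|\E e^{\ic\xi V_n/\sqrt{n}}\bigr|\;\lesssim\;e^{-\co_1\,\xi^2 v_n}+\varrho_n,\qquad T_n\le|\xi|\le b_n:=\co_2\,n^{1/2}v_n^{-1/2},
\]
for suitable $\co_1,\co_2>0$ and a remainder $\varrho_n$ controlled by the weak-dependence assumption \hyperref[T2]{\Ttwo}; this is precisely the type of bound established by the characteristic-function estimates for weakly dependent sums developed in Section \ref{sec_main_tech}, applied to $(W_k)_k$. The mechanism is the usual blocking device: one partitions $\{1,\dots,n\}$ into blocks of appropriate length, factorises $\E e^{\ic\xi V_n/\sqrt{n}}$ into a product of block characteristic functions up to the error $\varrho_n$, and bounds each factor via the anti-concentration inequality $1-\E\cos\bigl((\xi/\sqrt{n})(\Sigma_B-\Sigma_B')\bigr)\gtrsim(\xi^2/n)\Var(\Sigma_B)$, which is legitimate on the stated $\xi$-range precisely because there $(\xi/\sqrt{n})^2\,\E(\Sigma_B-\Sigma_B')^4\big/\E(\Sigma_B-\Sigma_B')^2\lesssim\xi^2v_n/n\lesssim1$ --- this is where the fourth moment from $p\ge4$, through $\|W_0\|_4\lesssim v_n^{1/2}$, is essential. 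Now take $b=b_n$. On $T_n=\co_T\sqrt{n}\le|\xi|\le b_n$ one has $\xi^2v_n\ge\co_T^2\,n\,v_n\ge c'(\log n)^\beta$, so, since $\beta>1$, the integrand is $\lesssim e^{-c''(\log n)^\beta}=\oo(n^{-q})$ for every $q>0$; integrating $|\xi|^{-1}d\xi$ over a range of merely logarithmic width costs at most a factor $\log n$, and $\varrho_n$ is of the same harmless order. Finally $1/b_n=\co_2^{-1}v_n^{1/2}n^{-1/2}\le\co_2^{-1}\sqrt{C}\;n^{-(1+\alpha)/2}$ by the upper bound in \hyperref[T3]{\Tthree}. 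Hence $\Cd_{T_n}\lesssim\oo(n^{-q})+n^{-(1+\alpha)/2}\lesssim n^{-(1+\alpha)/2}$, and plugging into Theorem \ref{thm_edge_four} gives the asserted bound; the Wasserstein statement follows verbatim from $\Id_{T_n}\le 2\tau_n\Cd_{T_n}$ and the corresponding result of Section \ref{sec_wasserstein}.

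The step I expect to be the genuine obstacle is the characteristic-function estimate on the full range $|\xi|\le b_n$: when $\alpha$ is close to $1$ the endpoint $b_n$ is almost of order $n$, far outside the regime where $\E e^{\ic\xi V_n/\sqrt{n}}$ can be approximated by a Gaussian or an Edgeworth expression, so one must work with the crude exponential upper bound and make sure that the blocking error $\varrho_n$ and the anti-concentration step survive uniformly up to $b_n$. This is exactly where the assumption $p\ge4$ is indispensable and where the martingale-difference structure of \hyperref[T1]{\Tone} enters: it is what forces $\|W_0\|_4\lesssim v_n^{1/2}$ (a crude triangle-inequality bound would be worse by a factor growing with $|\mathcal I|$), so that the block sums $\Sigma_B$ behave, fourth-moment-wise, like genuine variance-$v_n$ objects, rather than relying on a bare scalar weak-dependence hypothesis on $(W_k)_k$. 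The two bounds in \hyperref[T3]{\Tthree} are used in tandem: the lower bound $v_n\gtrsim n^{-1}(\log n)^\beta$ with $\beta>1$ makes the characteristic function super-polynomially small on $[T_n,b_n]$, while the upper bound $v_n\le C\,n^{-\alpha}$ turns $1/b_n$ into the target rate $n^{-(1+\alpha)/2}$.
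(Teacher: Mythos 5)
Your argument is correct and follows essentially the same route as the paper: reduce to bounding $\Cd_{T_n}$ via Theorem \ref{thm_edge_four}, discard the $\mathcal I^c$ coordinates by the independence in \hyperref[T1]{\Tone}, use the martingale-difference structure (Burkholder) to show the normalized sum $\sum_k\sum_{i\in\mathcal I}\theta_iX_{ki}/\Theta_{\mathcal I}$ satisfies the scalar assumptions uniformly, and then apply the paper's characteristic-function decay (Lemma \ref{lem_bound_prod_cond_char}, after the $m$-dependent approximation) on the enlarged range $\xi^2\Theta_{\mathcal I}^2\lesssim n$, with the lower bound in \hyperref[T3]{\Tthree} giving superpolynomial smallness past $T_n$ and the upper bound giving $1/b_n\lesssim n^{-(1+\alpha)/2}$. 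The blocking/anti-concentration sketch you give is just an informal re-derivation of that lemma, so no new ingredient is needed; the conclusion and the treatment of the $W_1$ case match the paper.
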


\begin{rem}
If $\theta$ is random and independent of $(X_k)_{k \in \Z}$, observe that Theorem \ref{thm:high:linear:stat} also applies by conditioning with respect to $\theta$. In particular, if $\theta \in \Theta$ and the constants in Assumptions \ref{ass:global} and \ref{ass:tail} remain bounded from above (and below) uniformly for $\theta \in \Theta$, the bound in Theorem \ref{thm:high:linear:stat} holds uniformly over $\Theta$.
\end{rem}

Note that \textit{no additional smoothness} like a non-lattice or a (conditional) Cram\'{e}r condition or even a density is necessary for this result. As the proof shows, the Tail Assumptions \ref{ass:tail} are enough to control the characteristic $\Cd_{T_n}$.

Let us now briefly discuss {\bf (iii)} (Chi Square statistics and tests), where we recall that we assume $\mathds{H}$ to be separable. For $X \in \mathds{H}$ with $\E X_k = 0$ and $\E \|X\|_{\mathds{H}}^2 < \infty$, we have the Karhunen-Lo\`{e}ve expansion
\begin{align*}
X_k = \sum_{i \geq 1} \sqrt{\lambda_i} u_i \eta_{ki},
\end{align*}
where $\lambda_i$, $u_i$ denote the eigenvalues and eigenfunctions of the covariance operator $\Sigma = \E X \otimes X$, and $\eta_{ki}$ are the (normalised) scores. The scores are uncorrelated and have unit variance. In this context, it is often assumed in the literature that the sequences $(\eta_{ki})_{k \in \Z}$ are independent for different $i \in \N$, we refer to ~\cite{JW18} for a discussion. Since
\begin{align*}
\big\|X_k\big\|_{\mathds{H}}^2 - \E \big\|X_k\big\|_{\mathds{H}}^2 = \sum_{i \geq 1} \lambda_i \big(\eta_{ki}^2 - 1\big),
\end{align*}
where the latter can obviously be written as a scalar product with $\theta_i = \lambda_i$, we see how this fits into the above framework.

\section{Application II: Smooth functions and weak Edgeworth expansions}\label{sec:appl:I}

Consider a function $f: \R \to \R$, whose \hypertarget{bigdf:eq9}{first} derivative $f^{(1)}$ satisfies

\begin{align}\label{eq_derivative_condition_1}
\text{$f^{(1)}$ is continuous and $\Do_f = \sup_{x \in \R}\bigl|f^{(1)}(x)\big| < \infty$.}
\end{align}

\begin{thm}\label{thm_smooth}
Grant Assumption \ref{ass_dependence_main} for $\ad > 3$. Then for any function $f$ satisfying \eqref{eq_derivative_condition_1}, \hypertarget{psin:eq9}{we} have
\begin{align*}
\Big|\int_{\R}f(x)d\bigl(\P_{S_n/\sqrt{ n}}(x) - \Psi_n(x)\bigr)\Big| = \oo\big(n^{-\frac{1}{2}}\big).
\end{align*}
\end{thm}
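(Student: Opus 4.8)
The plan is to deduce Theorem \ref{thm_smooth} from Theorem \ref{thm_edge} by a smoothing/mollification argument that exploits the fact that $f$ is only tested against the \emph{difference} of two measures having matching first three moments (up to the relevant orders). First I would reduce to the case where $f$ is bounded: since $\Do_f = \sup_x |f^{(1)}(x)| < \infty$, write $f(x) = f(0) + \int_0^x f^{(1)}(t)\,dt$, and note that the constant $f(0)$ integrates to zero against $\P_{S_n/\sqrt n} - \Psi_n$ (both are probability-type objects of total mass one; more precisely $\int d(\P_{S_n/\sqrt n} - \Psi_n) = 0$ since $\Psi_n(\pm\infty) = \Phi(\pm\infty) = 0,1$). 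So it suffices to bound $\big|\int_\R (f(x) - f(0))\, d(\P_{S_n/\sqrt n}(x) - \Psi_n(x))\big|$, and by integration by parts this equals $\big|\int_\R f^{(1)}(x)\,\Delta_n(x)\,dx\big|$, where $\Delta_n(x) = \P(S_n \le x\sqrt n) - \Psi_n(x)$ is exactly the object controlled in Theorem \ref{thm_edge}. This is the key structural identity: the weak (test-function) error is an $f^{(1)}$-weighted integral of the Kolmogorov error function $\Delta_n$.

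Next I would split the integral $\int_\R f^{(1)}(x)\Delta_n(x)\,dx$ into a central part $|x| \le R_n$ and tails $|x| > R_n$ for a slowly growing cutoff $R_n$, say $R_n \thicksim \sqrt{\log n}$ or a small power of $\log n$. On the tails, $|\Delta_n(x)| \le \P(|S_n| > x\sqrt n) + |\Psi_n(x) - \mathbf{1}_{x \ge 0}|$; the Gaussian/Edgeworth part decays like $\phi(x/s_n)(1 + x^2)$, and the probability tail is controlled via a moment bound — Assumption \ref{ass_dependence_main} \Aone\ with $p \ge 3$ gives $\P(|S_n| > x\sqrt n) \lesssim x^{-p}$ (Markov on $\|S_n/\sqrt n\|_p \lesssim 1$, which follows from \Atwo; cf. Lemma \ref{lem_sig_expressions_relations} and the Rosenthal-type bounds implicit in the paper). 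Since $\Do_f < \infty$, the tail contribution is then $\lesssim \Do_f R_n^{-(p-1)} + \Do_f \int_{|x|>R_n}\phi(x/s_n)(1+x^2)\,dx = \oo(n^{-1/2})$ once $R_n$ grows; in fact with $R_n$ a fixed large multiple of $\sqrt{\log n}$ the Gaussian piece is superpolynomially small and the moment piece is $o(n^{-1/2})$ provided $p > 3$ — but here we want it for $p = 3$ exactly with the $\log$-moment \Aone, so one uses $\E|X_k|^3\log(1+|X_k|)^{\ad} < \infty$, which upgrades the crude $x^{-3}$ tail to $x^{-3}(\log x)^{-\ad}$, and with $\ad > 3$ the integral $\int_{R_n}^\infty x \cdot x^{-3}(\log x)^{-\ad}\,dx = \oo(n^{-1/2})$ for $R_n = c\sqrt{\log n}$. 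For the central part, $\big|\int_{|x|\le R_n} f^{(1)}(x)\Delta_n(x)\,dx\big| \le 2 R_n \Do_f \sup_x|\Delta_n(x)| \lesssim R_n \Do_f\big(\oo(n^{-1/2})(\log n)^{-\bd} + \Cd_{T_n}\big)$ by Theorem \ref{thm_edge}.

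The crucial point — and this is where I expect the main obstacle — is handling the term $R_n \Cd_{T_n}$, which by the trivial bound $\Cd_{T_n} \lesssim n^{-1/2}$ is only $\lesssim R_n n^{-1/2} = \sqrt{\log n}\, n^{-1/2}$, i.e.\ \emph{not} $\oo(n^{-1/2})$. So a naive application of Theorem \ref{thm_edge} loses a logarithmic factor, and the whole point of the theorem (recovering the i.i.d.\ phenomenon, no Cramér condition needed) would fail. The resolution must be that for test functions one does not need the full strength of the uniform bound: instead of bounding $|\Delta_n(x)|$ pointwise by $\Cd_{T_n}$, one should directly estimate $\int f^{(1)}(x)\Delta_n(x)\,dx$ using Berry's smoothing inequality at the level of the integral. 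Concretely, $\int f^{(1)}(x)\Delta_n(x)\,dx$ can be written via Parseval/Fourier inversion in terms of $\widehat{f^{(1)}}(\xi)$ and the characteristic function difference $\E e^{\ic\xi S_n/\sqrt n} - \widehat{\Psi_n}(\xi)$, and the high-frequency contribution $|\xi| > T_n$ is now weighted by the \emph{rapidly decaying} Fourier transform of the nice function $f^{(1)}$ (or, after mollifying $f^{(1)}$ at scale $\delta_n = n^{-1/2}$ and paying $\Do_f \delta_n$ for the mollification error — this uses only continuity and boundedness of $f^{(1)}$, i.e.\ condition \eqref{eq_derivative_condition_1}), so that the high-frequency tail is $\oo(n^{-1/2})$ without any smoothness assumption on $S_n$. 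The low-frequency part $|\xi| \le T_n$ is handled exactly as in the proof of Theorem \ref{thm_edge} (the Edgeworth matching of moments makes $\E e^{\ic\xi S_n/\sqrt n} - \widehat\Psi_n(\xi) = \oo(n^{-1/2})$ locally, integrated against the bounded measure $|\widehat{f^{(1)}}(\xi)|\,d\xi$). Thus the final assembly is: mollification error $\Do_f\, \oo(n^{-1/2})$ $+$ low-frequency Edgeworth error $\oo(n^{-1/2})$ $+$ high-frequency error $\oo(n^{-1/2})$ (no Cramér needed because of the decay of $\widehat{f^{(1)}}$) $+$ tail error $\oo(n^{-1/2})$ (from \Aone, $\ad>3$), giving the claim. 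The genuinely delicate step is the rigorous treatment of the high-frequency range $|\xi| > T_n$: one must show that, after mollification, $\int_{|\xi|>T_n}|\widehat{f^{(1)}}(\xi)|\, \big|\E e^{\ic\xi S_n/\sqrt n} - \widehat\Psi_n(\xi)\big|\,d\xi = \oo(n^{-1/2})$ using only $|\E e^{\ic\xi S_n/\sqrt n}| \le 1$ together with the fast decay of $\widehat{f^{(1)}}$ inherited from the mollification scale $n^{-1/2}$, and to verify that this scale is compatible with all the other error terms.
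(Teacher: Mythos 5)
Your skeleton is the right one — reduce to an $f^{(1)}$-weighted integral of $\Delta_n$, kill the tails by the Nagaev/log-moment bound, and recognize that the only real obstacle is the $\Cd_{T_n}$/high-frequency contribution, to be removed by smoothing — and this is indeed the spirit of the paper's argument (which smooths $S_n$ by adding $H_n-H_0$ with $H_k\stackrel{d}{=}G_{a,b}$ from \eqref{eq_g_ab} and then applies the non-uniform bound of Lemma \ref{lem_nonunif} to the smoothed sum). But the two quantitative steps on which your proof actually stands both have genuine gaps. First, the smoothing cost: you price the mollification at scale $\delta_n=n^{-1/2}$ as $\Do_f\delta_n$, which is $\OO(n^{-1/2})$, and then in your final assembly you silently record it as $\oo(n^{-1/2})$; the theorem claims $\oo(n^{-1/2})$, so an $\OO(n^{-1/2})$ smoothing error is not good enough. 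To upgrade it you need exactly the paper's extra structure: the perturbation $(H_n-H_0)/\sqrt n$ has mean zero and is independent of $S_n$, so the first-order Taylor term $\E\big[(H_n-H_0)f_n^{(1)}(S_n)\big]$ vanishes, and the remainder is $\oo(n^{-1/2})$ only because $f^{(1)}$ is \emph{continuous} (uniform continuity on $[-K,K]$ plus tail bounds for $S_n$ and $H_n-H_0$). This is precisely where condition \eqref{eq_derivative_condition_1} beyond mere boundedness enters, and your accounting misses it.

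Second, the high-frequency range $|\xi|>T_n$: the claim that $\widehat{f^{(1)}}$ decays rapidly is false for an $f^{(1)}$ that is only bounded and continuous, and the mollified substitute does not rescue the estimate at the scale you chose. Mollifying the test function at scale $n^{-1/2}$ is equivalent to convolving the law of $S_n/\sqrt n$ with noise of size $n^{-1/2}$; for a generic kernel (say Gaussian) the Fourier transform of that noise at $|\xi|\thicksim T_n=\co_T\sqrt n$ is a fixed constant in $(0,1)$, so $\int_{|\xi|>T_n}\big|\E e^{\ic\xi S_n/\sqrt n}\big|\,\big|\hat\phi(\xi/\sqrt n)\big|^2\,|\xi|^{-1}d\xi$ is $\OO(1)$, not $\oo(n^{-1/2})$, unless one already has decay of $|\E e^{\ic\xi S_n/\sqrt n}|$ — i.e.\ a Cram\'er-type condition, which is what must be avoided; and pushing the scale to $\delta_n=\oo(n^{-1/2})$ both worsens the window $[\co_T\sqrt n,\delta_n^{-1}]$ where you have no decay at all and destroys nothing of the first problem. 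The paper's resolution is to take the smoothing noise band-limited: by \eqref{eq_thm_smooth_fourier} the characteristic function of $G_{a,b}$ vanishes for $|t|>ab$, so choosing $ab<\co_T$ makes the Berry–Esseen tail $\sup_x|\Td_{T_n}^{n}(x)|$ for the smoothed sum identically zero (equation \eqref{eq_thm_smooth_5.5}), and no rate is needed from $f$ at high frequencies. Without specifying a kernel of this type, and without the mean-zero/continuity upgrade of the smoothing error, your high-frequency and mollification bounds do not deliver the claimed $\oo(n^{-1/2})$; with them, your argument essentially becomes the paper's proof (integration-by-parts identity, Lemma \ref{lem_nonunif} for the non-uniform integrable bound with $q<\ad-2$, $\ad>3$, and Theorem \ref{thm_edge} applied to the smoothed sum).
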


Such expansions are sometimes referred to as weak Edgeworth expansions, see e.g. ~\cite{woodroofe1989}.

Theorem \ref{thm_smooth} requires \hypertarget{smid:eq10}{the} same minimal smoothness and moment conditions (except for $\ad > 3$) as in the i.i.d. case, see ~\cite{Bhattacharya_rao_1976_reprint_2010}. For weakly dependent sequences, stronger assumptions are needed in the literature, see for instance ~\cite{goetze_hipp_1983} ~\cite{lahiri_1993} and in particular ~\cite{mykland1992},~\cite{mykland1993aop}, ~\cite{mykland1995}. For results subject to spectral conditions, see e.g. \cite{kasun:2018:edgeworth}.

Let $s > 0$, then we \hypertarget{schls:eq9}{can} represent $s$ as $s = \m + \alpha$, where $[s] = \m$ denotes the integer part, and $0 < \alpha \leq 1$. Denote by $\mathcal{H}_{\mathrm{L}}^{s}$ the H\"{o}lder-class, that is, all real-valued functions $f$ such that the $\m$-th derivative exists and satisfies
\begin{align*}
\bigl|f^{(\m)}(x) - f^{(\m)}(y)\bigr| \leq \mathrm{L} \bigl|x - y\bigr|^{\alpha}, \quad \mathrm{L} > 0.
\end{align*}

\begin{thm}\label{thm_hoelder}
Grant Assumption \ref{ass_dependence_main} with $3 < p < 4$. Then for any fixed $f \in \mathcal{H}_{\mathrm{L}}^{s}$ with $s \in (1,2)$, we have
\begin{align*}
\Big|\int_{\R}f(x)d\bigl(\P_{S_n/\sqrt{ n}}(x) - \Psi_n(x)\bigr)\Big| \lesssim \big(\mathrm{L} + |f^{(1)}(0)|\big)n^{-\frac{p}{2}+1+\delta} + n^{-\frac{s}{2}},
\end{align*}
where $\delta > 0$ is arbitrarily  \hypertarget{SCL:eq10}{small}. If $p = 4$, then we may replace $n^{\delta}$ with $(\log n)^{6\frac{1}{2}}$.
\end{thm}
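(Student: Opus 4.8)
The plan is to transfer the distributional estimates of Theorems~\ref{thm_edge_frac} and~\ref{thm_wasserstein_frac} to $f$ via integration by parts, a mollification of $f$ at unit scale, and the matching of the low-order moments of $\P_{S_n/\sqrt n}$ and $\Psi_n$. Write $s=1+\alpha$ with $\alpha\in(0,1)$. By the shape \eqref{defn_PSI_m} of $\Psi_n$, the signed measure $\mathds{V}_n$ it induces has mass $1$, mean $0$ and second moment $s_n^2$, exactly as $\P_{S_n/\sqrt n}$, and differs from it in the third moment by only $\OO(n^{-1})$ (Lemma~\ref{lem_S_n_third_expectation}, Remark~\ref{rem_replace_s_sqrt_n}); in particular $\int_\R\Delta_n(x)\,dx=0$. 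Since $f\in\mathcal{H}_{\mathrm{L}}^{s}$ gives $|f(x)|\lesssim |f(0)|+|f^{(1)}(0)|\,|x|+\mathrm{L}|x|^{s}$ with $s<p$, and since under \hyperref[A1]{\Aone}--\hyperref[A2]{\Atwo} one has a Fuk--Nagaev type bound $|\Delta_n(x)|\lesssim n^{1-p/2}|x|^{-p}+e^{-cx^2}$ for large $|x|$, integration by parts is licit and
\begin{align*}
\int_\R f\,d\bigl(\P_{S_n/\sqrt n}-\Psi_n\bigr)=-\int_\R f^{(1)}(x)\,\Delta_n(x)\,dx=-\int_\R\bigl(f^{(1)}(x)-f^{(1)}(0)\bigr)\Delta_n(x)\,dx,
\end{align*}
the last equality using $\int_\R\Delta_n=0$. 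Everything is thus reduced to $\int_\R h(x)\Delta_n(x)\,dx$ for the $\alpha$-H\"older function $h:=f^{(1)}-f^{(1)}(0)$, which satisfies $h(0)=0$ and $|h(x)-h(y)|\le\mathrm{L}|x-y|^{\alpha}$.

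I would then split $h=h_{\ast}+r_{\ast}$ by mollifying $h$ at unit scale: $h_{\ast}$ is smooth with $\|h_{\ast}'\|_\infty\lesssim\mathrm{L}$ and sublinear growth $|h_{\ast}(x)|\lesssim\mathrm{L}|x|^{\alpha}$, while $r_{\ast}$ is again $\alpha$-H\"older with constant $\lesssim\mathrm{L}$ and $\|r_{\ast}\|_\infty\lesssim\mathrm{L}$. For the smooth piece, put $H_{\ast}(x)=\int_0^x h_{\ast}$, a $C^2$ function with $\|H_{\ast}''\|_\infty\lesssim\mathrm{L}$ and subquadratic growth; integrating by parts back, $\int_\R h_{\ast}\Delta_n=-\int_\R H_{\ast}\,d(\P_{S_n/\sqrt n}-\Psi_n)$, which one bounds by the weak second-order Edgeworth estimate $\bigl|\int_\R H_{\ast}\,d(\P_{S_n/\sqrt n}-\Psi_n)\bigr|\lesssim\mathrm{L}\,n^{1-p/2+\delta}$ valid for $C^2$ test functions of polynomial growth — obtained essentially as in the proof of Theorem~\ref{thm_smooth}, but now using Theorem~\ref{thm_edge_frac} and the characteristic-function estimates of Section~\ref{sec_main_tech} (e.g.\ Lemma~\ref{lem_polylipschitz_distance}) and, crucially, controlling the once-integrated tail $\Theta_n(t):=\int_t^\infty\Delta_n(x)\,dx$ rather than $\Delta_n$ itself. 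For the H\"older part, integration by parts is unavailable, and one instead exploits that on the bulk $|x|\le\tau_n$ the difference $\Delta_n$ decomposes into a ``secular'' component of size $\OO(n^{1-p/2})$ and an ``oscillatory'' component of amplitude $\OO(n^{-1/2})$ fluctuating essentially at the central-limit scale $n^{-1/2}$ (the frequencies above $T_n$ being absorbed by $\Cd_{T_n}\lesssim n^{-1/2}$); integrating the $\alpha$-H\"older $r_{\ast}$ against the oscillatory part gains a factor $n^{-\alpha/2}$, so that $\bigl|\int_{|x|\le\tau_n}r_{\ast}\Delta_n\bigr|\lesssim\mathrm{L}\,\tau_n\,n^{-1/2}n^{-\alpha/2}=\mathrm{L}\,\tau_n\,n^{-s/2}$, while $\bigl|\int_{|x|>\tau_n}r_{\ast}\Delta_n\bigr|\lesssim\mathrm{L}\,n^{1-p/2}$ by the tail bound. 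Collecting the pieces and using $n^{1-p/2}\ge n^{-1}$ for $p<4$ gives the stated estimate; the truncation of these polynomially growing test functions to the central-limit scale, needed in order to apply the Wasserstein and $C^2$ bounds, is the step that brings in the prefactor $|f^{(1)}(0)|$. For $p=4$ the $n^{\delta}$ slack is replaced by the logarithmic powers from Theorem~\ref{thm_edge_four}, which together with the $\tau_n$-losses combine to $(\log n)^{6\frac{1}{2}}$.

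The main obstacle is the H\"older-remainder step: producing the sharp exponent $n^{-s/2}$ in place of the generic $n^{-1/2}$ that any Lipschitz/Wasserstein bound delivers (and which is genuinely attained, e.g.\ for lattice-type $S_n$, where $\Delta_n$ really does oscillate at scale $n^{-1/2}$ with amplitude $n^{-1/2}$). Making this rigorous requires a Fourier-side analysis: expressing $\int_\R r_{\ast}\Delta_n$ through $\varphi_n(\xi)=\E e^{\ic\xi S_n/\sqrt n}$ and the transform $\widehat{\Psi_n}$ of the Edgeworth density, splitting the frequency range at a cutoff somewhat below $T_n$, and using that the Fourier transform of an $\alpha$-H\"older function decays fast enough to suppress the high-frequency (oscillatory) contribution with loss $n^{-\alpha/2}$ — all carried out under the algebraic weak dependence \hyperref[A2]{\Atwo} alone, where, as already flagged for Theorems~\ref{thm_edge_frac}--\ref{thm_wasserstein_frac}, in the fractional range $3<p<4$ matching the available moments no longer suffices for optimality, so that the requisite bounds on $\varphi_n-\widehat{\Psi_n}$ and on $\Theta_n$ must be established with considerable care.
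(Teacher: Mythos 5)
Your reduction to $\int h\,\Delta_n\,dx$ with $h=f^{(1)}-f^{(1)}(0)$ and your handling of the mollified part are plausible, but the H\"older-remainder step --- the one you yourself flag as the main obstacle --- is a genuine gap, and it is exactly where the content of the theorem sits. The structural input you need, namely that on the bulk $\Delta_n$ splits into a secular $\OO(n^{1-p/2})$ part plus a mean-zero oscillation of amplitude $\OO(n^{-1/2})$ at spatial scale $n^{-1/2}$, is not available under Assumption \ref{ass_dependence_main} alone: frequencies above $T_n$ are controlled only through $\Cd_{T_n}\lesssim n^{-1/2}$, a sup-norm bound carrying no phase or cancellation information, and without a Cram\'er-type condition $|\E e^{\ic\xi S_n/\sqrt{n}}|$ may be of order one on long stretches above $T_n$. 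Concretely, if $g$ denotes $r_{\ast}$ restricted to $[-\tau_n,\tau_n]$, then H\"older continuity gives $|\hat g(\xi)|\lesssim \mathrm{L}\tau_n|\xi|^{-\alpha}+\mathrm{L}|\xi|^{-1}$, and since above $T_n$ the only available bound on the characteristic function is the trivial one, your Fourier-side argument yields for the high-frequency contribution only $\int_{|\xi|\ge T_n}\mathrm{L}\tau_n|\xi|^{-1-\alpha}d\xi\lesssim \mathrm{L}\tau_n n^{-\alpha/2}$; on the physical side, the sup-norm route gives $\|r_{\ast}\|_\infty\tau_n\Cd_{T_n}\sim\tau_n n^{-1/2}$. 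Since $p>3$ and $\alpha<1$, the quantity $n^{-\alpha/2}$ exceeds both $n^{1-\frac{p}{2}}$ and $n^{-\frac{s}{2}}$, so neither estimate proves the claimed bound, and upgrading $n^{-\alpha/2}$ to $n^{-\frac{s}{2}}=n^{-\frac{1+\alpha}{2}}$ would require knowing that $|\E e^{\ic\xi S_n/\sqrt{n}}|$ is small off a frequency set of bounded measure --- precisely the kind of smoothness hypothesis the paper is built to avoid (your lattice heuristic works only because there the characteristic function concentrates near isolated frequencies).

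The paper's proof runs the other way: it smooths the law, not the test function. With $H_k\stackrel{d}{=}G_{a,b}$ from \eqref{eq_g_ab} and $S_n^{\diamond}=S_n+H_n-H_0$, a Taylor expansion combined with the $\alpha$-H\"older continuity of $f^{(1)}$ gives $\big|\E f_n(S_n^{\diamond})-\E f_n(S_n)\big|\lesssim \mathrm{L}\,n^{-\frac{s}{2}}\E|H_n-H_0|^{s}$ --- this is where the $n^{-\frac{s}{2}}$ term comes from, not from any oscillation gain --- and since the characteristic function of $H_0$ vanishes beyond $ab$, the Berry--Esseen tail of $S_n^{\diamond}$ is identically zero as in \eqref{eq_thm_smooth_5.5}; the representation \eqref{eq_thm_smooth_4} together with the non-uniform bound of Lemma \ref{lem_nonunif} for $3<p<4$ then produces the $(\mathrm{L}+|f^{(1)}(0)|)n^{1-\frac{p}{2}+\delta}$ term exactly as in the proof of Theorem \ref{thm_smooth}. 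If you replace your oscillation step by this smoothing of $S_n$, your argument closes; as written, the bound $\mathrm{L}\tau_n n^{-\frac{1}{2}}n^{-\frac{\alpha}{2}}$ for $\int_{|x|\le\tau_n}r_{\ast}\Delta_n\,dx$ is unsupported.
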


\section{Examples}\label{sec:examples}

In this section, we discuss how some prominent examples from the literature fit into our framework. Let us point out that in all these examples, not only the characterization by Theorems \ref{thm_edge} and \ref{thm_wasserstein} (and higher moment versions) and the weak expansions in Theorems \ref{thm_smooth}, \ref{thm_hoelder} are new, but also the CLTs in $L^p$ and $W_1$. Many more examples satisfying our conditions can be found, for instance, in \cite{aue_etal_2009},\cite{wu_2005} and the references therein.

\subsection{Functions of Banach space valued linear processes}\label{ex:banach:linear}

Suppose that $\mathbb{S} = \mathds{B}$ is a Banach space with norm $\|\cdot\|_{\mathds{B}}$. Let $(A_i)_{i \in \N}$ be a sequence of linear operators $A_i : \mathds{B} \to \mathds{B}$, and denote with
$\Vert A_i \Vert_{op}$ the corresponding operator norm. For an i.i.d. sequence $(\varepsilon_k)_{k \in \Z} \in \mathds{B}$, consider the linear process
\begin{align*}
L_k = \sum_{i = 0}^{\infty} A_i \varepsilon_{k-i}, \quad k \in \Z,
\end{align*}
which exists if $\E \|\varepsilon_0\|_{\mathds{B}} < \infty$ and $\sum_{i \in \N} \|A_i\|_{op} < \infty$, which we assume from now on. Note that autoregressive processes (even of infinite order) can be expressed as linear processes, but also the famous dynamical system 2x mod 1 (Bernoulli convolution, doubling map). For the latter, we refer to Example 3.2 in ~\cite{jirak_be_aop_2016} and the references therein for more details. For any finite $d$, let $\mathds{B}^d = \mathds{B} \oplus \ldots \oplus \mathds{B}$ be the direct sum, and we equip $\mathds{B}^d$ with any of the equivalent norms, which we denote with $\Vert \cdot \Vert_{\mathds{B}^d}$. Let $g:\mathds{B}^d \to \R$ be a function satisfying
\begin{align*}
\big|g(x) - g(y)\big| \leq C \big \Vert x-y\big \Vert_{\mathds{B}^d}^{\alpha} \big(1 + \Vert x \Vert_{\mathds{B}^d} + \Vert y \Vert_{\mathds{B}^d} \big)^{\beta}, \quad C,\alpha > 0, \beta \geq 0,
\end{align*}
and define $X_k$ by
\begin{align}\label{ex:linear:banach:defn}
X_k = g\big(L_k,L_{k-1}, \ldots, L_{k-d+1}\big) - \E g\big(L_k,L_{k-1}, \ldots, L_{k-d+1}\big).
\end{align}
Note that for $\mathds{B} = \R$, this setup includes empirical autocovariance functions and other important statistics. Suppose that
\begin{align}\label{ex:linear:banach:condi}
\E \|\varepsilon\|_{\mathds{B}}^q < \infty, \quad \sum_{k = 0}^{\infty} k^2  \Big(\sum_{i \geq k} \big\|A_i\big\|_{op} \Big)^{\alpha} < \infty,
\end{align}
for $q > \beta p (\alpha/\beta + 1)$ if $\beta > 0$, and $q > p \alpha$ otherwise. The triangle inequality, H\"{o}lder's inequality and some computations then yield
\begin{align*}
\big\|X_k - X_k^{\ast}\big\|_p \lesssim  \Big(\sum_{i \geq k} \big\|A_i\big\|_{op} \Big)^{\alpha}.
\end{align*}
If $\mathds{B}$ admits a Burkholder inequality, this bound can be further improved, see ~\cite{wu_2005}. By the above, we can now state the following result.

\begin{prop}\label{prop:linear:banach}
Given the above conditions, if \eqref{ex:linear:banach:condi} holds and $\sd^2 > 0$, then the process $X_k$ defined in \eqref{ex:linear:banach:defn} satisfies Assumption \ref{ass_dependence_main}.
\end{prop}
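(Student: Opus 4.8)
The plan is to verify the three conditions \Aone, \Atwo, \Athree{} of Assumption \ref{ass_dependence_main} for the process $X_k$ defined in \eqref{ex:linear:banach:defn}, using the moment and summability conditions in \eqref{ex:linear:banach:condi}. The key structural point is that the coupling $X_k^{*}$ is obtained by replacing $\varepsilon_0,\varepsilon_{-1},\ldots$ with independent copies inside each of the linear processes $L_k, L_{k-1},\ldots,L_{k-d+1}$, so that $L_j - L_j^{*} = \sum_{i \geq j} A_i(\varepsilon_{j-i} - \varepsilon_{j-i}')$ and hence $\|\,\|L_j - L_j^{*}\|_{\mathds{B}}\,\|_q \leq 2\sum_{i \geq j}\|A_i\|_{op}\,\|\|\varepsilon_0\|_{\mathds{B}}\|_q$, which decays in $j$ whenever $j \geq k - d + 1$, i.e.\ like $\sum_{i \geq k}\|A_i\|_{op}$ up to the fixed shift $d$.

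First I would establish the moment bound \hyperref[A1]{\Aone}. Since $g$ satisfies a polynomial-growth Hölder condition, $|X_k| \leq |g(L_k,\ldots,L_{k-d+1}) - g(0)| + |g(0) - \E g(\cdots)| \lesssim (1 + \sum_{j}\|L_{k-j}\|_{\mathds{B}})^{\alpha + \beta} + \const$, so by Minkowski and the fact that $\|\,\|L_0\|_{\mathds{B}}\,\|_q \leq \sum_i \|A_i\|_{op}\,\|\|\varepsilon_0\|_{\mathds{B}}\|_q < \infty$, we get $\|X_k\|_{p'} < \infty$ for $p' = q/(\alpha+\beta) > p$ when $\beta > 0$ (resp.\ $p' = q/\alpha > p$ when $\beta = 0$). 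The strict inequality $p' > p$ gives the logarithmic factor $\log(1+|X_k|)^{\ad}$ for free, since $\E|X_k|^p \log(1+|X_k|)^{\ad} \lesssim \E|X_k|^{p'} < \infty$. Centering is built into the definition, so \hyperref[A1]{\Aone} holds.

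Next, for \hyperref[A2]{\Atwo} I would combine the triangle inequality in $\mathds{B}^d$, the Hölder condition on $g$, and Hölder's inequality for expectations: writing $Y = (L_k,\ldots,L_{k-d+1})$ and $Y^{*} = (L_k^{*},\ldots,L_{k-d+1}^{*})$,
\begin{align*}
\big\|X_k - X_k^{*}\big\|_p \lesssim \big\| \|Y - Y^{*}\|_{\mathds{B}^d}^{\alpha}\,(1 + \|Y\|_{\mathds{B}^d} + \|Y^{*}\|_{\mathds{B}^d})^{\beta}\big\|_p \lesssim \big\|\|Y-Y^{*}\|_{\mathds{B}^d}\big\|_r^{\alpha},
\end{align*}
where $r$ is chosen so that $1/p = \alpha/r + \beta/q$ (using the uniform $L^q$-bound on the growth factor); the constraint $q > \beta p(\alpha/\beta+1)$ is exactly what makes $r$ admissible, i.e.\ $r \geq p$. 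Then $\|\|Y-Y^{*}\|_{\mathds{B}^d}\|_r \lesssim \sum_{j=0}^{d-1}\|\|L_{k-j}-L_{k-j}^{*}\|_{\mathds{B}}\|_r \lesssim \sum_{i \geq k - d + 1}\|A_i\|_{op}$, which for $k \geq d$ is $\lesssim \sum_{i \geq k}\|A_i\|_{op}$ up to shifting the index by the fixed constant $d$. Raising to the power $\alpha$ and summing $k^2$ times, the series $\sum_k k^2 (\sum_{i \geq k}\|A_i\|_{op})^{\alpha}$ converges by \eqref{ex:linear:banach:condi}; the finitely many terms $k < d$ are handled by the already-established finiteness of $\|X_0\|_p$. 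This gives $\Lambda_{2,p} < \infty$. Finally \hyperref[A3]{\Athree} is assumed directly as $\sd^2 > 0$, and its well-definedness follows from \hyperref[A2]{\Atwo} via Lemma \ref{lem_sig_expressions_relations}.

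The main obstacle is the bookkeeping in the choice of the exponent $r$ and checking that the stated condition $q > \beta p(\alpha/\beta + 1) = p(\alpha + \beta)$ (for $\beta > 0$) — equivalently $p' > p$ — is precisely what is needed both for \hyperref[A1]{\Aone} (to absorb the logarithmic weight) and for \hyperref[A2]{\Atwo} (to make Hölder's inequality close with $r \geq p$), rather than two separate requirements; one should double-check the edge case $\beta = 0$, where the growth factor is absent and one simply needs $q > p\alpha$ so that $\|\|Y-Y^*\|_{\mathds{B}^d}\|_{q/\alpha}^\alpha$ is controlled. Everything else is a routine application of Minkowski's and Hölder's inequalities together with the linear-process coupling estimate.
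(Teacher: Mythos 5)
Your argument is correct and is essentially the proof the paper intends: the triangle inequality plus H\"older's inequality reduce $\|X_k-X_k^{\ast}\|_p$ to the coupling bound for the Banach-valued linear process, the exponent condition $q>p(\alpha+\beta)$ (resp.\ $q>p\alpha$ when $\beta=0$) is exactly what makes the H\"older split close, and \eqref{ex:linear:banach:condi} then yields $\Lambda_{2,p}<\infty$, with \Aone{} following from $\|X_k\|_{p'}<\infty$ for some $p'>p$ and \Athree{} being assumed directly. The only blemish is cosmetic: with your choice $1/p=\alpha/r+\beta/q$ the admissibility requirement is $r\le q$ (so that the $L^r$-norm of $\|Y-Y^{\ast}\|_{\mathds{B}^d}$ is controlled by the $q$-th moments of $\varepsilon$), not $r\ge p$, and in the $\beta=0$ case the quantity to control is $\|\,\|Y-Y^{\ast}\|_{\mathds{B}^d}\|_{p\alpha}^{\alpha}$ (finite since $p\alpha<q$); neither point affects the argument.
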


Extensions to more general processes (bilinear or even Volterra processes) are possible.


\subsection{Left random walk on $GL_d(\R)$ and cocycles}\label{ex:randomwalk}

Cocycles, in particular the random walk on $GL_d(\R)$, have been heavily investigated in the literature, see e.g. ~\cite{bougerol_book_1985} and ~\cite{benoist2016}, ~\cite{cuny2017}, ~\cite{CUNY20181347spa} for some more recent results. We will particularly exploit ideas of ~\cite{cuny2017}, ~\cite{CUNY20181347spa}.\\ 
Let $(\varepsilon_k)_{k \in \N}$ be independent random matrices taking values in $G = GL_d(\R)$, with common distribution $\mu$. Let $A_0 = \mathrm{Id}$, and for every $n \in \N$, $A_n = \prod_{i = 1}^n \varepsilon_i$. Denote with $\|\cdot\|$ the Euclidean norm on $\R^d$. We adopt the usual convention that $\mu$ has a moment of order $q$, if
\begin{align*}
\int_G \big(\log N(g) \big)^q \mu(d g) < \infty, \quad N(g) = \max\big\{\|g\|, \|g^{-1}\|\big\}.
\end{align*}
Let $\mathds{X} = P_{d-1}(\R^d)$ be the projective space of $\R^d\setminus\{0\}$, and write $\overline{x}$ for the projection from $\R^d\setminus\{0\}$ to $\mathds{X}$. We assume that $\mu$ is strongly irreducible and proximal, see ~\cite{cuny2017} for details. The left random walk of law $\mu$ started at $\overline{x} \in \mathds{X}$ is the Markov chain given by $Y_{0\overline{x}} = \overline{x}$, $Y_{k\overline{x}} = \varepsilon_k Y_{k-1\overline{x}}$ for $k \in \N$. Following the usual setup, we consider the associated random variables $(X_{k\overline{x}})_{k \in \N}$, given by
\begin{align}\label{ex:randomwalk:defn}
X_{k\overline{x}} = h\big(\varepsilon_k, Y_{k-1\overline{x}} \big), \quad h\big(g, \overline{z}\big) = \log \frac{\| g z\|}{\|z\|},
\end{align}
for $g \in G$ and $z \in \R^d\setminus\{0\}$. It follows that, for any $x\in \mathbf{S}^{d-1}$, we have
\begin{align*}
S_{n\overline{x}} = \sum_{k = 1}^n \big(X_{k\overline{x}} - \E X_{k\overline{x}}\big) = \log \big\|A_n \overline{x}\big\| - \E \log \big\|A_n \overline{x}\big\|.
\end{align*}
Following ~\cite{CUNY20181347spa}, Proposition 3 in ~\cite{cuny2017} implies that, if $q > 4p$, then
\begin{align*}
\sum_{k = 1}^{\infty} k^2 \sup_{\overline{x},\overline{y} \in \mathds{X}}\big\|X_{k\overline{x}} - X_{k\overline{y}} \big\|_p < \infty.
\end{align*}
In particular, it holds that
\begin{align*}
\lim_{n \to \infty} n^{-1} \E S_{n\overline{x}}^2 = \sd^2,
\end{align*}
where the latter does not depend on $\overline{x} \in \mathds{X}$. We are now exactly in the \textit{quenched} setup briefly mentioned in Section \ref{sec_K_aut}, and can state the following result.

\begin{prop}\label{prop:randomwalk}
If $q > 4p$ and $\sd^2 > 0$, then for $\overline{x} \in \mathds{X}$, the process $X_{k\overline{x}}$ defined in \eqref{ex:randomwalk:defn} satisfies Assumption \ref{ass_dependence_main} (quenched) with $p$.
\end{prop}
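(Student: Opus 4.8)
The plan is to verify the three parts of Assumption \ref{ass_dependence_main}, in the quenched form described in Section \ref{sec_K_aut}, for the centered variables $X_{k\overline{x}} - \E X_{k\overline{x}}$, using the cocycle structure together with the two facts already recorded above: that $q > 4p$ gives $\sum_{k \geq 1} k^2 \sup_{\overline{x},\overline{y} \in \mathds{X}}\|X_{k\overline{x}} - X_{k\overline{y}}\|_p < \infty$ (Proposition 3 of \cite{cuny2017}, as exploited in \cite{CUNY20181347spa}), and that $n^{-1}\E S_{n\overline{x}}^2 \to \sd^2$ uniformly in $\overline{x} \in \mathds{X}$.

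For \Aone, I would start from the deterministic bound $|h(g,\overline{z})| \leq \log N(g)$, valid for all $g \in G$ and $\overline{z} \in \mathds{X}$: indeed $\|gz\|/\|z\| \leq \|g\|$ while $\|z\|/\|gz\| = \|g^{-1}(gz)\|/\|gz\| \leq \|g^{-1}\|$, so $\bigl|\log(\|gz\|/\|z\|)\bigr| \leq \log N(g)$. Hence $|X_{k\overline{x}}| \leq \log N(\varepsilon_k)$ and, since $q \geq 1$, $\sup_k |\E X_{k\overline{x}}| \leq \E \log N(\varepsilon_1) < \infty$, so the centered variable is bounded in absolute value by $\log N(\varepsilon_k) + \const$. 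Because $(\log N)^{p}\bigl(\log(1 + \log N)\bigr)^{\ad} = \OO\bigl((\log N)^{p+\delta}\bigr)$ for every $\delta > 0$ and $q > 4p > p$, picking $\delta$ with $p+\delta \leq q$ yields $\E|X_{k\overline{x}}|^p\log(1+|X_{k\overline{x}}|)^{\ad} < \infty$ uniformly in $\overline{x}$; together with the built-in centering this gives \Aone.

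The substantive step is \Atwo. Here the goal is to show that the quenched dependence coefficient $\lambda_{l,p} = \sup_k \|X_{k\overline{x}} - X_{k\overline{x}}^{(l,\ast)}\|_p$ satisfies $\lambda_{l,p} \leq \sup_{\overline{z},\overline{y} \in \mathds{X}}\|X_{l\overline{z}} - X_{l\overline{y}}\|_p$ for every $l \geq 1$. The key point is that replacing the innovations $(\varepsilon_j)_{j \leq k-l}$ by independent copies only moves the starting point of the cocycle: write $Y_{k-1\overline{x}}$ as $l-1$ steps of the walk driven by $\varepsilon_{k-l+1},\ldots,\varepsilon_{k-1}$ started from $\overline{z} := \overline{\varepsilon_{k-l}\cdots\varepsilon_1 x}$, and note that the coupled version is the same $l-1$ steps started from $\overline{y} := \overline{\varepsilon_{k-l}'\cdots\varepsilon_1' x}$, where $\overline{z},\overline{y}$ are independent of the block $\varepsilon_{k-l+1},\ldots,\varepsilon_k$. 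Conditioning on $(\overline{z},\overline{y})$ and relabelling that block as a fresh $\varepsilon_1,\ldots,\varepsilon_l$ then gives $\|X_{k\overline{x}} - X_{k\overline{x}}^{(l,\ast)}\|_p \leq \sup_{\overline{z},\overline{y}\in\mathds{X}}\|X_{l\overline{z}} - X_{l\overline{y}}\|_p$, where on the right the two chains share their driving noise; for $k \leq l$ all relevant innovations are retained and the difference vanishes, so the supremum over $k$ is harmless. Summing against $l^2$, adding $\|X_{0\overline{x}}\|_p < \infty$, and invoking the quoted estimate yields $\Lambda_{2,p} < \infty$, uniformly in $\overline{x}$.

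Finally, the non-degeneracy demanded in place of \Athree\ in the quenched setting of Section \ref{sec_K_aut} — namely $\liminf_n s_n^2 > 0$ — is immediate from $s_n^2 = n^{-1}\E S_{n\overline{x}}^2 \to \sd^2 > 0$, uniformly in $\overline{x}$. The only place where care is genuinely needed is the reduction carried out for \Atwo: one must track the index shift accurately and justify the conditioning through the independence of the "old" block $(\varepsilon_j)_{j \leq k-l}$ (hence of $\overline{z},\overline{y}$) from the "new" block $\varepsilon_{k-l+1},\ldots,\varepsilon_k$; everything else is the already-cited input plus routine moment estimates, and the bounds are manifestly uniform in $\overline{x}$, as needed for the quenched conclusion.
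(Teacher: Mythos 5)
Your proposal is correct and follows essentially the same route as the paper, which simply invokes the bound $\sum_k k^2 \sup_{\overline{x},\overline{y}}\|X_{k\overline{x}}-X_{k\overline{y}}\|_p<\infty$ from Proposition 3 of \cite{cuny2017} (via \cite{CUNY20181347spa}) together with the convergence $n^{-1}\E S_{n\overline{x}}^2\to\sd^2$ and notes that one is in the quenched setup of Section \ref{sec_K_aut}. Your additional details — the bound $|h(g,\overline{z})|\leq \log N(g)$ for \Aone\ and the reduction of the quenched coupling coefficient to the two-starting-point coupling for \Atwo\ — are exactly the routine verifications the paper leaves implicit.
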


As an immediate consequence, we obtain a Berry-Essen bound with optimal rate (Kolmogorov, $W_1$ and $L^p$), weak Edgeworth expansions and the sharp characterisation of the transition subject to mild moment conditions. To the best of our knowledge, all of these results are new. Typically, the literature requires the existence of all moments for similar results. In case of $W_1$ and $L^p$, we are not even aware of any other result where the optimal rate has been obtained.

\subsection{Non-uniform expanding maps}\label{ex:nonuniform:maps}

In this section, we do not discuss concrete examples, but list many well-known non-uniformly expanding (and non-uniformly
hyperbolic) transformations, where (two-sided) variants of Assumption \ref{ass_dependence_main} (or its quenched version, see Section \ref{sec_K_aut} for a brief comment) are satisfied. This is due to the recent advancements made in \cite{korepanov2018},\cite{cuny_quickly_2020},\cite{cuny_dedecker_korepanov_merlevede}, where it is shown that (two-sided) variants of Assumption \ref{ass_dependence_main} hold for H\"{o}lder continuous observables for many (non)-uniformly expanding maps with exponential tails (cf. \cite{korepanov2018}), sub exponential tails (cf. \cite{cuny_quickly_2020}), and also slowly mixing systems (cf. \cite{cuny_dedecker_korepanov_merlevede}). Among others, this includes:
\begin{itemize}
  \item Anosov and Axiom A diffeomorphisms,
  \item Gibbs-Markov maps with big images,
  \item dispersing billiards,
  \item classes of logistic and H\'{e}non maps,
  \item Lozi maps,
  \item uniformly expanding maps such as the doubling map, Gauss continued fraction map,
$\beta$-shifts.
\end{itemize}

With some (partially laborious) adaptations, results can be transferred.

\subsection{Functions of iterated random maps}\label{ex:iterated}

Let $(\mathcal{Y},\rho)$ be a complete, separable metric space. An iterated random
function system on the state space $\mathcal{X}$ is defined as
\begin{align}
Y_k = F_{\epsilon_k}\big(Y_{k-1} \big), \quad k \in \N,
\end{align}
where $\epsilon_k \in \mathbb{S}$ are i.i.d. with $\epsilon \stackrel{d}{=} \epsilon_k$. Here, $F_{\epsilon}(\cdot) = F(\cdot, \epsilon)$ is the $\epsilon$-section of a jointly
measurable function $F : \mathcal{Y} \times \mathbb{S} \to \mathcal{Y}$. Many dynamical systems, Markov processes and non-linear time series are within this framework, see for instance ~\cite{diaconis_freedman_1999}. For $y \in \mathcal{Y}$, let
$Y_k(y) = F_{\epsilon_k} \circ F_{\epsilon_{k-1}} \circ \ldots \circ F_{\epsilon}(y)$, and, given $y,y' \in \mathcal{Y}$ and $\gamma > 0$, we say that the system is $\gamma$-\textit{moment contracting} if
\begin{align}\label{ex:iterated:contraction}
\sup_{y,y'}\E \rho^{\gamma}\big(Y_k(y),Y_k(y')\big) \leq C \theta^k, \quad \theta \in (0,1).
\end{align}
We note that slight variations exist in the literature. A key quantity for verifying the moment contraction \eqref{ex:iterated:contraction} is
\begin{align*}
L_{\epsilon} = \sup_{y \neq y'}\frac{\rho\big(F_{\epsilon}(y), F_{\epsilon}(y')\big)}{\rho(y,y') }.
\end{align*}
Essentially (subject to some mild regularity conditions), \eqref{ex:iterated:contraction} holds if $\E L_{\epsilon}^{\gamma} < \infty$ and $\E \log L_{\epsilon} < 0$, see ~\cite{diaconis_freedman_1999}, ~\cite{wu_shao_iterated_2004}. Let $g:\mathcal{Y}^d \to \R$ be a function satisfying ($\rho_{\mathcal{Y}^d}$ is any product metric on $\mathcal{Y}^d$, $0\in \mathcal{Y}^d$ is some reference point)
\begin{align*}
\big|g(x) - g(y)\big| \leq C \rho_{\mathcal{Y}^d}^{\alpha}\big(x,y\big) \big(1 + \rho_{\mathcal{Y}^d}(x,0) + \rho_{\mathcal{Y}^d}(y,0) \big)^{\beta}, \quad C,\alpha > 0, \beta \geq 0,
\end{align*}
and define $X_k(y)$ by (we use the abbreviation $Y_k = Y_k(y)$ below)
\begin{align}\label{ex:iterate:defn}
X_k(y) = g\big(Y_k,Y_{k-1}, \ldots, Y_{k-d+1}\big) - \E g\big(Y_k,Y_{k-1}, \ldots, Y_{k-d+1}\big).
\end{align}
For $\gamma \geq q$, where $q > \beta p (\alpha/\beta + 1)$ if $\beta > 0$, and $q > p \alpha$ otherwise, the triangle inequality, H\"{o}lder's inequality and some computations then yield
\begin{align*}
\sup_{y,y' \in \mathcal{Y}}\big\|X_k(y) - X_k^{\ast}(y')\big\|_p \lesssim \theta^{k}, \quad \text{where $0 < \theta < 1$,}
\end{align*}
provided that $\E \rho^q(Y_k(y),0)<\infty$. As in Example \ref{ex:randomwalk}, we are now in the \textit{quenched} setup briefly mentioned in Section \ref{sec_K_aut}, and can state the following result.
\begin{prop}\label{prop:iterated}
Given the above conditions, if $\sd^2 > 0$, then the process $X_k$ defined in \eqref{ex:iterate:defn} satisfies Assumption \ref{ass_dependence_main}.
\end{prop}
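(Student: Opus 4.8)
The plan is to verify the quenched version of Assumption \ref{ass_dependence_main} described in Section \ref{sec_K_aut} for the (non-stationary) array $\big(X_k(y)\big)_{k \geq 1}$ from \eqref{ex:iterate:defn}, with weak dependence quantified through $\lambda_{l,p} = \sup_{k}\big\|X_k(y) - X_k^{(l,\ast)}(y)\big\|_p$. The single engine of the argument is the geometric estimate
\begin{align*}
\sup_{y,y' \in \mathcal{Y}}\big\|X_k(y) - X_k^{\ast}(y')\big\|_p \lesssim \theta^{k}, \qquad 0 < \theta < 1,
\end{align*}
already recorded above; shifting indices exactly as in Example \ref{ex:randomwalk}, it yields $\lambda_{l,p} \lesssim \theta^{l}$.

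First I would check \hyperref[A1]{\Aone}. Fixing the reference point $0 \in \mathcal{Y}^d$ and writing $\rho_d$ for $\rho_{\mathcal{Y}^d}$, the growth hypothesis on $g$ gives, after adding and subtracting $g(0)$ and its expectation,
\begin{align*}
\big|X_k(y)\big| \;\lesssim\; \rho_d^{\alpha}\big((Y_k,\ldots,Y_{k-d+1}),0\big) + \rho_d^{\alpha + \beta}\big((Y_k,\ldots,Y_{k-d+1}),0\big) + 1 .
\end{align*}
Under the stated conditions — which, together with the contraction \eqref{ex:iterated:contraction} and the standard results of ~\cite{diaconis_freedman_1999},~\cite{wu_shao_iterated_2004}, secure $\sup_{k}\E\rho^{q}\big(Y_k(y),0\big) < \infty$ — and because $q > \beta p(\alpha/\beta + 1) = p(\alpha + \beta)$ when $\beta > 0$ and $q > p\alpha$ otherwise, one obtains $\sup_{k}\|X_k(y)\|_{r} \lesssim 1$ for some $r > p$. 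A finite moment of order strictly larger than $p$ absorbs any fixed power of $\log(1 + |X_k(y)|)$, so $\sup_{k}\E|X_k(y)|^{p}\log(1+|X_k(y)|)^{\ad} < \infty$; the centering built into \eqref{ex:iterate:defn} gives $\E X_k(y) = 0$.

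Next, \hyperref[A2]{\Atwo} is immediate: $\|X_0(y)\|_p < \infty$ by the previous step, and $\sum_{l \geq 0} l^{2}\lambda_{l,p} \lesssim \sum_{l \geq 0} l^{2}\theta^{l} < \infty$, hence $\Lambda_{2,p} < \infty$. For \hyperref[A3]{\Athree}, existence of $\sd^2$ follows from \hyperref[A2]{\Atwo} by Lemma \ref{lem_sig_expressions_relations}; moreover, exactly as in Example \ref{ex:randomwalk}, the geometric decay forces $n^{-1}\E S_{n}^{2}(y) \to \sd^{2}$ with a limit independent of $y$, and its positivity $\sd^2 > 0$ is part of the hypothesis, so nothing further is required.

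The one point that is not pure bookkeeping is the displayed geometric bound itself, i.e. the ``triangle inequality, Hölder's inequality and some computations'' alluded to above. To get it, one writes $X_k(y) - X_k^{\ast}(y')$ as a single difference of values of $g$, applies the growth condition, and then splits by Hölder's inequality into the product of a contraction factor $\rho_d^{\alpha}(\cdots)$ — controlled by \eqref{ex:iterated:contraction} — and a polynomial factor $\big(1 + \rho_d(\cdots) + \rho_d(\cdots)\big)^{\beta}$ — controlled by the uniform $q$-th moment bound. The exponent constraint $q > \beta p(\alpha/\beta + 1)$ (resp. $q > p\alpha$) is precisely what makes the two conjugate Hölder exponents admissible while still leaving the slack $r > p$ exploited in \hyperref[A1]{\Aone}; everything else reduces to summing convergent geometric (and $l^{2}\theta^{l}$) series.
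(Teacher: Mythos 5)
Your proposal is correct and takes essentially the same route as the paper: the paper proves Proposition \ref{prop:iterated} precisely by the displayed geometric coupling bound (growth condition on $g$ plus H\"older and the moment contraction \eqref{ex:iterated:contraction}), from which \hyperref[A2]{\Atwo} follows by summing $k^2\theta^k$, \hyperref[A1]{\Aone} from the $q$-th moment of $\rho(Y_k(y),0)$ with $q>p(\alpha+\beta)$ (resp. $q>p\alpha$), and \hyperref[A3]{\Athree} from the hypothesis $\sd^2>0$ in the quenched setup of Section \ref{sec_K_aut}. Your write-up merely makes this bookkeeping explicit.
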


As before in Example \ref{ex:randomwalk}, in conjunction with our theory, this provides many new quantitative limit theorems in various probability metrics and (weak) Edgeworth expansions.

\subsection{Functions of GARCH$(\mathfrak{p},\mathfrak{q})$ processes}\label{sec:ex:garch}

Let $\mathbb{S} = \R$. A very prominent stochastic recursion is the GARCH($\mathfrak{p},\mathfrak{q})$ sequence, given through the relations
\begin{align*}
&Y_k = \varepsilon_k L_{k} \quad \text{where $(\varepsilon_{k})_{k \in \Z}$ is a zero mean i.i.d. sequence and}\\
&L_k^2 = \mu + \alpha_1 L_{k - 1}^2 + \ldots + \alpha_\mathfrak{p} L_{k - \mathfrak{p}}^2 + \beta_1 Y_{k - 1}^2 + \ldots + \beta_{\mathfrak{q}} Y_{k - \mathfrak{q}}^2,
\end{align*}
with $\mu > 0$, $\alpha_1,\ldots,\alpha_\mathfrak{p}, \beta_1,\ldots,\beta_{\mathfrak{q}} \geq 0$. Assume first $\|\varepsilon_k\|_q < \infty$ for some $q \geq 2$. An important quantity is
\begin{align*}
\gamma_C = \sum_{i = 1}^{r} \bigl\|\alpha_i + \beta_i \varepsilon_i^2\bigr\|_{q/2}, \quad \text{with $r = \max\{\mathfrak{p},\mathfrak{q}\}$},
\end{align*}
where we replace possible undefined $\alpha_i, \beta_i$ with zero. If $\gamma_C < 1$, then $(Y_k)_{k \in \Z}$ is stationary. In particular, one can show the representation
\begin{align*}
Y_k = \sqrt{\mu}\varepsilon_k\biggl(1 + \sum_{n = 1}^{\infty} \sum_{1 \leq l_1,\ldots,l_n\leq r} \prod_{i = 1}^n\bigl(\alpha_{l_i} + \beta_{l_i}\varepsilon_{k - l_1 - \ldots - l_i}^2\bigr) \biggr)^{1/2},
\end{align*}
we refer to ~\cite{aue_etal_2009} for comments and references. Using this representation and the fact that $|x-y|^q \leq |x^2 - y^2|^{q/2}$ for $x,y \geq 0$, one can follow the proof of Theorem 4.2 in ~\cite{aue_etal_2009} to show that
\begin{align*}
\bigl\|Y_k - Y_k^{\ast}\bigr\|_q \leq C \theta^{k}, \quad \text{where $0 < \theta < 1$.}
\end{align*}
Let $g:\R^d \to \R$ be a function satisfying ($\|\cdot\|_{\R^d}$ denotes the Euclidean norm)
\begin{align*}
\big|g(x) - g(y)\big| \leq C \big \Vert x-y\big \Vert_{\R^d}^{\alpha} \big(1 + \Vert x \Vert_{\R^d} + \Vert y \Vert_{\R^d} \big)^{\beta}, \quad C,\alpha > 0, \beta \geq 0,
\end{align*}
and define $X_k$ by
\begin{align}\label{ex:garch:defn}
X_k = g\big(Y_k,Y_{k-1}, \ldots, Y_{k-d+1}\big) - \E g\big(Y_k,Y_{k-1}, \ldots, Y_{k-d+1}\big).
\end{align}
Note that this setting includes empirical autocovariance functions and other important statistics. Given $q > \beta p (\alpha/\beta + 1)$ if $\beta > 0$, and $q > p \alpha$ otherwise, the triangle inequality, H\"{o}lder's inequality and some computations then yield
\begin{align*}
\big\|X_k - X_k^{\ast}\big\|_p \lesssim \theta^{k}, \quad \text{where $0 < \theta < 1$.}
\end{align*}
By the above, we can now state the following result.

\begin{prop}\label{prop:garch}
Given the above conditions, if $\sd^2 > 0$, then the process $X_k$ defined in \eqref{ex:garch:defn} satisfies Assumption \ref{ass_dependence_main}.
\end{prop}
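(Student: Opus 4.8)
The plan is to verify directly the three items of Assumption~\ref{ass_dependence_main}. Since $\sd^2>0$ is part of the hypothesis, \Athree{} is immediate once $\ss^2$ is known to exist, and existence follows from \Atwo{} via Lemma~\ref{lem_sig_expressions_relations}; so the real work lies in \Aone{} and \Atwo{}.

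I would begin by recording two preliminary facts. From the series representation of $Y_k$ displayed above and the independence of the innovations entering a single product $\prod_{i=1}^n(\alpha_{l_i}+\beta_{l_i}\varepsilon_{k-l_1-\cdots-l_i}^2)$, one gets
\begin{align*}
\Big\| 1 + \sum_{n\ge 1}\sum_{1\le l_1,\ldots,l_n\le r}\prod_{i=1}^n\bigl(\alpha_{l_i}+\beta_{l_i}\varepsilon_{k-l_1-\cdots-l_i}^2\bigr)\Big\|_{q/2} \le \sum_{n\ge 0}\gamma_C^{\,n} = \frac{1}{1-\gamma_C} < \infty,
\end{align*}
hence $\E|Y_0|^q<\infty$ by stationarity. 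Second, the coupling estimate $\|Y_k-Y_k^{\ast}\|_q\le C\theta^k$ (with $0<\theta<1$) stated above is obtained by following the proof of Theorem~4.2 in ~\cite{aue_etal_2009}, exploiting $|x-y|^q\le|x^2-y^2|^{q/2}$ for $x,y\ge 0$; I take this estimate as given.

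For \Aone{}, the growth condition on $g$ yields $|g(x)|\le |g(0)|+C'\bigl(\|x\|_{\R^d}^{\alpha}+\|x\|_{\R^d}^{\alpha+\beta}\bigr)$, so by equivalence of norms on $\R^d$ we have $|X_k|\lesssim 1+\sum_{j=0}^{d-1}|Y_{k-j}|^{\alpha+\beta}$, the centering constant being finite by the previous step. Because the moment budget is a strict inequality --- $q>\beta p(\alpha/\beta+1)=p(\alpha+\beta)$ when $\beta>0$, and $q>p\alpha$ when $\beta=0$ --- the logarithmic weight is absorbed: $\E|X_k|^p\log(1+|X_k|)^{\ad}\lesssim 1+\E|Y_0|^{q}<\infty$. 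Together with $\E X_k=0$ by construction, this gives \Aone{}, and in particular $\|X_0\|_p<\infty$.

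For \Atwo{}, the centering constants cancel, so $X_k-X_k^{\ast}=g(Y_k,\ldots,Y_{k-d+1})-g(Y_k^{\ast},\ldots,Y_{k-d+1}^{\ast})$. Applying the growth condition and then H\"{o}lder's inequality with exponents $u=q/(p\alpha)$ and $v=q/(q-p\alpha)$ --- admissible exactly because $u>1$ amounts to $q>p\alpha$, and $p\beta v=p\beta q/(q-p\alpha)\le q$ amounts to $q\ge p(\alpha+\beta)$ --- gives
\begin{align*}
\|X_k-X_k^{\ast}\|_p \lesssim \Big(\sum_{j=0}^{d-1}\|Y_{k-j}-Y_{k-j}^{\ast}\|_q\Big)^{\alpha}\bigl(1+\E|Y_0|^q\bigr)^{\beta/q} \lesssim \theta^{\alpha k},
\end{align*}
where the last step uses the geometric coupling estimate together with stationarity for the final factor. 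Hence $\sum_{k\ge 0}k^2\|X_k-X_k^{\ast}\|_p\lesssim\sum_{k\ge 0}k^2\theta^{\alpha k}<\infty$, which with $\|X_0\|_p<\infty$ yields \Atwo{}. The only mildly delicate point is the bookkeeping in the H\"{o}lder step --- checking that the exponents $u,v$ are admissible precisely under the stated moment budget --- but there is no genuine obstacle: once the GARCH coupling estimate of ~\cite{aue_etal_2009} is in hand, the proposition reduces to a routine verification.
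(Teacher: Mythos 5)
Your proposal is correct and follows essentially the same route as the paper, which gives no separate proof but derives the proposition directly from the preceding discussion: the series representation and $\gamma_C<1$ give $\E|Y_0|^q<\infty$, the coupling bound $\|Y_k-Y_k^{\ast}\|_q\lesssim\theta^k$ is imported from Theorem~4.2 of ~\cite{aue_etal_2009}, and the growth condition on $g$ plus the triangle and H\"{o}lder inequalities yield \Aone{} and the geometric decay in \Atwo{}, with \Athree{} coming from the hypothesis $\sd^2>0$ and Lemma \ref{lem_sig_expressions_relations}. Your explicit H\"{o}lder bookkeeping (exponents $q/(p\alpha)$ and $q/(q-p\alpha)$ under $q>p(\alpha+\beta)$) is exactly the "some computations" the paper leaves implicit, and the exponent $\theta^{\alpha k}$ versus the paper's $\theta^k$ is only a harmless renaming of the geometric rate.
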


We note that analogous results can be shown for many more processes of this kind, such as augmented GARCH$(\mathfrak{p},\mathfrak{q})$ processes.



\section{Main Technical Results and Notation}\label{sec_main_tech}

A key tool is the $m$-dependence approximation. To this end, we require some additional notations and definitions. Throughout the remainder of this section, let
\begin{align*}
X_k = f_m(\varepsilon_k,\ldots,\varepsilon_{k-m+1}) \quad \text{for $m \in \N$, $k \in \Z$,}
\end{align*}
where $f_m$ are measurable functions. Unless otherwise stated, $m$ can be any value, even $m = \infty$ is allowed. Strictly speaking, we work with $(m-1)$-dependent sequences (instead of $m$) for notational reasons. However, the difference is irrelevant in the sequel since we are particularly interested in the case where $m = m_n \to \infty$ as $n$ increases, and $(m-1)$-dependent sequences are $m$-dependent any way. For the sake of reference, we now restate Assumption \ref{ass_dependence_main} in \hypertarget{ssm2:eq12}{this} context.
\begin{ass}\label{ass_dependence}
For $p \geq 3$, $\ad > 2$, \hypertarget{lamdaqp:eq12}{the} $(m-1)$-dependent sequence $(X_k)_{k\in \Z}$ satisfies
\begin{enumerate}
\item[\Bone]\label{B1} $\E |X_k|^p\log(1 + |X_k|)^{\ad} < \infty$, $\E X_k = 0$,
\item[\Btwo]\label{B2} $\Lambda_{2,p} = \sum_{k = 1}^{\infty}k^{2}\|X_k-X_k^*\|_p < \infty$,
\item[\Bthree]\label{B3} $\ss_m^2 > 0$, where $\ss_m^2 =  \sum_{k \in \Z}\E X_0X_k = \sum_{k = -m}^{m}\E X_0X_k$.
\end{enumerate}
\end{ass}

Assumption \ref{ass_dependence} always indicates that we work with $m$-dependent sequences. In fact, except for Sections \ref{sec_proof_main} and \ref{sec_proof_wasser}, we \textit{always} work exclusively with $m$-dependent sequences.

As starting point, we use a similar conditioning scheme as in ~\cite{jirak_be_aop_2016}. Key tools employed there are Ideal (Zolotarev) metrics and martingale approximations. Unfortunately though, these key methods and results break down and are no longer available in the present context, forcing us to find a new argument. Fortunately, a recursive argument, briefly mentioned in Section \ref{sec_int_approach}, turns out to work, \hypertarget{littleek:eq12}{but} is not easy to implement.

Let $n = 2Nm + m'$ with $0 \leq m' \leq m$, and $e_1 = \{\varepsilon_{-m+1},\ldots,\varepsilon_0\}$, $e_2 = \{\varepsilon_{m+1},\ldots,\varepsilon_{2m}\}$ and so on for $e_j$, $j \leq N+1$. Define \hypertarget{FFm:eq12}{the} following $\sigma$-algebra
\begin{align}\label{defn_sigma_algebra}
\FF_m = \sigma\bigl(e_1 \cup e_2 \cup \ldots \cup e_{N+1}\bigr).
\end{align}
In the sequel, we can assume without loss of generality that $m' = 0$, hence $n = 2Nm$. We can do so since our viewpoint is  asymptotic, that is, whether $m = a_n$ or $m = 2a_n$ for $a_n \to \infty$ will not matter. If, indeed $m' > 0$, we can simply redefine $e_{N+1}$ as $e_{N+1} = e_{N+1} \cup \{\varepsilon_{2Nm+1}, \ldots, \varepsilon_{2Nm + m'}\}$, and all arguments remain the same. However, assuming $m' = 0$ is notationally much \hypertarget{EEH:eq12}{more} convenient.

We write $\P_{\FF_m}$ for the conditional law \hypertarget{barSNM:eq12}{and}, sometimes use the abbreviation $\E_{\FF_m} \cdot = \E[\cdot | \FF_m]$ (or $\E_{\mathcal{H}} \cdot $ for some other $\sigma$-algebra $\mathcal{H}$).  \hypertarget{tilSNM:eq12}{Introduce}
\begin{align*}
\overline{S}_{n|m} = \sum_{k = 1}^n  \big(X_k - \E_{\FF_m} X_k\big) \quad \text{and} \quad \widetilde{S}_{n|m} = \sum_{k = 1}^n \E_{\FF_m} X_k,
\end{align*}
hence
\begin{align*}
S_n = \sum_{k = 1}^n X_k  = \overline{S}_{n|m} + \widetilde{S}_{n|m}.
\end{align*}

For $1 \leq j \leq N$, we construct \hypertarget{biguj:eq13}{the} block random  \hypertarget{bigrj:eq13}{variables}
\begin{align}\label{defn_U_R}
{U}_j = \sum_{k = (2j-2)m + 1}^{(2j - 1)m} \big(X_k - \E_{\FF_m} X_k\bigr), \quad  R_j =  \sum_{k = (2j-1)m + 1}^{2jm} \big(X_k - \E_{\FF_m} X_k \bigr),
\end{align}
and put $\sqrt{2m}\,\overline{V}_j = U_j + R_j$, hence $\overline{S}_{n|m} = \sqrt{2m}\sum_{j = 1}^{N} \overline{V}_j$. Note that by construction, the blocks $\overline{V}_j$, $j = 1,\ldots,N$ are i.i.d. random variables under the conditional probability measure $\P_{\FF_m}$.
We also put
\begin{align*}
&\sqrt{2m}\widetilde{V}_0 = \sum_{k = 1}^{m} \E_{\FF_m}X_k, \quad \sqrt{2m}\widetilde{V}_N = \sum_{k = (2N-1)m + 1}^{2Nm} \E_{\FF_m}X_k,\\ &\sqrt{2m}\widetilde{V}_j = \sum_{k = (2j-1)m + 1}^{(2j+1)m} \E_{\FF_m} X_k, \quad \text{for $j = 1,\ldots,N-1$.}
\end{align*}
Note that $\widetilde{V}_j$, $j = 0,\ldots,N$ are a sequence of independent random variables with respect to $\P$, and are i.i.d. for $1 \leq j \leq N-1$.

The \hypertarget{barsigmajj:eq13}{following} partial \hypertarget{kappaJM:eq13}{and} conditional \hypertarget{sigmaJCM:eq13}{variances} \hypertarget{nuJM:eq13}{are} relevant \hypertarget{barNuj:eq13}{for} the \hypertarget{tilsigmajj:eq13}{proofs}:
\begin{align}
\overline{\sigma}_{j|m}^2 = \E_{\FF_m} \overline{V}_j^2, \quad \overline{\sigma}_j^2 = \E \overline{V}_j^2, \quad \widetilde{\sigma}_{j}^2 =  \E \widetilde{V}_j^2.
\end{align}
 \hypertarget{tildeNuj:eq13}{Similarly}, \hypertarget{tilkappaj:eq13}{put}

\begin{align*}
&\overline{\kapp}_{j|m}^3 = \E_{\FF_m} \overline{V}_j^3, \quad \overline{\kapp}_j^3 = \E \overline{V}_j^3,\quad \widetilde{\kapp}_{j}^3 = \E \widetilde{V}_j^3,
\end{align*}
and if $p \geq 4$
\begin{align*}
&\overline{\nu}_{j|m}^4 = \E_{\FF_m} \overline{V}_j^4, \quad \overline{\nu}_j^4 = \E \overline{V}_j^4,\quad \widetilde{\nu}_{j}^4 = \E \widetilde{V}_j^4.
\end{align*}
We remark that Lemma \ref{lem_sig_expansion} reveals that $2 \overline{\sigma}_j^2 = \ss_m^2 + \OO(m^{-1})$, and analogous results (see Section \ref{sec_lem_moments}) hold for $\overline{\kapp}_j^3$, $\overline{\nu}_j^4$ and $\widetilde{\sigma}_j^2$, $\widetilde{\kapp}_j^3$ and $\widetilde{\nu}_j^4$. Let ${F}$ (formally\footnote{A convolution of the normal and (centered) gamma distribution works, see Section \ref{sec_wasserstein} and Lemma \ref{lem_edge_for_FF}. }) be any continuous distribution function 
and $(\overline{Z}_{j})_{1 \leq j \leq N}$ be i.i.d. and distributed according to ${F}_{}$ such that
\begin{align}\nonumber \label{moment_conditions_global_FF}
&\E\overline{Z}_{j} = 0, \quad \E\overline{Z}_{j}^2 = \overline{\sigma}_j^2 + \OO\big(m^{-1}\big), \\&\E\overline{Z}_{j}^3 = \overline{\kappa}_j^3 + \OO\big(m^{-1}\big), \quad \E\overline{Z}_{j}^4 = \overline{\nu}_j^4 + \OO\big(m^{-1}\big) \quad \text{if $p \geq 4$}.
\end{align}
Let $(\widetilde{Z}_j)_{1 \leq j \leq N}$ be an independent \hypertarget{barZ:eq14}{copy} of $(\overline{Z}_j)_{1 \leq j \leq N}$, \hypertarget{tildeZ:eq14}{and} put
\begin{align}\nonumber
\overline{Z} &= N^{-\frac{1}{2}} \sum_{j = 1}^N \overline{Z}_j, \quad \widetilde{Z} = N^{-\frac{1}{2}} \sum_{j = 1}^{N} \widetilde{Z}_j.
\end{align}

Instead of using $(\overline{Z}_j)_{1 \leq j \leq N}$ and $(\widetilde{Z}_j)_{1 \leq j \leq N}$, we could also work with the usual signed measures $\mathds{V}_n$, induced by $\Psi_n$. However, the latter are notationally (much) less convenient for our proofs. 
Next, we define \hypertarget{UUTn:eq14}{the} counter part of $\Cd_{T_n}$, \hypertarget{darkm:eq15}{namely}

\begin{align*}
\Ad_{T_n} = \int_{-T_n}^{T_n}\Bigl|\E e^{\ic \xi S_n/\sqrt{n}}  - \E e^{\ic \xi (\overline{Z}+ \widetilde{Z})} \Bigr| \frac{1}{|\xi|}\, d\xi.
\end{align*}

\begin{prop}\label{prop_m_dependence}
Assume that Assumption \ref{ass_dependence} holds for $m = n^{\md}$ with $0 \leq \md < 1$. Then for $2\bd +2 < \ad$, we have
\begin{align*}
\Ad_{T_n} = \oo\big(n^{-\frac{1}{2}}\log(n)^{-\bd}\big).
\end{align*}
All involved constants only depend on $\ad$, $\md$, $\ss_m$ and $\Lambda_{2,3}$.
\end{prop}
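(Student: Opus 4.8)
The plan is to pass to the conditional block decomposition of Section~\ref{sec_main_tech} and to split the range $[-T_n,T_n]$ according to the size of $|\xi|$. Since $\widetilde{S}_{n|m}$ is $\FF_m$-measurable while $\overline{V}_1,\ldots,\overline{V}_N$ are i.i.d.\ under $\P_{\FF_m}$, one has, writing $t=\xi/\sqrt N=\xi\sqrt{2m/n}$,
\[
\E e^{\ic\xi S_n/\sqrt n}=\E\Big[e^{\ic\xi\widetilde{S}_{n|m}/\sqrt n}\prod_{j=1}^N\E_{\FF_m}\big[e^{\ic t\overline{V}_j}\big]\Big],\qquad
\E e^{\ic\xi(\overline{Z}+\widetilde{Z})}=\big(\E e^{\ic t\overline{Z}_1}\big)^N\big(\E e^{\ic t\widetilde{Z}_1}\big)^N,
\]
and, since $Nt^2=\xi^2$, the claim reduces to bounding the $|\xi|^{-1}$-weighted integral over $[-T_n,T_n]$ of the difference of these two expressions. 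The one genuine structural mismatch — on the left the predictable part $e^{\ic\xi\widetilde{S}_{n|m}/\sqrt n}$ sits inside the outer expectation and is correlated with the $\overline{V}_j$ through $\FF_m$, whereas on the right the two pieces are independent — is handled by invoking that the conditional moments $\overline{\sigma}_{j|m}^2,\overline{\kapp}_{j|m}^3,\overline{\nu}_{j|m}^4$ concentrate around their deterministic counterparts $\overline{\sigma}_j^2,\overline{\kapp}_j^3,\overline{\nu}_j^4$ (Lemma~\ref{lem_sig_expansion} and Section~\ref{sec_lem_moments}), so that the outer average over $\FF_m$ effectively decouples from the block product up to errors that are negligible after integration.

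For the low range $|\xi|\lesssim\sqrt{n/m}$ — where $|t|$ stays bounded — I would Taylor-expand each $\E_{\FF_m}[e^{\ic t\overline{V}_j}]$, and likewise each $\E e^{\ic t\overline{Z}_1}$ and $\E e^{\ic t\widetilde{Z}_1}$, and compare them termwise, using that a block of length $\asymp m$ is nearly Gaussian ($\overline{\sigma}_j^2\asymp1$, $\overline{\kapp}_j^3=\OO(m^{-1/2})$, $\E\overline{V}_j^4=3(\overline{\sigma}_j^2)^2+\OO(m^{-1})$, cf.\ Section~\ref{sec_lem_moments}) together with the moment matching \eqref{moment_conditions_global_FF}. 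After multiplying the $N$ factors, the leading quadratic terms produce the Gaussian damping $e^{-c\xi^2}$, the cubic (Edgeworth) terms cancel up to lower order by construction, and the surviving discrepancy is of fourth order in $\xi$ with coefficient of size $\OO(n^{-1})$ plus the conditional-moment fluctuations; weighting by $|\xi|^{-1}$ and integrating then yields a contribution $\oo(n^{-1/2})$. The extra factor $(\log n)^{-\bd}$ is produced by first truncating the $X_k$ at a threshold of logarithmic order before carrying out this expansion, which is affordable precisely because of the $\log^{\ad}$-moment in \hyperref[B1]{\Bone} and the assumption $2\bd+2<\ad$.

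The hard part is the intermediate-to-high range $\sqrt{n/m}\lesssim|\xi|\le T_n$, where $|t|$ grows (up to $\asymp\sqrt m$) and no Taylor expansion of the block transforms is available; here both transforms must be shown to be negligible. For the surrogate this causes no trouble: $F$ being a convolution with a Gaussian, $|\E e^{\ic\xi(\overline{Z}+\widetilde{Z})}|\le e^{-c\xi^2}$ uniformly, which is $\oo\big(n^{-1/2}(\log n)^{-\bd}/\log n\big)$ once $|\xi|\gtrsim\sqrt{\log n}$ and hence negligible after the $|\xi|^{-1}$-integration. The real task is to show $\int_{\sqrt{n/m}\lesssim|\xi|\le T_n}|\E e^{\ic\xi S_n/\sqrt n}|\,|\xi|^{-1}\,d\xi=\oo\big(n^{-1/2}(\log n)^{-\bd}\big)$ \emph{without any Cram\'{e}r-type hypothesis on} $(X_k)$. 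This is where the recursive strategy enters: one treats the $N$ conditionally i.i.d.\ blocks $\overline{V}_j$, together with the independent $\widetilde{V}_j$, as a new, coarser sequence, re-expresses the characteristic function at the level of these blocks, and reduces the estimate to a comparable quantity at the coarser scale $N$ with a smaller block length; iterating $\OO(\log n)$ times — with careful bookkeeping of the accumulated logarithmic losses (again using $2\bd+2<\ad$) and repeated use of the moment expansions of Section~\ref{sec_lem_moments} — brings one to a base case that can be handled directly, e.g.\ a Berry--Esseen-type bound at a bounded scale, or a quadratic estimate $|\E e^{\ic\theta B}|\le 1-c\theta^2$ for a sub-block sum $B$ of bounded length valid since $|\theta|=|\xi|/\sqrt n\le\co_T$ is kept below a small absolute threshold. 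Carrying this recursion out rigorously under the single weak-dependence assumption \hyperref[B2]{\Btwo}, keeping every error term sharp, is the principal obstacle, and is where the bulk of the work in Sections~\ref{sec_proof_prop_m_dep}--\ref{sec_proof_dep_it} is spent.
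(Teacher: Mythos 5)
Your outline reproduces the paper's starting point (the conditional block factorization and a Taylor/moment-matching comparison of block characteristic functions), but the part you yourself call ``the real task'' --- the range $\sqrt{n/m}\lesssim|\xi|\le T_n$ --- is not proved: it is delegated to an unspecified multiscale recursion iterated $\OO(\log n)$ times, with no statement of what quantity is being recursed on, what the one-step inequality is, or how the accumulated errors stay below $n^{-1/2}(\log n)^{-\bd}$. That is precisely the step that cannot be waved through, and it is also a misattribution of the paper's mechanism: no recursion is used for this proposition. The paper obtains, in a single step, the decay $\bigl\|\prod_{j=N/2}^{N}\varphi_{j|m}(\xi_N)\bigr\|_1\lesssim e^{-\co_{\varphi}\xi^2}+e^{-\sqrt{N/32}\log(8/7)}$ uniformly over the whole central zone $\xi^2\le\co_T n$ (Lemma \ref{lem_bound_prod_cond_char}), by choosing inside the conditioning scheme an effective sub-block length $l(\xi)$ that shrinks as $|\xi|$ grows --- essentially your ``quadratic estimate below a small threshold'' idea, but implemented at one scale rather than through iterated coarse-graining. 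With that bound, the telescoping product formula \eqref{eq_prod_formula} treats all $|\xi|\le T_n$ simultaneously: the single-factor difference from Lemma \ref{lem_taylor_expansion_smooth_function} is only polynomially large in $|\xi_N|$ and is absorbed by the factor $e^{-\co_{\varphi}\xi^2}$, so no low/high frequency split is needed at all. The recursive arguments of Sections \ref{sec_proof_dep_it} and \ref{sec_proof_prop_quad} belong to the $3<p\le4$ results and recurse on the smoothed Edgeworth error $\Delta_m^{\diamond}$, not on characteristic-function decay.

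Two further points in your low-frequency analysis would break down at $p=3$ as written. You invoke $\E\overline{V}_j^4=3(\overline{\sigma}_j^2)^2+\OO(m^{-1})$, but under Assumption \ref{ass_dependence} with $p=3$ the fourth moment need not exist; the paper instead truncates the \emph{block} variables at $\tau_n$ (Step 2 of Lemma \ref{lem_taylor_expansion_smooth_function}, via the Nagaev-type Lemma \ref{th:Moritz} --- this block-level truncation, not a logarithmic truncation of the individual $X_k$, is where $(\log n)^{-\bd}$ and the requirement $2\bd+2<\ad$ enter) and controls the fourth-order remainder through the distributional comparison $\Delta_{j,m}$, which requires the Berry--Esseen input $\sup_x|\Delta_{j,m}(x)|\lesssim m^{-1/2}$ of Lemma \ref{lem_taylor_expansion_smooth_function_I_BE}. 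Likewise, ``the conditional moments concentrate'' does not by itself license replacing the random conditional characteristic functions by the deterministic surrogate in $L^1$: one needs the coupling step exchanging $\overline{V}_j$ for the $\FF_m$-independent $\overline{V}_j^{*}$ (Lemma \ref{lem_polylipschitz_distance}); and the structural mismatch you worry about is resolved without any decoupling, simply by $\bigl|\E e^{\ic\xi S_n/\sqrt n}-\E e^{\ic\xi(\overline Z+\widetilde Z)}\bigr|\le\bigl\|\E_{\FF_m}e^{\ic\xi\overline S_{n|m}/\sqrt n}-\E e^{\ic\xi\overline Z}\bigr\|_1+\bigl|\E e^{\ic\xi\widetilde S_{n|m}/\sqrt n}-\E e^{\ic\xi\widetilde Z}\bigr|$, using $|e^{\ic x}|=1$.
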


\begin{rem}
Note that without loss of generality, $\md$ can be chosen to be arbitrarily close to one, a fact we shall use repeatedly in the proof.
\end{rem}

\begin{prop}\label{prop_m_dep_it}
Assume that Assumption \ref{ass_dependence} holds for $m = n (\log n)^{-\md}$ with $\md > 2$. If $3 < p < 4$, then
\begin{align*}
\Ad_{T_n} \lesssim n^{1-\frac{p}{2}+\delta},
\end{align*}
where $\delta > 0$ can be arbitrarily small. All involved constants only depend on $p,\ad,\md$, $\ss_m$ and $\Lambda_{2,p}$.
\end{prop}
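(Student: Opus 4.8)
The plan is to estimate $\Ad_{T_n}$ by splitting the frequency window $[-T_n,T_n]$ into a low/moderate part, on which a sharp conditional Edgeworth expansion of the characteristic function is carried out, and a high-frequency part, on which both characteristic functions are simply shown to be small.

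\emph{Truncation.} I first truncate every increment at a level $\tau\thicksim\sqrt n$ (up to a power of $\log n$), replacing $\ss_m^2$, $\kapp^3$ and the moments defining $\overline Z_j,\widetilde Z_j$ in \eqref{moment_conditions_global_FF} by their truncated analogues. Using \Bone together with Lemma \ref{lem_S_n_third_expectation}, the tail mass $n\,\P(|X_0|>\tau)$, the induced change of the cumulants, and hence the change of $\Ad_{T_n}$, are all $\lesssim n^{1-\frac p2+\delta}$; moreover this choice of $\tau$ is calibrated so that the truncated fourth cumulant of $S_n/\sqrt n$ has order exactly $n^{1-p/2}$, which is the error budget.

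\emph{Low/moderate frequencies.} Conditioning on $\FF_m$, the centred blocks $\overline V_j$ are i.i.d., hence $\E_{\FF_m}e^{\ic\xi\overline S_{n|m}/\sqrt n}=\psi(\xi)^N$ with $\psi(\xi)=\E_{\FF_m}e^{\ic\xi\overline V_1/\sqrt N}$ and $N=n/(2m)=(\log n)^{\md}/2$. On $|\xi|\le c\sqrt{\log n}$ the per-block frequency $\xi/\sqrt N$ tends to $0$ (as $\md>1$), so I expand $\log\psi$ to third order; the quadratic and cubic coefficients are $\overline\sigma_{1|m}^2$ and $\overline\kapp_{1|m}^3$, and the remainder is controlled by the truncated fourth conditional cumulant of a block, of size $\lesssim m\,\tau^{4-p}$. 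Taking the outer expectation over $\FF_m$, the conditional cumulants average to the unconditional $\overline\sigma_1^2,\overline\kapp_1^3$ appearing in \eqref{moment_conditions_global_FF}, and the remaining centred fluctuations (of order $m^{-1/2}$) are handled by the moment lemmas of Section \ref{sec_lem_moments}; doing the same for the independent blocks $\widetilde V_j$ against $\widetilde Z_j$, one obtains $|\E e^{\ic\xi S_n/\sqrt n}-\E e^{\ic\xi(\overline Z+\widetilde Z)}|\lesssim n^{1-p/2}(1+\xi^4)e^{-c\xi^2}$ up to terms absorbed by \Btwo, and $\int\cdot\,d\xi/|\xi|$ over this range is $\lesssim n^{1-p/2}$.

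\emph{High frequencies, recursion, and the equivalence.} On $c\sqrt{\log n}\le|\xi|\le T_n$ I only need $|\E e^{\ic\xi S_n/\sqrt n}|\lesssim e^{-c\xi^2}$ (and the analogous, easy bound for the target), after which this range contributes $\lesssim n^{-1}$; since $T_n=\co_T\sqrt n$ with $\co_T$ small no Cram\'er-type condition enters, but establishing this decay uniformly over the whole window under only the algebraic input \Btwo is the delicate step. Here I run a recursion in the dependence length: an $m$-dependence approximation at a smaller scale $m'$ perturbs $\Ad_{T_n}$ by $\lesssim m'^{-2}\sqrt n\,\Lambda_{2,p}$, each step being closed by a smoothed-expansion estimate of the type of Lemma \ref{lem_diamond_Sn_expansions_I}; iterating, the effective block length becomes polynomially small, a direct characteristic-function estimate then applies, and the accumulated multiplicative loss stays polylogarithmic and is absorbed into $n^{\delta}$. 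Finally, (i)$\Leftrightarrow$(ii) follows as in Theorem \ref{thm_edge}: (i) together with the bound on $\Ad_{T_n}$ gives (ii) by Esseen-type smoothing, and conversely. The main obstacles are the precision required of the conditional-cumulant and fluctuation estimates in the moderate-frequency step — needed to reach the honest rate $n^{1-p/2}$ rather than a polylogarithmic surrogate — and the recursive control of the high-frequency range under \Btwo alone.
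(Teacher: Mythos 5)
There is a genuine gap, and it sits exactly at the heart of the proposition. Your moderate-frequency step claims that after expanding the (conditional) log-characteristic function of a block to third order, the remainder is "controlled by the truncated fourth conditional cumulant" and hence of size $\approx m^{-1}\tau^{4-p}\thicksim n^{1-p/2}$ per normalized block. That is not a valid bound: the remainder of a third-order expansion is controlled by fourth \emph{absolute moments} (or, by interpolation, $p$-th moments), and the truncated fourth moment of a block is of order one, not $n^{1-p/2}$ -- its bulk is the Gaussian-type contribution $\thicksim 3\overline{\sigma}_j^4$, which does not cancel against the target $\overline{Z}_j$ because for $3<p<4$ only three moments are matched in \eqref{moment_conditions_global_FF}. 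Even if you push the expansion to fourth (or any fixed) order, the leftover remainder per block is $|\xi_N|^{k}$ times an $O(1)$ moment, and since in this proposition the number of blocks is only $N\thicksim(\log n)^{\md}$, the total is $N^{1-k/2}|\xi|^{k}$, i.e.\ only polylogarithmically small -- never $n^{1-p/2+\delta}$. In other words, no finite-order Taylor/cumulant matching can deliver the rate here; the fourth-order terms of $\overline{V}_j$ and $\overline{Z}_j$ must cancel because the two \emph{distributions} are close at the block level. This is precisely the quantity $\sup_x|\Delta_{j,m}(x)|$ appearing in Lemma \ref{lem_taylor_expansion_smooth_function}, for which the Berry--Esseen bound $m^{-1/2}$ (Lemma \ref{lem_taylor_expansion_smooth_function_I_BE}) is no longer sufficient when $p>3$. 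The paper's proof resolves this circularity by the diamond construction (adding increments with compactly supported Fourier transform, so that the Berry--Esseen characteristic of the smoothed process vanishes beyond $\co_T\sqrt{n}$), proving the block-level Edgeworth bound $\sup_x|\Delta_m^{\diamond}(x)|\lesssim m^{1-\frac p2+\delta}$ by the recursion of Lemmas \ref{lem_diamond_Sn_expansions_I}--\ref{lem_diamond_Sn_expansions_II}, and then feeding this into the diamond versions of the Taylor-comparison lemmas (Lemmas \ref{lem_taylor_expansion_smooth_function_diamond}, \ref{lem_taylor_expansion_smooth_function_diamond_II}) inside the telescoping/product argument of Proposition \ref{prop_m_dependence}. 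You do mention "a smoothed-expansion estimate of the type of Lemma \ref{lem_diamond_Sn_expansions_I}", but only in your high-frequency step and for a different purpose (decay of the characteristic function), so the essential mechanism is absent where it is actually needed.

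Two further, smaller points. First, your reduction $\E_{\FF_m}e^{\ic\xi\overline S_{n|m}/\sqrt n}=\psi(\xi)^N$ with a single $\psi$, followed by "averaging the conditional cumulants over $\FF_m$", glosses over the fact that the $\varphi_{j|m}$ are random and only one-dependent; the paper handles this through the leave-one-out product formula combined with the $L^1$ bound of Lemma \ref{lem_bound_prod_cond_char}, and the fluctuations $\overline{\sigma}_{j|m}^2-\overline{\sigma}_j^2$, $\overline{\kapp}_{j|m}^3-\overline{\kapp}_j^3$ are of order $m^{-1}$ (Lemmas \ref{lem_sig_expansion}, \ref{lem_cum_expansion}), not $m^{-1/2}$ -- an average-then-exponentiate step is not available as stated. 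Second, your high-frequency recursion "in the dependence length" is not needed and not well-posed inside this proposition (the process is already exactly $m$-dependent, and making the "effective block length polynomially small" has no meaning within the fixed blocking $N\thicksim(\log n)^{\md}$); the Gaussian-type decay you want on $c\sqrt{\log n}\le|\xi|\le T_n$ is exactly what Lemma \ref{lem_bound_prod_cond_char} supplies, up to the additive term $e^{-\sqrt{N/32}\log(8/7)}$, which is super-polynomially small only because $\md>2$ -- a point your sketch does not track. Finally, the equivalence (i)$\Leftrightarrow$(ii) you discuss is not part of this proposition (it belongs to Theorem \ref{thm_edge_frac}).
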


\begin{prop}\label{prop_m_dep_it_quad}
Assume that Assumption \ref{ass_dependence} holds for $m = n (\log n)^{-\md}$ with $\md \geq 5$. If $p \geq 4$, then
\begin{align*}
\Ad_{T_n} \lesssim n^{-1} (\log n)^5.
\end{align*}
All involved constants only depend on $p,\ad,\md$, $\ss_m$ and $\Lambda_{2,p}$.
\end{prop}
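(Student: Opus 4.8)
The plan is to establish Proposition~\ref{prop_m_dep_it_quad} by a recursion in the sample size, running parallel to, but sharper than, the argument behind Proposition~\ref{prop_m_dep_it}: the improvement from the $n^{\delta}$-loss there to a purely logarithmic loss here rests on the availability of the fourth conditional moments $\overline{\nu}_j^4,\widetilde{\nu}_j^4$ and their matching in \eqref{moment_conditions_global_FF}, while the requirement $\md\ge 5$ reflects that $m=n(\log n)^{-\md}$ makes the block count $N=n/(2m)\asymp(\log n)^{\md}$. For $n$ below a fixed threshold the bound is trivial: the integrand of $\Ad_{T_n}$ is $\OO(|\xi|^3)$ near the origin because the first four moments of $S_n/\sqrt n$ and of $\overline{Z}+\widetilde{Z}$ agree up to $\OO(m^{-1})$, so $\Ad_{T_n}$ is finite and absorbed by the constant. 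For larger $n$ one conditions on $\FF_m$ and uses the factorisations
\begin{align*}
\E e^{\ic\xi S_n/\sqrt n} &= \E\Bigl[e^{\ic\xi\widetilde{S}_{n|m}/\sqrt n}\,\textstyle\prod_{j=1}^{N}\E_{\FF_m}e^{\ic\xi\overline{V}_j/\sqrt N}\Bigr],\\
\E e^{\ic\xi(\overline{Z}+\widetilde{Z})} &= \textstyle\prod_{j=1}^{N}\E e^{\ic\xi\overline{Z}_j/\sqrt N}\cdot\prod_{j=1}^{N}\E e^{\ic\xi\widetilde{Z}_j/\sqrt N},
\end{align*}
exploiting that the $\overline{V}_j$ in \eqref{defn_U_R} are i.i.d.\ under $\P_{\FF_m}$, that $\widetilde{S}_{n|m}$ is $\FF_m$-measurable and decomposes as $N^{-1/2}\sum_j\widetilde{V}_j$ into \emph{independent} blocks, that $2m/n=1/N$, and a routine adjustment to absorb the boundary block $\widetilde{V}_0$.

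The second step is a telescoping (Lindeberg) replacement of the blocks, one at a time, combining $\overline{V}_j$ with $\overline{Z}_j$ and $\widetilde{V}_j$ with $\widetilde{Z}_j$. Since the first four moments coincide (by \eqref{moment_conditions_global_FF} and the cumulant expansions behind Lemma~\ref{lem_sig_expansion}), a fourth-order Taylor expansion of $e^{\ic u}$ leaves a fifth-order remainder; as only $p\ge 4$ is assumed, one must keep the truncated remainder $\min\{|u|^4,|u|^5\}$, equivalently truncate $\overline{V}_j,\widetilde{V}_j$ at level $\sqrt m(\log n)^{-c}$ and bound the excess via \Bone. Tracking these logarithmic losses through the $N\asymp(\log n)^{\md}$ swaps is exactly where the power $5$ materialises. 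For the telescoped sum to be integrable against $d\xi/|\xi|$ over $|\xi|\le T_n$ one needs the product of the remaining block factors to decay: on an $\FF_m$-event of overwhelming probability, $\overline{\sigma}_{j|m}^2=\E_{\FF_m}\overline{V}_j^2\ge\ss_m^2/4>0$ (Lemma~\ref{lem_sig_expansion}, Lemma~\ref{lem_sig_expressions_relations}) forces $|\E_{\FF_m}e^{\ic\xi\overline{V}_j/\sqrt N}|\le e^{-c(\xi/\sqrt N)^2}$ whenever $\xi/\sqrt N$ stays in the low-frequency window of the size-$2m$ problem, so the product of the other $N-1$ factors is at most $e^{-c\xi^2}$ there, killing all but $|\xi|\lesssim\sqrt{\log n}$; on the complementary small-probability event the trivial bound $1$ is used and estimated crudely.

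The genuinely recursive ingredient is the control of $\E_{\FF_m}e^{\ic\xi\overline{V}_j/\sqrt N}$ (and of $\E e^{\ic\xi\widetilde{V}_j/\sqrt N}$) in the complementary high-frequency range $|\xi/\sqrt N|\gtrsim 1$, i.e.\ $|\xi|$ up to $T_n$, where no Gaussian decay is available and no Cram\'er-type input may be invoked: each block is, up to the conditional centering, a normalised partial sum over a window of length $\asymp 2m<n$ of a process still satisfying Assumption~\ref{ass_dependence}, so the induction hypothesis (Proposition~\ref{prop_m_dep_it_quad} at sample size $2m$), together with Lemma~\ref{lem_edge_for_FF}, Lemma~\ref{lem_taylor_expansion_smooth_function_quad} and Lemma~\ref{lem_diamond_Sn_expansions_quad}, furnishes the Edgeworth-type approximation of the block characteristic function over $|\cdot|\le\co_T\sqrt{2m}$ with error $\lesssim(2m)^{-1}(\log 2m)^5$. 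Iterating, with window lengths $n,2m,\dots$ dropping geometrically (in $\log$) until one reaches the polynomial regime where Proposition~\ref{prop_m_dependence} applies and the block count is polynomially large (hence the localisation even stronger), one assembles the bound: the error produced at each level, once multiplied by the super-polynomially small products of block characteristic functions from the shallower levels, survives only on a vanishing $\xi$-range, so the operative error stays of order $n^{-1}(\log n)^5$ and the accumulating constants remain controlled by $p,\ad,\md,\ss_m,\Lambda_{2,p}$ alone.

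The hardest parts are: (i) passing from the random conditional factors $\E_{\FF_m}e^{\ic\xi\overline{V}_j/\sqrt N}$ to deterministic Edgeworth surrogates — one must bound the fluctuations (driven by $\overline{\sigma}_{j|m}^2,\overline{\kapp}_{j|m}^3,\overline{\nu}_{j|m}^4$) uniformly in $|\xi|\le T_n$ and show that the correlated factor $e^{\ic\xi\widetilde{S}_{n|m}/\sqrt n}$ can be pulled out at negligible cost, which is where the conditioning scheme and the moment lemmas of Section~\ref{sec_main_lemmas} do the real work; (ii) controlling the block characteristic functions on the full range up to $T_n$ without any Cram\'er condition, for which the recursion is unavoidable and must be organised so that the $\asymp\log n/\log\log n$ levels contribute neither a power of $n$ nor an unbounded constant; and (iii) the borderline nature of $p=4$, where the missing fifth moment forces the truncation above and the logarithmic bookkeeping must be arranged to close at exactly $(\log n)^5$ under $\md\ge 5$, much as in the proof of Proposition~\ref{prop_m_dep_it}.
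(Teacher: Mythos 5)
Your overall architecture (blocking, conditional independence of the $\overline{V}_j$ under $\P_{\FF_m}$, telescoping product comparison with surrogates matching four moments, truncation to compensate the missing fifth moment, and Gaussian-type decay of the product of conditional characteristic functions) matches the paper, but the way you set up the recursion — the one genuinely new ingredient — is mischaracterized, and as stated it would not close. First, the recursion in the paper is \emph{not} needed to control block characteristic functions ``at high frequency where no Gaussian decay is available'': Lemma \ref{lem_bound_prod_cond_char} already supplies the decay $e^{-\co_{\varphi}\xi^2}+e^{-c\sqrt{N}}$ of the product over half the blocks uniformly on the whole range $\xi^2\le \co_T^2 n$ (this is exactly why $\co_T$ is small and the coupling parameter $l(\xi)$ is chosen adaptively). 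The recursion's actual role is different: the Taylor-expansion lemmas (Lemma \ref{lem_taylor_expansion_smooth_function_quad} and \ref{lem_taylor_expansion_smooth_function_quad_II}) bound the per-block error by terms involving $\sup_{x}\bigl|\Delta_{j,m}^{(\diamond)}(x)\bigr|$ — a \emph{distribution-function} distance of the block to its surrogate, arising when $g^{(1)}$ is integrated against $\P(\overline{V}_j^{\ast}\ge x)-\P(\overline{Z}_j\ge x)$ — and the recursion upgrades this from the generic Berry--Esseen rate $m^{-1/2}$ (Lemma \ref{lem_taylor_expansion_smooth_function_I_BE}) to essentially $m^{-1}$ times logarithms, which is what produces $n^{-1}(\log n)^5$.

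Second, your proposed induction hypothesis — Proposition \ref{prop_m_dep_it_quad} itself at sample size $2m$, i.e.\ an \emph{integrated characteristic-function} bound $\Ad_{T_{2m}}\lesssim m^{-1}(\log m)^5$ — does not furnish the block-level input just described. To convert such a bound into the needed Kolmogorov-type bound $\sup_x|\Delta_{j,m}(x)|$ at scale $m$ one must invoke a smoothing inequality, and this reintroduces the Berry--Esseen tail/characteristic $\Cd_{T_{2m}}$ at that scale, which is precisely the quantity that cannot be controlled under the standing assumptions. The paper's device for closing the loop is the smoothing $X_k^{\diamond}=X_k+H_k-H_{k-1}$ with $H_k\stackrel{d}{=}G_{a,b}$ band-limited, so that $\E e^{\ic\xi S_n^{\diamond}/\sqrt n}$ vanishes for $|\xi|>\sqrt{n}|ab|$, Berry's inequality applies with $T_n^{+}=\infty$, and the recursion (Lemma \ref{lem_diamond_Sn_expansions_quad}) is run on the smoothed Kolmogorov distance $\sup_x|\Delta_m^{\diamond}(x)|$ rather than on any characteristic-function functional; only at the top level is $\Ad_{T_n}$ (unsmoothed) linked to $\sup_x|\Delta_m^{\diamond}(x)|$ through the ``$\diamond$'' variants of the quad Taylor lemmas, and then Lemma \ref{lem_diamond_Sn_expansions_quad} {\bf (ii)} finishes. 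You cite these lemmas, but your narrative neither uses the $\diamond$-smoothing for this purpose nor explains how your characteristic-function induction would avoid the uncontrollable $\Cd$ at the sub-scales, so the key step of your argument is missing. (Minor further points: the factor $e^{\ic\xi\widetilde{S}_{n|m}/\sqrt n}$ is removed trivially by $|e^{\ic x}|=1$ after the triangle inequality, not by a delicate decoupling; the truncation in the paper is at level $\tau_n\asymp\sqrt{\log n}$ for the normalized blocks, not $\sqrt m(\log n)^{-c}$; and the paper's iteration terminates internally with a crude geometric tail term rather than by descending to the polynomial regime of Proposition \ref{prop_m_dependence}.)
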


Proving Propositions \ref{prop_m_dependence}, \ref{prop_m_dep_it} and \ref{prop_m_dep_it_quad} is the most involved part. For the proof of Proposition \ref{prop_m_dependence}, we link Edgeworth expansions to Berry-Esseen type bounds, see Lemmas \ref{lem_taylor_expansion_smooth_function}, \ref{lem_taylor_expansion_smooth_function_II} and \ref{lem_taylor_expansion_smooth_function_I_BE}. For the proof of Propositions \ref{prop_m_dep_it} and \ref{prop_m_dep_it_quad}, we use an iterative argument. Throughout all proofs, we make the following conventions.
\begin{conv}\label{conv}
\hspace{10cm}
\begin{description}
\item[(i)] We \hypertarget{scAc:eq14}{assume} without loss of generality $m' = 0$. 
\item[(ii)] The abbreviations $I$, $II$, $III$, $\ldots$, for expressions (possible with some additional indices) vary from proof to proof.
\item[(iii)] Given a set $\mathcal{A}$, we denote by $\mathcal{A}^c$ its complement.
\item[(iv)] We write $\stackrel{def}{=}$ if we make definitions on the fly.
\item[(v)] We denote by $\co, \co_0, \ldots $ generic, absolute constants that may vary from line to line.
\end{description}
\end{conv}

\section{Main Lemmas}\label{sec_main_lemmas}

Throughout this section, we work under Assumption \ref{ass_dependence}. For the proof of \hypertarget{xiklstar:eq15}{the} main lemmas, \hypertarget{EEk:eq15}{we} require some more notation connected to the weak dependence coefficients $\lambda_{k,p}$. For $k \in \Z$, consider the $\sigma$-algebra $\mathcal{E}_k = \sigma\bigl(\varepsilon_j, \, j \leq k\bigr)$, and recall \hypertarget{xkell:eq15}{that} the filters $\xi_{k}^{(l,*)}$ are defined in \eqref{defn_strich_depe_0}  \hypertarget{bigXkstar:eq15}{as}
\begin{align}\label{defn_strich_depe_2}
\xi_{k}^{(l,*)} = \bigl(\varepsilon_{k}, \varepsilon_{k - 1},\ldots,\varepsilon_{k - l}',\varepsilon_{k - l - 1}',\varepsilon_{k-l-2}',\ldots\bigr).
\end{align}
Write ${X}_{k}^{(l,*)} = f_m(\xi_{k}^{(l,*)})$ and note that ${X}_{k}^{*} = {X}_{k}^{(k,*)}$, where ${X}_{k}^{*}$ is given in \eqref{defn_couple_star}. In an analogous manner, let $(\varepsilon_k'')_{k \in \Z}$ be another independent copy of $(\varepsilon_k)_{k \in \Z}$. For $l \leq k$, we then introduce the quantities $X_k^{(l,**)}, X_k^{**}$ in analogy to $X_k^{(l,*)}, X_k^{*}$. This means \hypertarget{scEkstar:eq15}{that} we replace every $\varepsilon_k'$ with $\varepsilon_k''$ at all corresponding places. For $k \geq 0$, we \hypertarget{EEKK:eq15}{also} introduce the $\sigma$-algebras
\begin{align*}
\mathcal{E}_k^* = \sigma\bigl(\varepsilon_j, \, 1 \leq j \leq k \, \text{and} \,  \varepsilon_i', \, i \leq 0\bigr),
\end{align*}
and $\mathcal{E}_k^{**}$ in an analogous manner.

\subsection{Moments and Conditional Moments}\label{sec_lem_moments}

Recall Convention \ref{conv} {\bf (i)}, that we heavily use in this section to simplify notation. This has no impact on \hypertarget{snsquare:eq15}{the} actual results.

\begin{lem}\label{lem_S_n_third_expectation}
Grant Assumption \ref{ass_dependence}. \hypertarget{sn:eq15}{Then}
\begin{description}
  \item[(i)] $\big|\E S_n^3\big| \lesssim n$,
  \item[(ii)] $\big|\E S_n^3  - n \sum_{i,j \in \Z} \E X_0 X_i X_j  \big| = O\big(1\big)$,
  \item[(iii)]  $\big|\E S_n^4  - 3 n^2 s_n^4\big| \lesssim n$,
  \item[(iv)] $\big|\sum_{i,j \in \Z} \E X_0 X_i X_j  \big| < \infty$.
\end{description}
\end{lem}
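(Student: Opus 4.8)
The plan is to exploit the $m$-dependence (which makes all sums finite) together with the weak-dependence bound $\Lambda_{2,p} < \infty$ from \hyperref[B2]{\Btwo} to control the relevant mixed moments. First I would expand
$$
\E S_n^3 = \sum_{1 \le i,j,k \le n} \E X_i X_j X_k.
$$
By stationarity, $\E X_i X_j X_k$ depends only on the two gaps $i-j$, $i-k$; reindexing gives $\E S_n^3 = \sum_{|a|,|b| \le n} (n - r(a,b)) \E X_0 X_a X_b$ with $0 \le r(a,b) \le 2(|a|\vee|b|)$. So (ii) and (iv) both reduce to showing the \emph{absolute summability}
$$
\sum_{a,b \in \Z} |a \vee b| \, \bigl| \E X_0 X_a X_b \bigr| < \infty,
$$
since then $\bigl| \E S_n^3 - n \sum_{a,b} \E X_0 X_a X_b \bigr| \le \sum_{a,b} r(a,b) |\E X_0 X_a X_b| = O(1)$, which is (ii), and $|\sum_{a,b}\E X_0X_aX_b| < \infty$ is (iv); (i) is then immediate from (ii). The key estimate is therefore a bound on $|\E X_0 X_a X_b|$ in terms of the coefficients $\lambda_{k,p}$.

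The main step is the triple-product covariance bound. For $0 \le a \le b$ (the other orderings are symmetric), I would split according to which gap is largest. Using the coupling $X_b^{(l,*)}$ that re-randomizes the innovations at time $\le b-l$: replacing $X_b$ by a version independent of $X_0, X_a$ costs at most $\|X_b - X_b^{(\lceil (b-a)/2\rceil,*)}\|_p \lesssim \lambda_{\lceil (b-a)/2 \rceil, p}$ in $L^p$, and by Hölder (using $\|X_0\|_p,\|X_a\|_p < \infty$ from \hyperref[B1]{\Bone}, $p \ge 3$) one gets $|\E X_0 X_a X_b| \lesssim \lambda_{\lceil(b-a)/2\rceil,p} + (\text{term where }X_b\text{ is independent of the pair})$; the independent term equals $\E X_b \cdot \E X_0 X_a = 0$ up to a further coupling error, since $\E X_k = 0$. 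Iterating this on both gaps yields a bound like $|\E X_0 X_a X_b| \lesssim \lambda_{\lfloor a/2\rfloor,p}^{?} + \lambda_{\lfloor (b-a)/2\rfloor, p}^{?}$ (possibly $\lambda$ to a fractional power, or a product), and then $\sum_{a,b} (a\vee b)|\E X_0X_aX_b|$ is controlled by $\sum_k k^2 \lambda_{k,p} = \Lambda_{2,p} - \|X_0\|_p < \infty$ after summing the free index. This is essentially the computation behind Lemma \ref{lem_sig_expressions_relations} and the third-moment formula quoted in Remark \ref{rem_replace_s_sqrt_n}, extended to three factors.

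For (iii), I would expand $\E S_n^4 = \sum_{i,j,k,l} \E X_i X_j X_k X_l$ and compare with $3 n^2 s_n^4 = 3 (\E S_n^2)^2$. Writing $\E X_iX_jX_kX_l = \E X_iX_j \,\E X_kX_l + (\text{cumulant-type remainder})$ summed over the three pairings, the product-of-pairs terms reassemble into $3(\sum_{i,j}\E X_iX_j)^2 = 3(\E S_n^2)^2$ up to boundary corrections of size $O(n)$ (again because $\sum_k |k| |\E X_0X_k| < \infty$), while the fourth-order mixed cumulant $\kappa_4(X_i,X_j,X_k,X_l)$ is absolutely summable over three free indices — this needs $p \ge 4$, which is exactly the hypothesis under which (iii) is invoked (see Assumption \ref{ass:global} and the $\nu_j^4$ quantities), together with a four-fold coupling bound analogous to the one above. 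Hence $\bigl|\E S_n^4 - 3n^2 s_n^4\bigr| \lesssim n$.

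I expect the main obstacle to be the triple (and quadruple) coupling estimate: one must re-randomize innovations "in the middle" of a product of factors whose dependence ranges overlap, keep careful track of which coupled version is being introduced (the $\xi_k^{(l,*)}$, $\xi_k^{(l,**)}$ notation of Section \ref{sec_main_lemmas} is set up precisely for this), and verify that the resulting powers of $\lambda_{k,p}$ still sum against the weight $k^2$. Once that combinatorial bookkeeping is done, everything else is reindexing and the triangle inequality.
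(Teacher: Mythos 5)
Your plan for parts \textbf{(i)}, \textbf{(ii)}, \textbf{(iv)} is essentially the paper's own proof: reindex by stationarity, reduce to the weighted absolute summability $\sum_{a,b}(|a|\vee|b|)\,|\E X_0X_aX_b|<\infty$, and prove the triple-moment bound by decoupling at whichever gap is largest — at the last gap via the coupled $X_b^{(l,*)}$ with $l\le b-a$ (independent of $\mathcal{E}_a$, so the decoupled term vanishes since $\E X_b=0$), and at the first gap by coupling the pair $(X_a,X_b)$ jointly and using $\E X_0=0$ — then summing against $\sum_k k^2\lambda_{k,3}<\infty$. Two cautions on your write-up of that step: drop the hedge about ``fractional powers or products'' of $\lambda$ — Hölder with exponents $(3,3,3)$ gives bounds \emph{linear} in $\lambda_{\cdot,3}$ (namely $\lambda_{b-a,3}$ in one regime and $\lambda_{a,3}+\lambda_{b,3}$ in the other), and a fractional power would in general not be summable against the weight $k^2$; also note that a bound like $\lambda_{\lfloor a/2\rfloor}+\lambda_{\lfloor(b-a)/2\rfloor}$ asserted \emph{uniformly} would not suffice (e.g.\ $\sum_{a\le b}b\,\lambda_{\lfloor a/2\rfloor}$ diverges) — the largest-gap case split you propose is what makes the weighted sum close, exactly as in the paper's proof of (ii). For \textbf{(iii)} you take a genuinely different, and arguably cleaner, route: the joint-cumulant identity $\E S_n^4=3(\E S_n^2)^2+\sum_{i,j,k,l}\kappa_4(X_i,X_j,X_k,X_l)$, so that the pairing terms reassemble into $3n^2s_n^4$ \emph{exactly} (no boundary corrections are needed, so your caution there is superfluous), and it remains to show absolute summability of the fourth cumulant over three free indices. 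The paper instead expands $\E S_n^4$ into eight groups of terms, treats the ordered quadruples by comparing only with the adjacent pairing $\E X_iX_j\,\E X_kX_l$ via the coupling bound $\|X_iX_jX_kX_l-X_iX_j(X_kX_l)^{(l-j,*)}\|_1\lesssim\min\{\lambda_{j-i,4},\lambda_{l-j,4}+\lambda_{k-j,4},\lambda_{l-k,4}\}$ and the largest-gap counting sets $\mathcal{S}_{lk},\mathcal{S}_{kj},\mathcal{S}_{ji}$, and handles the diagonal terms separately. Your cumulant route needs the same two ingredients (the four-fold coupling bound at the largest gap, plus $\sum_k k^2\lambda_{k,4}<\infty$, so $p\ge4$ here, as you correctly note), and in addition the easy estimates $|\E X_iX_k\E X_jX_l|\lesssim\lambda_{k-i,2}\lambda_{l-j,2}$ for the cross pairings and the repeated-index cumulants; what it buys is that the bookkeeping of the paper's terms $I_n,\dots,VII_n$ and of $v_n=\sum_{k\le n}\E X_kX_0$ disappears into the exact identity. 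Both routes are sound.
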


The above results are based on computations connected to cumulants and the higher-order spectral density. We refer to ~\cite{wu_zhang_IEEE_2017} for more details on this subject.

\begin{proof}[Proof of Lemma \ref{lem_S_n_third_expectation}]
{\bf (i)}: For $i \leq j \leq k$, we have
\begin{align*}
\E X_j X_k  = \E\big[(X_j X_k)^{(k-i,*)}\big|\mathcal{E}_i\big] = \E \big[X_j^{(j-i,*)} X_k^{(k-i,*)}\big|\mathcal{E}_i\big].
\end{align*}
Then, since $\E X_i = 0$, the triangle and H\"{o}lder's inequality yield
\begin{align*}
\big|\E X_i X_j X_k\big| &= \big|\E \big[ X_i \E\big[X_j X_k\big|\mathcal{E}_i\big]\big]\big| = \big|\E \big[ X_i \E\big[X_j X_k - (X_j X_k)^{(k-i,*)}\big|\mathcal{E}_i\big]\big]\big| \\&\leq \|X_i\|_3 \big(\|X_j\|_3 \|X_k -  X_k^{(k-i,*)}\|_3 + \|X_k\|_3 \|X_j -  X_j^{(j-i,*)}\|_3\big)\\&=\|X_0\|_3^2 \big(\lambda_{k-i,3} + \lambda_{j-i,3}\big).
\end{align*}
Similarly, one derives that
$$
\big|\E X_i X_j X_k \big| \leq \|X_i\|_3 \|X_j\|_3 \|X_k - X_k^{(k-j,*)}\|_3  \leq \|X_0\|_3^2 \lambda_{k-j,3}.
$$
Then by the above
\begin{align*}
&\big|\E S_n^3\big| \lesssim \sum_{1 \leq i \leq j \leq k \leq n}\big|\E X_i X_j X_k\big| \\ &\lesssim \sum_{i = 1}^n \sum_{j = i}^n\Big(\sum_{k = j}^{2j-i-1} \lambda_{j-i,3} + \sum_{k = 2j-i}^n \lambda_{k-j,3}  \Big) \\&\lesssim \sum_{i = 1}^n \sum_{j = 1}^{\infty} j \lambda_{j,3} + \sum_{i = 1}^n \sum_{j = i}^{\infty} \sum_{k = 1}^{\infty} \lambda_{k+j-i,3} \lesssim n.
\end{align*}

{\bf(ii)}: Using \hypertarget{littlelamkp:eq16}{the} results of {\bf (i)}, we obtain \hypertarget{sn:eq16}{by} stationarity
\begin{align*}
&\big|\E S_n^3 - n \sum_{i,j \in \Z} \E X_0 X_i X_j \big| \lesssim \sum_{i,j \in \N} \big(n \wedge (i \vee j )\big)\big|\E X_0 X_i X_j\big| \\&\lesssim \sum_{1 \leq i \leq j} \big(n \wedge j\big) \big|\E X_0 X_i X_j \big| \lesssim \sum_{j = 1}^n j \sum_{i = 1}^{j/2} \lambda_{j-i,3} + n \sum_{j > n} \sum_{i = 1}^{j/2} \lambda_{j-i,3} \\&+ \sum_{j = 1}^n j \sum_{i = j/2}^j \lambda_{i,3} + n \sum_{j > n} \sum_{i = j/2}^j \lambda_{i,3} \lesssim \sum_{i = 1}^{\infty} i^2 \lambda_{i,3} < \infty.
\end{align*}

{\bf (iii)}: Expanding $\E S_n^4$, we obtain by stationarity
\begin{align*}
\E S_n^4 &= \sum_{i = 1}^n \E X_i^4 + 4 \sum_{1 \leq i  < j \leq n} \E X_i^3 X_j + 4 \sum_{1 \leq i  < j \leq n} \E X_i X_j^3 \\&+ 12 \sum_{1 \leq i  < j < k \leq n} \E X_i^2 X_j X_k +12 \sum_{1 \leq i  < j < k \leq n} \E X_i X_j^2 X_k \\&+ 12 \sum_{1 \leq i  < j < k \leq n} \E X_i X_j X_k^2 +6\sum_{1 \leq i  < j \leq n} \E X_i^2 X_j^2 \\&+24\sum_{1 \leq i < j < k < l \leq n} \E X_i X_j X_k X_l\\&\stackrel{def}{=} I_n + II_n + III_n + IV_n + V_n + VI_n + VII_n + VIII_n.
\end{align*}
The most difficult term to deal with is $VIII_n$, which we handle first. For $i \leq j < k \leq l$ we have
\begin{align}
\E X_i X_j (X_k X_l)^{(l-j,*)} = \E X_i X_j  \E X_k X_l.
\end{align}
Arguing as before in {\bf (i)}, it follows that
\begin{align}
\big\|X_i X_j X_k X_l - X_i X_j (X_k X_l)^{(l-j,*)} \big\|_1 \lesssim \min\big\{\lambda_{j-i,4}, \lambda_{l-j,4} + \lambda_{k-j,4}, \lambda_{l-k,4} \big\}.
\end{align}
Next, for $1 \leq i < j < k < l \leq n$, define \hypertarget{scSkj:eq17}{the} three classes of sets
\begin{align}\nonumber
\mathcal{S}_{lk} &= \big\{i,j\, : \, \,(l-k) \geq (k-j)\vee (j-i)\big\},\\ \nonumber
\mathcal{S}_{kj} &= \big\{i,l\, : \, \,(k-j) \geq (l-k)\vee (j-i)\big\},\\
\mathcal{S}_{ji} &= \big\{k,l\, : \, \,(j-i) \geq (k-j)\vee (l-k)\big\}.
\end{align}
Note that the cardinalities $|\cdot|$ are bounded by
\begin{align}
\big|\mathcal{S}_{lk}\big| \lesssim (l-k)^2, \quad  \big|\mathcal{S}_{kj}\big| \lesssim (k-j)^2, \quad \big|\mathcal{S}_{ji}\big| \lesssim (j-i)^2.
\end{align}
With $l_k > k$\footnote{$\lambda_{k,p}$ is not necessarily monotone decreasing in $k$, which is not a problem though.} for $1 \leq k \leq n$, it follows \hypertarget{VN:eq18}{that}
\begin{align*}
&\Big|\sum_{1 \leq i < j < k < l \leq n} \big(\E X_i X_j X_k X_l - \E X_i X_j (X_k X_l)^{(l-j,*)}\big) \Big|\\&\lesssim \sum_{j = 1}^n \sum_{i = 1}^j (j-i)^2 \lambda_{j-i,4} + \sum_{k = 1}^n \sum_{j = 1}^k (k-j)^2(\lambda_{k-j,4} + \lambda_{l_k - j,4}) \\&+ \sum_{l = 1}^n \sum_{k = 1}^l (l-k)^2 \lambda_{l-k,4} \lesssim n + \sum_{k = 1}^n \sum_{j = 1}^{l_k} (l_k - j)^2\lambda_{l_k - j,4} \lesssim n.
\end{align*}
Next, let $v_n = \sum_{k = 1}^n \E X_k X_0$. Then since $\big|\sum_{m >j}\E X_m X_0\big| \lesssim j^{-2}$, we obtain
\begin{align*}
&24\sum_{1 \leq i < j < k < l \leq n} \E X_i X_j \E X_k X_l = 24\sum_{j = 2}^{n-2} \sum_{j<k<l\leq n}\E X_k X_l\big(v_n + \OO(j^{-2})\big) \\&= 24\sum_{j = 2}^{n-2} \sum_{j<k\leq n}\big(v_n + \OO((n-k)^{-2})\big)\big(v_n + \OO(j^{-2})\big) \\&=24 \frac{n^2}{2} v_n^2 + \OO\big(n\big) = 12 n^2 v_n^2 + \OO\big(n\big).
\end{align*}
Similarly, one obtains
\begin{align*}
I_n, II_n,III_n,V_n \lesssim n,
\end{align*}
and
\begin{align*}
IV_n + VI_n = 12 n^2\E X_0^2 v_n + \OO\big(n\big), \quad VII_n = 3 n^2 \E\big[X_0^2\big]^2+ \OO\big(n\big).
\end{align*}
Piecing everything together, the claim follows. Finally, {\bf (iv)} follows analogous to {\bf (i)}.
\end{proof}

For the next results, we require some \hypertarget{BVJS:eq17}{more} notation. For $1 \leq j \leq N$, denote by
\begin{align}\label{defn_Vj_star}
&\sqrt{2m}\,\overline{V}_j^* = \sum_{k = (2j-2)m + 1}^{(2j - 1)m} X_k^{(k - (2j-2)m,*)}.
\end{align}
Observe that $\overline{V}_j^* \stackrel{d}{\neq}\overline{V}_j$, however, this difference is negligible in the sequel. What is more important is that $\overline{V}_j^*$ is independent of $\FF_m$ for all $1 \leq j \leq N$ by construction. Similarly, for $1 \leq j \leq N$ ($\widetilde{V}_0$ is degenerate) we \hypertarget{VVj:eq18}{put}
\begin{align}\label{defn_Vj_star_II}
&\sqrt{2m}\widetilde{V}_j^* = \sum_{k = (2j-1)m + 1}^{2jm} X_k^{(k - (2j-1)m,*)}.
\end{align}
Note that $S_m/\sqrt{2m} \stackrel{d}{=} \overline{V}_j^* \stackrel{d}{=} \widetilde{V}_j^*$.

\begin{lem}\label{lem_polylipschitz_distance}
Let $g(\cdot)$ be a three times differentiable function with $|g^{(s)}| \leq \Co_g$ for $s \in \{0,1,2,3\}$. Assume that Assumption \ref{ass_dependence} holds. Then
\begin{align*}
\big\|\E_{\FF_m}\big[g\big(\overline{V}_j\big)-g\big(\overline{V}_j^*\big)\big]\big\|_1\lesssim \Co_g m^{-1},
\end{align*}
where $\overline{V}_j^{*}$ is as in \eqref{defn_Vj_star}.
\end{lem}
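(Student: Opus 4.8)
The plan is to split $\sqrt{2m}\,\overline{V}_j=U_j+R_j$ into a part agreeing with a copy of $\sqrt{2m}\,\overline{V}_j^*$ and a family of small, conditionally mean-zero corrections, then to Taylor-expand $g$. Writing $\rho_k=X_k-\E_{\FF_m}X_k$, one first records, exactly as in the proof of Lemma~\ref{lem_S_n_third_expectation}: if $k$ lies at distance $t$ from the relevant $\FF_m$-block then $\|X_k-X_k^{(t,*)}\|_p=\lambda_{t,p}$, $\|\E_{\FF_m}X_k\|_p\lesssim\lambda_{t,p}$ and $\|\rho_k\|_p\lesssim\lambda_{t,p}$, together with the identities $\E_{\FF_m}\rho_k=0$ and $\E_{\FF_m}\big[X_k-X_k^{(t,*)}\big]=\E_{\FF_m}X_k$ (the last because $X_k^{(t,*)}$ is independent of $\FF_m$ for this choice of $t$). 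Hence $\sqrt{2m}\,(\overline{V}_j-\overline{V}_j^*)=D:=\sum_k\delta_k$, where $k$ runs over the index blocks of $U_j$ and $R_j$, with $\delta_k=(X_k-X_k^{(t_k,*)})-\E_{\FF_m}X_k$ on the first and $\delta_k=\rho_k$ on the second; each $\delta_k$ has $\E_{\FF_m}\delta_k=0$ and $\|\delta_k\|_p\lesssim\lambda_{t_k,p}$, each lag occurring boundedly often, so $\|D\|_p\lesssim\sum_{l\ge1}\lambda_{l,p}\lesssim\Lambda_{2,p}$ and $\E_{\FF_m}D=0$.

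Then I would expand $g(\overline{V}_j)-g(\overline{V}_j^*)=g'(\overline{V}_j^*)(2m)^{-1/2}D+\tfrac12 g''(\overline{V}_j^*)(2m)^{-1}D^2+\tfrac16 g'''(\xi)(2m)^{-3/2}D^3$ and apply $\E_{\FF_m}$. The quadratic term is $\lesssim\Co_g(2m)^{-1}\E D^2\lesssim\Co_g m^{-1}\Lambda_{2,p}^2$ and the cubic term $\lesssim\Co_g(2m)^{-3/2}\E|D|^3\lesssim\Co_g m^{-3/2}\Lambda_{2,p}^3$ (only $p\ge3$ moments enter). Everything thus comes down to the linear term $(2m)^{-1/2}\sum_k\E_{\FF_m}\big[g'(\overline{V}_j^*)\delta_k\big]$, for which a crude Lipschitz bound gives only $O(\Co_g m^{-1/2})$. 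To gain the extra $m^{-1/2}$, for each $k$ I would decouple from $\overline{V}_j^*$ the free innovations that $\delta_k$ depends on, obtaining $\overline{V}_j^{*,k}$ that is independent of $\delta_k$ given $\FF_m$; since $\E_{\FF_m}\delta_k=0$ this kills the main piece and leaves $\E_{\FF_m}\big[(g'(\overline{V}_j^*)-g'(\overline{V}_j^{*,k}))\delta_k\big]\le\Co_g\|\overline{V}_j^*-\overline{V}_j^{*,k}\|_2\|\delta_k\|_2$ by conditional Cauchy--Schwarz, and weak dependence makes $\|\overline{V}_j^*-\overline{V}_j^{*,k}\|_2$ of order $(2m)^{-1/2}$ times the square root of the number of swapped innovations. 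Summing the resulting weights over $k$ uses the decay in \Btwo.

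I expect the real obstacle to be precisely this linear cross-term, and inside it the contribution of $R_j$: when $\delta_k=\rho_k$ with $k$ near the left end of $R_j$'s block, $\rho_k$ depends on essentially \emph{all} of the free innovations of $U_j$'s block, so a single decoupling of $\overline{V}_j^*$ is far too expensive. The fix is multiscale --- split $\rho_k$ across lag scales, decouple from $\overline{V}_j^*$ only its low-lag (strongly relevant) part, estimate the residual high-lag part of $\rho_k$ directly, and balance the two bounds before summing over $k$. It is here, rather than mere summability of $\lambda_{l,p}$, that the quantitative decay $\sum_{l\ge1}l^2\lambda_{l,p}<\infty$ of \Btwo\ is needed to reach the asserted rate $\Co_g m^{-1}$; the remaining work (exact bookkeeping of lags and multiplicities) is routine.
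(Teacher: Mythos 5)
Your setup is sound and, up to the linear term, coincides with the paper's own argument: the decomposition $\sqrt{2m}\,(\overline{V}_j-\overline{V}_j^*)=\sum_k\delta_k$ with $\E_{\FF_m}\delta_k=0$ and $\|\delta_k\|_p\lesssim\lambda_{t_k,p}$ is correct, the quadratic and cubic remainders are indeed $\lesssim \Co_g m^{-1}$ (the paper stops at a quadratic remainder, using $\|\overline{V}_j-\overline{V}_j^*\|_2^2\lesssim m^{-1}$), and your per-$k$ decoupling of the $U_j$-type increments $X_k-X_k^*-\E_{\FF_m}X_k$ with a cut at position $k$ is essentially the paper's treatment of that piece and does yield $\Co_g m^{-1}$ after the outer factor $(2m)^{-1/2}$. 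The gap sits exactly where you predicted, but your proposed fix does not close it. For the $R_j$-part you bound the decoupling error by conditional Cauchy--Schwarz, $\Co_g\|\overline{V}_j^*-\overline{V}_j^{*,k}\|_2\|\delta_k\|_2$, and balance against a directly estimated high-lag residual. Quantitatively: for $k=m+t$, keeping only the part of $\rho_k$ depending on the $l$ innovations nearest the block boundary costs $\Co_g (2m)^{-1/2}\sqrt{l}\,\lambda_{t,2}$ per term (swap of $\thicksim l$ innovations, Lemma \ref{lem_wu_original}), while the discarded residual costs $\Co_g\lambda_{t+l,p}$; summing in $t$ and multiplying by the outer $(2m)^{-1/2}$ gives $\Co_g\big(m^{-1}\sqrt{l}+m^{-1/2}l^{-2}\big)$, minimized near $l\thicksim m^{1/5}$ at about $\Co_g m^{-9/10}$. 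Letting $l$ depend on $t$ does not help: forcing $\sum_t\lambda_{t+l_t}\lesssim m^{-1/2}$ with only $\sum_t t^2\lambda_{t,p}<\infty$ available pushes $l_t\gtrsim m^{1/4}$, and the scheme stalls around $m^{-7/8}$. So "balance the two bounds" cannot reach the claimed $\Co_g m^{-1}$.

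The missing idea is a \emph{second} cancellation, not a better choice of scales. Cauchy--Schwarz on $(g'(\overline{V}_j^*)-g'(\overline{V}_j^{*,k}))\delta_k$ throws away the fact that the swapped piece of $\overline{V}_j^*$ and $R_j$ are \emph{both} conditionally centered weakly dependent sums, so their conditional covariance is of order $m^{-1/2}$, far smaller than the product of their norms. This is what the paper exploits: it expands $g^{(1)}(\overline{V}_1^*)$ to second order around $\overline{V}_1^{(\leq l,*)}$, so the dangerous object appears explicitly as $g^{(2)}(\overline{V}_1^{(\leq l,*)})R_1\overline{V}_1^{(>l,*)}$; it then replaces $R_1$ and $\overline{V}_1^{(>l,*)}$ by double-primed couplings at cost $\Co_g(l^{-2}+m^{-1/2})$, uses conditional independence, and proves $\sqrt{2m}\,\|\E_{\FF_m}[R_1^{(l,**)}\overline{V}_1^{(>l,**)}]\|_1\lesssim\sum_{k\geq1}k\lambda_{k,2}<\infty$ by decoupling the individual cross products $X_jX_k$ and using $\E_{\FF_m}X_j^{(j-m+l,**)}=0$ (the identity \eqref{eq_lem_polylipschitz_distance_3.5} is what makes the coupled increments conditionally centered). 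With $l=m^{1/3}$ this bounds the whole $R_1$-contribution by $\Co_g m^{-1/2}$ before the outer $(2m)^{-1/2}$, hence by $\Co_g m^{-1}$. In your per-$k$ language, you would have to expand $g'(\overline{V}_j^*)-g'(\overline{V}_j^{*,k})$ once more and show that the aggregated conditional covariance of the swapped piece with $\rho_k$ is $O(m^{-1/2})$; without that extra product-level decoupling the asserted rate $m^{-1}$ is not attainable by the argument you describe.
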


\begin{proof}[Proof of Lemma \ref{lem_polylipschitz_distance}]
For notational convenience, assume that $j = 1$. All \hypertarget{RR1:eq18}{other} cases follow exactly \hypertarget{barV1:eq18}{in} the same manner \hypertarget{barvv1:eq18}{by} stationarity. Define
\begin{align}\label{eq_lem_polylipschitz_distance_1}
&\overline{V}_1^{(*,>l)} = \frac{1}{\sqrt{2m}}\sum_{k = m - l}^{m} X_k^{*}, \quad  \overline{V}_1^{(\leq l,*)} = \overline{V}_1^{*} - \overline{V}_1^{(>l,*)},\\ \label{eq_lem_polylipschitz_distance_1.5}
&\overline{V}_1^{(**,>l)} = \frac{1}{\sqrt{2m}}\sum_{k = m - l}^{m} X_k^{(k-m+l,**)}, \quad  \overline{V}_1^{(\leq l,*)} = \overline{V}_1^{*} - \overline{V}_1^{(>l,*)},\\ \label{eq_lem_polylipschitz_distance_2.5}
&R_1^{(l,**)} = \sum_{k = m + 1}^{2m} \big(X_k - \E_{\FF_m} X_k\big)^{(k-m + l,**)}.
\end{align}
Note that $\overline{V}_1^{(*,>l)}$ and $\overline{V}_1^{(**,>l)}$ are defined differently. By the triangle inequality

\begin{align}\label{eq_lem_polylipschitz_distance_3}
&\sqrt{2m}\big\|\overline{V}_1^{(>l,*)} - \overline{V}_1^{(>l,**)} \big\|_p \leq \sum_{k = 1}^{\infty} \big\|X_k - X_k^*\big\|_p < \infty.
\end{align}
For $m < k \leq 2m$ and $l > 0$ we have due to $X_k = f_m(\varepsilon_k,\ldots,\varepsilon_{k-m})$

\begin{align}\label{eq_lem_polylipschitz_distance_3.5}
\big(\E_{\FF_m}X_k\big)^{(k-m + l,**)} = \E_{\FF_m}X_k  = \E_{\FF_m}X_k^{(k-m+l,**)}.
\end{align}
To see this, note the existence of a measurable function $H(\varepsilon_k, \ldots,\varepsilon_{m+1})$ such that
$\E_{F_m} X_k = H(\varepsilon_k, \ldots, \varepsilon_{m+1})$. Since $l>0$ and $m< k \le 2 m$, $H(\varepsilon_k, \ldots, \varepsilon_{m+1})^{(k-m+l, **)} = H(\varepsilon_k, \ldots, \varepsilon_{m+1})$.
This explains the LHS identity of \eqref{eq_lem_polylipschitz_distance_3.5}. On the other hand, since $k-m+l \ge k-m+1$, we have $\E_{F_m} X_k = \E_{F_m} X_k ^{(k-m+l, **)}$. This explains the RHS identity of \eqref{eq_lem_polylipschitz_distance_3.5}.

Using \eqref{eq_lem_polylipschitz_distance_3.5} and the triangle inequality gives
\begin{align}\label{eq_lem_polylipschitz_distance_4}
&\big\|R_1 - R_1^{(l,**)}\big\|_p \leq l^{-2}\sum_{k = l}^{\infty}k^2\big\|X_k - X_k^*\big\|_p \lesssim l^{-2}.
\end{align}
We are now ready to proceed to the actual proof. By Taylor expansion and Lemma \ref{lem_bound_R1}
\begin{align}\label{eq_lem_polylipschitz_distance_5} \nonumber
&\big\|\E_{\FF_m}\big[g\big(\overline{V}_1\big)-g\big(\overline{V}_1^*\big)\big] - \E_{\FF_m}\big[g^{(1)}\big(\overline{V}_1^*\big)(\overline{V}_1 - \overline{V}_1^*)\big] \big\|_1 \\&\lesssim \Co_g\big\|\overline{V}_1 - \overline{V}_1^* \big\|_2^2 \lesssim \Co_g m^{-1}.
\end{align}

Next, we further investigate
\begin{align*}
g^{(1)}\big(\overline{V}_1^*\big)(\overline{V}_1 - \overline{V}_1^*) = g^{(1)}\big(\overline{V}_1^{*} \big)(2m)^{-\frac{1}{2}}R_1  + g^{(1)}\big(\overline{V}_1^*\big)(\overline{V}_1 - (2m)^{-\frac{1}{2}}R_1 - \overline{V}_1^*).
\end{align*}

{\bf Case} $g^{(1)}\big(\overline{V}_1^{*} \big)R_1$:
By another Taylor expansion, H\"{o}lder's inequality and Lemma \ref{lem_bound_R1}
\begin{align}\label{eq_lem_polylipschitz_distance_6} \nonumber
&\big\|\big(g^{(1)}\big(\overline{V}_1^*\big) - g^{(1)}\big(\overline{V}_1^{(\leq l,*)} \big) - g^{(2)}\big(\overline{V}_1^{(\leq l,*)} \big) \overline{V}_1^{(> l,*)} \big)R_1 \big\|_1 \\&\lesssim \Co_g \big\|R_1\big\|_3 \big\|\overline{V}_1^{(>l,*)}\big\|_3^2 \lesssim \Co_g m^{-1} l,
\end{align}
where $\big\|\overline{V}_j^{(>l,*)}\big\|_3^2 \lesssim l$ follows from Lemma \ref{lem_wu_original}.

{\bf Subcase} $g^{(1)}\big(\overline{V}_1^{(\leq l,*)} \big)R_1$: We first note that by \eqref{eq_lem_polylipschitz_distance_4}

\begin{align*}
\big\|g^{(1)}\big(\overline{V}_1^{(\leq l,*)} \big)(R_1 - R_1^{(l,**)})\big\|_1 & \leq \Co_g \big\|R_1 - R_1^{(l,**)}\big\|_1 \lesssim \Co_g l^{-2}.
\end{align*}

Since $\overline{V}_1^{(\leq l,*)}$ and $R_1^{(l,**)}$ are independent with respect to $\P_{\FF_m}$, we have
\begin{align*}
\E_{\FF_m}g^{(1)}\big(\overline{V}_1^{(\leq l,*)} \big)R_1^{(l,**)} = \E_{\FF_m}g^{(1)}\big(\overline{V}_1^{(\leq l,*)} \big) \E_{\FF_m}R_1^{(l,**)} = 0.
\end{align*}

Hence by the above
\begin{align}\label{eq_lem_polylipschitz_distance_7}
\big\|\E_{\FF_m}g^{(1)}\big(\overline{V}_1^{(\leq l,*)} \big)R_1\big\|_1 \lesssim \Co_g l^{-2}.
\end{align}

{\bf Subcase} $g^{(2)}\big(\overline{V}_j^{(\leq l,*)} \big)R_j \overline{V}_j^{(> l,*)}$: By Cauchy-Schwarz, \eqref{eq_lem_polylipschitz_distance_3}, \eqref{eq_lem_polylipschitz_distance_4}, Lemma \ref{lem_wu_original} and Lemma \ref{lem_bound_R1}
\begin{align*}
&\big\|g^{(2)}\big(\overline{V}_1^{(\leq l,*)} \big)R_1 \overline{V}_j^{(> l,*)} - g^{(2)}\big(\overline{V}_1^{(\leq l,*)} \big)R_1^{(l,**)} \overline{V}_1^{(> l,**)} \big\|_1 \\&\lesssim \Co_g \big\|R_1 -  R_1^{(l,**)}\big\|_2 \big\|\overline{V}_1^{(> l,*)}\big\|_2 + \Co_g \big\|R_1^{(l,**)}\big\|_2 \big\|\overline{V}_1^{(> l,*)} - \overline{V}_1^{(> l,**)}\big\|_2 \\&\lesssim \Co_g l^{-2} + \Co_g^{(2)} m^{-\frac{1}{2}}.
\end{align*}

Since $\overline{V}_1^{(\leq l,*)}$ and $R_1^{(l,**)}\overline{V}_1^{(> l,**)}$ are $\P_{\FF_m}$-independent, we have
\begin{align*}
\E_{\FF_m}g^{(2)}\big(\overline{V}_1^{(\leq l,*)} \big)R_1^{(l,**)} \overline{V}_1^{(> l,**)} = \E_{\FF_m}g^{(2)}\big(\overline{V}_1^{(\leq l,*)} \big) \E_{\FF_m}R_1^{(l,**)} \overline{V}_1^{(> l,**)}.
\end{align*}
Consequently, we deduce
\begin{align*}
\big\|\E_{\FF_m}g^{(2)}\big(\overline{V}_1^{(\leq l,*)} \big)R_1^{(l,**)} \overline{V}_1^{(> l,**)}\big\|_1 \lesssim \Co_g \big\|\E_{\FF_m}R_1^{(l,**)} \overline{V}_1^{(> l,**)}\big\|_{1}.
\end{align*}
Moreover, by independence of $X_j^{(j-m+l,**)}$ from $\FF_m$ for $m-l \leq j \leq m$, we have
$$
\E_{\FF_m}X_j^{(k-m+l,**)} = \E X_j = 0,
$$
and hence
\begin{align*}
\sqrt{2m}\big\|\E_{\FF_m}R_j^{(l,**)} \overline{V}_j^{(> l,**)} \big\|_{1} = \Big\|\sum_{j = m-l}^m \sum_{k = m+1}^{2m} \E_{\FF_m}X_j^{(k-m+l,**)} X_k^{(k-m+l,**)} \Big\|_{1}.
\end{align*}
By independence of $X_j^{(j-m+l,**)}$ and $X_k^{(k-j,*)}$ for $m-l \leq j \leq m$, $m+1\leq k \leq 2m$
\begin{align*}
&\sum_{j = m-l}^m \sum_{k = m+1}^{2m} \E_{\FF_m}X_j^{(j-m+l,**)} X_k^{(k-j,*)} \\&= \sum_{j = m-l}^m \sum_{k = m+1}^{2m} \E_{\FF_m}X_j^{(j-m+l,**)}  \E_{\FF_m}X_k^{(k-j,*)} = 0,
\end{align*}
where we used $\E_{\FF_m} X_j^{(j-m+l,**)} = \E X_j = 0$. By Cauchy-Schwarz inequality and the above
\begin{align*}
&\sqrt{2m}\Big\|\E_{\FF_m}R_j^{(l,**)} \overline{V}_j^{(> l,**)} \Big\|_{1} \\&= \Big\|\sum_{j = m-l}^m \sum_{k = m+1}^{2m} \E_{\FF_m}\big(X_k^{(k-m+l,**)} - X_k^{(k-j-1,*)}\big) X_j^{(k-m+l,**)} \Big\|_{1} \\&\leq \sum_{j = m-l}^m \sum_{k = m+1}^{2m} \big\|X_k^{(k-m+l,**)} - X_k^{(k-j,*)}\big\|_2 \big\|X_j\big\|_2 \\&= \sum_{j = 0}^l \sum_{k = 1}^{m} \big\|X_k^{} - X_k^{(k+j,*)}\big\|_2 \big\|X_j\big\|_2 \lesssim \sum_{k = 1}^{\infty} k \big\|X_k - X_k^*\big\|_2.
\end{align*}
Piecing everything together, we finally obtain
\begin{align*}
\big\|\E_{\FF_m}g^{(2)}\big(\overline{V}_1^{(\leq l,*)} \big)R_1 \overline{V}_1^{(> l,*)} \big\|_1 \lesssim \Co_g l^{-2} + \Co_g m^{-\frac{1}{2}}.
\end{align*}
Selecting $l = m^{\frac{1}{3}}$ then yields
\begin{align}\label{eq_lem_polylipschitz_distance_8}
\big\|\E_{\FF_m}g^{(1)}\big(\overline{V}_1^{*} \big)R_1 \big\|_1 \lesssim \Co_g m^{-\frac{1}{2}}.
\end{align}

{\bf Case} $g^{(1)}\big(\overline{V}_1^*\big)(\overline{V}_1 - (2m)^{-\frac{1}{2}}R_1 - \overline{V}_1^*)$: We have
\begin{align*}
\sqrt{2m}g^{(1)}\big(\overline{V}_1^*\big)(\overline{V}_1 - (2m)^{-\frac{1}{2}}R_1 - \overline{V}_1^*) =  \sum_{k = 1}^m g^{(1)}\big(\overline{V}_1^*\big) \big(X_k - X_k^* - \E_{\FF_m}X_k\big).
\end{align*}
Using $\|E_{\FF_m}X_k\|_p \leq \|X_k - X_k^*\|_p$ for $1 \leq k \leq m$, the Lipschitz-continuity of $g^{(1)}$, Cauchy-Schwarz and Lemma \ref{lem_wu_original}, we get
\begin{align*}
&\Big\|\sum_{k = 1}^m g^{(1)}\big(\overline{V}_1^*\big) \big(X_k - X_k^* - \E_{\FF_m} X_k\big) \\& \quad \quad-\sum_{k = 1}^m g^{(1)}\big(\overline{V}_1^{(> m-k-1,*)}\big) \big(X_k - X_k^* - \E_{\FF_m} X_k \big)\Big\|_1\\& \lesssim \Co_g \sum_{k = 1}^m \big\|\overline{V}_1^{(\leq m-k-1,*)} \big\|_2 \big\|X_k - X_k^* - \E_{\FF_m} X_k\big\|_2 \\&\lesssim \Co_g m^{-\frac{1}{2}} \sum_{k = 1}^m k^{\frac{1}{2}}\big\|X_k - X_k^*\big\|_2 \lesssim \Co_g m^{-\frac{1}{2}},
\end{align*}
where $\overline{V}_1^{(> -1)} = 0$. Similarly, we obtain from \eqref{eq_lem_polylipschitz_distance_3}
\begin{align*}
&\Big\|\sum_{k = 1}^m g^{(1)}\big(\overline{V}_1^{(> m-k-1,*)}\big) \big(X_k - X_k^* - \E_{\FF_m} X_k\big)\\& \quad \quad -\sum_{k = 1}^m g^{(1)}\big(\overline{V}_1^{(> m-k-1,**)}\big) \big(X_k - X_k^* - \E_{\FF_m}X_k\big)\Big\|_1 \\&\lesssim \Co_g \sum_{k = 1}^m \big\|\overline{V}_1^{(> m-k-1,*)}-\overline{V}_1^{(> m-k-1,**)}\big\|_2 \big\|X_k - X_k^* - \E_{\FF_m} X_k\big\|_2 \\&\lesssim \Co_g m^{-\frac{1}{2}} \sum_{k = 1}^{\infty}\big\|X_k - X_k^*\big\|_2 \lesssim \Co_g m^{-\frac{1}{2}}.
\end{align*}
Note that $\overline{V}_1^{(> m-k-1,**)}$ and $X_k - X_k^* - \E_{\FF_m} X_k$ are $\P_{\FF_m}$-independent. Since
\begin{align*}
\E_{\FF_m}\big[(X_k - X_k^* - \E_{\FF_m}X_k)\big] = 0 - \E_{\FF_m}X_k^* = 0
\end{align*}
for $1 \leq k \leq m$, it follows that
\begin{align*}
&\sum_{k = 1}^m\E_{\FF_m}\big[g^{(1)}(\overline{V}_1^{(> m-k-1,**)})(X_k - X_k^* - \E_{\FF_m}[X_k])\big] \\&= \sum_{k = 1}^m\E_{\FF_m}\big[g^{(1)}(\overline{V}_1^{(> m-k-1,**)})\big] \E_{\FF_m}\big[(X_k - X_k^* - \E_{\FF_m}[X_k])\big] = 0.
\end{align*}
Piecing everything together, we finally obtain
\begin{align}\label{eq_lem_polylipschitz_distance_9}
\big\|g^{(1)}\big(\overline{V}_1^*\big)(\overline{V}_1 - (2m)^{-1/2}R_1 - \overline{V}_1^*) \big\|_1 \lesssim \Co_g m^{-1}+\Co_g m^{-1}.
\end{align}
Combining \eqref{eq_lem_polylipschitz_distance_5}, \eqref{eq_lem_polylipschitz_distance_8} and \eqref{eq_lem_polylipschitz_distance_9} yields
\begin{align*}
\big\|\E_{\FF_m}g\big(\overline{V}_1\big)-g\big(\overline{V}_1^*\big)\big]\big\|_1\lesssim \Co_g m^{-1}.
\end{align*}
\end{proof}

\begin{lem}\label{lem_polylipschitz_distance_moments}
Let $g(\cdot)$ be a third-degree polynomial \hypertarget{BVJS:eq22}{with} coefficients bounded by $\Co_g$. Assume that Assumption \ref{ass_dependence} holds. Then
\begin{align*}
\big\|\E_{\FF_m}\big[g\big(\overline{V}_j\big)-g\big(\overline{V}_j^*\big)\big]\big\|_1\lesssim \Co_g m^{-1},
\end{align*}
where $\overline{V}_j^{*}$ is as in \eqref{defn_Vj_star}. This bound is also valid if $g(\cdot)$ is a fourth-degree polynomial and Assumption \ref{ass_dependence} holds for $p \geq 4$.
\end{lem}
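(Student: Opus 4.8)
The plan is to reduce to monomials and then recycle Lemma~\ref{lem_polylipschitz_distance}. By linearity and the bound on the coefficients it suffices to treat $g(x) = x^{j}$ with $1 \le j \le 3$ (and $1 \le j \le 4$ when $p \ge 4$), absorbing $\Co_g$. The case $j = 1$ is trivial: $U_j$ and $R_j$ are $\P_{\FF_m}$-centered, so $\E_{\FF_m}\overline{V}_j = 0$, while $\overline{V}_j^{*}$ is independent of $\FF_m$ and has mean zero, hence the left-hand side vanishes. For $j \ge 2$ I would use the Taylor identity, exact for a polynomial of degree $j$,
\begin{align*}
g(\overline{V}_j) - g(\overline{V}_j^{*}) = \sum_{s=1}^{j}\frac{g^{(s)}(\overline{V}_j^{*})}{s!}\,\big(\overline{V}_j - \overline{V}_j^{*}\big)^{s},
\end{align*}
and split into the linear term $s = 1$ and the higher-order terms $s \ge 2$.

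For the terms with $s \ge 2$ a crude estimate is enough. I would first note $\|\overline{V}_j - \overline{V}_j^{*}\|_{p} \lesssim m^{-1/2}$ — writing $\sqrt{2m}(\overline{V}_j - \overline{V}_j^{*}) = \sum_k\big[(X_k - \E_{\FF_m}X_k) - X_k^{(k-(2j-2)m,*)}\big]$ and using the triangle inequality together with $\|\E_{\FF_m}X_k\|_p \le \|X_k - X_k^{*}\|_p$ and \Btwo~— and that $\overline{V}_j^{*}$ has $L^p$-norm bounded by an absolute constant (Lemma~\ref{lem_wu_original}). Since $g^{(s)}$ is a polynomial of degree $j - s$ and $j \le p$, H\"older's inequality gives $\big\|g^{(s)}(\overline{V}_j^{*})(\overline{V}_j - \overline{V}_j^{*})^{s}\big\|_1 \lesssim \|\overline{V}_j - \overline{V}_j^{*}\|_p^{s} \lesssim m^{-s/2} \le m^{-1}$, as $s \ge 2$.

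The delicate part — and the one I expect to be the main obstacle — is the term $s = 1$, namely $\E_{\FF_m}\big[g^{(1)}(\overline{V}_j^{*})(\overline{V}_j - \overline{V}_j^{*})\big]$: now $g^{(1)}$ is not bounded but a polynomial of degree $j - 1 \le 2$ (resp.\ $\le 3$), and recovering the full factor $m^{-1}$ forces one to reproduce the cancellations of Lemma~\ref{lem_polylipschitz_distance}. I would re-run that argument from \eqref{eq_lem_polylipschitz_distance_5} onwards, with $g^{(1)}$, $g^{(2)}$, $g^{(3)}$ now being the (polynomial) derivatives of the monomial $g$: the split into $g^{(1)}(\overline{V}_1^{*})R_1$ and $g^{(1)}(\overline{V}_1^{*})\big(\overline{V}_1 - (2m)^{-1/2}R_1 - \overline{V}_1^{*}\big)$, the further expansion around $\overline{V}_1^{(\le l,*)}$, the passage to the doubly coupled variables $X_k^{(\cdot,**)}$ (which are $\FF_m$-independent and $\P_{\FF_m}$-centered, so that the leading conditional expectations factorise and vanish), the coupling errors of order $l^{-2}$ and $m^{-1/2}$ from \eqref{eq_lem_polylipschitz_distance_3}--\eqref{eq_lem_polylipschitz_distance_4}, and the final choice $l = m^{1/3}$ all carry over verbatim. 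The only substantive change is that wherever $|g^{(s)}| \le \Co_g$ was used to pull a bounded factor out of an $L^1$-norm, one now applies H\"older and bounds $\|g^{(s)}(\overline{V}_1^{*})\|_{q}$, $\|g^{(s)}(\overline{V}_1^{(\le l,*)})\|_{q}$ and the truncated factors $\|\overline{V}_1^{(>l,*)}\|_q$ by an absolute constant (resp.\ by $l^{1/2}$, and by $l^{-2}$ after coupling) using Lemma~\ref{lem_wu_original} and \eqref{eq_lem_polylipschitz_distance_3}. The point that must be checked carefully is the H\"older bookkeeping: inserting a degree-$(j-1)$ factor turns each product in \eqref{eq_lem_polylipschitz_distance_6}--\eqref{eq_lem_polylipschitz_distance_9} into a product of $j$ or $j+1$ factors, so the conjugate exponents sum to $1$ over at most four terms when $j = 3$ (achievable with all exponents $\le 3$) and over five terms when $j = 4$ (which forces one exponent $\ge 4$) — this is exactly why the quartic case requires $p \ge 4$. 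Once this is verified every bound reproduces with $\Co_g m^{-1}$ on the right, which together with the $s \ge 2$ estimate completes the proof.
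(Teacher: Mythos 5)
Your proposal is correct and follows essentially the same route as the paper, whose proof simply says to repeat the argument of Lemma \ref{lem_polylipschitz_distance} with H\"older's inequality and moment bounds in place of the bounded-derivative estimates, the polynomial structure making all expansions exact. Your additional bookkeeping (the trivial $s=1$-degree case, the crude $m^{-s/2}$ bound for the higher-order terms via $\|\overline{V}_j-\overline{V}_j^*\|_p\lesssim m^{-1/2}$, and the exponent count explaining why degree four needs $p\geq 4$) is consistent with what the paper leaves implicit.
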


\begin{proof}[Proof of Lemma \ref{lem_polylipschitz_distance_moments}]
The proof is almost identical to the one of Lemma \ref{lem_polylipschitz_distance}. The only difference is that we use H\"{o}lder's inequality and corresponding moment bounds instead of bounding derivatives with $\Co_g$. Moreover, due to the polynomial structure, no Taylor expansions are necessary.
\end{proof}

\begin{lem}\label{lem_polylipschitz_distance_tilde}
Let $g(\cdot)$ be a three times differentiable function with $|g^{(s)}| \leq \Co_g$ for $s \in \{0,1,2,3\}$. Assume that Assumption \ref{ass_dependence} holds. \hypertarget{scFmstar:eq22}{Then}
\begin{align*}
\big\|g\big(\widetilde{V}_j\big)-g\big(\widetilde{V}_j^*\big)\big\|_1 \lesssim \Co_g m^{-1},
\end{align*}
where $\widetilde{V}_j^{*}$ is defined in \eqref{defn_Vj_star_II}.
\end{lem}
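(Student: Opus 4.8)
The plan is to follow the scheme of the proof of Lemma~\ref{lem_polylipschitz_distance} (and its polynomial variant Lemma~\ref{lem_polylipschitz_distance_moments}) almost verbatim, with the conditional blocks $\widetilde{V}_j,\widetilde{V}_j^*$ in place of $\overline{V}_j,\overline{V}_j^*$, the difference being that there is now no protective outer conditional expectation $\E_{\FF_m}[\,\cdot\,]$. By stationarity it suffices to treat one fixed index $j$.

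First I would perform a Taylor expansion $g(\widetilde{V}_j)-g(\widetilde{V}_j^*)=g^{(1)}(\widetilde{V}_j^*)(\widetilde{V}_j-\widetilde{V}_j^*)+\tfrac12 g^{(2)}(\zeta)(\widetilde{V}_j-\widetilde{V}_j^*)^2$, so that the quadratic remainder is bounded by $\Co_g\|\widetilde{V}_j-\widetilde{V}_j^*\|_2^2$. To see that this is $\lesssim \Co_g m^{-1}$, I would argue as around \eqref{eq_lem_polylipschitz_distance_5}: both $\sqrt{2m}\,\widetilde{V}_j$ and $\sqrt{2m}\,\widetilde{V}_j^*$ are, up to a negligible tail, carried by the same block of innovations, one has the identity $\E_{\FF_m}[X_k^{(k-(2j-1)m,*)}]=\E_{\FF_m}X_k$ in the spirit of \eqref{eq_lem_polylipschitz_distance_3.5}, and the block second-moment bounds follow from Lemma~\ref{lem_bound_R1}, Lemma~\ref{lem_wu_original} and $\Lambda_{2,p}<\infty$. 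It then remains to estimate $\big\|g^{(1)}(\widetilde{V}_j^*)(\widetilde{V}_j-\widetilde{V}_j^*)\big\|_1$.

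For that term I would mimic the casework of the proof of Lemma~\ref{lem_polylipschitz_distance}: split $\widetilde{V}_j-\widetilde{V}_j^*$ into a part carried by innovations deep inside the unobserved block and a fast-decaying tail; introduce truncated and re-coupled versions $\widetilde{V}_j^{(\le l,*)}$, $\widetilde{V}_j^{(l,**)}$ depending only on innovations of lag at most $l$, in analogy with \eqref{eq_lem_polylipschitz_distance_1}--\eqref{eq_lem_polylipschitz_distance_2.5}; then Taylor-expand the smooth factor $g^{(1)}(\widetilde{V}_j^*)$ first around $g^{(1)}(\widetilde{V}_j^{(\le l,*)})$ and then around $g^{(1)}(\widetilde{V}_j^{(\le l,*)})+g^{(2)}(\widetilde{V}_j^{(\le l,*)})\widetilde{V}_j^{(>l,*)}$. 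The bound is then driven by three mechanisms, exactly as in \eqref{eq_lem_polylipschitz_distance_3}--\eqref{eq_lem_polylipschitz_distance_9}: (i) the de-coupling errors decay like $l^{-2}$; (ii) the low-order Taylor remainders, estimated by Cauchy--Schwarz together with $\|\widetilde{V}_j^{(>l,*)}\|_3^2\lesssim l$ (Lemma~\ref{lem_wu_original}), are of order $\Co_g m^{-1}l$; (iii) the leading contributions vanish because the re-coupled tail part is independent of the low-lag part and has zero mean ($\E X_k=0$), the $\widetilde{V}$-blocks being independent across $j$ so that the required factorisation holds unconditionally, replacing the $\P_{\FF_m}$-independence used for the $\overline{V}$'s. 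Balancing (i) against (ii) with $l=m^{1/3}$ then gives $\big\|g^{(1)}(\widetilde{V}_j^*)(\widetilde{V}_j-\widetilde{V}_j^*)\big\|_1\lesssim \Co_g m^{-1}$, which combined with the quadratic remainder yields the claim.

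The hard part will be item (iii) in the absence of the outer conditional expectation: in Lemma~\ref{lem_polylipschitz_distance} it sufficed for the \emph{conditional} expectation of the linear term to be small, whereas here the linear term must be small as a random variable in $L^1$, so one must arrange the truncation and re-coupling so that, after isolating the leading piece, the independence making it vanish is genuine and every de-coupling and Taylor error decays strictly faster than $m^{-1/2}$ before the optimisation in $l$; carefully tracking which innovations are shared between $\widetilde{V}_j^*$ and $\widetilde{V}_j$ (both being functions of the observed block $e_{j+1}$) is the delicate bookkeeping.
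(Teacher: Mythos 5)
Your overall template (Taylor expansion, quadratic remainder of size $\Co_g\|\widetilde{V}_j-\widetilde{V}_j^*\|_2^2\lesssim \Co_g m^{-1}$, then a careful treatment of the linear term) is the right one, but your resolution of the step you yourself flag as the hard part does not work, and cannot be made to work for the quantity as you read it. Independence of the re-coupled tail piece from the low-lag piece together with $\E X_k=0$ only forces the \emph{expectation} of the corresponding product to vanish; it does not make the product small in $L^1$ (and the unconditional independence of the $\widetilde{V}$-blocks across $j$ is irrelevant here, since only one block is involved). Since $\|\widetilde{V}_j-\widetilde{V}_j^*\|_2$ is genuinely of order $m^{-1/2}$, the linear term $g^{(1)}(\widetilde{V}_j^*)(\widetilde{V}_j-\widetilde{V}_j^*)$ is in general of exact order $m^{-1/2}$ in $L^1$: for instance, for $X_k=\varepsilon_k+\varepsilon_{k-1}$ with Rademacher innovations one computes $\widetilde{V}_1-\widetilde{V}_1^*=(\varepsilon_{2m}-\varepsilon_m')/\sqrt{2m}$, and with $g=\sin$ the $L^1$ distance between $g(\widetilde{V}_1)$ and $g(\widetilde{V}_1^*)$ is bounded below by a constant times $m^{-1/2}$. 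So no arrangement of truncation and re-coupling can deliver the rate $m^{-1}$ at the level of the raw random difference; that rate is only attainable once a (conditional) expectation of the difference is taken, exactly as in Lemma \ref{lem_polylipschitz_distance}, where the quantity bounded is $\|\E_{\FF_m}[\,\cdot\,]\|_1$ and all factorisations happen under that conditional expectation.

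This is also how the estimate is obtained and used in the paper. The paper's proof introduces $\FF_m^*=\sigma(\FF_m,\varepsilon_1',\ldots,\varepsilon_m')$ and, for $m+1\le k\le 2m$, uses the identity $\E_{\FF_m}X_k=\E_{\FF_m^*}\big[X_k-X_k^{(k-m,*)}\big]+X_k^{(k-m,*)}$ (in the spirit of \eqref{eq_lem_polylipschitz_distance_3.5}), so that $\sqrt{2m}\,\widetilde{V}_1$ equals $\sqrt{2m}\,\widetilde{V}_1^*$ plus centred correction sums (the other one being $\sum_{k=2m+1}^{3m}\E_{\FF_m}X_k$), and then reruns the argument of Lemma \ref{lem_polylipschitz_distance}; the decisive cancellations there are cancellations of expectations of products of independent, centred factors. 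Correspondingly, what is needed downstream is only a difference of expectations: Lemma \ref{lem_taylor_expansion_smooth_function_II} {\bf (i)} compares $\E\ff(\xi_N\widetilde{V}_j)$ with $\E\ff(\xi_N\widetilde{Z}_j)$, and Lemma \ref{lem_widetilde_moments} (via Lemma \ref{lem_polylipschitz_distance_moments_tilde}) needs only moment differences. You should therefore prove the claim in the form $\big|\E g(\widetilde{V}_j)-\E g(\widetilde{V}_j^*)\big|\lesssim \Co_g m^{-1}$ (or with a suitable conditional expectation inside the norm); with that reading, and after the $\FF_m^*$-identity above, your casework does go through essentially as in Lemma \ref{lem_polylipschitz_distance}, but as written your item (iii) is a genuine gap.
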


\begin{proof}[Proof of Lemma \ref{lem_polylipschitz_distance_tilde}]
For simplicity, we set $j = 1$. Let $\FF_m^{*} = \sigma\big(\FF_m, \varepsilon_1', \ldots, \varepsilon_m'\big)$. Then
\begin{align*}
\sum_{k = m+1}^{2m} \E_{\FF_m}X_k = \sum_{k = m+1}^{2m} \E_{\FF_m^{*}}\big[X_k-X_k^{(k-m,*)}\big] + X_k^{(k-m,*)}.
\end{align*}
We may now proceed exactly as in the proof of Lemma \ref{lem_polylipschitz_distance}. This includes $j = N$, requiring only a slight adaptation.
\end{proof}

\begin{lem}\label{lem_polylipschitz_distance_moments_tilde}
Let $g(\cdot)$ be a third-degree polynomial with coefficients bounded by $\Co_g$. Assume that Assumption \ref{ass_dependence} holds. Then
\begin{align*}
\big\|g\big(\widetilde{V}_j\big)-g\big(\widetilde{V}_j^*\big)\big\|_1\lesssim  m^{-1},
\end{align*}
where $\widetilde{V}_j^{*}$ is as in \eqref{defn_Vj_star_II}. This bound is also valid if $g(\cdot)$ is a fourth-degree polynomial and Assumption \ref{ass_dependence} holds for $p \geq 4$.
\end{lem}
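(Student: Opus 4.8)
The plan is to mimic the proof of Lemma \ref{lem_polylipschitz_distance_tilde} while replacing every Taylor expansion by the elementary polynomial identity, exactly in the way one passes from Lemma \ref{lem_polylipschitz_distance} to Lemma \ref{lem_polylipschitz_distance_moments}. First I would fix $j = 1$ (stationarity gives the cases $1 < j < N$, and $j = N$ needs only the minor adaptation already indicated in the proof of Lemma \ref{lem_polylipschitz_distance_tilde}) and introduce $\FF_m^{*} = \sigma\big(\FF_m, \varepsilon_1', \ldots, \varepsilon_m'\big)$. Using that $X_k^{(k-m,*)}$ is $\FF_m^{*}$-measurable for the relevant $k$ while $\E_{\FF_m^{*}} X_k = \E_{\FF_m} X_k$, one obtains the decomposition
\begin{align*}
\sqrt{2m}\,\widetilde{V}_1 = \sum_{k} \E_{\FF_m} X_k = \sum_{k} \E_{\FF_m^{*}}\big[X_k - X_k^{(k-m,*)}\big] + \sqrt{2m}\,\widetilde{V}_1^{*},
\end{align*}
so that $\sqrt{2m}\,(\widetilde{V}_1 - \widetilde{V}_1^{*}) = \sum_k \E_{\FF_m^{*}}\big[X_k - X_k^{(k-m,*)}\big]$, a sum of mean-zero terms with $\sum_k \big\|X_k - X_k^{(k-m,*)}\big\|_p \lesssim \sum_{k \geq 1}\lambda_{k,p} < \infty$.

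Next I would expand $g(\widetilde{V}_1) - g(\widetilde{V}_1^{*})$ through $a^s - b^s = (a - b)\sum_{i=0}^{s-1} a^i b^{s-1-i}$ applied to each monomial of $g$ (degree $s \leq 3$, resp.\ $s \leq 4$ when $p \geq 4$), reducing the claim to bounding $\big\|(\widetilde{V}_1 - \widetilde{V}_1^{*})\,\widetilde{V}_1^{\,i}\,(\widetilde{V}_1^{*})^{i'}\big\|_1$ with $i + i' \leq 2$ (resp.\ $\leq 3$) by $\Co_g m^{-1}$. The polynomial factors are controlled by $\|\widetilde{V}_1\|_p \vee \|\widetilde{V}_1^{*}\|_p \lesssim 1$ from Lemma \ref{lem_wu_original} after accounting for the $\sqrt{2m}$ normalisation; then H\"older's inequality separates them off (there is no Taylor remainder to carry, which is the only simplification over Lemma \ref{lem_polylipschitz_distance}). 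For the increment I would run the same close/far split as in \eqref{eq_lem_polylipschitz_distance_1}--\eqref{eq_lem_polylipschitz_distance_2.5} and its subcases: the ``close'' innovations (within distance $l$ of the block boundary) are handled by the crude bound $\sqrt{2m}\|\widetilde{V}_1 - \widetilde{V}_1^{*}\|_p \lesssim 1$ together with a remainder of order $l^{-2}$ supplied by the tail $\sum_{k > l} k^2 \lambda_{k,p} \to 0$, while for the ``far'' innovations a further independent copy $(\varepsilon_k'')_{k\in\Z}$ makes the corresponding summands conditionally independent of the remaining factors, so that pulling the conditional expectation out and using the mean-zero property (after the replacement) kills those terms up to a summable weighted sum in $\lambda_{k,p}$. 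Balancing with $l \sim m^{1/3}$, as in the bar case, gives the bound $\Co_g m^{-1}$. The fourth-degree extension under $p \geq 4$ follows verbatim, now feeding the fourth moments from \Bone\ and $\Lambda_{2,4} < \infty$ into the same estimates.

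The step I expect to be the main obstacle is the one that is already delicate in Lemma \ref{lem_polylipschitz_distance}: upgrading the naive estimate $\|\widetilde{V}_1 - \widetilde{V}_1^{*}\|_1 = O(m^{-1/2})$ to the sharp $O(m^{-1})$. This gain cannot come from the size of the increment; it must be extracted from the mean-zero structure of $\E_{\FF_m^{*}}\big[X_k - X_k^{(k-m,*)}\big]$ combined with the decoupling via the doubly-primed copy, which forces the cross terms with the polynomial factors to cancel modulo the summable weighted sum $\sum_k k^2 \lambda_{k,p} = \Lambda_{2,p} < \infty$. The polynomial hypothesis in the statement only removes the Taylor-remainder bookkeeping that accompanies this mechanism in Lemma \ref{lem_polylipschitz_distance}, so the substantive work is identical and the proof is therefore obtained by combining the arguments of Lemmas \ref{lem_polylipschitz_distance_moments} and \ref{lem_polylipschitz_distance_tilde}.
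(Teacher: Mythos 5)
You take essentially the same route as the paper: its proof of this lemma is precisely the reduction you describe, namely repeat Lemma \ref{lem_polylipschitz_distance_tilde} (the $\FF_m^{*}$ decomposition that brings one back to the setting of Lemma \ref{lem_polylipschitz_distance}), with Taylor expansions replaced by the polynomial identity, H\"older's inequality and moment bounds, exactly as in the passage from Lemma \ref{lem_polylipschitz_distance} to Lemma \ref{lem_polylipschitz_distance_moments}; your account of where the $m^{-1}$ comes from (mean-zero structure, the doubly-primed decoupling, the choice $l\sim m^{1/3}$) is the bar-case mechanism the paper reuses.

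Two remarks. First, a bookkeeping slip in your displayed decomposition: for $1\le j\le N-1$ one has $\sqrt{2m}\,\widetilde{V}_j=\sum_{k=(2j-1)m+1}^{(2j+1)m}\E_{\FF_m}X_k$, i.e.\ $2m$ summands, whereas $\widetilde{V}_j^{*}$ in \eqref{defn_Vj_star_II} contains only the $m$ summands of the first half-block; hence $\sqrt{2m}(\widetilde V_1-\widetilde V_1^{*})$ is not just $\sum_{k=m+1}^{2m}\E_{\FF_m^{*}}[X_k-X_k^{(k-m,*)}]$ but also carries the overhang $\sum_{k=2m+1}^{3m}\E_{\FF_m}X_k$ (it is the case $j=N$, not the generic one, for which your identity is exact). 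The overhang is the analogue of the $R_1$-term of Lemma \ref{lem_polylipschitz_distance}, with $\|\E_{\FF_m}X_k\|_p\le\lambda_{k-2m,p}$ summable, and is absorbed by the same subcase analysis, so this is repairable, but it should be carried through. Second, the cancellations you invoke to upgrade $m^{-1/2}$ to $m^{-1}$ (killing the cross terms of the increment with the polynomial factors) only materialize once an expectation is taken: what the argument actually delivers is $|\E g(\widetilde V_j)-\E g(\widetilde V_j^{*})|\lesssim \Co_g\, m^{-1}$, which is exactly what is consumed downstream in the moment comparisons of Lemma \ref{lem_widetilde_moments}; the pathwise $L^1$ distance appearing in the statement is genuinely of order $m^{-1/2}$ in general (take $g(x)=x$ and $X_k=\varepsilon_k+a\varepsilon_{k-1}$, for which $\sqrt{2m}(\widetilde V_1-\widetilde V_1^{*})=a(\varepsilon_{2m}-\varepsilon_m')$). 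The paper's own one-line proof is open to the same reading, so this is a comment on the statement rather than a divergence between your argument and the paper's, but your ``main obstacle'' paragraph should not promise the upgrade at the level of the $L^1$ norm of the increment itself.
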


\begin{proof}[Proof of Lemma \ref{lem_polylipschitz_distance_moments_tilde}]
The proof is almost identical to the one of Lemma \ref{lem_polylipschitz_distance_tilde}. The only difference is that we use H\"{o}lder's inequality and corresponding moment bounds, instead of bounding derivatives with $\Co_g$.
\end{proof}

\begin{lem}\label{lem_sig_expansion}
Grant Assumption \ref{ass_dependence}. Then
\begin{description}
\item[(i)]$\bigl\|\overline{\sigma}_{j|m}^2 - \overline{\sigma}_j^2\bigr\|_{\frac{p}{2}} \lesssim \bigl\|\overline{\sigma}_{j|m}^2 - \frac{1}{2}{s}_m^2\bigr\|_{\frac{p}{2}} + m^{-1} \lesssim m^{-1}$ for $1 \leq j \leq N$,
\item[(ii)] $2m\overline{\sigma}_j^2 = {s}_m^2 + \OO(1)$ for $1 \leq j \leq N$.
\item[(iii)] $\big\|2\overline{S}_{n|m}^2 - s_n^2\big\|_{p/2} \lesssim  m^{-1} N^{\frac{2}{p\wedge 4}}$.
\end{description}
\end{lem}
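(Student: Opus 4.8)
Throughout, the plan is to first establish the single‑block estimate
\begin{align*}
\bigl\|\overline{\sigma}_{j|m}^2 - \tfrac12 s_m^2 \bigr\|_{p/2} \lesssim m^{-1}, \qquad 1 \le j \le N,
\end{align*}
and then read off all three assertions from it. For the single‑block estimate I would start from $\overline{\sigma}_{j|m}^2 = \E_{\FF_m}\overline{V}_j^2$. Since the coupled block $\overline{V}_j^*$ from \eqref{defn_Vj_star} is independent of $\FF_m$ and satisfies $\overline{V}_j^* \stackrel{d}{=} S_m/\sqrt{2m}$, we get $\E_{\FF_m}(\overline{V}_j^*)^2 = \E(\overline{V}_j^*)^2 = (2m)^{-1}\E S_m^2 = \tfrac12 s_m^2$, hence
\begin{align*}
\overline{\sigma}_{j|m}^2 - \tfrac12 s_m^2 = \E_{\FF_m}\bigl[\overline{V}_j^2 - (\overline{V}_j^*)^2\bigr].
\end{align*}
Now I would invoke Lemma \ref{lem_polylipschitz_distance_moments} with the monomial $g(x)=x^2$ (a polynomial with coefficients bounded by $1$). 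That lemma is stated in $\|\cdot\|_1$, but its proof is the argument of Lemma \ref{lem_polylipschitz_distance} with all Taylor expansions suppressed (legitimate, since $g$ is a polynomial), and every H\"older/Cauchy--Schwarz splitting there involves only $\overline{V}_j$, $R_1$, $\overline{V}_1^{(\leq l,*)}$, $R_1^{(l,**)}$ and increments $X_k-X_k^*$, all of which are controlled in $L^p$ by \Bone, \Btwo and Lemma \ref{lem_wu_original}; running the same bookkeeping in $L^{p/2}$ (using $\sum_k k^2\lambda_{k,p}<\infty$ and $\sum_k k\lambda_{k,p}<\infty$) gives $\|\E_{\FF_m}[\overline{V}_j^2-(\overline{V}_j^*)^2]\|_{p/2}\lesssim m^{-1}$, which is the single‑block estimate.

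Part (ii) is then immediate: taking expectations gives $\overline{\sigma}_j^2 = \E\,\overline{\sigma}_{j|m}^2$ for $1\le j\le N$, so $|2\overline{\sigma}_j^2-s_m^2|\le\|2\overline{\sigma}_{j|m}^2-s_m^2\|_{1}\le\|2\overline{\sigma}_{j|m}^2-s_m^2\|_{p/2}\lesssim m^{-1}$; if one prefers $\ss_m^2$ instead of $s_m^2$ one uses in addition $|s_m^2-\ss_m^2|=|m^{-1}\sum_{|k|<m}|k|\,\E X_0X_k|\lesssim m^{-1}$, a consequence of \Btwo (cf.\ Lemma \ref{lem_sig_expressions_relations}). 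For the first inequality in part (i), the triangle inequality with reference point $\tfrac12 s_m^2$ gives $\|\overline{\sigma}_{j|m}^2-\overline{\sigma}_j^2\|_{p/2}\le\|\overline{\sigma}_{j|m}^2-\tfrac12 s_m^2\|_{p/2}+|\tfrac12 s_m^2-\overline{\sigma}_j^2|$, and the last term is $\lesssim m^{-1}$ by part (ii); combined with the single‑block estimate this proves the chain in part (i).

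For part (iii), recall $\overline{S}_{n|m}=\sqrt{2m}\sum_{j=1}^N\overline{V}_j$ with $n=2mN$ (Convention \ref{conv} (i)), and that $(\overline{V}_j)_{1\le j\le N}$ are i.i.d.\ and centered under $\P_{\FF_m}$; expanding $\overline{S}_{n|m}^2$ and using this, it reduces to $N^{-1}\sum_{j=1}^N\overline{\sigma}_{j|m}^2$. Writing $\overline{\sigma}_{j|m}^2=\tfrac12 s_m^2+Y_j$ with $Y_j=\E_{\FF_m}[\overline{V}_j^2-(\overline{V}_j^*)^2]$, the single‑block estimate gives $\|Y_j\|_{p/2}\lesssim m^{-1}$. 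Since each $Y_j$ is $\FF_m$‑measurable and depends only on the blocks $e_j,e_{j+1}$, the family $(Y_j)_j$ is $1$‑dependent; splitting $\{1,\dots,N\}$ into odds and evens turns $\sum_{j=1}^N(Y_j-\E Y_j)$ into a sum of two families of i.i.d.\ centered summands, and Rosenthal's inequality (when $p\ge 4$, so $p/2\ge 2$) resp.\ the von Bahr--Esseen inequality (when $3\le p<4$, so $p/2<2$) gives $\|\sum_{j=1}^N(Y_j-\E Y_j)\|_{p/2}\lesssim N^{2/(p\wedge 4)}\max_j\|Y_j\|_{p/2}\lesssim m^{-1}N^{2/(p\wedge 4)}$; together with $|\E Y_j|\lesssim m^{-1}$, $|s_n^2-s_m^2|\lesssim m^{-1}$ and $n=2mN$ this yields $\|2\overline{S}_{n|m}^2-s_n^2\|_{p/2}\lesssim m^{-1}N^{2/(p\wedge 4)}$.

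The main obstacle is the single‑block estimate in the stronger norm $\|\cdot\|_{p/2}$: although $\overline{V}_j^2-(\overline{V}_j^*)^2$ is only of order $1$ in $L^{p/2}$ (its $R_j$‑contribution does not decay), its conditional expectation given $\FF_m$ is of order $m^{-1}$, and this rests on the delicate cancellations in the proof of Lemma \ref{lem_polylipschitz_distance} --- the $\P_{\FF_m}$‑independence of the truncated pieces $\overline{V}_1^{(\leq l,*)}$, $R_1^{(l,**)}$, $\overline{V}_1^{(>l,**)}$ provided by the block construction, together with $\E_{\FF_m}X_k^{(k-m+l,**)}=0$ --- which one must verify survive when the $L^1$‑estimates there are replaced by $L^{p/2}$‑ones. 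The remaining steps (the triangle‑inequality reductions and the Rosenthal / von Bahr--Esseen estimate for $1$‑dependent sums) are routine.
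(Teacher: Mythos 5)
Your argument is correct, and for parts (i) and (ii) it is essentially the route the paper itself indicates: the paper's proof of Lemma \ref{lem_sig_expansion} is just a citation of \cite{jirak_be_aop_2016} together with the remark that Lemma \ref{lem_polylipschitz_distance_moments} gives (i), which implies (ii) --- exactly your use of $g(x)=x^2$ and $\E_{\FF_m}(\overline{V}_j^*)^2=\frac{1}{2}s_m^2$. You correctly flag the only gap in that remark: Lemma \ref{lem_polylipschitz_distance_moments} is stated in $\|\cdot\|_1$ while (i) requires $\|\cdot\|_{p/2}$, and your fix (replace the Cauchy--Schwarz steps by H\"older with two $L^p$ factors, noting that the conditional-independence and conditional-centering cancellations are norm-free identities) does go through, since $R_1$, $R_1^{(l,**)}$, $\overline{V}_1^{(\leq l,*)}$, $\overline{V}_1^{(>l,*)}$ and the increments $X_k-X_k^*$ are all controlled in $L^p$ via Lemmas \ref{lem_wu_original}, \ref{lem_bound_R1} and $\Lambda_{2,p}<\infty$. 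For (iii) the paper supplies no argument at all (it is outsourced to the citation), so your blocking argument is a genuine addition: $Y_j=\E_{\FF_m}\bigl[\overline{V}_j^2-(\overline{V}_j^*)^2\bigr]$ is a function of $(e_j,e_{j+1})$ only, hence $1$-dependent, and the odd/even split with Rosenthal ($p\geq 4$) resp.\ von Bahr--Esseen ($3\leq p<4$) produces precisely the exponent $N^{2/(p\wedge 4)}$ in the statement, which strongly suggests this is the intended mechanism. One caution on bookkeeping: the statement of (ii)/(iii) contains normalization slips ($2m\overline{\sigma}_j^2$ versus $2\overline{\sigma}_j^2$, and $\overline{S}_{n|m}^2$ versus the conditional second moment), and the way (iii) is actually used (in Lemma \ref{lem_cum_expansion}) is as a bound on $\bigl\|\E_{\FF_m}\overline{S}_{n|m}^2-\E\overline{S}_{n|m}^2\bigr\|_{p/2}=2m\bigl\|\sum_{j}(\overline{\sigma}_{j|m}^2-\overline{\sigma}_j^2)\bigr\|_{p/2}\lesssim N^{2/(p\wedge4)}$, i.e.\ the unaveraged sum; under your $N^{-1}$-averaged reading the claimed bound is weaker than what your estimate gives, so nothing is lost, but the Rosenthal/von Bahr--Esseen step is genuinely needed for the unaveraged version, and your write-up already contains it.
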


\begin{proof}[Proof of Lemma \ref{lem_sig_expansion}]
See ~\cite{jirak_be_aop_2016}. Alternatively, one may also use Lemma \ref{lem_polylipschitz_distance_moments} for {\bf (i)}, which implies  {\bf (ii)}.
\end{proof}

\begin{lem}\label{lem_cum_expansion}
Grant Assumption \ref{ass_dependence}. Then
\begin{description}
  \item[(i)] 
$\bigl\|\overline{\kapp}_{j|m}^3 - (2m)^{-\frac{3}{2}}\E S_m^3\bigr\|_{1}, \bigl\|\overline{\kapp}_{j|m}^3 - \overline{\kapp}_j^3\bigr\|_{1} \lesssim m^{-1}$ for $1 \leq j \leq N$,
  \item[(ii)] $\E_{}{S}_{n}^3 = \E_{}\overline{S}_{n|m}^3 + \E_{}\widetilde{S}_{n|m}^3 + \OO\big(n^{\frac{1}{2}} N^{\frac{2}{p\wedge 4}}\big) \\ \hspace{6cm} =(2m)^{\frac{3}{2}}\sum_{j = 1}^N \big(\overline{\kapp}_j^3 + \widetilde{\kapp}_j^3\big) +\OO\big(n^{\frac{1}{2}} N^{\frac{2}{p\wedge 4}}\big)$.
\end{description}
\end{lem}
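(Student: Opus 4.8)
The plan is to handle the two parts separately, reducing everything to the block-level estimates already in hand (Lemma~\ref{lem_polylipschitz_distance_moments} and Lemma~\ref{lem_sig_expansion}), and to obtain the second equality in \textbf{(ii)} from an exact algebraic splitting rather than an approximation.

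For \textbf{(i)}, I would exploit that the decoupled block $\overline{V}_j^{*}$ of \eqref{defn_Vj_star} is independent of $\FF_m$ and satisfies $\sqrt{2m}\,\overline{V}_j^{*}\stackrel{d}{=}S_m$, so that $\E_{\FF_m}\big[(\overline{V}_j^{*})^3\big]=\E\big[(\overline{V}_j^{*})^3\big]=(2m)^{-3/2}\E S_m^3$. Since $\overline{\kapp}_{j|m}^3-(2m)^{-3/2}\E S_m^3=\E_{\FF_m}\big[\overline{V}_j^3-(\overline{V}_j^{*})^3\big]$, the first estimate is then an immediate application of Lemma~\ref{lem_polylipschitz_distance_moments} to the third-degree polynomial $g(x)=x^3$. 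For the second estimate I would note $\overline{\kapp}_j^3=\E\,\overline{\kapp}_{j|m}^3$, take expectations in the bound just obtained to get $\big|\E\,\overline{\kapp}_{j|m}^3-(2m)^{-3/2}\E S_m^3\big|\lesssim m^{-1}$, and conclude with the triangle inequality.

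For \textbf{(ii)}, I would write $S_n=\overline{S}_{n|m}+\widetilde{S}_{n|m}$ and expand
\begin{align*}
\E S_n^3 = \E\overline{S}_{n|m}^3 + 3\,\E\big[\overline{S}_{n|m}^2\widetilde{S}_{n|m}\big] + 3\,\E\big[\overline{S}_{n|m}\widetilde{S}_{n|m}^2\big] + \E\widetilde{S}_{n|m}^3 .
\end{align*}
The term $\E\big[\overline{S}_{n|m}\widetilde{S}_{n|m}^2\big]$ vanishes exactly, because $\widetilde{S}_{n|m}$ is $\FF_m$-measurable and $\E_{\FF_m}\overline{S}_{n|m}=0$. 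For $\E\big[\overline{S}_{n|m}^2\widetilde{S}_{n|m}\big]$ I would condition on $\FF_m$ and subtract the mean (using $\E\widetilde{S}_{n|m}=0$) to rewrite it as $\E\big[\widetilde{S}_{n|m}\big(\E_{\FF_m}\overline{S}_{n|m}^2-\E\overline{S}_{n|m}^2\big)\big]$, and then apply H\"older with exponents $q=p/(p-2)$ and $p/2$. Since the $\overline{V}_j$ are i.i.d.\ and centered under $\P_{\FF_m}$, $\E_{\FF_m}\overline{S}_{n|m}^2=2m\sum_{j=1}^N\overline{\sigma}_{j|m}^2$, so the second factor is $2m\big\|\sum_{j=1}^N(\overline{\sigma}_{j|m}^2-\overline{\sigma}_j^2)\big\|_{p/2}\lesssim N^{2/(p\wedge4)}$ by Lemma~\ref{lem_sig_expansion}(i) together with a Marcinkiewicz--Zygmund/Rosenthal inequality for the finitely dependent summands (the exponent $2/(p\wedge4)$ being exactly the $\ell^{p/2}$ scaling). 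The first factor, $\|\widetilde{S}_{n|m}\|_q$, is $\lesssim n^{1/2}$: Rosenthal's inequality applied to the independent centered blocks $\widetilde{V}_j$, together with $\|\widetilde{V}_j\|_p\lesssim m^{-1/2}$ (which follows from \Btwo\ via $\|\E_{\FF_m}X_k\|_p\lesssim\sum_{l\ge k}\lambda_{l,p}$ and summation over the block), gives $\|\widetilde{S}_{n|m}\|_q\lesssim\sqrt{2m}\,m^{-1/2}N^{1/2}\lesssim n^{1/2}$. This yields the first equality. For the second, I would expand $\E\overline{S}_{n|m}^3=(2m)^{3/2}\sum_{j_1,j_2,j_3}\E\big[\overline{V}_{j_1}\overline{V}_{j_2}\overline{V}_{j_3}\big]$, and use conditional independence and centering of the $\overline{V}_j$ (conditioning on $\FF_m$) to kill every summand with not all three indices equal, leaving $(2m)^{3/2}\sum_{j=1}^N\overline{\kapp}_j^3$; the same argument with the (unconditionally) independent centered $\widetilde{V}_j$ gives $\E\widetilde{S}_{n|m}^3=(2m)^{3/2}\sum_{j=0}^N\widetilde{\kapp}_j^3$, and the extra $j=0$ contribution $(2m)^{3/2}\widetilde{\kapp}_0^3$ is $\OO(1)$ since $|\widetilde{\kapp}_0^3|\le\|\widetilde{V}_0\|_3^3\lesssim m^{-3/2}$, hence absorbed into the stated error.

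The main obstacle is the bound on the mixed term $\E\big[\overline{S}_{n|m}^2\widetilde{S}_{n|m}\big]$: one has to place the factor $N^{2/(p\wedge4)}$ correctly and, in the regime $3\le p<4$ where $p/2<2$, use $\ell^{p/2}$-type (Marcinkiewicz--Zygmund) inequalities for the block sums rather than $L^2$ ones — this is precisely where Lemma~\ref{lem_sig_expansion} and the corresponding moment bounds for the blocks $\widetilde{V}_j$ are needed. Everything else is routine bookkeeping on top of Lemmas~\ref{lem_polylipschitz_distance_moments} and~\ref{lem_sig_expansion}.
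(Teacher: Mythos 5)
Your route is essentially the paper's. Part \textbf{(i)} is exactly the intended use of Lemma \ref{lem_polylipschitz_distance_moments} with $g(x)=x^3$ (using that $\overline{V}_j^*$ is independent of $\FF_m$ and $\sqrt{2m}\,\overline{V}_j^*\stackrel{d}{=}S_m$), followed by the tower property and the triangle inequality, which is how the paper argues as well. For part \textbf{(ii)} you use the same cubic expansion, kill $\E\big[\overline{S}_{n|m}\widetilde{S}_{n|m}^2\big]$ by conditional centering, recenter the mixed term via $\E\widetilde{S}_{n|m}=0$, and apply H\"older; the paper at this point simply cites Lemmas \ref{lem_sig_expansion} and \ref{lem_bound_R1}, whereas you rederive the $N^{2/(p\wedge4)}$ factor from Lemma \ref{lem_sig_expansion}\,\textbf{(i)} together with a von Bahr--Esseen/Rosenthal bound for the centered, $1$-dependent increments $\overline{\sigma}_{j|m}^2-\overline{\sigma}_j^2$ (after an even/odd split); this is legitimate and essentially equivalent. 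The second equality in \textbf{(ii)}, via conditional (resp.\ unconditional) independence and the $O(1)$ treatment of the $j=0$ block through $\|\widetilde{V}_0\|_3\lesssim m^{-1/2}$, also matches the paper.

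One justification is wrong, although it does not damage the final estimate: you claim $\|\widetilde V_j\|_p\lesssim m^{-1/2}$ for all blocks, justified by $\|\E_{\FF_m}X_k\|_p\lesssim\sum_{l\ge k}\lambda_{l,p}$. That inequality only holds when the conditioning misses the recent innovations of $X_k$, which is the situation of $\widetilde V_0$ (Lemma \ref{lem_bound_R1}\,\textbf{(i)}). For $1\le j\le N$, the observed block $e_{j+1}$ contains the most recent $\varepsilon$'s of many of the $X_k$ entering $\widetilde V_j$ (e.g.\ $k$ close to $2jm$), so $\E_{\FF_m}X_k$ is close to $X_k$ itself rather than to $0$; accordingly $\|\widetilde V_j\|_p$ is of order one, consistent with Lemma \ref{lem_bound_R1}\,\textbf{(iv)} and with $\widetilde\sigma_j^2=\overline\sigma_j^2+\OO(m^{-1})\thicksim \ss_m^2/2$ in Lemma \ref{lem_widetilde_moments}. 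The repair is immediate: since the $\widetilde V_j$ are independent, centered, with $\max_j\|\widetilde V_j\|_p=\OO(1)$, Rosenthal's inequality gives $\|\widetilde S_{n|m}\|_q\le\|\widetilde S_{n|m}\|_p\lesssim \sqrt{2m}\,N^{1/2}\thicksim n^{1/2}$ (here $q=p/(p-2)\le 3\le p$), which is precisely the bound your H\"older step requires, so the stated error term $\OO\big(n^{1/2}N^{2/(p\wedge4)}\big)$ is recovered and the proof stands.
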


\begin{rem}
Lengthy calculations even reveal the bound $m^{-\frac{3}{2}}$ in {\bf (i)}.
\end{rem}

\begin{proof}[Proof of Lemma \ref{lem_cum_expansion}]
Regarding {\bf (i)}, the first claim follows immediately from Lemma \ref{lem_polylipschitz_distance_moments}. For the second claim, conditioning, the triangle inequality and the first result give
\begin{align*}
\bigl\|\overline{\kapp}^3_{j|m} - \overline{\kapp}^3_{j} \big\|_1 &\lesssim m^{-1} + \bigl\|\overline{\kapp}^3_{j} -  (2m)^{-\frac{3}{2}}\E S_m^3\big\|_1 \\&\lesssim \bigl\|\overline{\kapp}^3_{j|m} - (2m)^{-\frac{3}{2}}\E S_m^3 \big\|_1 + m^{-1} \lesssim m^{-1}.
\end{align*}
For {\bf (ii)}, since $\E_{\FF_m}\overline{S}_{n|m}=0$, conditioning gives
\begin{align*}
&\E_{}{S}_{n}^3 = \E\overline{S}_{n|m}^3 + 3\E\big[\widetilde{S}_{n|m} \E_{\FF_m}[\overline{S}_{n|m}^2]\big] + \E\widetilde{S}_{n|m}^3.
\end{align*}
Hence it suffices to show
\begin{align}
\big|\E\big[\widetilde{S}_{n|m}\E_{\FF_m}[\overline{S}_{n|m}^2] \big]\big| \lesssim n^{\frac{1}{2}} N^{\frac{2}{p \wedge 4}}.
\end{align}
Since $\E \widetilde{S}_{n|m} = 0$, we have
\begin{align*}
\big|\E\big[\widetilde{S}_{n|m}\E_{\FF_m}[\overline{S}_{n|m}^2] \big]\big| = \big|\E\big[\widetilde{S}_{n|m}\big(\E_{\FF_m}[\overline{S}_{n|m}^2] - \E_{}[\overline{S}_{n|m}^2]\big)\big]\big|.
\end{align*}
From Lemmas \ref{lem_sig_expansion}, \ref{lem_bound_R1} and H\"{o}lder's inequality, we get the first equality in {\bf (ii)}. Since $(\overline{V}_j)_{1 \leq j \leq N}$ is $\P_{\FF_m}$-independent and $(\widetilde{V}_j)_{1 \leq j \leq N}$ is $\P$-independent, we get the second equality.
\end{proof}

\begin{lem}\label{lem_quad_expansion}
Grant Assumption \ref{ass_dependence}. Then for $1 \leq j \leq N$
\begin{align*}
\bigl\|\overline{\nu}_{j|m}^4 - (2m)^{-2}\E S_m^4\bigr\|_{1}, \bigl\|\overline{\nu}_{j|m}^4 - \overline{\nu}_j^4\bigr\|_{1} \lesssim m^{-1}.
\end{align*}
\end{lem}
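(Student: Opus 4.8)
The plan is to treat this as the fourth-power analogue of the third-cumulant estimate in Lemma~\ref{lem_cum_expansion}(i). The key observation is that, since we are under Assumption~\ref{ass_dependence} with $p\geq4$, the fourth-degree polynomial version of Lemma~\ref{lem_polylipschitz_distance_moments} is at our disposal, and we apply it to the monomial $g(x)=x^4$.

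Concretely, Lemma~\ref{lem_polylipschitz_distance_moments} with $g(x)=x^4$ gives
\begin{align*}
\big\|\E_{\FF_m}\overline{V}_j^4 - \E_{\FF_m}\big(\overline{V}_j^*\big)^4\big\|_1 \lesssim m^{-1},
\end{align*}
with $\overline{V}_j^*$ as in \eqref{defn_Vj_star}. Because $\overline{V}_j^*$ is independent of $\FF_m$ and $S_m/\sqrt{2m}\stackrel{d}{=}\overline{V}_j^*$, the conditional expectation $\E_{\FF_m}(\overline{V}_j^*)^4$ equals the deterministic constant $\E(\overline{V}_j^*)^4 = (2m)^{-2}\E S_m^4$, which yields the first bound $\|\overline{\nu}_{j|m}^4 - (2m)^{-2}\E S_m^4\|_1 \lesssim m^{-1}$. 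For the second bound, write $\overline{\nu}_j^4 = \E\overline{V}_j^4 = \E\,\overline{\nu}_{j|m}^4$; Jensen's inequality together with the first bound gives $\big|\overline{\nu}_j^4 - (2m)^{-2}\E S_m^4\big|\lesssim m^{-1}$, and the triangle inequality then delivers $\|\overline{\nu}_{j|m}^4 - \overline{\nu}_j^4\|_1 \lesssim m^{-1}$.

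The only delicate point --- and the sole place where the hypothesis $p\geq4$ is used --- is the applicability of Lemma~\ref{lem_polylipschitz_distance_moments} to a polynomial of degree four; this in turn rests on finiteness and uniform control of the relevant fourth moments of the block sums and of the coupling errors (via Lemma~\ref{lem_wu_original} and Lemma~\ref{lem_bound_R1}), all available under Assumption~\ref{ass_dependence} with $p\geq4$. Apart from that, the argument is purely routine: it is the $s=4$ version of the computation already carried out for $s=3$ in Lemma~\ref{lem_cum_expansion}(i), so no new difficulty arises.
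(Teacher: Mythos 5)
Your proposal is correct and matches the paper's argument: the paper simply says to argue as in Lemma \ref{lem_cum_expansion}, which is exactly what you do — apply the fourth-degree polynomial case of Lemma \ref{lem_polylipschitz_distance_moments} to $g(x)=x^4$, use the independence of $\overline{V}_j^*$ from $\FF_m$ and $\overline{V}_j^*\stackrel{d}{=}S_m/\sqrt{2m}$ to identify $\E_{\FF_m}(\overline{V}_j^*)^4=(2m)^{-2}\E S_m^4$, and then get the second bound by the tower property, Jensen and the triangle inequality. Your remark that $p\geq 4$ enters only through the degree-four version of Lemma \ref{lem_polylipschitz_distance_moments} is also consistent with the paper.
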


\begin{proof}[Proof of Lemma \ref{lem_quad_expansion}]
We may argue \hypertarget{barsigmajj:eq23}{as} in the proof \hypertarget{tilsigmajj:eq23}{of} Lemma \ref{lem_cum_expansion}.
\end{proof}

\begin{lem}\label{lem_widetilde_moments}
 \hypertarget{barNuj:eq23}{Grant} Assumption \ref{ass_dependence}. \hypertarget{tilkappaj:eq23}{Then} \hypertarget{tildeNuj:eq23}{for} $1 \leq j \leq N$ 
\begin{description}
  \item[(i)] $\widetilde{\sigma}_j^2 = \overline{\sigma}_j^2 + \OO(m^{-1})$,
  \item[(ii)] $|\widetilde{\kappa}_j^3 - \overline{\kappa}_j^3|\lesssim m^{-1}$,
  \item[(iii)] $|\widetilde{\nu}_j^4 - \overline{\nu}_j^4|\lesssim m^{-1}$ if $p \geq 4$.
\end{description}
\end{lem}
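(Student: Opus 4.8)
The plan is to obtain Lemma \ref{lem_widetilde_moments} as a short consequence of the two moment-matching lemmas \ref{lem_polylipschitz_distance_moments} and \ref{lem_polylipschitz_distance_moments_tilde}, using the coupled surrogates $\overline{V}_j^{*}$ and $\widetilde{V}_j^{*}$ as a common reference. The key point is that, by the distributional identities noted after \eqref{defn_Vj_star_II}, one has $\overline{V}_j^{*} \stackrel{d}{=} \widetilde{V}_j^{*} \stackrel{d}{=} S_m/\sqrt{2m}$ for every $1 \leq j \leq N$; consequently $\E (\overline{V}_j^{*})^{r} = \E (\widetilde{V}_j^{*})^{r}$ for every $r$ for which these are finite, which --- since $p \geq 3$ (resp.\ $p \geq 4$ for order four) under \Bone\ --- covers $r \in \{2,3\}$ (and $r = 4$ once $p \geq 4$). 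Thus it suffices to show that each of $\overline{\sigma}_j^{2}, \overline{\kapp}_j^{3}, \overline{\nu}_j^{4}$ and $\widetilde{\sigma}_j^{2}, \widetilde{\kapp}_j^{3}, \widetilde{\nu}_j^{4}$ agrees with the corresponding surrogate moment up to an error $\OO(m^{-1})$, and then subtract.

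First I would apply Lemma \ref{lem_polylipschitz_distance_moments} to the monomials $g(x) = x^{2}, x^{3}$ (and $x^{4}$ when $p \geq 4$), each of which is a polynomial of degree at most three (respectively four) with coefficients bounded by one. Passing from the conditional bound to total expectations (tower property and $|\E Y| \leq \|Y\|_{1}$) turns $\bigl\|\E_{\FF_m}[g(\overline{V}_j) - g(\overline{V}_j^{*})]\bigr\|_{1} \lesssim m^{-1}$ into $\bigl|\E g(\overline{V}_j) - \E g(\overline{V}_j^{*})\bigr| \lesssim m^{-1}$, i.e.\ $\overline{\sigma}_j^{2} = \E (\overline{V}_j^{*})^{2} + \OO(m^{-1})$, and likewise for $\overline{\kapp}_j^{3}$ and (if $p \geq 4$) $\overline{\nu}_j^{4}$. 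Running the identical computation with Lemma \ref{lem_polylipschitz_distance_moments_tilde}, whose bound is already stated unconditionally, gives $\widetilde{\sigma}_j^{2} = \E (\widetilde{V}_j^{*})^{2} + \OO(m^{-1})$, and likewise for $\widetilde{\kapp}_j^{3}$ and $\widetilde{\nu}_j^{4}$. Combining the two sides with $\E (\overline{V}_j^{*})^{r} = \E (\widetilde{V}_j^{*})^{r}$ and the triangle inequality then yields $|\widetilde{\sigma}_j^{2} - \overline{\sigma}_j^{2}| \lesssim m^{-1}$, $|\widetilde{\kapp}_j^{3} - \overline{\kapp}_j^{3}| \lesssim m^{-1}$ and $|\widetilde{\nu}_j^{4} - \overline{\nu}_j^{4}| \lesssim m^{-1}$, uniformly in $1 \leq j \leq N$, which is the assertion.

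The only step needing genuine attention --- and the main, though minor, obstacle --- is the boundary block $j = N$, for which $\widetilde{V}_N$ is a sum over $m$ rather than $2m$ indices and so falls outside the generic block shape. Here I would invoke the remark in the proof of Lemma \ref{lem_polylipschitz_distance_tilde} that Lemmas \ref{lem_polylipschitz_distance_moments} and \ref{lem_polylipschitz_distance_moments_tilde} persist for $j = N$ after a routine adaptation, while the identity $\overline{V}_N^{*} \stackrel{d}{=} \widetilde{V}_N^{*} \stackrel{d}{=} S_m/\sqrt{2m}$ is unaffected; alternatively, for $j = N$ one can compare both $\overline{V}_N$ and $\widetilde{V}_N$ to $S_m/\sqrt{2m}$ directly using the telescoping-coupling estimate from the proof of Lemma \ref{lem_polylipschitz_distance}. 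All remaining manipulations are mechanical, so no further difficulty is expected.
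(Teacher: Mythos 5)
Your argument is correct and essentially the paper's own proof: the paper deduces the lemma from Lemma \ref{lem_polylipschitz_distance_moments_tilde} together with Lemmas \ref{lem_sig_expansion}, \ref{lem_cum_expansion} and \ref{lem_quad_expansion} (which are themselves obtained from Lemma \ref{lem_polylipschitz_distance_moments}), so your direct use of the two moment-matching lemmas combined with the identity $\overline{V}_j^{*} \stackrel{d}{=} \widetilde{V}_j^{*} \stackrel{d}{=} S_m/\sqrt{2m}$ and the triangle inequality is the same comparison through the common surrogate. Your treatment of the boundary block $j=N$ also matches the paper's remark that the tilde lemmas extend to $j=N$ after a slight adaptation.
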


\begin{proof}[Proof of Lemma \ref{lem_widetilde_moments}]
All three results follow from Lemma \ref{lem_polylipschitz_distance_moments_tilde} and Lemmas \ref{lem_sig_expansion}, \ref{lem_cum_expansion} and \ref{lem_quad_expansion}. Alternatively, one may also prove it directly.

\end{proof}

\subsection{Conditional Approximations and Distributions}

We \hypertarget{BVJS:eq23}{require} some additional notation. Recall the definitions of $\overline{V}_j^{\ast}$ and $\widetilde{V}_j^{\ast}$ in \eqref{defn_Vj_star} and \eqref{defn_Vj_star_II}. Since
$\overline{V}_j^{\ast} \stackrel{d}{=} \widetilde{V}_j^{\ast}$ and
\begin{align}\label{Z_equality}
\overline{Z}_j \stackrel{d}{=}\widetilde{Z}_j \stackrel{d}{=} Z,\quad 1 \leq j \leq N
\end{align}
for some random variable $Z$, we \hypertarget{DeltaJMX:eq24}{put}
\begin{align}\label{defn_Delta_star_diamond}
{\Delta}_{j,m}(x) = \overline{\Delta}_{j,m}(x) &= \P_{}\bigl(\overline{V}_j^* \geq x \bigr) - \P_{}\bigl(\overline{Z}_j \geq x \bigr), \quad 1 \leq j \leq N,
\end{align}
$\widetilde{\Delta}_{j,m} = {\Delta}_{j,m}$ for $1 \leq j \leq N$. Observe that ${\Delta}_{j,m}$ actually do not depend on $j$, but we stick to this notation to distinguish them from $\Delta_m$ and $\Delta_m^{\diamond}$. Recall that $\xi_N = \xi/\sqrt{N}$.




\begin{lem}\label{lem_taylor_expansion_smooth_function}
Grant Assumption \ref{ass_dependence}, and let $f$ be a smooth function such that $\sup_{x \in \R}|f^{(s)}(x)| \leq 1$ for $s = 0,\ldots,7$. Then for $\tau_n \geq {\co_{\tau}} \sqrt{\log n}$, ${\co_{\tau}} > 0$ sufficiently large, $2\bd + 2 < \ad$ and $p \geq 3$
\begin{align*}
&\, \max_{1 \leq j \leq N}\bigl\|\E_{\FF_m}\ff\bigl(\xi_N \overline{V}_{j}\bigr) - \E_{}\ff\bigl(\xi_N \overline{Z}_{j}\bigr)\bigr\|_1\\& \quad \lesssim |\xi_N|^3 \tau_n^{3-p} \log(n)^{-\bd} \oo\big(m^{1-\frac{p}{2}}\big) + \tau_n^4(|\xi_N|^4 + |\xi_N|^7)m^{-1} \\&\quad +\sup_{x \in \R}\big|{\Delta}_{j,m}(x)\big|\bigl(\tau_n^4|\xi_N|^4 + |\tau_n|^5 |\xi_N|^5\big) + |\xi_N|^3 m^{-1}+ |\xi_N|^2 m^{-1}.
\end{align*}
\end{lem}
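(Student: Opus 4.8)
The plan is to compare, for a single block $j$ (all $j$ being equivalent by stationarity of the construction), the conditional quantity $\E_{\FF_m}f(\xi_N\overline V_j)$ with $\E f(\xi_N\overline Z_j)$ in three moves: decouple $\overline V_j$ from $\FF_m$; match the low-order moments of the decoupled variable against those of $\overline Z_j$; and estimate the Taylor remainder through a tail split at level $\tau_n$. Throughout one uses that $g := f(\xi_N\,\cdot)$ has $|g^{(s)}(x)| = |\xi_N|^s|f^{(s)}(\xi_N x)| \le |\xi_N|^s$ for $0 \le s \le 7$.

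\emph{Step 1 (decoupling).} Apply Lemma~\ref{lem_polylipschitz_distance}, together with its higher-order refinements (which is where derivatives of $f$ beyond the third are needed), to $g = f(\xi_N\,\cdot)$. Re-inspecting the proof of Lemma~\ref{lem_polylipschitz_distance}, every contribution there in fact carries the power of $|\xi_N|$ corresponding to the order of the derivative of $g$ that gets differentiated out (e.g.\ the Taylor remainders produce $|\xi_N|^2\|\overline V_j - \overline V_j^*\|_2^2 \lesssim |\xi_N|^2 m^{-1}$ via Lemma~\ref{lem_bound_R1}, while the $g^{(1)}$-block pieces carry a compensating $(2m)^{-1/2}$), so that, with $\overline V_j^*$ as in \eqref{defn_Vj_star},
\[
\big\|\E_{\FF_m}\!\big[f(\xi_N\overline V_j) - f(\xi_N\overline V_j^*)\big]\big\|_1 \lesssim \big(|\xi_N|^2 + |\xi_N|^3\big)m^{-1} + \tau_n^4\big(|\xi_N|^4 + |\xi_N|^7\big)m^{-1}.
\]
Since $\overline V_j^*$ is independent of $\FF_m$, the quantity $\E_{\FF_m}f(\xi_N\overline V_j^*) = \E f(\xi_N\overline V_j^*)$ is deterministic, and it remains to bound $\big|\E f(\xi_N\overline V_j^*) - \E f(\xi_N\overline Z_j)\big|$.

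\emph{Step 2 (moment matching).} Write $f(y) = \sum_{k=0}^{3}\frac{f^{(k)}(0)}{k!}y^k + \rho(y)$ with $|\rho(y)| \le \tfrac{1}{24}|y|^4$. The polynomial part contributes $\sum_{k=1}^{3}\frac{f^{(k)}(0)}{k!}\xi_N^{\,k}\big(\E(\overline V_j^*)^k - \E\overline Z_j^{\,k}\big)$; the $k=1$ term vanishes because $\E\overline V_j^* = \E\overline Z_j = 0$, while Lemmas~\ref{lem_sig_expansion} and \ref{lem_cum_expansion} combined with \eqref{moment_conditions_global_FF} give $\E(\overline V_j^*)^k - \E\overline Z_j^{\,k} = \OO(m^{-1})$ for $k = 2,3$ (recall $\sqrt{2m}\,\overline V_j^* \stackrel{d}{=} S_m$), producing the terms $\big(|\xi_N|^2 + |\xi_N|^3\big)m^{-1}$.

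\emph{Step 3 (remainder split).} Integrate by parts: with $F_V, F_Z$ the distribution functions of $\overline V_j^*, \overline Z_j$ and $\Delta_{j,m} = F_Z - F_V$ (cf.\ \eqref{defn_Delta_star_diamond}),
\[
\E\rho(\xi_N\overline V_j^*) - \E\rho(\xi_N\overline Z_j) = \xi_N\!\int_{\R}\rho_1(\xi_N x)\,\Delta_{j,m}(x)\,dx, \qquad \rho_1(y) = f'(y) - f'(0) - f''(0)y - \tfrac{1}{2}f'''(0)y^2,
\]
the boundary terms vanishing because $p \ge 3$ forces $|x|^3\Delta_{j,m}(x) \to 0$ (for $p = 3$ one uses the logarithmic moment in \Bone). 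Split the integral at $|x| = \tau_n$. On $\{|x| \le \tau_n\}$, bounding $|\rho_1(y)| \le \tfrac{1}{6}|y|^3$ (or peeling off one further monomial) and $|\Delta_{j,m}(x)| \le \sup_y|\Delta_{j,m}(y)|$ yields the terms $\sup_y|\Delta_{j,m}(y)|\big(\tau_n^4|\xi_N|^4 + \tau_n^5|\xi_N|^5\big)$. On $\{|x| > \tau_n\}$, use $|\rho_1(y)| \lesssim 1 + |y|^2$ (further splitting the range according to whether $|\xi_N x| \le 1$, so as to exploit $|\rho_1(y)| \le \tfrac{1}{6}|y|^3$ on the inner part), together with the tail estimates
\[
\P\big(|\overline V_j^*| \ge x\big) \lesssim e^{-\co x^2} + m^{1 - p/2}(\log n)^{-\ad}x^{-p}, \qquad \P\big(|\overline Z_j| \ge x\big) \lesssim e^{-\co x^2},
\]
the first obtained from a Fuk--Nagaev/Rosenthal estimate for the $(m-1)$-dependent sum $S_m \stackrel{d}{=} \sqrt{2m}\,\overline V_j^*$ that exploits the logarithmic moment in \Bone and $m = n^{\md}$, the second from the light-tailed choice of $F$ supplied by Lemma~\ref{lem_edge_for_FF}. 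Since $\tau_n \ge \co_{\tau}\sqrt{\log n}$ with $\co_{\tau}$ large, the $e^{-\co x^2}$ contributions are $\oo(m^{1-p/2})$; moreover $\int_{|x| > \tau_n}(1 + |x|^2)x^{-p}\,dx \lesssim \tau_n^{3-p}$, and $(\log n)^{-\ad} = (\log n)^{-\bd}(\log n)^{-(\ad - \bd)} = (\log n)^{-\bd}\oo(1)$ because $\ad - \bd > \bd + 2 > 0$, so this part contributes $|\xi_N|^3\tau_n^{3-p}\log(n)^{-\bd}\oo(m^{1-p/2})$. Adding the estimates of Steps~1--3 gives the asserted bound.

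The main obstacle is the $\{|x| > \tau_n\}$ part of Step~3: obtaining the heavy-tail estimate for $\overline V_j^*$ with precisely the $m^{1-p/2}$ scale and the logarithmic gain $(\log n)^{-\ad}$, uniformly over the relevant range of $x$, and simultaneously arranging a comparison variable $\overline Z_j$ whose tails are light enough that the threshold $\tau_n \gtrsim \sqrt{\log n}$ suffices. A secondary but still delicate point is the $\xi_N$- and $\tau_n$-bookkeeping in Step~1, where one must verify that the refinements of Lemma~\ref{lem_polylipschitz_distance} cost only the stated powers of $|\xi_N|$ (in particular no term $|\xi_N|m^{-1}$).
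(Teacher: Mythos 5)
Your proposal reuses the paper's ingredients (third--order Taylor expansion, moment matching, truncation at $\tau_n$, the decoupling Lemma \ref{lem_polylipschitz_distance}, integration by parts against $\Delta_{j,m}$, and the Nagaev-type bound of Lemma \ref{th:Moritz}) but in a different order: you decouple first, applying Lemma \ref{lem_polylipschitz_distance} to $f(\xi_N\cdot)$ itself, and this is where the first genuine gap sits. You justify the refined cost of Step~1 by asserting that every contribution in that lemma's proof carries the power of $|\xi_N|$ matching the order of the derivative involved; that is not true as the proof stands. In the subcase $g^{(1)}\big(\overline{V}_1^{(\leq l,*)}\big)R_1$ the error is bounded by $\Co_g\|R_1-R_1^{(l,**)}\|_1\lesssim \Co_g l^{-2}$ with $\Co_g$ a bound on $|g^{(1)}|\leq|\xi_N|$ only, so with the choice $l=m^{1/3}$ and the prefactor $(2m)^{-1/2}$ this produces a term of order $|\xi_N|\,m^{-7/6}$, which is not dominated by your claimed Step~1 bound (nor by the lemma's bound) once $|\xi_N|\ll m^{-1/6}$; removing it requires an extra argument you never supply (e.g.\ replacing $f^{(1)}$ by $f^{(1)}-f^{(1)}(0)$ and exploiting $\E_{\FF_m}\big[R_1-R_1^{(l,**)}\big]=0$, or a $\xi$-dependent choice of $l$). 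Relatedly, the term $\tau_n^4(|\xi_N|^4+|\xi_N|^7)m^{-1}$ you list in Step~1 has no mechanism behind it at that stage, since nothing has been truncated yet: in the paper it arises only because the decoupling lemma is applied \emph{after} the Taylor expansion and the smooth cutoff, namely to $g(x)=x^4\ff^{(4)}(st\xi_N x)\hh_n(x)$, whose derivatives cost $\tau_n^4(1+|\xi_N|^3)$, while the low orders are handled separately through the conditional moment comparisons of Lemmas \ref{lem_sig_expansion} and \ref{lem_cum_expansion}. A decouple-first route is conceivable, but then the refined $\xi_N$-bookkeeping of Lemma \ref{lem_polylipschitz_distance} is exactly the hard part, and it is missing.

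The second concrete problem is in Step~3, in the case $p=3$, which is the principal case of this lemma. The estimate $\int_{|x|>\tau_n}(1+|x|^2)\,x^{-p}\,dx\lesssim\tau_n^{3-p}$ is false for $p=3$: the integral diverges logarithmically, and the divergence is not an artifact of crude bounding, since with a tail of the pure form $m^{1-p/2}x^{-p}$ times an $x$-independent logarithmic factor the outer contribution weighted by $\xi_N^2x^2$ cannot be summed. The cure is the $x$-dependent factor that Lemma \ref{th:Moritz} actually provides, $\P(S_m\geq x)=\oo\big(m\,x^{-p}(\log x)^{-\ad/2}\big)$ (note the exponent $\ad/2$, not $\ad$ as you wrote), which makes $\int_{\tau_n}^{\infty}x^{2-p}\big(\log(m x)\big)^{-\ad/2}dx$ finite and of size $\tau_n^{3-p}(\log n)^{-\bd}\oo(1)$ precisely because $2\bd+2<\ad$; this is how the paper handles the tail region in its Step~2 (estimate \eqref{eq_lem_taylor_exp_7}). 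With the logarithm frozen at $(\log n)^{-\ad}$, as in your tail bound, the $p=3$ computation breaks down, so this step needs to be redone with the correct form of the Nagaev bound.
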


\begin{proof}[Proof of Lemma \ref{lem_taylor_expansion_smooth_function}]
The overall proof is lengthy and consists of several parts. Some key technical results are deferred to subsequent lemmas.\\
{\bf Step 1}: \textit{Taylor expansion}.
Recall the following Taylor expansion
\begin{align}\label{taylor_expansion}\nonumber
f\bigl(x + h\bigr) - f\bigl(x\bigr) &= \sum_{j = 1}^{s}\frac{f^{(j)}\bigl(x\bigr) h^j}{j!} + \frac{f^{(s+1)}\bigl(x\bigr) h^{s+1}}{(s+1)!} \\&+ \frac{h^{s+1}}{s!}\int_0^1 (1-t)^s\bigl(f^{(s+1)}(t h + x) - f^{(s+1)}(x)\bigr)\,dt,
\end{align}
that we use in the sequel. Expanding at $x = 0$ we have
\begin{align}\nonumber \label{eq_lem_taylor_exp_5}
&\E_{\FF_m}\ff\bigl(\xi_N \overline{V}_{j}\bigr) - \E_{}\ff\bigl(\xi_N \overline{Z}_{j}\bigr) = \frac{\xi_N^2}{2}(\overline{\sigma}_{j|m}^2 - \overline{\sigma}_j^2)\ff^{(2)}(0) + \frac{\xi_N^3}{6}(\overline{\kapp}_{j|m}^3 - \overline{\kapp}_j^3)\ff^{(3)}(0)\\ \nonumber &+\frac{1}{2}\int_0^1 (1-t)^2 \E_{|\FF_m}\Bigl[(\xi_N\overline{V}_j)^{3} \big(\ff^{(3)}(t \xi_N \overline{V}_j) - \ff^{(3)}(0) \bigr)\Bigr]\,dt  \\&-\frac{1}{2}\int_0^1 (1-t)^2 \E_{}\Bigl[(\xi_N\overline{Z}_{j})^{3} \big(\ff^{(3)}(t \xi_N \overline{Z}_{j}) - \ff^{(3)}(0)\big)\Bigr]\,dt.
\end{align}
By Lemma \ref{lem_sig_expansion} and Lemma \ref{lem_cum_expansion}, the first part is bounded by
\begin{align}\label{eq_lem_taylor_exp_5.5}
\big\|\frac{\xi_N^2}{2}(\overline{\sigma}_{j|m}^2 - \overline{\sigma}_j^2)\ff^{(2)}(0) + \frac{\xi_N^3}{6}(\overline{\kapp}_{j|m}^3 - \overline{\kapp}_j^3)\ff^{(3)}(0) \big\|_1 \lesssim \xi_N^2 m^{-1} + |\xi_N|^3 m^{-1}.
\end{align}
In the next steps, we deal with the second residual term.\\

{\bf Step 2}: \textit{Truncation of residual term}. Let $\tau_n \geq {\co_{\tau}} \sqrt{\log n}$ with ${\co_{\tau}} > 0$ to be specified, and $\hh_n(x)$ be \hypertarget{hnx:eq24}{a} three times continuously differentiable function such that
\begin{align}\label{defn_hn}
\hh_n(x)  = \left\{\begin{array}{cl} 1, & \mbox{if $|x| \leq \tau_n/2$,}\\ 0, & \mbox{if $|x| \geq \tau_n$} \end{array}\right.
\end{align}
and $|\hh_n^{(s)}(x)| \leq \co$ for $s \in \{0,1,2,3\}$. For a random variable $X$ and $q\geq 1$ we have
\begin{align}\label{eq_lem_taylor_exp_6}
\E\bigl[|X|^q \ind(|X| \geq \tau_n)\bigr] \leq q\tau_n^q \P\bigl(|X| \geq \tau_n \bigr) +  q \int_{\tau_n}^{\infty} x^{q-1} \P\bigl(|X| \geq x \bigr)d\,x.
\end{align}
Lemma \ref{th:Moritz} then yields that for all $1 \leq j \leq N$
\begin{align}\nonumber \label{eq_lem_taylor_exp_7}
&\E\bigl[|\overline{V}_j|^3 \big(1 - \hh_n(\overline{V}_j)\big) \bigr] \leq \E |\overline{V}_j|^3 \ind(|\overline{V}_j| \geq \tau_n/2)\\& \nonumber \lesssim  m^{1-p/2} \tau_n^{3-p} \oo\big(\log(\tau_n)^{-\bd}\big) + \oo\big(m^{1-p/2}\big) \int_{\tau_n/2}^{\infty} x^{2-p} \log(n x)^{-\ad/2} d\,x\\& \lesssim m^{1-p/2} \tau_n^{3-p} \oo\big(\log(\tau_n)^{-\bd}\big),
\end{align}
for sufficiently large $\co_{\tau} > 0$ ($\tau_n \geq {\co_{\tau}} \sqrt{\log n}$). Since $|\ff^{(3)}| \leq 1$, we thus obtain by Jensen's inequality
\begin{align}\nonumber \label{eq_lem_taylor_exp_8}
&\Bigl\|\E_{\FF_m}\Bigl[(\xi_N \overline{V}_j)^{3} \big(\ff^{(3)}(t \xi_N \overline{V}_j) - \ff^{(3)}(0)\bigr)\big(1-\hh_n(\overline{V}_j)\big)\Bigr]\Bigr\|_1 \\&\lesssim |\xi_N|^3 \Bigl\||\overline{V}_j|^{3} \big(1 - \hh_n(\overline{V}_j)\big)\Bigr\|_1 \lesssim |\xi_N|^3 \tau_n^{3-p}\log(\tau_n)^{-\bd} \oo\big(m^{1-p/2}\big).
\end{align}

{\bf Step 3}: \textit{Decomposition and approximation of residual term one}.
Using another Taylor expansion we have
\begin{align}\nonumber \label{eq_lem_taylor_exp_9}
\nonumber
&\E_{\FF_m}\Bigl[(\xi_N \overline{V}_j)^{3} \big(\ff^{(3)}(t \xi_N \overline{V}_j) - \ff^{(3)}(0)\bigr)\hh_n(\overline{V}_j)\Bigr]\\& = t\xi_N^4 \int_0^1(1-s)\E_{\FF_m}\bigl[\overline{V}_j^{4}\ff^{(4)}(s t \xi_N \overline{V}_j)\hh_n(\overline{V}_j)\bigr]d\,s.
\end{align}
\hypertarget{GX:eq26}{Let}
\begin{align}\label{eq_lem_taylor_exp_g}
g(x) = x^4 \ff^{(4)}(st \xi_N x )\hh_n(x),
\end{align}
and recall that the derivatives of $\ff$ are uniformly bounded. Then $|g^{(s)}| \lesssim \tau_n^4(1 + |\xi_N|^3)$ for $s \in \{0,1,2,3\}$, and Lemma \ref{lem_polylipschitz_distance} yields for $1 \leq j \leq N$
\begin{align}\label{eq_lem_taylor_exp_10} \nonumber
&\Bigl\|\E_{\FF_m}\Bigl[t(\xi_N \overline{V}_j)^{4}\ff^{(4)}\big(st \xi_N \overline{V}_j\big) \hh_n(\overline{V}_j)\Bigr] \\&- \E_{\FF_m}\Bigl[t(\xi_N \overline{V}_j^*)^{4} \ff^{(4)}\big(st \xi_N \overline{V}_j^*\bigr)\hh_n(\overline{V}_j^*)\Bigr]\Bigr\|_1 \lesssim m^{-1} \tau_n^4(|\xi_N|^4 + |\xi_N|^7).
\end{align}
Note that in this step, the smoothness and boundedness of $\hh_n(\cdot)$ are essential.

{\bf Step 4}: \textit{Decomposition and approximation of residual term two}.
Observe $g(0) = 0$, and recall that for any random variable $Y$ and differentiable function $f$ we have
\begin{align}\label{eq_thm_smooth_4_second}
\E\bigl[f(Y) - f(0)\bigr] = \int_0^{\infty}f^{(1)}(y) \P\bigl(Y \geq y \bigr)dy - \int_{-\infty}^{0}f^{(1)}(y)\P\bigl(Y \leq y \bigr)dy.
\end{align}
Since $\hh_n(x)$, $\hh_n^{(1)}(x)$ and $g^{(1)}(x)$ vanish for $|x| > \tau_n$, we obtain from the above
\begin{align}\label{eq_lem_taylor_exp_11}\nonumber
&\E_{\FF_m}(\overline{V}_j^*)^{4}\ff^{(4)}\bigl(t \xi_N \overline{V}_j^*\bigr)\hh_n(\overline{V}_j^*)\\ \nonumber &= \E_{\FF_m}\Bigl[\int_{0}^{\tau_n} g_{}^{(1)}(x) \P_{\FF_m}\bigl(\overline{V}_j^* \geq x \bigr) d\,x - \int_{-\tau_n}^{0} g_{}^{(1)}(x) \P_{\FF_m}\bigl( \overline{V}_j^* \leq x \bigr) d\,x \Bigr] \\&= \int_{0}^{\tau_n} g_{}^{(1)}(x)\P_{}\bigl( \overline{V}_j^* \geq x \bigr) d\,x - \int_{-\tau_n}^{0} g_{}^{(1)}(x) \P_{}\bigl( \overline{V}_j^* \leq x \bigr) d\,x,
\end{align}
where we used the fact that $\overline{V}_j^*$ is independent of $\FF_m$. In \hypertarget{DeltaJMX:eq26}{addition}
\begin{align}\label{eq_lem_taylor_exp_11.6}
\Bigl\|\int_{0}^{\tau_n}g_{}^{(1)}(x)\Bigl( \P_{}\bigl( \overline{V}_j^* \geq x \bigr) - \P_{}\bigl( \overline{Z}_j \geq x \bigr)\Bigr)d\,x \Bigr\|_1 \lesssim \sup_{x \in \R}\big|{\Delta}_{j,m}(x)\big|\bigl(\tau_n^4 + |\tau_N|^5 |\xi_N|),
\end{align}
and the same bound applies to the second expression in \eqref{eq_lem_taylor_exp_11}. Hence we deduce from \eqref{eq_lem_taylor_exp_10}, \eqref{eq_lem_taylor_exp_11} and \eqref{eq_lem_taylor_exp_11.6} the bound
\begin{align}\label{eq_lem_taylor_exp_11.7} \nonumber
\Bigl\|\E_{\FF_m} t \xi_N^4  g\big(\overline{V}_j\big)  - \E_{} t \xi_N^4  g\big(\overline{Z}_j\big) \Big\|_1  \lesssim m^{-1} \tau_n^4(|\xi_N|^4 + |\xi_N|^7)
\\ \indent + \sup_{x \in \R}\big|{\Delta}_{j,m}(x)\big|\bigl(\tau_n^4|\xi_N|^4 + |\tau_N|^5 |\xi_N|^5).
\end{align}

{\bf Step 5}: \textit{Final estimate of residual term}.
Combining all bounds and equations \eqref{eq_lem_taylor_exp_8}, \eqref{eq_lem_taylor_exp_9} and \eqref{eq_lem_taylor_exp_11.7} 
we arrive at 
\begin{align}\label{eq_lem_taylor_exp_12} \nonumber
&\Bigl\|\E_{\FF_m}(\xi_N\overline{V}_j)^{3} \ff^{(3)}\bigl(t \xi_N \overline{V}_j \bigr)  - \E_{\FF_m}(\xi_N\overline{V}_j)^{3}\ff^{(3)}\bigl(0\bigr) \\ \nonumber & \quad \quad -\int_0^1 \E_{}\Bigl[(\xi_N \overline{Z}_j)^{4} \ff^{(4)}\big(st \xi_N \overline{Z}_j\big) \hh_n(\overline{Z}_j)\Bigr] d\,s\Bigr\|_1 \\ \nonumber & \lesssim  |\xi_N|^3 \tau_n^{3-p}\log(n)^{-\bd} \oo\big(m^{1-p/2}\big) + \tau_n^4(|\xi_N|^4 + |\xi_N|^7)m^{-1} \\ & \quad \quad  +\sup_{x \in \R}\big|{\Delta}_{j,m}(x)\big|\bigl(\tau_n^4|\xi_N|^4 + |\tau_N|^5 |\xi_N|^5).
\end{align}

{\bf Step 6}: \textit{Estimate of residual counter part}.
We now consider the counter part. Using similar arguments as before, together with Lemma \ref{lem_Z_approx_and_BE}, one derives the estimate
\begin{align}\nonumber \label{eq_lem_taylor_exp_13}
&\Bigl|\E_{}(\xi_N\overline{Z}_{j})^{3} \ff^{(3)}\bigl(t \xi_N \overline{Z}_{j}\bigr)  - \E_{}(\xi_N\overline{Z}_{j})^{3} \ff^{(3)}\bigl(0\bigr) \\ &-\int_0^1 \E_{}\Bigl[(\xi_N \overline{Z}_{j})^{4} \ff^{(4)}\big(st \xi_N \overline{Z}_{j}\big)\hh_n(\overline{Z}_{j})\Bigr]d\,s\Bigr| \lesssim |\xi_N|^3 m^{-2},
\end{align}
uniformly for $1 \leq j \leq N$.\\

{\bf Step 7}: \textit{Final overall bound for residual terms}.
In turn, employing the estimates \eqref{eq_lem_taylor_exp_5.5}, \eqref{eq_lem_taylor_exp_12} and \eqref{eq_lem_taylor_exp_13}, we finally obtain from equation (\ref{eq_lem_taylor_exp_5}) 
\begin{align}\nonumber \label{eq_lem_taylor_exp_17}
&\bigl\|\E_{\FF_m}\ff\bigl(\xi_N \overline{V}_{j}\bigr) - \E_{}\ff\bigl(\xi_N \overline{Z}_{j}\bigr)\bigr\|_1 \\& \nonumber \lesssim |\xi_N|^3 \tau_n^{3-p} \log(n)^{-\bd} \oo\big(m^{1-p/2}\big) + \tau_n^4(|\xi_N|^4 + |\xi_N|^7)m^{-1} \\&+\sup_{x \in \R}\big|{\Delta}_{j,m}(x)\big|\bigl(\tau_n^4|\xi_N|^4 + |\tau_N|^5 |\xi_N|^5) + |\xi_N|^3 m^{-1} + \xi_N^2 m^{-1}.
\end{align}
This completes the proof.
\end{proof}

\begin{lem}\label{lem_taylor_expansion_smooth_function_II}
Assume that Assumption \ref{ass_dependence} holds, and let $f$ be a smooth function such that $\sup_{x \in \R}|f^{(s)}(x)| \leq 1$ for $s = 0,\ldots,7$. Then for $\tau_n \geq {\co_{\tau}} \sqrt{\log n}$, ${\co_{\tau}} > 0$ sufficiently large, $2\bd +2 < \ad$ and $p \geq 3$
\begin{align*}
&\text{{\bf (i)}} \,\, \max_{1 \leq j \leq N}\bigl\|\E_{} \ff\bigl(\xi_N \widetilde{V}_{j}\bigr)  - \E_{}\ff\bigl(\xi_N \widetilde{Z}_{j}\bigr)\bigr\|_1 \\&\quad \lesssim |\xi_N|^3 \tau_n^{3-p} \log(n)^{-\bd} \oo\big(m^{1-\frac{p}{2}}\big) + \tau_n^4(|\xi_N|^4 + |\xi_N|^7)m^{-1}\\& \quad +\sup_{x \in \R}\big|{\Delta}_{j,m}(x)\big|\bigl(\tau_n^4|\xi_N|^4 + |\tau_n|^5 |\xi_N|^5\big) + \big(\xi_N^2 + |\xi_N|^3\big) m^{-1}.\\
&\text{{\bf (ii)}} \,\, \bigl\|\E_{}\ff\bigl(\xi_N \widetilde{V}_0\bigr)  - \E_{}\ff\bigl(0\bigr)\bigr\|_1 \lesssim \xi_N^2 m^{-1}.
\end{align*}
\end{lem}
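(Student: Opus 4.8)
The plan is to treat part~(i) by transcribing, essentially verbatim, the seven-step argument used for Lemma~\ref{lem_taylor_expansion_smooth_function}, replacing throughout $\overline{V}_j$ by $\widetilde{V}_j$, $\overline{V}_j^{*}$ by $\widetilde{V}_j^{*}$ (defined in \eqref{defn_Vj_star_II}), $\overline{Z}_j$ by $\widetilde{Z}_j$, and the conditional expectation $\E_{\FF_m}$ by the plain expectation $\E$; note that $\widetilde{V}_j$ is $\FF_m$-measurable, so the left-hand side is in fact deterministic and $\|\cdot\|_1$ reduces to absolute value. The three ingredients that drive the proof of Lemma~\ref{lem_taylor_expansion_smooth_function} all have tilde counterparts at our disposal: the moment matching of $\E\widetilde{V}_j^{2}=\widetilde{\sigma}_j^{2}$ and $\E\widetilde{V}_j^{3}=\widetilde{\kapp}_j^{3}$ against $\E\widetilde{Z}_j^{2}$, $\E\widetilde{Z}_j^{3}$ follows from Lemma~\ref{lem_widetilde_moments} together with \eqref{Z_equality} and \eqref{moment_conditions_global_FF}; the derivative-to-moment exchange is Lemma~\ref{lem_polylipschitz_distance_tilde} in place of Lemma~\ref{lem_polylipschitz_distance}; and the identity $\widetilde{V}_j^{*}\stackrel{d}{=}\overline{V}_j^{*}$, $\widetilde{Z}_j\stackrel{d}{=}\overline{Z}_j$, hence $\widetilde{\Delta}_{j,m}=\Delta_{j,m}$ (recorded just below \eqref{Z_equality}), ensures that exactly the same quantity $\sup_{x}|\Delta_{j,m}(x)|$ enters the final bound.

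Concretely, I would proceed as follows. (1)~Taylor-expand $\ff(\xi_N\widetilde{V}_j)$ and $\ff(\xi_N\widetilde{Z}_j)$ at $0$ to third order and cancel the quadratic and cubic coefficients up to an error $\lesssim(\xi_N^2+|\xi_N|^3)m^{-1}$ by the above moment matching, leaving a third-order integral remainder. (2)~Split the remainder with the smooth cutoff $\hh_n$ at scale $\tau_n$, estimating the far-tail piece by $\E|\widetilde{V}_j|^{3}\ind(|\widetilde{V}_j|\ge\tau_n/2)\lesssim m^{1-p/2}\tau_n^{3-p}\oo(\log(\tau_n)^{-\bd})$ through Lemma~\ref{th:Moritz}, now applied to $\widetilde{V}_j$. (3)~Taylor-expand $\ff^{(3)}(t\xi_N\widetilde{V}_j)-\ff^{(3)}(0)$ once more to produce a term $\widetilde{V}_j^{4}\ff^{(4)}\hh_n$ and replace $\widetilde{V}_j$ by $\widetilde{V}_j^{*}$ via Lemma~\ref{lem_polylipschitz_distance_tilde} applied to $g(x)=x^{4}\ff^{(4)}(st\xi_N x)\hh_n(x)$, whose first three derivatives are $\lesssim\tau_n^{4}(1+|\xi_N|^{3})$, at cost $\lesssim m^{-1}\tau_n^{4}(|\xi_N|^{4}+|\xi_N|^{7})$. (4)~Rewrite $\E[g(\widetilde{V}_j^{*})-g(0)]$ via the tail-integral identity \eqref{eq_thm_smooth_4_second} and swap $\P(\widetilde{V}_j^{*}\ge x)$ for $\P(\widetilde{Z}_j\ge x)$, costing $\sup_{x}|\Delta_{j,m}(x)|(\tau_n^{4}|\xi_N|^{4}+|\tau_n|^{5}|\xi_N|^{5})$ since $\widetilde{\Delta}_{j,m}=\Delta_{j,m}$. (5)~Dispose of the $\widetilde{Z}_j$ counterpart by Lemma~\ref{lem_Z_approx_and_BE}, exactly as in Step~6 of Lemma~\ref{lem_taylor_expansion_smooth_function}, gaining only $\OO(|\xi_N|^{3}m^{-2})$. (6)~Sum the contributions to obtain the asserted bound. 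For the boundary block $j=N$ one uses the same slight adaptation already indicated in the proof of Lemma~\ref{lem_polylipschitz_distance_tilde}.

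For part~(ii), recall $\sqrt{2m}\,\widetilde{V}_0=\sum_{k=1}^{m}\E_{\FF_m}X_k$. For $1\le k\le m$ the coupled variable $X_k^{*}$ is independent of $\FF_m$, so $\E_{\FF_m}X_k^{*}=\E X_k=0$ and therefore $\|\E_{\FF_m}X_k\|_p=\|\E_{\FF_m}(X_k-X_k^{*})\|_p\le\lambda_{k,p}$; summing over $k$ and invoking \Btwo\ gives $\|\widetilde{V}_0\|_2\lesssim m^{-1/2}$, while $\E\widetilde{V}_0=0$. A second-order Taylor expansion $\ff(\xi_N\widetilde{V}_0)-\ff(0)=\ff^{(1)}(0)\xi_N\widetilde{V}_0+\xi_N^2\widetilde{V}_0^{2}\int_0^1(1-t)\ff^{(2)}(t\xi_N\widetilde{V}_0)\,dt$ then yields, upon taking expectations and using $\E\widetilde{V}_0=0$ and $|\ff^{(2)}|\le1$, the estimate $|\E\ff(\xi_N\widetilde{V}_0)-\ff(0)|\le\tfrac12\xi_N^2\,\E\widetilde{V}_0^{2}\lesssim\xi_N^2 m^{-1}$.

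The main obstacle is bookkeeping rather than conceptual: one must verify that the tail bound of Lemma~\ref{th:Moritz} and the Rosenthal-type moment estimates (Lemma~\ref{lem_wu_original} and the lemmas of Section~\ref{sec_lem_moments}) genuinely apply to $\widetilde{V}_j$, which is a normalized sum of the conditional expectations $\E_{\FF_m}X_k$ over a window of length of order $m$ rather than a sum of the $X_k$ themselves. These conditional expectations form an array that is $1$-dependent in $j$ and inherits the moment and $\log$-moment conditions \Bone--\Btwo\ of $(X_k)_{k\in\Z}$, so the required estimates go through; once this is checked, the remainder of part~(i) is the routine transcription sketched above, and part~(ii) is immediate.
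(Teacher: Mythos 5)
Your proposal is correct and follows essentially the same route as the paper, whose proof of part \textbf{(i)} is simply to repeat the argument of Lemma \ref{lem_taylor_expansion_smooth_function} with the tilde quantities (using Lemma \ref{lem_polylipschitz_distance_tilde}, Lemma \ref{lem_widetilde_moments} and the identity $\widetilde{\Delta}_{j,m}=\Delta_{j,m}$, exactly as you do), and whose proof of part \textbf{(ii)} is the same second-order Taylor expansion based on $\|\widetilde{V}_0\|_p\lesssim m^{-1/2}$, which in turn is obtained by the Jensen/coupling bound $\|\E_{\FF_m}X_k\|_p\le\lambda_{k,p}$ that you reproduce. Your closing caveat about transferring the tail bound of Lemma \ref{th:Moritz} to $\widetilde{V}_j$ is resolved exactly as you indicate, since the conditional expectations inherit \Bone--\Btwo\ via Jensen's inequality (as in the reduction at the start of the proof of Theorem \ref{thm_edge}).
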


\begin{proof}[Proof of Lemma \ref{lem_taylor_expansion_smooth_function_II}]
For {\bf (i)}, we may argue as in the proof of Lemma \ref{lem_taylor_expansion_smooth_function}. 
For {\bf (ii)}, it suffices to note that $\|\widetilde{V}_{0}\|_p \lesssim m^{-\frac{1}{2}}$, which follows from Lemma \ref{lem_bound_R1} {\bf (i)} together with the triangle inequality. Hence $\widetilde{\sigma}_1^2 \lesssim m^{-1}$, 
and the claim follows \hypertarget{DeltaJMX:eq27}{from} a second \hypertarget{OWJ:eq28}{order} Taylor expansion at $x = 0$.
\end{proof}

\begin{lem}\label{lem_taylor_expansion_smooth_function_I_BE}
The quantity $\sup_{x \in \R}\big|{\Delta}_{j,m}(x)\big|$, in the bounds of Lemmas \ref{lem_taylor_expansion_smooth_function} and \ref{lem_taylor_expansion_smooth_function_II}, can be replaced with $m^{-\frac{1}{2}}$.
\end{lem}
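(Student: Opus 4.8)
The plan is to read $\sup_{x}\bigl|\Delta_{j,m}(x)\bigr|$ as a Berry--Esseen discrepancy and to bound it through a common second--order pivot. Recall that $\overline V_j^{*}\stackrel{d}{=}S_m/\sqrt{2m}$, that $\overline Z_j$ is distributed according to the fixed law $F$ (a convolution of a non-degenerate Gaussian with a centred Gamma; see the footnote preceding \eqref{moment_conditions_global_FF} and Lemma \ref{lem_edge_for_FF}), and that by Lemmas \ref{lem_sig_expansion}, \ref{lem_cum_expansion} (together with Lemmas \ref{lem_quad_expansion} and \ref{lem_widetilde_moments} when $p\ge 4$) the moments of $\overline V_j^{*}$ and $\overline Z_j$ of orders up to three (resp.\ four) agree up to errors $\OO(m^{-1})$; in particular $\overline\sigma_j^{2}=\E(\overline V_j^{*})^{2}+\OO(m^{-1})$, $\overline\kappa_j^{3}=\E(\overline V_j^{*})^{3}+\OO(m^{-1})$, and $|\overline\kappa_j^{3}|\lesssim m^{-1/2}$. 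Let $\Psi^{\sharp}$ be the second--order Edgeworth function determined by $(\overline\sigma_j^{2},\overline\kappa_j^{3})$. Then
\begin{align*}
\sup_{x}\bigl|\Delta_{j,m}(x)\bigr|\;\le\;\sup_{x}\bigl|\P(\overline V_j^{*}\ge x)-\Psi^{\sharp}(x)\bigr|+\sup_{x}\bigl|\P(\overline Z_j\ge x)-\Psi^{\sharp}(x)\bigr|+\OO(m^{-1}),
\end{align*}
the last term absorbing the $\OO(m^{-1})$ mismatch of the normalising constants.

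The second term is elementary: $\overline Z_j$ is the independent sum of a Gaussian whose variance is bounded below by \hyperref[B3]{\Bthree} and a centred Gamma whose third cumulant equals $\overline\kappa_j^{3}+\OO(m^{-1})=\OO(m^{-1/2})$, hence it has a density bounded by an absolute constant, and a direct Edgeworth estimate for the centred Gamma (or simply Lemma \ref{lem_edge_for_FF}) gives $\sup_x|\P(\overline Z_j\ge x)-\Psi^{\sharp}(x)|\lesssim |\overline\kappa_j^{3}|+m^{-1}\lesssim m^{-1/2}$. The first term is the genuine content: a Berry--Esseen bound of the \emph{sharp} order $m^{-1/2}$ for the normalised sum $S_m/\sqrt{2m}$ of the $m$--dependent sequence $(X_k)$, which by construction still obeys the quantitative weak--dependence bound \hyperref[B2]{\Btwo}. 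I would obtain it by the characteristic--function route: Esseen's smoothing inequality with cut--off $T\asymp\sqrt m$ (the density bound on $\overline Z_j$ supplying the $\OO(T^{-1})=\OO(m^{-1/2})$ remainder) reduces the task to
\begin{align*}
\bigl|\,\E e^{\ic\xi S_m/\sqrt{2m}}-e^{-\xi^{2}\overline\sigma_j^{2}/2}\bigl(1+\tfrac{(\ic\xi)^{3}}{6}\overline\kappa_j^{3}\bigr)\bigr|\;\lesssim\; m^{-1/2}|\xi|^{3}e^{-c\xi^{2}},\qquad |\xi|\le T.
\end{align*}
For this I would pass through the Gordin/martingale decomposition $S_m=M_m+R_m$: \hyperref[B2]{\Btwo} together with the moment computations of Lemma \ref{lem_S_n_third_expectation} give $\|R_m\|_p\lesssim\Lambda_{2,p}$ uniformly in $m$, so after an anti--concentration step (the martingale $M_m/\sqrt m$ has a density of order one) the remainder is negligible, while $M_m/\sqrt m$ is handled by the classical martingale Berry--Esseen theorem, whose hypotheses follow from \hyperref[B1]{\Bone}--\hyperref[B3]{\Bthree} and the estimates of Section \ref{sec_lem_moments}; alternatively one may rerun the block/cumulant scheme underlying Lemma \ref{lem_taylor_expansion_smooth_function} directly on $S_m$.

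I expect the main obstacle to be extracting the \emph{exact} rate $m^{-1/2}$, with no stray logarithm or $m^{\delta}$: the crude bound $\|R_m\|_p\lesssim\Lambda_{2,p}$, fed through a Markov/anti--concentration argument, only yields $m^{-1/2+\delta}$ for $p$ near $3$. Removing the loss is exactly where the $k^{2}$--weighting in \hyperref[B2]{\Btwo} is used: one exploits that $R_m=g_1-g_{m+1}$ is \emph{itself} a difference of weakly dependent tail sums with uniformly bounded $\|\cdot\|_p$, so that $|\E e^{\ic\xi M_m/\sqrt m}(e^{\ic\xi R_m/\sqrt m}-1)|$ is controlled by $\OO(\xi^{2}m^{-1})$ up to a further negligible dependence correction, sharpening the replacement. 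A more economical alternative is an induction on the scale $m$: the Kolmogorov conclusion of Theorem \ref{thm_edge}, applied with $m$ in place of $n$ (legitimate since the block size used in its proof is $o(m)$, so there is no circularity), already delivers $\sup_x|\P(S_m\le x\sqrt m)-\Psi_m(x)|\lesssim m^{-1/2}$, with the classical fixed--$m$ $m$--dependence Berry--Esseen theorem as base case; combined with the second paragraph this yields $\sup_x|\Delta_{j,m}(x)|\lesssim m^{-1/2}$ and completes the proof.
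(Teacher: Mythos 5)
Your reduction is structurally the same as the paper's: write $\Delta_{j,m}$ as a difference to a common pivot, note $\overline V_j^{*}\stackrel{d}{=}S_m/\sqrt{2m}$, dispose of the $\overline Z_j$ side by the moment matching and the explicit law $F$ (this part of your argument is fine and mirrors Lemmas \ref{lem_edge_for_FF}, \ref{lem_Z_approx_and_BE} and \ref{lem_psi_compare}), and reduce everything to a Berry--Esseen bound of the exact order $m^{-1/2}$ for $S_m/\sqrt{2m}$ under Assumption \ref{ass_dependence}. The gap is that you never actually establish this last bound, and it is the entire content of the lemma. The paper does not prove it either; it invokes Theorem 2.2 of \cite{jirak_be_aop_2016}, i.e.\ an already available optimal-rate Berry--Esseen theorem for weakly dependent Bernoulli shifts with $\Lambda_{2,3}<\infty$, compares $\overline V_j^{*}$ with a Gaussian $\overline W_j$, and then compares $\overline W_j$ with $\overline Z_j$. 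That external theorem is a substantial result, and none of your three substitutes delivers it as sketched.

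Concretely: (a) the displayed characteristic-function estimate, valid uniformly up to $|\xi|\le c\sqrt m$ with a Gaussian envelope, is a statement about an Edgeworth-level expansion for a dependent sum at the full Berry--Esseen frequency range; for dependent data it cannot be obtained by factorization, and the only control of characteristic functions available in the paper is through conditioning and blocking (Lemmas \ref{lem_taylor_expansion_smooth_function}, \ref{lem_bound_prod_cond_char}), whose bounds re-introduce $\sup_x|\Delta_{j,m}(x)|$ — exactly the quantity you are trying to bound, so ``rerunning the block scheme on $S_m$'' is circular. (b) Classical martingale Berry--Esseen theorems do not give the rate $m^{-1/2}$ under these assumptions (the paper states explicitly that the martingale/Zolotarev tools of \cite{jirak_be_aop_2016} are not usable here, and even in the i.i.d.-increment-free martingale setting the known rates carry losses tied to the fluctuation of conditional variances); moreover your key cross-term claim $|\E e^{\ic\xi M_m/\sqrt m}(e^{\ic\xi R_m/\sqrt m}-1)|=\OO(\xi^{2}m^{-1})$ requires a decorrelation of $R_m$ from $M_m$ that you do not prove — the naive expansion only gives $\OO(|\xi|m^{-1/2})$, which after the $d\xi/|\xi|$ integration up to $\sqrt m$ is not even $o(1)$. (c) Citing the Kolmogorov conclusion of Theorem \ref{thm_edge} at scale $m$ is circular within this paper: its proof runs through Proposition \ref{prop_m_dependence}, which uses precisely Lemma \ref{lem_taylor_expansion_smooth_function_I_BE}; turning this into a legitimate induction on scales would require restructuring the whole argument and controlling how the implied constants propagate over the $\OO(\log\log m)$ induction steps — the paper's own recursions of this type (Lemmas \ref{lem_diamond_Sn_expansions_I}--\ref{lem_diamond_Sn_expansions_II}) only close because they work with the smoothed quantities $\Delta^{\diamond}$ and they do incur extra $n^{\delta}$ or logarithmic losses, so the clean $m^{-1/2}$ you assert does not follow. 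In short, the correct fix is either to quote the weak-dependence Berry--Esseen theorem of \cite{jirak_be_aop_2016} (as the paper does) or to supply a complete proof of it, which your sketch does not.
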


\begin{proof}[Proof of Lemma \ref{lem_taylor_expansion_smooth_function_I_BE}]
For $1 \leq j \leq N$, let $\overline{W}_j$ be a zero mean Gaussian random variable with variance $\sigma_{j}^2 = \|\overline{V}_j^*\|_2^2$. Since $\overline{V}_j^{\ast} \stackrel{d}{=} S_m/\sqrt{2m}$, Lemma \ref{lem_sig_expansion} implies $\sigma_j^2 = \overline{\sigma}_j^2 + \OO(m^{-1})$, and together with Lemma \ref{lem_sig_expressions_relations} and \hyperref[B3]{\Bthree} that $\sigma_j^2$ is bounded away from zero for $m \geq m_0$ large enough, uniformly for $1 \leq j \leq N$. Invoking Theorem 2.2 in ~\cite{jirak_be_aop_2016}, it follows that
\begin{align*}
\sup_{x \in \R}\bigl|\P\bigl(\overline{V}_j^* \leq x  \bigr) - \P\bigl(\overline{W}_j \leq x \bigr)\bigr| \lesssim m^{-\frac{1}{2}}.
\end{align*}
Hence by the above and Lemma \ref{lem_psi_compare}, the triangle inequality yields
\begin{align*}
\sup_{x \in \R}\big|\P\bigl(\overline{V}_j^* \leq x  \bigr) - \P\bigl(\overline{Z}_j \leq x \bigr)\big| &\leq \sup_{x \in \R}\big|\P\bigl(\overline{V}_j^* \leq x  \bigr) - \P\bigl(\overline{W}_j \leq x \bigr)\big| \\&+ \sup_{x \in \R}\big|\P\bigl(\overline{W}_j \leq x  \bigr) - \P\bigl(\overline{Z}_j \leq x \bigr)\big| \lesssim m^{-\frac{1}{2}}.
\end{align*}
Since $\overline{V}_j^* \stackrel{d}{=} \widetilde{V}_j^*$ for $1 \leq j \leq N$, the above argument remains valid for $(\widetilde{V}_j^*)_{1 \leq j \leq N}$.
\end{proof}

For $a_1 > 0$, $a_2, x \in \R$, let
\begin{align*}
G(a_1,a_2,x) = \Phi\bigl(\frac{x}{a_1}\bigr) + \frac{a_2}{6a_1^{3/2}}\bigl(1 - \frac{x^2}{a_1} \bigr) \phi\bigl(\frac{x}{a_1}\bigr).
\end{align*}

\begin{lem}\label{lem_psi_compare}
Suppose that $\max_{1 \leq i \leq 2}|a_i - b_i| \leq y$, $a_1 \geq c > 0$. Then
\begin{align*}
\sup_{x \in\R}(x^2+1)\big|G(a_1,a_2,x) - G(b_1,b_2,x)\big| \lesssim y
\end{align*}
for $y$ small enough.
\end{lem}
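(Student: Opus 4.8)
The plan is to exploit the smooth dependence of $G$ on its first two arguments and reduce everything to the mean value theorem. Throughout, the implicit constants are understood to depend on a range $[c,C]$ with $0<c\le a_1,b_1\le C$ and on an upper bound $|a_2|,|b_2|\le C$ (these hold in the applications via Lemma~\ref{lem_sig_expansion}, Lemma~\ref{lem_cum_expansion} and \Bthree). First I would reduce to the case where $y$ is smaller than a fixed constant: when $y$ is bounded below, the claim follows from the cruder uniform bound $\sup_{x\in\R}(x^2+1)\,|G(a_1,a_2,x)|\lesssim1$, which holds because $(x^2+1)\Phi(x/a_1)$ is bounded on compacta and decays like a fixed Gaussian once $|x|$ is large, while the correction term is a fixed polynomial in $x$ times $\phi(x/a_1)$. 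Once $y$ is small, all four parameters lie in a common compact set of the above type, so the following perturbation argument applies.

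Next I would change one argument at a time,
\[
G(a_1,a_2,x)-G(b_1,b_2,x)=\bigl(G(a_1,a_2,x)-G(b_1,a_2,x)\bigr)+\bigl(G(b_1,a_2,x)-G(b_1,b_2,x)\bigr).
\]
The second bracket equals $\tfrac16(a_2-b_2)\bigl(1-x^2/b_1\bigr)\phi(x/b_1)$, and since $(x^2+1)\bigl(1+x^2/b_1\bigr)\phi(x/b_1)$ is a fixed polynomial in $x$ times $\phi(x/b_1)$, hence bounded uniformly in $x$, this term contributes $\lesssim|a_2-b_2|\le y$. For the first bracket I would split further as $\bigl(\Phi(x/a_1)-\Phi(x/b_1)\bigr)+\tfrac16 a_2\bigl(H(a_1,x)-H(b_1,x)\bigr)$ where $H(t,x)=(1-x^2/t)\phi(x/t)$, and treat each piece by the mean value theorem in the first variable. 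For the c.d.f.\ part one obtains $\Phi(x/a_1)-\Phi(x/b_1)=x\,(1/a_1-1/b_1)\,\phi(x\zeta)$ for some $\zeta$ between $1/a_1$ and $1/b_1$; then $|1/a_1-1/b_1|\le y/c^2$ and $|x\zeta|\ge|x|/C$ give $(x^2+1)\,|x|\,\phi(x\zeta)\lesssim(x^2+1)\,|x|\,e^{-x^2/(2C^2)}\lesssim1$, so this piece is $\lesssim y$. For the $H$ part I would compute, using $\phi'(u)=-u\phi(u)$,
\[
\partial_t H(t,x)=\phi(x/t)\Bigl(\frac{x^2}{t^2}+\frac{x^2}{t^3}-\frac{x^4}{t^4}\Bigr),
\]
which on $t\in[c,C]$ is bounded by a fixed degree-four polynomial in $x$ times $e^{-x^2/(2C^2)}$, so that $(x^2+1)\,|\partial_t H(t,x)|\lesssim1$ uniformly; the mean value theorem then yields $(x^2+1)\,|a_2|\,|H(a_1,x)-H(b_1,x)|\lesssim|a_2|\,y\lesssim y$. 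Summing the three contributions gives the stated bound.

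There is no genuine obstacle here; the only point requiring care is the bookkeeping in the last step, namely checking that each intermediate point produced by the mean value theorem still lies in $[c,C]$ (resp.\ in $[1/C,1/c]$ for the reciprocals), so that the Gaussian factors dominate the polynomial prefactors uniformly in $x$. Equally, one should keep in mind that the estimate is genuinely false without the standing boundedness of $a_1,b_1$ away from $0$ and $\infty$ and of $a_2,b_2$, so these must be invoked when the lemma is used.
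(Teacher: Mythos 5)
Your proof is correct and is essentially the paper's argument: the paper simply says "a Taylor expansion yields $\sup_x(x^2+1)|G(a_1,a_2,x)-G(b_1,b_2,x)|\lesssim|a_1-b_1|+|a_2-b_2|$", and your parameter-by-parameter mean value theorem with uniform polynomial-times-Gaussian bounds is exactly the filled-in version of that, including the (correct) observation that the implicit constants require $a_1,b_1$ bounded away from $0$ and $a_2,b_2$ bounded, as is guaranteed where the lemma is applied.
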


\begin{proof}[Proof of Lemma \ref{lem_psi_compare}]
A Taylor expansion yields
\begin{align*}
\sup_{x \in \R}(x^2+1)\big|G(a_1,a_2,x) - G(b_1,b_2,x)\big| \lesssim \big|a_1 - b_1\big| + \big|a_2 - b_2\big|,
\end{align*}
and hence \hypertarget{FXX:eq29}{the} claim.
\end{proof}

\begin{lem}\label{lem_edge_for_FF}
Grant Assumption \ref{ass_dependence}. Then $F$ (formally introduced above \eqref{moment_conditions_global_FF}) exists and can be chosen such that
\begin{align*}
&\sup_{x \in \R}\big|{F}(x) - {\Psi}_{m}(\sqrt{2}x)\big| = \sup_{x \in \R}\big|\P(\overline{Z}_j \leq x) - {\Psi}_{m}(\sqrt{2}x)\big| \lesssim m^{-1},
\end{align*}
where we recall that $\overline{Z}_j \stackrel{d}{=} \widetilde{Z}_j$. Moreover, we have
\begin{align*}
\sup_{x \in \R}(x^2+1)\big|\P\big(\overline{Z} + \widetilde{Z} \leq x\big) - \Psi_n(x) \big| \lesssim m^{-1}.
\end{align*}
An analogous result holds for the corresponding ${(\cdot)}^{\diamond}$ quantities.
\end{lem}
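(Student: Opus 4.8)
The plan is to exhibit $F$ as a small smooth perturbation of a Gaussian — concretely, the law of an independent sum of a Gaussian and a scaled, centered Gamma variable — chosen so that the first three moments of $\overline{Z}_j$ match those of $\overline{V}_j$ exactly (and the fourth up to $\OO(m^{-1})$ when $p\geq 4$), and then to read off the two estimates from the fact that such an $F$ admits a genuine second–order Edgeworth expansion with remainder of the claimed order, whose Edgeworth polynomial can be identified with $\Psi_m(\sqrt{2}\,\cdot)$, resp.\ $\Psi_n$, via Lemma \ref{lem_psi_compare}.

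First I would fix the construction. By stationarity together with Convention \ref{conv}(i), the laws of $\overline{V}_j$ coincide for $1\leq j\leq N$, so $\overline{\sigma}_j^2=\overline{\sigma}_m^2$, $\overline{\kapp}_j^3=\overline{\kapp}_m^3$, $\overline{\nu}_j^4=\overline{\nu}_m^4$ do not depend on $j$, and by Lemma \ref{lem_widetilde_moments} the $\widetilde{V}_j$ have the same moments up to $\OO(m^{-1})$. If $\overline{\kapp}_m^3=0$, take $F=\mathcal N(0,\overline{\sigma}_m^2)$. Otherwise let $F$ be the law of $Z_0+\sign(\overline{\kapp}_m^3)\,(G_0-\E G_0)$ with $Z_0\sim\mathcal N(0,\overline{\sigma}_m^2/2)$ independent of $G_0\sim\Gamma(\alpha_m,\beta_m)$, the parameters being chosen so that $\alpha_m/\beta_m^2=\overline{\sigma}_m^2/2$ and $2\alpha_m/\beta_m^3=|\overline{\kapp}_m^3|$, which forces $\beta_m\thicksim m^{1/2}$ and $\alpha_m\thicksim m$. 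A direct cumulant computation then gives $\E\overline{Z}_j=0$, $\E\overline{Z}_j^2=\overline{\sigma}_m^2$, $\E\overline{Z}_j^3=\overline{\kapp}_m^3$ exactly, $\E\overline{Z}_j^4=3\overline{\sigma}_m^4+6\alpha_m\beta_m^{-4}$ with $6\alpha_m\beta_m^{-4}\lesssim m^{-1}$, and $\kappa_r(\overline{Z}_j)\lesssim m^{-(r-2)/2}$ for $r\geq 3$; by Lemma \ref{lem_S_n_third_expectation}(iii), Lemma \ref{lem_sig_expansion} and Lemma \ref{lem_cum_expansion} one has $\overline{\nu}_m^4=3\overline{\sigma}_m^4+\OO(m^{-1})$, so \eqref{moment_conditions_global_FF} holds. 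Note $\Var Z_0=\overline{\sigma}_m^2/2$ is bounded away from $0$ by \hyperref[B3]{\Bthree} and Lemma \ref{lem_sig_expressions_relations}.

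For the first bound, the characteristic function of $\overline{Z}_j$ factors explicitly as $e^{-(\overline{\sigma}_m^2/2)t^2}\,e^{\mp it\alpha_m/\beta_m}(1\mp it/\beta_m)^{-\alpha_m}$; the Gaussian factor makes it globally integrable (so no non‑lattice or Cram\'er condition is needed), and expanding $\exp\!\big(\sum_{r\geq 3}\kappa_r(\overline{Z}_j)(it)^r/r!\big)$ and applying Fourier inversion together with Berry's smoothing inequality yields $\sup_{x\in\R}(1+x^2)\big|\P(\overline{Z}_j\leq x)-G(\overline{\sigma}_m,\overline{\kapp}_m^3/\overline{\sigma}_m^3,x)\big|\lesssim m^{-1}$, the right‑hand side being dominated by the neglected cumulant $\kappa_4\lesssim m^{-1}$ and the quadratic‑in‑$\kappa_3$ term. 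Writing $\Psi_m(\sqrt{2}\,x)=G(b_1,b_2,x)$ for the appropriate parameters and using Lemma \ref{lem_sig_expansion} ($\overline{\sigma}_m^2=s_m^2/2+\OO(m^{-1})$), Lemma \ref{lem_cum_expansion} and Lemma \ref{lem_sig_expressions_relations} to get $|\overline{\sigma}_m-b_1|\lesssim m^{-1}$ and $|\overline{\kapp}_m^3/\overline{\sigma}_m^3-b_2|\lesssim m^{-1}$, Lemma \ref{lem_psi_compare} gives the first claim; the $\widetilde Z_j$ version follows since $\widetilde Z_j\stackrel{d}{=}\overline Z_j$. For the ``moreover'' part, independence and the stability of the Gamma family under convolution give $\overline{Z}+\widetilde{Z}\stackrel{d}{=}\mathcal N(0,\overline{\sigma}_m^2)+\sign(\overline{\kapp}_m^3)\,N^{-1/2}(\widehat G-\E\widehat G)$ with $\widehat G\sim\Gamma(2N\alpha_m,\beta_m)$ and $2N\alpha_m\gtrsim n$; hence the cumulants of $\overline Z+\widetilde Z$ are $\kappa_2=2\overline{\sigma}_m^2\thicksim 1$, $\kappa_3=2N^{-1/2}\overline{\kapp}_m^3$ and $\kappa_r\lesssim n^{-(r-2)/2}$ for $r\geq 3$. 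The same Fourier‑inversion argument (again with a $\Var=\overline{\sigma}_m^2$ Gaussian factor) gives $\sup_x(1+x^2)\big|\P(\overline Z+\widetilde Z\leq x)-G(a_1,a_2,x)\big|\lesssim n^{-1}$ with $a_1^2=2\overline{\sigma}_m^2$; using $n=2Nm$ and Lemma \ref{lem_sig_expansion}, Lemma \ref{lem_cum_expansion}, Lemma \ref{lem_S_n_third_expectation}(ii) and Lemma \ref{lem_sig_expressions_relations} one checks $a_1=s_n+\OO(m^{-1})$ and that $a_2$ differs from the skewness parameter of $\Psi_n$ by $\OO(m^{-1})$, so Lemma \ref{lem_psi_compare} yields $\sup_x(1+x^2)\big|\P(\overline Z+\widetilde Z\leq x)-\Psi_n(x)\big|\lesssim m^{-1}+n^{-1}\lesssim m^{-1}$. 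The $(\cdot)^{\diamond}$ statements are proved verbatim, only the (matched) target moments change.

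The one genuinely delicate step is the passage from the explicit characteristic function to the $(1+x^2)$‑weighted uniform bounds with the stated rates — that is, controlling the remainder after truncating the cumulant (Edgeworth) expansion and carrying the polynomial weight through the smoothing inequality. Everything else is bookkeeping of normalization constants through $n=2Nm$ and the moment lemmas; the structural simplification that keeps the argument short is that the Gaussian component built into $F$ renders the inversion integral absolutely convergent, so that no smoothness hypothesis on $(X_k)$ enters here.
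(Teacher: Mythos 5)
Your construction is correct and, at bottom, follows the same strategy as the paper: match the first three (four) moments using Lemmas \ref{lem_sig_expansion}, \ref{lem_cum_expansion}, \ref{lem_quad_expansion}, \ref{lem_widetilde_moments}, obtain a genuine second--order expansion for the smooth law, and transfer to $\Psi_m(\sqrt{2}\,\cdot)$, resp.\ $\Psi_n$, via Lemma \ref{lem_psi_compare}. The genuine difference is where the expansion of $F$ comes from. The paper does not take $F$ to be a single normal-plus-gamma law directly; it sets $\overline{Z}_j \stackrel{d}{=} m^{-1/2}\sum_{k=1}^m A_k$ with $A_k$ i.i.d.\ smooth (e.g.\ normal plus centered gamma) with matched moments and $\E|A_k|^8<\infty$, and then simply cites the classical i.i.d.\ Edgeworth theory (Bhattacharya--Rao) to get both the bound for $F$ and the $(x^2+1)$-weighted bound for $\overline{Z}+\widetilde{Z}$, so that the only hand-work is the parameter comparison via Lemma \ref{lem_psi_compare}. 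You instead fix $F$ explicitly with $\kappa_2,\kappa_3$ matched exactly, derive the expansion by direct Fourier inversion from the explicit characteristic function, and use gamma convolution stability for $\overline{Z}+\widetilde{Z}$. That buys self-containedness and exact cumulant matching, but the step you yourself flag as delicate --- the $(1+x^2)$-weighted inversion with remainder $\OO(m^{-1})$, resp.\ $\OO(n^{-1})$ --- is exactly what the paper obtains for free from the classical nonuniform expansions; note that you could do the same, since by convolution stability your $F$ is itself the law of a normalized sum of $m$ i.i.d.\ normal-plus-gamma variables, so the two constructions coincide in distribution and the classical results apply verbatim. One small caution: your identification of the skewness slot as $\overline{\kapp}_m^3/\overline{\sigma}_m^3$ versus the $\kapp_m^3$ literally appearing in \eqref{defn_PSI_m} uses the standardized normalization of the cubic term; this is the same (implicit) convention the paper relies on when invoking the classical expansions, so your argument is consistent with the paper's, but it deserves an explicit sentence.
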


\begin{proof}[Proof of Lemma \ref{lem_edge_for_FF}]
Let $(A_k)_{1 \leq k \leq m}$ be i.i.d. \hypertarget{bigAk:eq28}{random} variables with continuous distribution function $F$ such that
\begin{align*}
&\E A_k = 0, \quad \E A_k^2 = \overline{\sigma}_j^2,\\
&\E A_k^3 = \overline{\kapp}_j^3, \quad \E|A_k|^8 < \infty.
\end{align*}
It is not hard to find such an $F$, for instance the linear combination of a normal and independent (centered) gamma random variable, see Section \ref{sec_wasserstein}. It follows that the first three moments of $m^{-\frac{1}{2}}\sum_{k = 1}^m A_k$ coincide with $0$, $\overline{\sigma}_j^2$ and $\overline{\kapp}_j^3$. In addition, we \hypertarget{OZJ:eq29}{have}
\begin{align*}
m^{-2}\big\|\sum_{k = 1}^m A_k \big\|_4^4 = \frac{3}{4} s_m^4 + \OO\big(m^{-1}\big) = \overline{\nu}_j^4 + \OO\big(m^{-1}\big)
\end{align*}
by Lemmas \ref{lem_S_n_third_expectation}, \ref{lem_quad_expansion}. Setting $\overline{Z}_j \stackrel{d}{=} m^{-\frac{1}{2}}\sum_{k = 1}^m A_k$, we conclude that \eqref{moment_conditions_global_FF} holds. Due to Lemma \ref{lem_widetilde_moments}, we may construct an analogous sequence $(B_k)_{1 \leq k \leq m}$, and set $\widetilde{Z}_j \stackrel{d}{=} m^{-\frac{1}{2}}\sum_{k = 1}^m B_k$. Since the first four moments (only three are necessary here) of $\overline{Z}_j$ (resp. $\widetilde{Z}_j$) now match those of $m^{-\frac{1}{2}}S_m$ up to an error term of $\OO(m^{-1})$ by Lemmas \ref{lem_sig_expansion}, \ref{lem_cum_expansion}, \ref{lem_quad_expansion} and \ref{lem_widetilde_moments}, the claim follows from classic Edgeworth expansions\footnote{Existence of $\E[|A_k|^8]$ is needed in Lemma \ref{lem_Z_approx_and_BE}}, e.g. ~\cite{Bhattacharya_rao_1976_reprint_2010}, and Lemma \ref{lem_psi_compare}. Likewise, the first four moments of $\overline{Z} + \widetilde{Z}$ match those of $n^{-\frac{1}{2}}S_n$ up to an error of $\OO(m^{-1})$, and the claim follows again from classic Edgeworth expansions and Lemma \ref{lem_psi_compare}.
\end{proof}

\begin{lem}\label{lem_Z_approx_and_BE}
Grant Assumption \ref{ass_dependence}. Then by construction in Lemma \ref{lem_edge_for_FF}
\begin{description}
  \item[(i)] $\sup_{x \in \R}\big|{F}_{}(x) - \Phi(x/\overline{\sigma}_{j})\big| \lesssim m^{-\frac{1}{2}}$.
  \item[(ii)] $\E|\overline{Z}_{j}|^3 \ind(|\overline{Z}_{j}| \geq \tau_n) \lesssim m^{-2}$.
\end{description}
An analogous result holds for $\widetilde{Z}_j$.
\end{lem}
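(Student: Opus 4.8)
The plan is the following. Recall from the construction in Lemma~\ref{lem_edge_for_FF} that, for each $1\le j\le N$, one may take $\overline{Z}_{j}\stackrel{d}{=}m^{-1/2}\sum_{k=1}^{m}A_{k}$, where $(A_{k})_{1\le k\le m}$ are i.i.d.\ with $\E A_{k}=0$, $\E A_{k}^{2}=\overline{\sigma}_{j}^{2}$ and $\E|A_{k}|^{8}<\infty$, the law of $A_{k}$ being the convolution of a (scaled) centered Gaussian and an independent (scaled, centered) Gamma law whose parameters (variances $\OO(1)$, third moment $\OO(1)$, Gamma shape a fixed constant) are uniformly controlled in $j$ and $m$. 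By Lemma~\ref{lem_sig_expansion}~{\bf (ii)} together with Lemma~\ref{lem_sig_expressions_relations} and \hyperref[B3]{\Bthree}, $\overline{\sigma}_{j}^{2}$ is bounded and bounded away from $0$, uniformly in $1\le j\le N$, once $m\ge m_{0}$; in particular $\E|A_{k}|^{3}\le(\E|A_{k}|^{8})^{3/8}=\OO(1)$ and $\overline{\sigma}_{j}^{-3}=\OO(1)$. For {\bf (i)} I would invoke the classical Berry--Esseen theorem for normalized sums of i.i.d.\ summands (e.g.\ ~\cite{petrov_book_sums}):
\[
\sup_{x\in\R}\bigl|F(x)-\Phi(x/\overline{\sigma}_{j})\bigr|=\sup_{x\in\R}\bigl|\P(\overline{Z}_{j}\le x)-\Phi(x/\overline{\sigma}_{j})\bigr|\le \frac{C\,\E|A_{k}|^{3}}{\sqrt{m}\,\overline{\sigma}_{j}^{3}}\lesssim m^{-1/2}.
\]
(If the variance of $\overline{Z}_{j}$ is only $\overline{\sigma}_{j}^{2}+\OO(m^{-1})$, matching it to $\overline{\sigma}_{j}^{2}$ in the Gaussian costs a further $\OO(m^{-1})$ through Lemma~\ref{lem_psi_compare}. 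Alternatively, {\bf (i)} follows by chaining $\sup_{x}|F(x)-\Psi_{m}(\sqrt{2}x)|\lesssim m^{-1}$ from Lemma~\ref{lem_edge_for_FF} with $\sup_{x}|\Psi_{m}(\sqrt{2}x)-\Phi(x/\overline{\sigma}_{j})|\lesssim|\kapp_{m}^{3}|+|s_{m}/\sqrt{2}-\overline{\sigma}_{j}|\lesssim m^{-1/2}$, where $|\kapp_{m}^{3}|=m^{-3/2}|\E S_{m}^{3}|\lesssim m^{-1/2}$ by Lemma~\ref{lem_S_n_third_expectation}~{\bf (i)} and we use Lemma~\ref{lem_sig_expansion}~{\bf (ii)} and Lemma~\ref{lem_psi_compare}.)

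For {\bf (ii)}, the crucial point is that the law of $A_{k}$, being a convolution of a Gaussian and a Gamma law with uniformly bounded parameters (the Gamma shape a fixed constant), has a moment generating function finite on a neighbourhood of $0$ whose radius is bounded below uniformly in $j$ and $m$; equivalently, $K:=\sup_{j,m}\|A_{k}\|_{\psi_{1}}<\infty$. Bernstein's inequality for sums of i.i.d.\ centered sub-exponential random variables then gives, for every $t>0$,
\[
\P\bigl(|\overline{Z}_{j}|\ge t\bigr)\le 2\exp\!\bigl(-c\min\{t^{2}/K^{2},\ t\sqrt{m}/K\}\bigr),\qquad c>0\ \text{absolute.}
\]
For $t\ge\tau_{n}=\co_{\tau}\sqrt{\log n}$ with $\tau_{n}\le\sqrt{m}$ --- which covers all non-trivial instances in Propositions~\ref{prop_m_dependence}--\ref{prop_m_dep_it_quad} --- this becomes $\P(|\overline{Z}_{j}|\ge t)\le2\exp(-ct^{2}/K^{2})$, and inserting it into the elementary bound \eqref{eq_lem_taylor_exp_6} with $q=3$ yields
\[
\E|\overline{Z}_{j}|^{3}\ind\!\bigl(|\overline{Z}_{j}|\ge\tau_{n}\bigr)\le 3\tau_{n}^{3}\,\P\bigl(|\overline{Z}_{j}|\ge\tau_{n}\bigr)+3\!\int_{\tau_{n}}^{\infty}\!x^{2}\,\P\bigl(|\overline{Z}_{j}|\ge x\bigr)\,dx\lesssim \tau_{n}^{3}e^{-c\tau_{n}^{2}/K^{2}}\lesssim(\log n)^{3/2}\,n^{-c\co_{\tau}^{2}/K^{2}},
\]
which is $\le m^{-2}$ as soon as $\co_{\tau}$ is chosen large enough that $c\co_{\tau}^{2}/K^{2}\ge 3$ (recall $m\le n$). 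In the complementary degenerate range $m\lesssim\log n$ the exponent in the Bernstein bound is $\gtrsim t\sqrt{m}/K\gtrsim\sqrt{\log n}$ for $t\ge\tau_{n}$, and the same computation gives a bound that decays faster than $(\log n)^{-2}\lesssim m^{-2}$.

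Finally, $\widetilde{Z}_{j}\stackrel{d}{=}m^{-1/2}\sum_{k=1}^{m}B_{k}$ with $(B_{k})$ constructed exactly as $(A_{k})$ but matched to $\widetilde{\sigma}_{j}^{2}$, $\widetilde{\kapp}_{j}^{3}$, which by Lemma~\ref{lem_widetilde_moments} differ from $\overline{\sigma}_{j}^{2}$, $\overline{\kapp}_{j}^{3}$ only by $\OO(m^{-1})$; the two displays above therefore carry over verbatim to $\widetilde{Z}_{j}$, the discrepancy being absorbed by Lemma~\ref{lem_psi_compare} in {\bf (i)} and irrelevant in {\bf (ii)}. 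I expect the one genuinely delicate point to be the concentration step in {\bf (ii)}: one has to verify that the sub-exponential parameter $K$ of the auxiliary summands is uniform over $j$ and $m$ --- which rests on having constructed $F$ as a Gaussian--Gamma mixture with uniformly bounded moments and a uniformly bounded Gamma shape --- and to keep track of the degenerate range $m\lesssim\log n$; the Berry--Esseen step in {\bf (i)} and the transfer to $\widetilde{Z}_{j}$ are routine.
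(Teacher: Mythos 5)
Your proposal is correct, but it takes a partly different route from the paper, whose own proof is terse: for {\bf (i)} it simply chains Lemma \ref{lem_edge_for_FF} with Lemma \ref{lem_sig_expansion} (the Edgeworth function $\Psi_m(\sqrt 2\,\cdot)$ differs from $\Phi(\cdot/\overline{\sigma}_j)$ by the $\OO(m^{-1/2})$ skewness term plus an $\OO(m^{-1})$ variance mismatch), and for {\bf (ii)} it repeats Step 2 of the proof of Lemma \ref{lem_taylor_expansion_smooth_function} — the tail-integration bound \eqref{eq_lem_taylor_exp_6} — combined with the \emph{classical Fuk--Nagaev inequality} for the i.i.d.\ summands $A_k$, which uses nothing beyond $\E|A_k|^8<\infty$: the polynomial tail term contributes $\lesssim m^{-3}\tau_n^{-5}$ and the sub-Gaussian term is killed by taking $\co_\tau$ large, exactly as in your computation. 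Your {\bf (i)} via the classical Berry--Esseen theorem applied directly to $m^{-1/2}\sum_{k\le m}A_k$ is equivalent and if anything more direct (and you correctly note the paper's chaining as an alternative). Your {\bf (ii)} via Bernstein's inequality is the genuine divergence: it needs the extra structural input that the Gaussian-plus-centered-Gamma summands have a uniformly bounded sub-exponential ($\psi_1$) norm over $j$ and $m$, which is not implied by the moment conditions \eqref{moment_conditions_global_FF} alone but does hold for the specific construction (the Gamma rate $\beta$ stays bounded below since $\overline{\kapp}_j^3$ is bounded and the Gamma variance can be kept bounded away from zero, with the pure-Gaussian case when $\overline{\kapp}_j^3=0$); you flag and address this, including the degenerate range $m\lesssim\log n$. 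The trade-off: your bound is stronger (exponentially small tail) but tied to the particular choice of $F$, whereas the paper's Fuk--Nagaev argument is robust to any $F$ satisfying the stated eighth-moment condition and requires no uniformity check on concentration parameters. The transfer to $\widetilde{Z}_j$ via Lemma \ref{lem_widetilde_moments} is handled the same way in both.
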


\begin{proof}[Proof of Lemma \ref{lem_Z_approx_and_BE}]
{\bf (i)} follows from Lemma \ref{lem_edge_for_FF} and Lemma \ref{lem_sig_expansion}. For {\bf (ii)} we proceed as in step two in the proof of Lemma \ref{lem_taylor_expansion_smooth_function}, using the classical Fuk-Nagaev inequality.
\end{proof}

\begin{lem}\label{lem_bound_prod_cond_char}
Assume that Assumption \ref{ass_dependence} holds and recall $T_n = {\co_T} \sqrt{n}$. Then there exists an absolute constant $\co_{\varphi} > 0$ such that for sufficiently small ${\co_T} > 0$
\begin{align*}
\Bigl\|\prod_{j = N/2}^{N} \varphi_{j|m}\bigl(\xi_N \bigr)\Bigr\|_1 \lesssim e^{-\co_{\varphi} \xi^2} + e^{-\sqrt{N/32}\log 8/7}, \quad \xi^2 \leq {\co_T} n.
\end{align*}
\end{lem}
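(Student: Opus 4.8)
The plan is to pass to a per-block estimate and then multiply. Since the $\overline{V}_j$, $j=N/2,\dots,N$, are i.i.d.\ under $\P_{\FF_m}$, the product $\prod_{j=N/2}^N\varphi_{j|m}(\xi_N)$ is exactly the $\P_{\FF_m}$-characteristic function of $\sum_{j=N/2}^N\overline{V}_j$ evaluated at $\xi_N$, and, the $L^1$-norm being the expectation of the modulus,
\begin{align*}
\Bigl\|\prod_{j=N/2}^N\varphi_{j|m}(\xi_N)\Bigr\|_1=\E\prod_{j=N/2}^N\bigl|\varphi_{j|m}(\xi_N)\bigr|.
\end{align*}
Because $n=2Nm$, the constraint $\xi^2\le\co_T n$ is the same as $|\xi_N|\le\sqrt{2\co_T m}$; equivalently, the unnormalised block $\sqrt{2m}\,\overline{V}_j$ is only ever probed at frequencies $t:=\xi_N/\sqrt{2m}$ with $|t|\le\sqrt{\co_T}$. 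This is the whole point of choosing $\co_T$ small: we never need more than \emph{near-origin} control of characteristic functions, so no Cram\'er-type condition enters. I split the admissible range into a moderate band $|\xi_N|\le\delta_0$ and a high band $\delta_0<|\xi_N|\le\sqrt{2\co_T m}$, with $\delta_0$ a suitable absolute constant; note that for $\xi^2\le1/\co_{\varphi}$ the asserted bound is $\gtrsim1$, so there is nothing to prove and I may assume $|\xi|$ bounded below.

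\emph{Moderate band.} For $|\xi_N|\le\delta_0$ I Taylor-expand each $\varphi_{j|m}$ to second order, $\varphi_{j|m}(\xi_N)=1-\tfrac12\xi_N^2\overline{\sigma}_{j|m}^2+r_j$ with $|r_j|\le\tfrac16|\xi_N|^3\E_{\FF_m}|\overline{V}_j|^3$, and control the cubic term by splitting $\overline{V}_j$ into its truncation at a large absolute level $K$ (where $\E_{\FF_m}[|\overline{V}_j|^3\ind(|\overline{V}_j|\le K)]\le K\overline{\sigma}_{j|m}^2$) and a tail handled by the Fuk--Nagaev-type bound Lemma~\ref{th:Moritz}, exactly as in Step~2 of the proof of Lemma~\ref{lem_taylor_expansion_smooth_function}. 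Using $2\overline{\sigma}_j^2=\ss_m^2+\OO(m^{-1})$ (Lemma~\ref{lem_sig_expansion}, \Bthree), $\|\overline{\sigma}_{j|m}^2-\overline{\sigma}_j^2\|_{p/2}\lesssim m^{-1}$, and the uniform integrability of $|\overline{V}_j|^3$ furnished by \Bone, this rewrites $\varphi_{j|m}(\xi_N)=(1-\tfrac14\xi_N^2\ss_m^2)+\zeta_j$ with $\|\zeta_j\|_1\le\eta\,\xi_N^2$, where $\eta$ can be made as small as desired by shrinking $\delta_0$ and enlarging $K$. Expanding the product $\prod_{j}((1-\tfrac14\xi_N^2\ss_m^2)+\zeta_j)$, taking expectations, and handling the weak (at most two-step) dependence of the $\zeta_j$ in $j$ by a standard interlacing/blocking argument, one obtains
\begin{align*}
\E\prod_{j=N/2}^N\bigl|\varphi_{j|m}(\xi_N)\bigr|\le\bigl(1-\tfrac14\xi_N^2\ss_m^2+C\eta\,\xi_N^2\bigr)^{N/2}\le e^{-\co_{\varphi}N\xi_N^2}=e^{-\co_{\varphi}\xi^2}
\end{align*}
for $\eta$ small enough, which produces the first term of the claim.

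\emph{High band.} For $\delta_0<|\xi_N|\le\sqrt{2\co_T m}$ I aim at a deterministic per-block bound $|\varphi_{j|m}(\xi_N)|\le e^{-c\xi_N^2}\vee\rho$ with $\rho<1$. Inside block $j$ I isolate a sub-block $Z_j$ of $\ell_n:=\lceil c_0/t^2\rceil\wedge m$ consecutive indices lying strictly inside the block, shielded by an $m$-buffer from the boundary innovation packets contained in $\FF_m$ and from the remaining indices of the block; using the coupling machinery behind Lemma~\ref{lem_polylipschitz_distance} (the $\overline{V}_j$ versus $\overline{V}_j^{*}$ and $R_1$ versus $R_1^{(l,**)}$ estimates) together with $\overline{V}_j^{*}\stackrel{d}{=}S_m/\sqrt{2m}$, the blockwise Berry--Esseen bound of \cite{jirak_be_aop_2016} (Theorem~2.2) and Lemma~\ref{lem_Z_approx_and_BE}, precisely as in Lemma~\ref{lem_taylor_expansion_smooth_function_I_BE}, one reduces $|\varphi_{j|m}(\xi_N)|$ to $|\E e^{\ic t S_{\ell_n}}|$ up to remainders that vanish once the product is formed. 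Since $\ell_n$ is large once $\co_T$ is small, Lemma~\ref{lem_sig_expressions_relations} and \Bthree give $s_{\ell_n}^2\ge\ss^2/2$, while Lemma~\ref{lem_S_n_third_expectation} gives $\E|S_{\ell_n}|^3\lesssim\ell_n^{3/2}$, so the elementary estimate $|\E e^{\ic t S_{\ell_n}}|\le e^{-c\,t^2\ell_n}$ holds for $|t|\le c''\ell_n^{-1/2}$; this yields $|\varphi_{j|m}(\xi_N)|\le e^{-c\xi_N^2\ss^2/4}$ when $\ell_n=m$ (the moderate part of the band, where $t^2\ell_n=\tfrac12\xi_N^2$), and $|\varphi_{j|m}(\xi_N)|\le e^{-cc_0}=:\rho<1$ when $\ell_n=\lceil c_0/t^2\rceil<m$ (large $|\xi_N|$, where $t^2\ell_n\asymp c_0$); in either case $|\varphi_{j|m}(\xi_N)|\le e^{-c\xi_N^2}\vee\rho$. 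Taking the product,
\begin{align*}
\E\prod_{j=N/2}^N\bigl|\varphi_{j|m}(\xi_N)\bigr|\le e^{-cN\xi_N^2}\vee\rho^{N/2}\le e^{-c\xi^2}+\rho^{N/2}\le e^{-c\xi^2}+e^{-\sqrt{N/32}\log(8/7)},
\end{align*}
the last step holding for all $N\ge1$ once $\co_T$ (hence $\rho$) is small enough, because $(N/2)/\sqrt{N/32}=2\sqrt2\,\sqrt N\ge2\sqrt2$. Combining the two bands gives $\|\prod_{j=N/2}^N\varphi_{j|m}(\xi_N)\|_1\lesssim e^{-\co_{\varphi}\xi^2}+e^{-\sqrt{N/32}\log(8/7)}$ for $\xi^2\le\co_T n$.

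I expect the high-band bookkeeping to be the main obstacle: carving a genuinely decoupled sub-block $Z_j$ out of each $2m$-block and bounding, uniformly over $\delta_0<|\xi_N|\le\sqrt{2\co_T m}$ and over $m=m_n$, both the coupling errors against $\FF_m$ and the Berry--Esseen remainder, so that the near-origin bound on $S_{\ell_n}$ really applies and the small residual terms genuinely disappear after the $N/2$-fold product; matching the truncation level $K$ in the moderate band against the required smallness of $\eta$, and tracking the absolute constants (so that $\rho$ ends up small enough for the last displayed inequality), are the subordinate technical points.
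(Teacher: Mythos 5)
Your moderate-band argument (conditional second-order Taylor expansion of $\varphi_{j|m}$, expectation bounds on $\overline{\sigma}_{j|m}^2$ and the conditional third moment, then a parity/interlacing step exploiting that $\varphi_{j|m}$ is a function of $(e_j,e_{j+1})$, hence $1$-dependent) is essentially sound modulo routine truncation details. The genuine gap is in the high band, and it is threefold. First, the shielded sub-block you want does not exist: block $j$ occupies the $X$-indices $(2j-2)m+1,\dots,2jm$, the conditioned packets are $e_j=\{\varepsilon_i:\,(2j-3)m<i\le(2j-2)m\}$ and $e_{j+1}=\{\varepsilon_i:\,(2j-1)m<i\le 2jm\}$, so the free innovations form a single stretch of length exactly $m$, while each $X_k=f_m(\varepsilon_k,\dots,\varepsilon_{k-m+1})$ needs a window of length $m$; consequently only the single index $k=(2j-1)m$ is measurable with respect to the free innovations, and a sub-block of $\ell_n\ge 2$ consecutive indices ``shielded by an $m$-buffer from the packets in $\FF_m$ and from the rest of the block'' is geometrically impossible inside a $2m$-block. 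Second, your per-block bound $|\varphi_{j|m}(\xi_N)|\le e^{-c\xi_N^2}\vee\rho$ is asserted deterministically, but $\varphi_{j|m}$ is a random function of $(e_j,e_{j+1})$: its conditional variance is controlled only in $L^{p/2}$ (Lemma \ref{lem_sig_expansion}), and on exceptional boundary configurations it can be nearly degenerate, so $|\varphi_{j|m}(\xi_N)|$ can be arbitrarily close to $1$ with positive probability; no uniform almost-sure bound away from $1$ is available. This randomness is precisely what the second term $e^{-\sqrt{N/32}\log 8/7}$ in the statement accounts for (in the source it comes from a count of ``bad'' blocks), and your argument contains no mechanism for it. Third, the coupling remainders you invoke do not ``vanish once the product is formed'': replacing $\overline{V}_j$ by $\overline{V}_j^{\ast}$ (or a coupled sub-sum) costs $|\xi_N|\,\E|\overline{V}_j-\overline{V}_j^{\ast}|\lesssim|\xi_N|m^{-1/2}$, which at the top of the admissible range $|\xi_N|\le\sqrt{2\co_T m}$ is a constant of order $\sqrt{\co_T}$ per block; such errors must be absorbed into an expectation-level per-block bound $\le\rho'<1$ (choosing first the sub-block constant, then $\co_T$), a step your write-up neither formulates nor carries out.

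For comparison, the paper does not prove this lemma from scratch at all: it cites Equations 4.14 and 4.20 of \cite{jirak_be_aop_2016}, which already give the bound $e^{-\co_{\varphi,1}\xi^2(m-l)/32m}+e^{-\sqrt{N/32}\log(8/7)}$ for $\xi^2<\co_{\varphi,2}n/(m-l)$, uniformly over an auxiliary parameter $\co_{\varphi,3}\le l\le m$ (the number of innovations additionally conditioned on per block), and the whole proof consists of optimizing $l=l(\xi)$ and choosing $\co_T$ accordingly. A self-contained proof along your lines is not hopeless — one can bound $\E|\varphi_{j|m}(\xi_N)|$ by $|\E e^{\ic t S_m}|+C\sqrt{\co_T}$ with $t=\xi_N/\sqrt{2m}$ via $\overline{V}_j^{\ast}\stackrel{d}{=}S_m/\sqrt{2m}$, and then reduce $|\E e^{\ic tS_m}|$ to $|\E e^{\ic tS_{\ell_n}}|+C|t|$ by conditioning on all innovations except the last $\ell_n$ ones and coupling (the mechanism behind Lemma \ref{lem_bound_R1}), with $\ell_n\asymp t^{-2}$ so that only the near-origin Taylor bound is needed — but that is an expectation-based argument with absorbed constant errors, not the deterministic, shielded-sub-block argument you describe, and as written your high-band step would fail.
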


\begin{proof}[Proof of Lemma \ref{lem_bound_prod_cond_char}]
From Equations 4.14 and 4.20 in ~\cite{jirak_be_aop_2016}, there exist absolute, positive constants $\co_{\varphi,1}, \co_{\varphi,2}$ and $\co_{\varphi,3}$ such that uniformly for $\co_{\varphi,3} \leq l \leq m$
\begin{align*}
\Bigl\|\prod_{j = N/2}^{N} \varphi_{j|m}\bigl(\xi_N \bigr)\Bigr\|_1 \lesssim e^{-\frac{\co_{\varphi,1} \xi^2 (m-l)}{32m}} + e^{-\sqrt{\frac{N}{32}}\log \frac{8}{7}}, \quad \text{for $\xi^2 < \frac{\co_{\varphi,2} n}{m-l}$.}
\end{align*}
For employing this bound, we need to appropriately select $l= l(\xi)$. Choosing
\begin{align*}
l(\xi) = \ind\bigl(\xi^2 < N \co_{\varphi,2} \bigr) + \Big(m - \frac{n}{\xi^2}\vee \co_{\varphi,3}\Big)\ind\bigl(\xi^2 \geq N \co_{\varphi,2} \bigr)
\end{align*}
and ${\co_T}^2 < \co_{\varphi,2}/\co_{\varphi,3}$, it follows that
\begin{align*}
e^{-\co_{\varphi,1} \xi^2 (m-l)/32m} \lesssim e^{-\co_{\varphi} \xi^2} + e^{-\sqrt{N/32}\log 8/7}
\end{align*}
for some absolute constant $\co_{\varphi}> 0$, which completes the proof.
\end{proof}

\subsection{A generalized Nagaev-type inequalitiy}\label{sec_nagaev}

\hypertarget{xikell:eq29}{Similarly} \hypertarget{XKLP:eq30}{to} \eqref{defn_strich_depe_2}, denote \hypertarget{Xkprime:eq29}{by}
\begin{align}\label{defn_strich_depe}
\xi_{k}^{(l,')} = (\varepsilon_{k}, \varepsilon_{k - 1},\ldots,\varepsilon_{k - l}',\varepsilon_{k - l - 1},\ldots),
\end{align}
and ${X}_{k}^{(l,')} = f_m(\xi_{k}^{(l,')})$, in particular, we set ${X}_{k}' ={X}_{k}^{(k,')}$. Related to $\lambda_{k,p}$, we \hypertarget{thetaKPP:eq29}{then} define  \hypertarget{sceij:eq29}{the} coupling \hypertarget{Thetajp:eq29}{distance}
\begin{align}\label{defn_theta_coupling}
\theta_{k,p} = \|X_k-X_k'\|_p, \quad \Theta_{j,p} = \sum_{k = j}^{\infty} \theta_{k,p}.
\end{align}
  Note that $\theta_{k,p} \leq 2 \lambda_{k,p}$. For $i \geq j$, let $\mathcal{E}_{i,j} = \sigma\big(\varepsilon_i, \varepsilon_{i-1}, \ldots, \varepsilon_j \big)$, \hypertarget{bigXkm:eq29}{and}
\begin{align}\label{defn_X_km}
X_{k,m} = \E\big[X_k\big|\mathcal{E}_{k,k-m}\big], \quad k \in \Z.
\end{align}

\begin{lem}\label{th:Moritz}
Assume $\E[|X_i|^p g(X_i)|] < \infty$, where $3 \le p \le 4$, $g(x) = 1 + (\log (1+|x|))^{\ad}$, $\ad > 1$, and
\begin{align}\label{eq:23}
\sum_{j=1}^\infty j^2 \theta_{j, p} < \infty.
 \end{align}
Then for all $x > \Co (n \log n)^{\frac{1}{2}}$, where $\Co$ is a sufficiently large constant, we have
\begin{align}\label{eq:Moritz1}
    \P\big(S_n \ge x \big) = \oo\big(n x^{-p} (\log x)^{-\frac{\ad}{2}}\big).
\end{align}
\end{lem}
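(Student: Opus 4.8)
The plan is to reduce the tail bound for $S_n$ to the classical Fuk–Nagaev inequality for sums of independent random variables by means of an $m$-dependence (martingale-type) approximation, choosing the block size $m$ carefully against the scale $x$. First I would fix $m = m(x)$ and write $X_k = X_{k,m} + (X_k - X_{k,m})$, where $X_{k,m} = \E[X_k \mid \mathcal{E}_{k,k-m}]$ as in \eqref{defn_X_km}; the first part is $m$-dependent, and the tail of the remainder $\sum_{k=1}^n (X_k - X_{k,m})$ can be controlled in $L^p$ using $\|X_k - X_{k,m}\|_p \lesssim \Theta_{m,p}$, together with the summability $\sum_j j^2 \theta_{j,p} < \infty$ from \eqref{eq:23}, which forces $\Theta_{m,p} = o(m^{-2})$. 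Via Markov's inequality at order $p$ (or a Rosenthal/Burkholder bound for the $m$-dependent telescoping structure), the remainder contributes at most $O(n \Theta_{m,p}^p) + O((n \Theta_{m,p}^2)^{p/2})$ to $\P(|\sum (X_k - X_{k,m})| \ge x/2)$, which will be negligible compared to the target $n x^{-p}(\log x)^{-\ad/2}$ provided $m$ is chosen as a small power of $x$ (or of $n$).

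Next I would treat the $m$-dependent sum $\sum_{k=1}^n X_{k,m}$ by splitting into $2m$ interlaced blocks of $m$-dependent-free partial sums; inside each residue class the summands are independent, so the classical Fuk–Nagaev inequality applies. The key point is to track the truncation contribution: Fuk–Nagaev gives a bound of the form $n \P(|X_{1,m}| > x/(2m)) \cdot(\text{combinatorial factor})$ plus a sub-Gaussian term $\exp(-c x^2/(n m \sigma^2))$ coming from the truncated variance. The moment hypothesis $\E[|X_i|^p g(X_i)] < \infty$ with $g(x) = 1 + (\log(1+|x|))^{\ad}$ yields exactly $\P(|X_{1,m}| > t) = o(t^{-p}(\log t)^{-\ad})$, hence the individual-jump term is $o(n (x/m)^{-p}(\log(x/m))^{-\ad})$, and after summing over the $O(m)$ blocks this is $o(n m^{p-1} x^{-p}(\log x)^{-\ad})$. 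To recover the clean exponent $(\log x)^{-\ad/2}$ in \eqref{eq:Moritz1} I would choose $m$ growing like a slowly-varying power, e.g. $m \asymp (\log x)^{\ad/(2(p-1))}$ up to constants, so that $m^{p-1} \lesssim (\log x)^{\ad/2}$ is absorbed — this is precisely where the condition $x > \Co (n\log n)^{1/2}$ enters, guaranteeing $x/\sqrt{n}$ is large enough (hence $\log x \asymp \log n$) for the sub-Gaussian term $\exp(-c x^2/(nm))$ to be superpolynomially small once $m$ is only polylogarithmic.

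The main obstacle I anticipate is the bookkeeping in the interplay of the three error sources: the $L^p$-remainder from $m$-dependence approximation, the truncated-jump term, and the Gaussian term, all of which depend on $m$ in competing ways, so the admissible window for $m$ must be shown nonempty and compatible with the stated threshold $x > \Co(n\log n)^{1/2}$. A secondary technical point is handling the $m$-dependence within the Fuk–Nagaev step cleanly; rather than re-deriving a dependent Fuk–Nagaev, I would lean on the interlacing-block decomposition so that only the i.i.d. version is needed, at the cost of the harmless factor $m$ in front. The logarithmic refinement — getting $(\log x)^{-\ad/2}$ rather than merely $(\log x)^{-\ad}$ times a power of $m$ — is the delicate part and is the reason the hypothesis carries the extra $\log$-weight $g$; I would make this quantitative by integrating the tail estimate $\P(|X_1|>t) = o(t^{-p}(\log t)^{-\ad})$ against $t^{p-1}\,dt$ over $[x/(2m),\infty)$ exactly as in Step 2 of the proof of Lemma \ref{lem_taylor_expansion_smooth_function}.
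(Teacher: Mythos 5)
There is a genuine gap, and it is exactly at the point you flagged as the main obstacle: the admissible window for $m$ is empty. Your plan needs $m$ to be polylogarithmic in $x$ so that the loss from the union bound over the $O(m)$ interlaced residue classes (truncation/jump term of order $n\,m^{p}x^{-p}(\log x)^{-\ad}$; note it is $m^{p}$ rather than $m^{p-1}$, since the $2m$ classes each carry threshold $x/(2m)$) can be absorbed into $(\log x)^{\ad/2}$. But the $m$-dependence approximation error cannot tolerate such a small $m$: the hypothesis $\sum_j j^2\theta_{j,p}<\infty$ only gives $\Theta_{m,p}=o(m^{-2})$, so the best available bound is $\|S_n-S_{n,m}\|_p\lesssim \sqrt{n}\,\Theta_{m,p}$, and Markov at order $p$ yields $\P(|S_n-S_{n,m}|\ge x/2)=o\big(n^{p/2}m^{-2p}x^{-p}\big)$. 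For this to be $o\big(nx^{-p}(\log x)^{-\ad/2}\big)$ you need $m\gtrsim n^{(p-2)/(4p)}$ up to logarithmic factors, i.e.\ a positive power of $n$ (between $n^{1/12}$ and $n^{1/8}$ for $p\in[3,4]$). Since at the threshold $x\asymp(n\log n)^{1/2}$ we have $\log x\asymp\log n$, "polylogarithmic in $x$" and "polynomial in $n$" are incompatible, so the two requirements on $m$ cannot be met simultaneously and the single-scale scheme cannot reach the stated bound.

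The paper avoids this by not fixing a single block scale at all. It truncates the variables once, at the level $K=x(\log x)^{-\ad/q}$ with $2p<q<2(3p+1)/3$ (so essentially at scale $x$, not $x/(2m)$, which is what kills the $m^{p}$ loss), and then runs the multiscale chaining argument of Liu, Xiao and Wu over dyadic conditioning lags $\alpha_l=2^{l-1}$, writing $\dot S_{i,n}-\dot S_{i,0}=\sum_l\dot M_{i,l}$, bounding each level by Burkholder with the truncated dependence coefficients $\dot\theta_{k,q}\le K^{1-p/q}\theta_{k,p}^{p/q}$, allocating the threshold $x$ among levels via optimized weights, and applying the classical Nagaev inequality only to the genuinely independent lag-zero part. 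The condition $x>\Co(n\log n)^{1/2}$ is used there to make all exponential terms negligible, and the logarithmic refinement $(\log x)^{-\ad/2}$ comes from the final untruncation step $n\P(|X_1|>K)\le n\,\E h(X_1)/h(K)$ together with $n K^{q-p}x^{-q}$ and the choice $q>2p$, not from integrating the tail against $t^{p-1}\,dt$. If you want to salvage a block-type argument you would have to let the block scale vary (effectively reproducing a chaining/induction over scales), which is precisely what the cited chaining argument does.
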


\begin{rem}
Since $\sum_{j=1}^\infty j^2 \theta_{j, p} \lesssim \Lambda_{2,p}$, Lemma \ref{th:Moritz} applies if Assumption \ref{ass_dependence_main} holds.
\end{rem}

\begin{proof}[Proof of Lemma \ref{th:Moritz}]
We first require some additional notation and couplings for $X_k$. In the proof we shall denote by ${\co}$ a constant \hypertarget{dotXi:eq30}{that} is independent of $n$ and $x$ and its value may change from place to place, and by $\co_q$ a constant only depending on $q$. Let $K = x (\log x)^{-\ad/q}$, $2p < q < 2(3p+1)/3$, and the truncated process $\dot{X_i} = \max(-K, \min(X_i, K))$. Then the functional dependence measure $\dot{\theta}_{k, p}$ for the process $(\dot{X_i})$ satisfies 
\begin{equation}
\dot{\theta}_{k, q} \le K^{1-p/q} \theta_{k, p}^{p/q}.
\end{equation}
By (\ref{eq:23}), \hypertarget{dotSn:eq30}{there} exists a constant ${\co} > 0$ such  \hypertarget{dotSnm:eq30}{that}
\begin{equation} \label{eq:Moritz4}
\sum_{n=k+1}^{2 k} \theta_{n, p}^{p/q} \le k^{1-p/q} (\sum_{n=k+1}^{2 k} \theta_{n, p})^{p/q}
\le k^{1-p/q} ({\co} k^{-2})^{p/q} = {\co} k^{1-3p/q}.
\end{equation}
Let $\dot{S_n} = \sum_{i=1}^n \dot{X_i}$ and $\dot{S}_{n,m} = \sum_{i=1}^n \E[\dot{X_i} | \varepsilon_{i-m}, \ldots, \varepsilon_i]$.
   We shall use the chaining argument in the proof of Theorem 2 in ~\cite{Wu_fuk_nagaev}. Let $L = \lfloor \log n / \log 2 \rfloor + 1$, $\alpha_0= 0$, $\alpha_l = 2^{l-1}$, $1 \le l \le L-1$ and $\alpha_L = n$. \hypertarget{dotMIL:eq30}{Write}
\begin{eqnarray}
\dot{S}_{i,n} - \dot{S}_{i,0} = \sum_{l=1}^L \dot{M}_{i, l}, \, \mbox{ where } \,
\dot{M}_{i, l} = \sum_{k=1}^i (\dot{X}_{k, \alpha_l} - \dot{X}_{k,\alpha_{l-1}})
\end{eqnarray}
and ${X}_{k,m}$ is defined in \eqref{defn_X_km}. \hypertarget{Thetanq:eq30}{By} (2.15) in ~\cite{Wu_fuk_nagaev} and (\ref{eq:Moritz4}),
\begin{equation}\label{eq:e54}
\P\big(\max_{i \le n} |\dot{S}_i - \dot{S}_{i, n}| \ge x\big) \le { {\|\max_{i \le n} |\dot{S}_i - \dot{S}_{i, n}|\|_q^q } \over x^q} \lesssim { {(\sqrt n \dot{\Theta}_{n, q})^q} \over x^q} \lesssim { {n^{3q/2-3p} K^{q-p} } \over x^q},
\end{equation}
   where $\dot{\Theta}_{n, q} = \sum_{k = n}^{\infty} \dot{\theta}_{k,q}$. By Burkholder's inequality, there exists a constant ${\co}_q > 0$ such that
$$
\Big\| \sum_{k=1}^i (\dot{X}_{k,a} - \dot{X}_{k,a-1}) \Big\|_q \le {\co}_q \dot{\theta}_{a, q}.
$$
\hypertarget{breMNL:eq31}{Then}
$$
{ {\| \dot{M}_{i, l} \|_q} \over \sqrt i} \le {\co}_q \sum_{a=1+\alpha_{l-1}}^{\alpha_l}
\dot{\theta}_{a, q} \stackrel{def}{=} {\co}_q \breve{\theta}_{l,q} \lesssim K^{1-p/q} \alpha_l^{1-3p/q},
$$
 \hypertarget{brevenul:eq31}{and}
$$
{{\| \dot{M}_{i, l} \|_2} \over \sqrt i}
\le \sum_{a=1+\alpha_{l-1}}^{\alpha_l} \dot{\theta}_{a, 2} \stackrel{def}{=} \breve{\theta}_{l,2}.
$$
Let $\breve M_{n, l} = \max_{i \le n} |\dot{M}_{i, l}|$ and $\breve \upsilon_1, \ldots \breve \upsilon_L$ be a positive sequence for which  $\sum_{l=1}^L \breve \upsilon_l \le 1$.
 \hypertarget{bremuL:eq31}{Then}
\begin{eqnarray}\label{eq:e57}
\P\big(\breve M_{n, l} \ge 3 \breve \upsilon_l x\big) \le {\co}_q {n \over {x^q}}
{{\alpha_l^{q/2-1} \breve{\theta}_{l,q}^q} \over {\breve \upsilon_l^q}}
 + 2 \exp\Big(-{\co}_q { {(\breve \upsilon_l x)^2}
 \over { n \breve{\theta}_{l, 2}^2}} \Big).
\end{eqnarray}
Let $\breve \mu_l = (\alpha_l^{q/2-1} \breve{\theta}_{l,q}^q)^{1/(q+1)}$, $\overline{\mu}_L = \sum_{l=1}^L \breve \mu_l$ and $\breve \upsilon_l = \breve \mu_l / \overline{\mu}_L$. Since $2p < q < 2(3p+1)/3$,
\begin{eqnarray*}
 \sum_{l=1}^L { {\alpha_l^{q/2-1} \breve{\theta}_{l,q}^q} \over {\breve \upsilon}_l^q}
   = \overline{\mu}_L^{q+1}
   \le  \Big(\sum_{l=1}^L (\alpha_l^{q/2-1} {\co}^q K^{q-p} \alpha_l^{q-3p})^{1/(q+1)} \Big)^{q+1} \lesssim K^{q-p}.
\end{eqnarray*}
By (\ref{eq:e54}) and (\ref{eq:e57}) and the classical Nagaev inequality for independent random variables
\begin{align}\label{eq:S30847} \nonumber
&\P\big( \max_{i\le n} |\dot{S}_i-\E[\dot{S}_i]| \ge 5 x\big)\leq
\sum_{l=1}^L \P\big(\breve M_{n, l} \ge 3 \breve{\upsilon}_l x\big)\\ \nonumber
&+\P\Big( \max_{1\le i\le n} |\dot{S}_i - \dot{S}_{i,n}|  \geq x\Big) +\P\Big( \max_{1\le i\le n} |\dot{S}_{i,0} - \E \dot{S}_{i,0} | \ge x\Big)\\ \nonumber
&\lesssim {n \over {x^q}} K^{q-p} + \sum_{l=1}^L 2 \exp\Big(-{\co}_q { {(\breve \upsilon_l x)^2}
\over{ n \breve{\theta}_{l, 2}^2}} \Big) \\  & + { {n^{3q/2-3p} K^{q-p} } \over x^q}
 + {\co}_q { {n \|\dot{X}_0\|_q^q} \over {x^q}} + 2 \exp\Big( - { {{\co}_q x^2} \over {n \|\dot{X}_0\|^2} } \Big).
\end{align}
Note that ${\|\dot{X}_0\|_q^q} \le K^{q-p}  \|X_0\|_p^p$. Since $\breve \upsilon_l / \breve{\theta}_{l, 2} \ge  \breve \mu_l / (\overline{\mu}_L \breve{\theta}_{l, q}) \ge {\co} (\alpha_l^{q/2-1})^{1/(q+1)}$ for some constant ${\co} > 0$ and $x > \Co (n \log n)^{1/2}$ for sufficiently large $\Co$, we have by (\ref{eq:S30847}) that
\begin{eqnarray}
 \label{eq:J26855}
\P\big( \max_{i\le n} |\dot{S}_i-\E \dot{S}_i| \ge 5 x\big)
  \le {\co} {n \over {x^q}} K^{q-p}.
\end{eqnarray}
Let $h(x) = |x|^p g(x)$. Since $|\E \dot{S}_n | \le n K^{1-p} \E |X_i|^p = \oo(x)$, we have
\begin{align*}
\P\big(|S_n| > 6 x\big) &\le n \P\big(|X_1| \ge K\big) + \P\big(|\dot{S}_n | \ge 6 x\big) \cr
 & \le n \P\big(|X_1| > K\big) + \P\big( \max_{i\le n} |\dot{S}_i - \E \dot{S}_i | \ge 5 x\big) \cr
 &\le  n \E h(X_i) / h(K) + {\co} { n \over x^q} K^{q-p}.
 \end{align*}
Since $2p < q < 2(3p+1)/3$, by elementary manipulations, the claim follows.
\end{proof}

\subsection{Technical Auxiliary Results}


\begin{lem}\label{lem_sig_expressions_relations}
Grant Assumption \ref{ass_dependence}. Then
\begin{description}
\item[(i)] $\sum_{k = 1}^{\infty}k \bigl|\E X_0 X_k \bigr| < \infty$,
\item[(ii)] $\E S_n^2 = n \ss_m^2 - \sum_{k \in \Z} (n \wedge |k|)\E X_0 X_k$.
\end{description}
\end{lem}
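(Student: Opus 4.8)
The plan is to establish (i) by a coupling argument in the spirit of the proof of Lemma \ref{lem_S_n_third_expectation} {\bf (i)}, and then to read off (ii) by an elementary rearrangement for which (i) supplies exactly the absolute summability one needs.

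For (i), I would fix $k \geq 1$ and write $X_k = (X_k - X_k^{*}) + X_k^{*}$ with $X_k^{*} = X_k^{(k,*)}$ the coupled version from \eqref{defn_couple_star}. The key observation is that $X_k^{*}$ is a measurable function of $(\varepsilon_j)_{j \geq 1}$ and $(\varepsilon_j')_{j \leq 0}$ only, hence independent of $X_0 = f_m(\varepsilon_0,\varepsilon_{-1},\ldots)$, so that $\E[X_0 X_k^{*}] = \E X_0 \cdot \E X_k^{*} = 0$ by \hyperref[B1]{\Bone}. The Cauchy--Schwarz inequality (using $p \geq 3 > 2$) then gives $|\E X_0 X_k| = |\E[X_0(X_k - X_k^{*})]| \leq \|X_0\|_2\,\|X_k-X_k^{*}\|_2 \leq \|X_0\|_p\,\lambda_{k,p}$, and summing against $k$, using $k \leq k^2$ and \hyperref[B2]{\Btwo}, yields $\sum_{k\geq 1} k|\E X_0 X_k| \leq \|X_0\|_p \sum_{k\geq 1} k^2\lambda_{k,p} = \|X_0\|_p \Lambda_{2,p} < \infty$. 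This is (i), and as a byproduct it shows $\sum_{k\in\Z}|\E X_0 X_k| < \infty$, so that $\ss_m^2 = \sum_{k\in\Z}\E X_0 X_k$ is a well-defined absolutely convergent series; the very same estimate applies verbatim in the general Bernoulli-shift setting of Assumption \ref{ass_dependence_main}, giving existence of $\ss^2$ there.

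For (ii), I would expand the square and use stationarity to write $\E S_n^2 = \sum_{i,j = 1}^n \E X_i X_j = \sum_{i,j=1}^n \E X_0 X_{j-i}$, and then collect terms according to $k = j-i$: for each $k$ with $|k| < n$ there are exactly $n - |k|$ pairs $(i,j)$ with $1 \leq i,j \leq n$ and $j-i = k$, so $\E S_n^2 = \sum_{|k|<n}(n-|k|)\E X_0 X_k$. Since $n - |k| = n - (n\wedge|k|)$ whenever $|k| \leq n$ and this vanishes for $|k| \geq n$, and since both $\sum_{k\in\Z}|\E X_0 X_k|$ and $\sum_{k\in\Z}(n\wedge|k|)|\E X_0 X_k| \leq \sum_{k\in\Z}|k|\,|\E X_0 X_k|$ are finite by (i), the finite sum may be split and re-indexed over $\Z$ to give $\E S_n^2 = n\sum_{k\in\Z}\E X_0 X_k - \sum_{k\in\Z}(n\wedge|k|)\E X_0 X_k = n\ss_m^2 - \sum_{k\in\Z}(n\wedge|k|)\E X_0 X_k$, which is (ii).

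I do not expect a genuine obstacle here: the only delicate points are the independence of $X_0$ and $X_k^{*}$ (which kills the cross term and is precisely what the functional dependence measure is built to exploit) and the absolute convergence justifying the rearrangement in (ii) — both of which come for free once (i) is available. The one thing to keep in mind is that in the $m$-dependent reduction $X_k = f_m(\varepsilon_k,\ldots,\varepsilon_{k-m+1})$ the coupling $X_k^{*}$ in fact coincides with $X_k$ for $k \geq m$ (so $\lambda_{k,p}=0$ there and the series in (i) is genuinely finite), but the displayed bounds hold verbatim and nothing in the argument changes.
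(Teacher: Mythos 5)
Your proposal is correct and follows essentially the same route as the paper: the paper's (i) is exactly the bound $|\E X_0 X_k| \leq \|X_0\|_2\|X_k - X_k^*\|_2$ (obtained from the independence of $X_0$ and $X_k^*$ and $\E X_k^* = 0$, as you spell out), summed against the $k^2$-summability in \hyperref[B2]{\Btwo}, and (ii) is the same routine expansion and rearrangement the paper omits.
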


\begin{proof}[Proof of Lemma \ref{lem_sig_expressions_relations}]
{\bf (i)} follows from $|\E X_0 X_k | \leq \|X_k -X_k^*\|_2 \|X_0\|_2$. {\bf (ii)} follows from {\bf (i)} and routine calculations, we omit the details.
\end{proof}

We frequently use the following lemma, which is essentially a restatement of Theorem 1 in ~\cite{sipwu}, adapted \hypertarget{thetaKPP:eq32}{to} our setting.
\begin{lem}\label{lem_wu_original}
Grant Assumption \ref{ass_dependence}, and recall $\theta_{k,p}$ defined in \eqref{defn_theta_coupling}, and put $p' = p \wedge 2$. If $\sum_{k = 1}^{\infty} \theta_{k,p} < \infty$, then
\begin{align*}
\bigl\|X_1 + \ldots + X_n\bigr\|_{p} \lesssim n^{\frac{1}{p'}}.
\end{align*}
\end{lem}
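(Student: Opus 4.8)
The plan is to recall, in the present notation, the short martingale-approximation argument underlying Theorem~1 of~\cite{sipwu}. First I would introduce the projection operators $\mathcal{P}_j(\cdot) = \E[\,\cdot\mid\mathcal{E}_j] - \E[\,\cdot\mid\mathcal{E}_{j-1}]$, $j\in\Z$, with $\mathcal{E}_j = \sigma(\varepsilon_i:i\le j)$ as above. Since $X_k$ is $\mathcal{E}_k$-measurable, $\E X_k = 0$, and $\E[X_k\mid\mathcal{E}_{k-j}]\to 0$ in $L^p$ as $j\to\infty$ (reverse martingale convergence together with triviality of the tail $\sigma$-algebra $\bigcap_j\mathcal{E}_{k-j}$ by Kolmogorov's $0$--$1$ law), one has the telescoping identity $X_k = \sum_{i\ge0}\mathcal{P}_{k-i}X_k$, convergent in $L^p$. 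The bound $\sum_{i\ge0}\|\mathcal{P}_{k-i}X_k\|_p\le\sum_{i\ge0}\theta_{i,p}<\infty$ (see below) makes the double sum $\sum_{k=1}^n\sum_{i\ge0}\mathcal{P}_{k-i}X_k$ absolutely convergent in $L^p$, so I may reorganize by lag:
$$
S_n = \sum_{k=1}^n X_k = \sum_{i\ge0}D_n^{(i)}, \qquad D_n^{(i)} = \sum_{k=1}^n\mathcal{P}_{k-i}X_k.
$$

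The key observation is that, for each fixed $i$, the array $(\mathcal{P}_{k-i}X_k)_{k=1}^n$ is a martingale difference sequence with respect to the filtration $(\mathcal{E}_{k-i})_{k=1}^n$, because $\mathcal{P}_{k-i}X_k$ is $\mathcal{E}_{k-i}$-measurable with $\E[\mathcal{P}_{k-i}X_k\mid\mathcal{E}_{k-i-1}]=0$. For $p\ge2$ (the relevant range here, since $p\ge3$ forces $p'=2$) I would then apply Burkholder's inequality followed by Minkowski's inequality in $L^{p/2}$ to obtain $\|D_n^{(i)}\|_p\lesssim\big(\sum_{k=1}^n\|\mathcal{P}_{k-i}X_k\|_p^2\big)^{1/2}\le\sqrt{n}\,\max_{1\le k\le n}\|\mathcal{P}_{k-i}X_k\|_p$; for completeness, in the range $1\le p<2$ one replaces this by the $L^p$ martingale inequality $\|D_n^{(i)}\|_p^p\le 2\sum_{k=1}^n\|\mathcal{P}_{k-i}X_k\|_p^p$, which gives the same bound with $\sqrt{n}$ replaced by $n^{1/p}=n^{1/p'}$.

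To control a single projection I would use stationarity, $\mathcal{P}_{k-i}X_k\stackrel{d}{=}\mathcal{P}_0X_i$, and the identity $\mathcal{P}_0X_i = \E[X_i-X_i'\mid\mathcal{E}_0]$, where $X_i' = X_i^{(i,')}$: indeed $X_i'$ does not depend on $\varepsilon_0$ while $\varepsilon_0'$ is independent of $\mathcal{E}_0$, so $\E[X_i'\mid\mathcal{E}_0]=\E[X_i'\mid\mathcal{E}_{-1}]=\E[X_i\mid\mathcal{E}_{-1}]$, and subtracting this from $X_i=\E[X_i\mid\mathcal{E}_0]$ gives the claim; conditional Jensen then yields $\|\mathcal{P}_{k-i}X_k\|_p\le\theta_{i,p}$. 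Combining,
$$
\|S_n\|_p \le \sum_{i\ge0}\|D_n^{(i)}\|_p \lesssim n^{1/p'}\sum_{i\ge0}\theta_{i,p} = n^{1/p'}\bigl(\theta_{0,p}+\Theta_{1,p}\bigr)\lesssim n^{1/p'},
$$
since $\theta_{0,p}\le 2\|X_0\|_p<\infty$ by \hyperref[B1]{\Bone} and $\Theta_{1,p}=\sum_{k\ge1}\theta_{k,p}<\infty$ by hypothesis. There is no genuine obstacle; the only points that warrant care are the $L^p$-convergence of the telescoping decomposition and the coupling identity $\mathcal{P}_0X_i=\E[X_i-X_i'\mid\mathcal{E}_0]$ that produces the clean projection bound $\|\mathcal{P}_0X_i\|_p\le\theta_{i,p}$ --- the rest is routine Burkholder/Minkowski bookkeeping.
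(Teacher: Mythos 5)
Your proof is correct. The paper does not actually prove this lemma—it is stated as a restatement of Theorem 1 in ~\cite{sipwu}—and your argument is exactly the standard proof underlying that citation: the projection decomposition $X_k=\sum_{i\ge 0}\mathcal{P}_{k-i}X_k$, the coupling identity giving $\|\mathcal{P}_0X_i\|_p\le\theta_{i,p}$, and Burkholder/Minkowski summed over lags, so it takes essentially the same approach (with constants depending on $p$ and $\|X_0\|_p+\sum_k\theta_{k,p}$, consistent with the paper's conventions).
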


Recall that $\sqrt{2m} \overline{V}_j = U_j + R_j$, where $U_j,R_j$ are defined in \eqref{defn_U_R}, \hypertarget{BVJS:eq32}{and}
\begin{align}\label{defn_Vj_star_2}
\overline{V}_j^* = \frac{1}{\sqrt{2m}}\sum_{k = (2j-2)m + 1}^{(2j - 1)m} X_k^{(k - (2j-2)m,*)}, \quad  \text{for $1 \leq j \leq N$}.
\end{align}

\begin{lem}\label{lem_bound_R1}
Grant Assumption \ref{ass_dependence}. Then for $j = 1,\ldots,N$
\begin{description}
\item[(i)]  $\bigl\|R_j\bigr\|_p \lesssim m^{-\frac{1}{2}}$, $\bigl\|\widetilde{V}_0\bigr\|_p \lesssim m^{-1/2}$,
\item[(ii)] $\big\|\overline{V}_j - \overline{V}_j^*\big\|_p \lesssim m^{-\frac{1}{2}}$,
\item[(iii)] $\bigl\|\overline{V}_j\bigr\|_p, \bigl\|\overline{V}_j^*\bigr\|_p  < \infty$,
\item[(iv)] $\bigl\|\widetilde{V}_j\bigr\|_p, \bigl\|\widetilde{V}_j^*\bigr\|_p  < \infty$.
\end{description}
\end{lem}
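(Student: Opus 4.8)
The plan is to reduce everything to the block $j=1$ --- the terminal block $j=N$ being handled identically --- using that shifting all indices by $2m$ maps the $j$-th blocks and $\FF_m$ onto the $(j{+}1)$-st ones and, by stationarity, preserves the joint law. Every bound will then come from two conditioning identities. First: if $\mathcal{G}=\sigma(\varepsilon_i:i\in I_0)$, $X$ is $\sigma(\varepsilon_i:i\in I)$-measurable with $I_0\subseteq I$ finite, and $X^{\sharp}$ denotes $X$ with the coordinates $\varepsilon_i$, $i\in I\setminus I_0$, replaced by independent copies, then $\E[X\mid\mathcal{G}]=\E[X^{\sharp}\mid\mathcal{G}]=\E[X^{\sharp}\mid\sigma(\varepsilon_i:i\in I)]$, whence $X-\E[X\mid\mathcal{G}]=\E[X-X^{\sharp}\mid\sigma(\varepsilon_i:i\in I)]$ and $\|X-\E[X\mid\mathcal{G}]\|_p\le\|X-X^{\sharp}\|_p$ by conditional Jensen. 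Second: if instead $X^{\flat}$ is independent of $\mathcal{G}$ with $\E X^{\flat}=0$, then $\E[X\mid\mathcal{G}]=\E[X-X^{\flat}\mid\mathcal{G}]$, so $\|\E[X\mid\mathcal{G}]\|_p\le\|X-X^{\flat}\|_p$. In each instance the bounding quantity will, after shifting indices, be a stationary coupling difference, for which I will use $\|X_k-X_k^{(l,*)}\|_p=\lambda_{l,p}$ (valid for $1\le l\le k$, by shifting by $k-l$), together with $\sum_{i\ge1}\lambda_{i,p}\le\sum_{i\ge1}i^2\lambda_{i,p}\le\Lambda_{2,p}$.

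For (i): for $1\le k\le m$ the coordinates of $X_k=f_m(\varepsilon_k,\dots,\varepsilon_{k-m+1})$ that lie in $\FF_m$ are precisely those in $e_1$, i.e. $\varepsilon_{k-m+1},\dots,\varepsilon_0$, so $\E_{\FF_m}X_k=\E[X_k\mid\mathcal{E}_0]$; taking $X^{\flat}=X_k^{*}$ (which is $\mathcal{E}_0$-independent and mean zero) in the second identity gives $\|\E_{\FF_m}X_k\|_p\le\lambda_{k,p}$, hence $\sqrt{2m}\,\|\widetilde{V}_0\|_p=\big\|\sum_{k=1}^m\E_{\FF_m}X_k\big\|_p\le\sum_{k=1}^m\lambda_{k,p}\le\Lambda_{2,p}$ and $\|\widetilde{V}_0\|_p\lesssim m^{-1/2}$. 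For $m+1\le k\le 2m$ the $\FF_m$-coordinates of $X_k$ are those in $e_2$, i.e. $\varepsilon_{m+1},\dots,\varepsilon_k$, so $\E_{\FF_m}X_k=\E[X_k\mid\sigma(\varepsilon_{m+1},\dots,\varepsilon_k)]$; applying the first identity with $X^{\sharp}$ replacing the free coordinates $\varepsilon_{k-m+1},\dots,\varepsilon_m$ and then shifting by $k-m$ identifies $X_k-X_k^{\sharp}$ in law with $X_{k-m}-X_{k-m}^{*}$, so $\|X_k-\E_{\FF_m}X_k\|_p\le\lambda_{k-m,p}$ and therefore $\|R_1\|_p\le\sum_{i=1}^m\lambda_{i,p}\le\Lambda_{2,p}$ (this is the form in which the bound on $R_j$ is used in the sequel, where $R_j$ always enters through $\overline{V}_j=(U_j+R_j)/\sqrt{2m}$).

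For (ii) I will combine $\sqrt{2m}\,\overline{V}_1=\sum_{k=1}^m(X_k-\E_{\FF_m}X_k)+R_1$ from \eqref{defn_U_R} with $\sqrt{2m}\,\overline{V}_1^{*}=\sum_{k=1}^m X_k^{*}$ from \eqref{defn_Vj_star} to write $\sqrt{2m}\,(\overline{V}_1-\overline{V}_1^{*})=\sum_{k=1}^m(X_k-X_k^{*})-\sqrt{2m}\,\widetilde{V}_0+R_1$; each of the three pieces has $L^p$-norm at most $\Lambda_{2,p}$ by the triangle inequality and part (i), so $\|\overline{V}_1-\overline{V}_1^{*}\|_p\lesssim m^{-1/2}$. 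Parts (iii)--(iv) are then immediate: since $\overline{V}_j^{*}\stackrel{d}{=}\widetilde{V}_j^{*}\stackrel{d}{=}S_m/\sqrt{2m}$ and $\|S_m\|_p\lesssim m^{1/(p\wedge2)}=m^{1/2}$ by Lemma \ref{lem_wu_original}, we get $\|\overline{V}_j^{*}\|_p,\|\widetilde{V}_j^{*}\|_p\lesssim1$; then $\|\overline{V}_j\|_p\le\|\overline{V}_j-\overline{V}_j^{*}\|_p+\|\overline{V}_j^{*}\|_p<\infty$ by (ii), while $\|\widetilde{V}_j\|_p\le(2m)^{-1/2}\sum_k\|\E_{\FF_m}X_k\|_p<\infty$ trivially (a finite sum of $L^p$-terms via \Bone and conditional Jensen).

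The genuinely delicate point is the combinatorial bookkeeping in (i): for each $k$ in each block one must read off exactly which of the $m$ coordinates of $X_k$ fall inside $\FF_m$ and which remain free, and then align the replacement difference $X_k-X_k^{\sharp}$ (or $X_k^{\flat}$) with the \emph{correctly indexed} stationary coupling difference $X_i-X_i^{*}$, so that the resulting sum closes against $\sum_i\lambda_{i,p}\le\Lambda_{2,p}$. Everything beyond this indexing is routine: Minkowski's inequality, conditional Jensen, and the moment bound of Lemma \ref{lem_wu_original}.
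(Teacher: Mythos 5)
Your proposal is correct and is essentially the paper's own argument: the two identities you isolate are exactly the devices used there --- $X_k-\E_{\FF_m}X_k\stackrel{d}{=}\E\big[X_k^{(k-m,*)}-X_k\,\big|\,\sigma(\FF_m,\mathcal{E}_k^*)\big]$ plus conditional Jensen for $R_j$, and $\E[X_k^*\mid\FF_m]=\E X_k=0$ for $\widetilde V_0$ and for the middle term of the same three-term decomposition of $\overline V_j-\overline V_j^*$ --- with (iii)--(iv) reduced to $\overline V_j^*\stackrel{d}{=}\widetilde V_j^*\stackrel{d}{=}S_m/\sqrt{2m}$ and Lemma \ref{lem_wu_original}. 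Your remark that one really obtains $\|R_j\|_p=\OO(1)$ for the unnormalized block (i.e.\ $m^{-1/2}$ only after dividing by $\sqrt{2m}$) agrees with how the estimate is actually derived and used in the paper. The one point where you are weaker is $\|\widetilde V_j\|_p$ in (iv): your term-by-term Jensen bound gives finiteness, which is all the statement literally asserts, but it grows like $\sqrt m$, whereas the intended (and easily available) bound is uniform in $m$ --- apply Jensen at the level of the two half-block sums, using $\|\E_{\FF_m}\sum_k X_k\|_p\le\|\sum_k X_k\|_p\lesssim\sqrt m$ via Lemma \ref{lem_wu_original} for the half conditioning on recent coordinates and $\sum_i\lambda_{i,p}<\infty$ for the other half, or compare with $\widetilde V_j^*$ as in (ii).
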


\begin{proof}[Proof of Lemma \ref{lem_bound_R1}]
Without loss of generality, we can assume that $j = 1$ throughout the proof.\\
{\bf (i)}: Since for $m+1 \le k \leq 2m$
\begin{align*}
X_k - \E_{\FF_m}\bigl[X_k\bigr] \stackrel{d}{=} \E_{\sigma(\FF_m, \mathcal{E}_k^*)}\bigl[X_k^{(k-m,*)} - X_k\bigr],
\end{align*}
we have
\begin{align*}
&\Bigl\|\sum_{k = m+1}^{2m}\bigl(X_k - \E_{\FF_m} X_k \bigr)\Bigr\|_p = \Bigl\|\sum_{k = m+1}^{2m} \E_{\sigma(\FF_m, \mathcal{E}_k^*)}\bigl[X_k^{(k-m,*)} - X_k\bigr]\Bigr\|_p \\&\leq \sum_{k = m+1}^{2m}\bigl\| X_k^{(k-m,*)} - X_k\bigr\|_p \leq \sum_{k = 1}^{\infty} \lambda_{k,p} < \infty.
\end{align*}
Hence $\bigl\|R_j\bigr\|_p \lesssim m^{-\frac{1}{2}}$. Similarly, $\bigl\|\widetilde{V}_0\bigr\|_p \lesssim m^{-1/2}$ follows.\\
{\bf (ii)}: Observe that by the triangle inequality and {\bf (i)} (recall $j = 1$),
\begin{align*}
\sqrt{2 m}\bigl\|\overline{V}_{j}^{*} - V_j\bigr\|_p &\leq \sum_{k = 1}^m \bigl\|X_k - X_k^{*}\bigr\|_p + \sum_{k = 1}^m \bigl\|\E_{\FF_m} X_k \bigr\|_p + \bigl\|R_j\bigr\|_p \\&\leq \sum_{k = 1}^m \bigl\|\E_{\FF_m} X_k \bigr\|_p + \OO\big(1\big).
\end{align*}
We note $\E\bigl[X_k^{*}\bigl|\FF_m\bigr] = \E X_k= 0$ for $1 \leq k \leq m$, and thus by Jensen's and the triangle inequality
\begin{align*}
\Bigl\|\sum_{k = 1}^m\E\bigl[X_k\bigl|\FF_m\bigr]\Bigr\|_p = \Bigl\|\sum_{k = 1}^m\E\bigl[X_k - X_k^{*} \bigl|\FF_m\bigr]\Bigr\|_p \leq \sum_{k = 1}^m\bigl\|X_k - X_k^{*} \bigr\|_p \leq \sum_{k = 1}^{\infty} \lambda_{k,p} < \infty.
\end{align*}
Piecing everything together, we have {\bf (ii)}.\\
{\bf (iii)}: By {\bf (ii)}, it suffices to show $\big\|\overline{V}_{j}^{*}\big\|_p < \infty$. However, this immediately follows from Lemma \ref{lem_wu_original}.\\
{\bf (iv)}: This follows by using the same arguments as in {\bf (i)}-{\bf (iii)}.
\end{proof}

\section{Proof of Proposition \ref{prop_m_dependence}}\label{sec_proof_prop_m_dep}

{
For the proof, key estimates are provided by some lemmas. More precisely, we use
Lemma \ref{lem_taylor_expansion_smooth_function} (combined with Lemma \ref{lem_taylor_expansion_smooth_function_I_BE}) to bound the
difference between conditional and unconditional characteristic functions. We also use Lemma \ref{lem_bound_prod_cond_char} to show that conditional characteristic functions decay sufficiently fast.

\begin{proof}[Proof of Proposition \ref{prop_m_dependence}]
By properties of conditional expectations, independence, and $|e^{\ic x}| = 1$, we have
\begin{align*}
&\Bigl|\E e^{\ic \xi S_n/\sqrt{n}}  - \E e^{\ic \xi (\overline{Z} + \widetilde{Z})}\Bigr| \leq \Bigl\|\E_{\FF_m} e^{\ic \xi \overline{S}_{n|m}/\sqrt{n}} - \E_{} e^{\ic \xi \overline{Z}} \Bigr\|_1 \\& + \Bigl|\E_{}e^{\ic \xi \widetilde{S}_{n|m}/\sqrt{n}} - \E_{}e^{\ic \xi \widetilde{Z}}\Bigr| \stackrel{def}{=} {\bf A}_m(\xi) + {\bf B}_m(\xi).
\end{align*}
In the sequel, we treat these \hypertarget{phiJCM:eq33}{two} terms separately.\\

{\bf ${\bf A}_m(\xi)$:} Let $\varphi_{j|m}(x) = \E\bigl[e^{\ic x \overline{V}_j}\bigl| \FF_m\bigr]$, $\psi_{j}(x) = \E e^{\ic x \overline{Z}_j}$ and $\xi_N = \xi/\sqrt{N}$. Due to the independence of $(\overline{V}_j)_{1 \leq j \leq N}$ under $\P_{|\FF_m}$, it follows that
\begin{align}\label{eq_berry_smoothing}
{\bf A}_m(\xi) = \Bigl\|\prod_{j = 1}^N \varphi_{j|m}\bigl(\xi_N \bigr) - \prod_{j = 1}^N \psi_{j}\bigl(\xi_N\bigr)\Bigr\|_1.
\end{align}

For $a_j,b_j \in \mathds{C}$ we have
\begin{align}\label{eq_prod_formula}
\prod_{j = 1}^N a_j - \prod_{j = 1}^N b_j = \sum_{i = 1}^N \Bigl(\prod_{j = 1}^{i-1}b_j\Bigr)\big(a_i - b_i\big)\Bigl(\prod_{j = i+1}^N a_j\Bigr),
\end{align}
where we use the convention that $\prod_{j = 1}^{i-2}(\cdot) = \prod_{j = i + 2}^{N}(\cdot) = 1$ if $i-2 < 1$ or $i + 2 > N$.

Note that $(\varphi_{j|m})_{1 \leq j \leq N}$ is a one-dependent sequence. Since $|\varphi_{j|m}|\leq 1$ and $|\psi_{j}|\leq 1$, it then follows from (\ref{eq_prod_formula}), the triangle inequality, 'leave one out' (to obtain independence) and stationarity that
\begin{align}\label{eq_thm_m_dependence_1} \nonumber
{\bf A}_m(\xi) &\leq \sum_{i = 1}^N\Bigl\|\prod_{j = 1}^{i-2}\psi_{j}\bigl(\xi_N \bigr)\Bigr\|_1 \Bigl\|\varphi_{i|m}\bigl(\xi_N \bigr) -\psi_{i}\bigl(\xi_N \bigr)\Bigr\|_1\Bigl\|\prod_{j = i+2}^N \varphi_{j|m}\bigl(\xi_N \bigr)\Bigr\|_1 \\& \nonumber \leq N\Bigl\|\varphi_{1|m}\bigl(\xi_N \bigr) -\psi_{1}\bigl(\xi_N \bigr)\Bigr\|_1\Bigl\|\prod_{j = N/2}^{N} \varphi_{j|m}\bigl(\xi_N \bigr)\Bigr\|_1\\& \nonumber + N \Bigl\|\prod_{j = 1}^{N/2-3}\psi_{j}\bigl(\xi_N\bigr)\Bigr\|_1  \Bigl\|\varphi_{1|m}\bigl(\xi_N \bigr) -\psi_{1}\bigl(\xi_N \bigr)\Bigr\|_1 \\&\stackrel{def}{=} I_N(\xi) + II_N(\xi).
\end{align}
Let us first consider $I_N(\xi)$. Since $e^{\ic x} = \cos(x) + \ic \sin (x)$ for $x \in \R$, an application of Lemma \ref{lem_taylor_expansion_smooth_function} {\bf (i)}, in view of Lemma \ref{lem_taylor_expansion_smooth_function_I_BE}, yields (with $p = 3$)
\begin{align}\label{eq_thm_m_dependence_2}\nonumber
&\bigl\|\varphi_{j|m}\bigl(\xi_N \bigr) -\psi_{j}\bigl(\xi_N \bigr)\bigr\|_1 \lesssim \big(|\xi_N|^2+ |\xi_N|^3 + \tau_n^4(|\xi_N|^4 + |\xi_N|^7)\big)m^{-1}\\& \nonumber + |\xi_N|^3 \tau_n^{3-p} \log(n)^{-\bd} \oo\big(m^{1-\frac{p}{2}}\big) + m^{-\frac{1}{2}}\bigl(\tau_n^4|\xi_N|^4 + |\tau_n|^5 |\xi_N|^5\big)\\& \stackrel{def}{=} III_N(\xi),
\end{align}
uniformly for $1 \leq j \leq N$. Here, $\tau_n \geq {\co_{\tau}} \sqrt{\log n}$, ${\co_{\tau}} > 0$ sufficiently large. Further, Lemma \ref{lem_bound_prod_cond_char} implies
\begin{align*}
\Bigl\|\prod_{j = N/2}^{N} \varphi_{j|m}\bigl(\xi_N \bigr)\Bigr\|_1 \lesssim e^{-\co_{\varphi} \xi^2} + e^{-\log \frac{8}{7} \sqrt{N/32}}, \quad \xi^2 \leq \co_T^2 n,
\end{align*}
for ${\co_T} > 0$ sufficiently small. Combining both estimates and plugging in $\xi_N = \xi/\sqrt{N}$, we obtain the bound
\begin{align*}
I_N(\xi) &\lesssim III_N(\xi) \Big(e^{-\co_{\varphi} \xi^2} + e^{-\log \frac{8}{7} \sqrt{N/32}}\Big)
\\& \lesssim \Big(|\xi|^3 N^{-\frac{1}{2}} \oo\big(m^{1-\frac{p}{2}}\big) \log(\tau_n)^{-\bd} + (|\xi|^4\vee|\xi|^5) \tau_n^5 (N n)^{-\frac{1}{2}} \\& + (|\xi|^4 \vee |\xi|^7) \tau_n^4 n^{-1} + |\xi|^3 m^{-1}n^{-\frac{1}{2}} + |\xi|^2 m^{-1} \Big) \Bigl(e^{-\co_{\varphi} \xi^2} + e^{-\log \frac{8}{7} \sqrt{N/32}}\Bigr).
\end{align*}
Since $N \thicksim n^{1 - \md}$, we conclude that for $\md < 1$ close enough to one
\begin{align}\label{eq_thm_m_dependence_3}
\int_{-T_n}^{T_n} \frac{I_N(\xi)}{|\xi|} \,d\, \xi \lesssim  \oo\big(n^{-\frac{1}{2}}\big) \log(\tau_n)^{-\bd} + \tau_n^5 (N n)^{-\frac{1}{2}}.
\end{align}
Using a standard argument to bound $|\psi_j(\xi_N)|$ (cf. ~\cite{fellervolume2}, XVI.5), we obtain the same bound for $II_N(\xi)$ in an analogous manner. Piecing everything together yields
\begin{align}\label{eq_thm_m_dependence_4}
\int_{-T_n}^{T_n} \frac{{\bf A}_m(\xi)}{|\xi|} \, d\, \xi \lesssim  \oo\big(n^{-\frac{1}{2}}\big) \log(\tau_n)^{-\bd} + \tau_n^5 (N n)^{-\frac{1}{2}}.
\end{align}

{\bf ${\bf B}_m(\xi)$:}  We may proceed in the same way as for ${\bf A}_m(\xi)$. Note that the present situation is much simpler since $(\widetilde{V}_j)_{1 \leq j \leq N}$ are i.i.d., and not only conditionally independent as $(\overline{V}_j)_{1 \leq j \leq N}$. Using Lemma \ref{lem_taylor_expansion_smooth_function_II} this leads to the same bound
\begin{align}\label{eq_thm_m_dependence_12}
\int_{-T_n}^{T_n} \frac{{\bf B}_m(\xi)}{|\xi|} \, d\, \xi \lesssim  \oo\big(n^{-\frac{1}{2}}\big) \log(\tau_n)^{-\bd} + \tau_n^5 (N n)^{-\frac{1}{2}}.
\end{align}
Combining (\ref{eq_thm_m_dependence_4}) and (\ref{eq_thm_m_dependence_12}) we obtain
\begin{align*}
 \int_{-T_n}^{T_n} \big({\bf A}_m(\xi) + {\bf B}_m(\xi)\bigr) \frac{1}{|\xi|} \, d\, \xi  \lesssim \oo\big(n^{-\frac{1}{2}}\big) \log(\tau_n)^{-\bd} + \tau_n^5 (N n)^{-\frac{1}{2}}.
\end{align*}
Recall $m = n^{\md}$, $N \thicksim n^{1 - \md}$. Setting $\tau_n \thicksim n^{\tau}$ with $\tau, 1 - \md >0$ sufficiently small, we arrive at
\begin{align*}
\Ad_{T_n}  = \oo\big(n^{-\frac{1}{2}}\big)\log(n)^{-\bd}.
\end{align*}
\end{proof}

\section{Proof of Proposition \ref{prop_m_dep_it}}\label{sec_proof_dep_it}

The basic idea for the proof is to set up a recursion, subsequently improving the bound. The problem here is the characteristic $\Cd_{T_n}$, which we cannot control. To bypass $\Cd_{T_n}$, we use an additional smoothing argument that allows us to set up the recursion for the smoothed distance $\Delta_n^{\diamond}$ (defined below), where $\Cd_{T_n}$ vanishes \hypertarget{Gab:eq35}{and} thus becomes insignificant. To this end, for $a > 0$ and $b \in \N$ even, let $G_{a,b}$  be a real valued random variable with density function
\begin{align}\label{eq_g_ab}
g_{a,b}(x) = \co_b a \Big|\frac{\sin(a x)}{a x} \Big|^b, \quad x \in \R,
\end{align}
for some constant $\co_b > 0$ only depending on $b$. It is well-known (cf. ~\cite{Bhattacharya_rao_1976_reprint_2010}, Section 10) that for even $b$ the Fourier transform $\hat{g}_{a,b}$  \hypertarget{ghatab:eq35}{satisfies}
\begin{align}\label{eq_thm_smooth_fourier}
\hat{g}_{a,b}(t) =  \left\{
\begin{array}{ll}
2 \pi \co_b u^{\ast \, b}[-a,a](t) &\text{if $|t| \leq a b$},\\
0 &\text{otherwise},
\end{array}
\right.
\end{align}
where $u^{\ast \, b}[-a,a]$ denotes the $b$-fold convolution of \hypertarget{XXKdm:eq35}{the} density of the uniform distribution on $[-a,a]$, that is $u[-a,a](t) = \frac{1}{2a} \ind_{[-a,a]}(t)$. For $b \geq 6$, let $(H_k)_{k \in \Z}$ be i.i.d. with $H_k \stackrel{d}{=} G_{a,b}$ and independent of $S_n$. \hypertarget{SNDM:eq35}{Define}
\begin{align}\label{defn_diamond_mod}
{X}_k^{\diamond} = X_k + H_{k} - H_{k-1}, \quad {S}_n^{\diamond} = \sum_{k = 1}^n {X}_k^{\diamond} = S_n + H_n - H_0,
\end{align}
and in analogy ${s}_n^{\diamond}$, ${\kapp}_n^{\diamond}$ and $\Ad_{T_n}^{\diamond}$, $\Id_{T_n}^{\diamond}$, $(\Td_{x}^y)^{\diamond}$, ${\Psi}_n^{\diamond}$, $\overline{Z}_j^{\diamond}$, $\widetilde{Z}_j^{\diamond}$, $\overline{Z}^{\diamond}$, $\widetilde{Z}^{\diamond}$ and \hypertarget{deltandia:eq36}{the} difference
\begin{align}
{\Delta}_n^{\diamond}(x) = \P\big({S}_n^{\diamond} \leq x \sqrt{n} \big) - {\Psi}_n^{\diamond}(x), \quad x \in \R.
\end{align}
Note that since $b \geq 6$, exploiting also the independence of $(H_k)_{k \in \Z}$ and $S_n$, we have by \eqref{eq_g_ab} and \eqref{eq_thm_smooth_fourier}
\begin{align}\nonumber
&\E H_k = 0, \quad \E|H_k^{}|^4 < \infty,\\ \label{eq_char_S_n_diamond_zero}
&\Big|\E e^{\ic \xi S_n^{\diamond}/\sqrt{n}} \Big| = 0 \quad \text{for $|\xi| > \sqrt{n} |ab|$.}
\end{align}

Next, recall the definitions of $\overline{V}_j^*$ and $\widetilde{V}_j^*$ in \eqref{defn_Vj_star} and \eqref{defn_Vj_star_II}.
For $1 \leq j \leq N$, denote by
\begin{align}\label{defn_Vj_diamond}
&\sqrt{2m}\,\overline{V}_j^{\diamond} = \sqrt{2m}\,\overline{V}_j^* + H_{(2j - 1)m +1} - H_{(2j-2)m + 1}.
\end{align}
Observe that $\overline{V}_j^{\diamond}$ is independent of $\FF_m$ for all $1 \leq j \leq N$ by construction. Similarly, for $1 \leq j \leq N$ ($\widetilde{V}_0$ is degenerate) we put
\begin{align}\label{defn_Vj_diamond_II}
&\sqrt{2m}\widetilde{V}_j^{\diamond} = \sqrt{2m}\widetilde{V}_j^* + H_{2jm +1} - H_{(2j-1)m + 1}.
\end{align}
Note that $S_m^{\diamond}/\sqrt{2m} \stackrel{d}{=}\overline{V}_j^{\diamond} \stackrel{d}{=} \widetilde{V}_j^{\diamond}$. In analogy to \eqref{defn_Delta_star_diamond}, we
 \hypertarget{deltaDM:eq36}{put}
\begin{align}\label{defn_Delta_diamond}
{\Delta}_{j,m}^{\diamond}(x) = \overline{\Delta}_{j,m}^{\diamond}(x) &= \P_{}\bigl(\overline{V}_j^{\diamond} \geq x \bigr) - \P_{}\bigl(\overline{Z}_j^{\diamond} \geq x \bigr), \quad 1 \leq j \leq N,
\end{align}
$\widetilde{\Delta}_{j,m}^{\diamond} = {\Delta}_{j,m}^{\diamond}$ for $1 \leq j \leq N$. Observe that ${\Delta}_{j,m}^{\diamond}$ actually do not depend on $j$, but we stick to this notation to distinguish them from $\Delta_m^{\diamond}$.\\
\\
We now establish modifications of Lemmas \ref{lem_taylor_expansion_smooth_function} and \ref{lem_taylor_expansion_smooth_function_II}. The proofs are similar, so we only sketch them.

\begin{lem}\label{lem_taylor_expansion_smooth_function_diamond}
Grant Assumption \ref{ass_dependence}, and let $f$ be a smooth function such that $\sup_{x \in \R}|f^{(s)}(x)| \leq 1$ for $s = 0,\ldots,7$. Then for $\tau_n \geq {\co_{\tau}} \sqrt{\log n}$, ${\co_{\tau}} > 0$ sufficiently large, $2\bd + 2 < \ad$ and $p \geq 3$
\begin{align*}
&\text{{\bf (i)}} \,\, \max_{1 \leq j \leq N}\bigl\|\E_{\FF_m} \ff\bigl(\xi_N \overline{V}_{j}\bigr)  - \E_{}\ff\bigl(\xi_N \overline{Z}_{j}\bigr)\bigr\|_1\\& \quad \lesssim |\xi_N|^3 \tau_n^{3-p} \log(n)^{-\bd} \oo\big(m^{1-\frac{p}{2}}\big) + \tau_n^4(|\xi_N|^4 + |\xi_N|^7)m^{-1} \\&\quad +\sup_{x \in \R}\big|{\Delta}_{j,m}^{\diamond}(x)\big|\bigl(\tau_n^4|\xi_N|^4 + |\tau_n|^5 |\xi_N|^5\big) + |\xi_N|^3 m^{-1}+ |\xi_N|^2 m^{-1}.\\
&\text{{\bf(ii)}} \,\, \text{The above also gives an upper bound for}  \\& \quad \max_{1 \leq j \leq N}\bigl\|\E_{\FF_m} \ff\bigl(\xi_N \overline{V}_{j}^{\diamond}\bigr) - \E_{}\ff\bigl(\xi_N \overline{Z}_{j}^{\diamond}\bigr)\bigr\|_1.
\end{align*}
\end{lem}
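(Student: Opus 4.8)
The plan is to run the seven‑step argument from the proof of Lemma~\ref{lem_taylor_expansion_smooth_function} essentially verbatim, inserting the smoothing increments at exactly the two places where \emph{distributions}, rather than moments, get compared. First I would record the elementary facts that make this work. Writing $W_j = (2m)^{-1/2}\bigl(H_{(2j-1)m+1}-H_{(2j-2)m+1}\bigr)$, we have $\overline{V}_j^{\diamond}=\overline{V}_j^{*}+W_j$, and since the $H_k$ are i.i.d., independent of the $\varepsilon_k$, with even density $g_{a,b}$ (see \eqref{eq_g_ab}), $\E|H_k|^4<\infty$, and $a$ an absolute constant, it follows that $W_j$ is independent of $(\overline{V}_j^{*},\FF_m)$, $\E W_j=\E W_j^3=0$, and $\|W_j\|_4\lesssim m^{-1/2}$. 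Moreover $X_k^{\diamond}-(X_k^{\diamond})^{*}=X_k-X_k^{*}$, so $X_k^{\diamond}$ satisfies Assumption~\ref{ass_dependence} with the same $\lambda_{k,p}$; hence Lemma~\ref{lem_bound_R1}, the Fuk--Nagaev estimate of Lemma~\ref{th:Moritz}, and the Edgeworth construction of Lemma~\ref{lem_edge_for_FF} apply to $S_m^{\diamond}$, $\overline{V}_j^{\diamond}$, $\overline{Z}_j^{\diamond}$. Finally, $\E W_j=\E W_j^3=0$, $\|W_j\|_2^2\lesssim m^{-1}$, Lemmas~\ref{lem_sig_expansion}, \ref{lem_cum_expansion}, \ref{lem_quad_expansion}, \ref{lem_widetilde_moments} and \eqref{moment_conditions_global_FF} give that $\overline{Z}_j^{\diamond}$ and $\overline{Z}_j$ share their first four moments up to $\OO(m^{-1})$.

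For {\bf (i)} I would keep Steps~1--2 unchanged and, in Step~3, replace Lemma~\ref{lem_polylipschitz_distance} by its diamond analogue $\bigl\|\E_{\FF_m}\bigl[g(\overline{V}_j)-g(\overline{V}_j^{\diamond})\bigr]\bigr\|_1\lesssim \Co_g m^{-1}$; this follows from Lemma~\ref{lem_polylipschitz_distance} plus a first‑order Taylor expansion of $g$ about $\overline{V}_j^{*}$, whose linear term vanishes because $\overline{V}_j^{*}\perp W_j$ and $\E W_j=0$, and whose remainder is $\OO(\Co_g m^{-1})$, of the order already produced by Lemma~\ref{lem_polylipschitz_distance}. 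Thus $\overline{V}_j$ is replaced by $\overline{V}_j^{\diamond}$, which is independent of $\FF_m$, so the conditioning drops out exactly as in \eqref{eq_lem_taylor_exp_11}; in Steps~4--5 the identity \eqref{eq_thm_smooth_4_second} then pairs $g^{(1)}$ against $\P(\overline{V}_j^{\diamond}\ge x)-\P(\overline{Z}_j^{\diamond}\ge x)$, producing $\sup_{x}|\Delta_{j,m}^{\diamond}(x)|$ in place of $\sup_{x}|\Delta_{j,m}(x)|$ with the same bound on $\int|g^{(1)}(x)|\,dx$; in Step~6 the residual counterpart is estimated with $\overline{Z}_j^{\diamond}$ via Lemma~\ref{lem_Z_approx_and_BE} (valid for $\overline{Z}_j^{\diamond}$ by the moment matching); and $|\E f(\xi_N\overline{Z}_j^{\diamond})-\E f(\xi_N\overline{Z}_j)|\lesssim(\xi_N^2+|\xi_N|^3)m^{-1}$ by a Taylor expansion at $0$, which is absorbed. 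Summing the pieces reproduces the stated bound with $\Delta_{j,m}$ replaced by $\Delta_{j,m}^{\diamond}$.

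For {\bf (ii)}, $\overline{V}_j^{\diamond}$ is a function of $\varepsilon_k$, $k\le(2j-1)m$, and of the independent $H$'s, hence independent of $\FF_m$, so the left side equals $|\E f(\xi_N\overline{V}_j^{\diamond})-\E f(\xi_N\overline{Z}_j^{\diamond})|$. This is strictly easier than {\bf (i)}: no appeal to Lemma~\ref{lem_polylipschitz_distance} is needed, $\overline{V}_j^{\diamond}$ plays the role of both $\overline{V}_j$ and $\overline{V}_j^{*}$ throughout Steps~1--6, and $\sup_{x}|\Delta_{j,m}^{\diamond}(x)|$ appears directly from \eqref{eq_thm_smooth_4_second} as the Kolmogorov distance between $\overline{V}_j^{\diamond}$ and $\overline{Z}_j^{\diamond}$; so the bound of {\bf (i)} is an upper bound a fortiori.

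The delicate part is not conceptual but the bookkeeping in {\bf (i)}: one must check that \emph{every} passage from a starred quantity to its diamond counterpart (in the diamond form of Lemma~\ref{lem_polylipschitz_distance}, in the truncation of the residual term, and in the residual counterpart of Step~6) costs at most $\OO(m^{-1})$ times quantities polynomial in $\tau_n$ and $\xi_N$ of degrees already present on the right‑hand side, so no term of larger order is created. This hinges precisely on $\E W_j=\E W_j^3=0$, $\|W_j\|_4\lesssim m^{-1/2}$, and on $a$ --- hence $\E H_k^2$ and $\E H_k^4$ --- being an absolute constant.
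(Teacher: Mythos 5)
Your argument is correct in substance but organized in the opposite direction from the paper, and it redoes work the paper black-boxes. The paper disposes of {\bf (ii)} in one line: $X_k^{\diamond}=X_k+H_k-H_{k-1}$ is again an $(m-1)$-dependent Bernoulli shift in the enlarged innovations $(\varepsilon_k,H_k)$ satisfying Assumption \ref{ass_dependence}, so Lemma \ref{lem_taylor_expansion_smooth_function} applied with $X_k=X_k^{\diamond}$ \emph{is} statement {\bf (ii)}, with ${\Delta}_{j,m}^{\diamond}$ appearing automatically; it then deduces {\bf (i)} from {\bf (ii)} by a single Taylor step, using \eqref{eq_char_S_n_diamond_zero}: the passage from $\overline{V}_j$ to $\overline{V}_j^{\diamond}$ (and from $\overline{Z}_j$ to $\overline{Z}_j^{\diamond}$) only adds an increment that is independent, mean zero, with second moment $\lesssim m^{-1}$, so the linear term vanishes and the total cost is $\xi_N^2 m^{-1}\E H_0^2$, which is absorbed into the stated bound. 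You instead rerun the seven-step proof for {\bf (i)}, inserting the smoothing at Step 3 (your diamond analogue of Lemma \ref{lem_polylipschitz_distance}, which is valid exactly for the reason you give) and at Steps 4--5 (pairing $g^{(1)}$ with ${\Delta}_{j,m}^{\diamond}$ after the conditioning drops out), and you read off {\bf (ii)} as the easier case; this is the same machinery, and what the paper's route buys is brevity and no re-verification of Steps 2--6. One sentence of yours needs repair: the claim $|\E f(\xi_N\overline{Z}_j^{\diamond})-\E f(\xi_N\overline{Z}_j)|\lesssim(\xi_N^2+|\xi_N|^3)m^{-1}$ does not follow from ``a Taylor expansion at $0$'', since the third-order remainder is of size $|\xi_N|^3\E|\overline{Z}_j|^3$, not $\OO(m^{-1})$, and moment matching alone cannot kill it. You should either construct $\overline{Z}_j^{\diamond}$ as $\overline{Z}_j$ plus an independent, mean-zero smoothing increment with variance $\lesssim m^{-1}$ (admissible, since this preserves the diamond analogue of the moment requirements \eqref{moment_conditions_global_FF}) and expand around $\xi_N\overline{Z}_j$ so the linear term dies by independence --- exactly the paper's device --- or compare the two laws in Kolmogorov distance ($\lesssim m^{-1}$ by Lemmas \ref{lem_edge_for_FF} and \ref{lem_psi_compare}) and pair with $g^{(1)}$ as in Step 4. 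With that fix your proof closes and yields the stated bound.
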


\begin{proof}[Proof of Lemma \ref{lem_taylor_expansion_smooth_function_diamond}]

For showing {\bf (ii)}, we may directly apply Lemma \ref{lem_taylor_expansion_smooth_function}, setting $X_k = X_k^{\diamond}$. Next, we show {\bf (i)}. Using \eqref{eq_char_S_n_diamond_zero}, a Taylor expansion and the triangle inequality, we obtain
\begin{align*}
\bigl\|\E_{\FF_m}\ff\bigl(\xi_N \overline{V}_{j}\bigr) - \E_{}\ff\bigl(\xi_N \overline{V}_{j}^{\diamond}\bigr)\bigr\|_1 \lesssim \xi_N^2 m^{-1} \E H_0^2 \lesssim \xi_N^2 m^{-1}.
\end{align*}
The same applies to $\overline{Z}_j^{\diamond}$. The claim now follows from {\bf (ii)}. 
\end{proof}

\begin{lem}\label{lem_taylor_expansion_smooth_function_diamond_II}
Assume that Assumption \ref{ass_dependence} holds, and let $f$ be a smooth function such that $\sup_{x \in \R}|f^{(s)}(x)| \leq 1$ for $s = 0,\ldots,7$. Then for $\tau_n \geq {\co_{\tau}} \sqrt{\log n}$, ${\co_{\tau}} > 0$ sufficiently large, $2\bd +2 < \ad$ and $p \geq 3$
\begin{align*}
&\text{{\bf (i)}} \,\, \max_{1 \leq j \leq N}\bigl\|\E_{}\ff\bigl(\xi_N \widetilde{V}_{j}\bigr)  - \E_{}\ff\bigl(\xi_N \widetilde{Z}_{j}\bigr)\bigr\|_1 \\&\quad \lesssim |\xi_N|^3 \tau_n^{3-p} \log(n)^{-\bd} \oo\big(m^{1-\frac{p}{2}}\big) + \tau_n^4(|\xi_N|^4 + |\xi_N|^7)m^{-1}\\& \quad +\sup_{x \in \R}\big|{\Delta}_{j,m}^{\diamond}(x)\big|\bigl(\tau_n^4|\xi_N|^4 + |\tau_n|^5 |\xi_N|^5\big) + \big(\xi_N^2 + |\xi_N|^3\big) m^{-1}.\\
&\text{{\bf (ii)}} \,\, \text{The above estimate also bounds}  \\&\quad \max_{1 \leq j \leq N}\bigl\|\E_{} \ff\bigl(\xi_N \widetilde{V}_{j}^{\diamond}\bigr) - \E_{} \ff\bigl(\xi_N \widetilde{Z}_{j}^{\diamond}\bigr) \bigr\|_1.\\
&\text{{\bf (iii)}} \,\, \bigl\|\E_{} \ff\bigl(\xi_N \widetilde{V}_0^{\diamond}\bigr) - \E_{} \ff\bigl(0\bigr) \bigr\|_1 \lesssim \xi_N^2 m^{-1}.
\end{align*}
\end{lem}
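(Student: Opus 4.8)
The plan is to follow the proof of Lemma~\ref{lem_taylor_expansion_smooth_function_diamond} almost verbatim, with the role played there by Lemma~\ref{lem_taylor_expansion_smooth_function} now taken over by Lemma~\ref{lem_taylor_expansion_smooth_function_II}, i.e.\ the $\widetilde V$-blocks (built from the conditional means $\E_{\FF_m}X_k$) replace the $\overline V$-blocks. As in that proof, the three items are deduced from one another and from the non-diamond lemma, so only a sketch is needed.

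First, for \textbf{(ii)} I would apply Lemma~\ref{lem_taylor_expansion_smooth_function_II}~\textbf{(i)} directly with $X_k$ replaced by the smoothed variable $X_k^{\diamond}=X_k+H_k-H_{k-1}$. The only thing to verify is that Assumption~\ref{ass_dependence} is inherited by $(X_k^{\diamond})_{k\in\Z}$. Since $(H_k)$ is i.i.d.\ and independent of $(\varepsilon_k)$, the sequence $(X_k^{\diamond})$ is again $(m-1)$-dependent; \hyperref[B1]{\Bone} holds because $\E|H_k|^p<\infty$ once $b$ in \eqref{eq_g_ab} is chosen large enough (already $b\ge 6$ suffices in the range $3<p\le 4$ relevant here); \hyperref[B2]{\Btwo} holds because the coupling coefficients satisfy $\lambda_{k,p}^{\diamond}=\lambda_{k,p}$ for $k\ge 2$ and $\lambda_{1,p}^{\diamond}\le\lambda_{1,p}+2\|H_0\|_p$, so $\Lambda_{2,p}^{\diamond}\le\Lambda_{2,p}+2\|H_0\|_p<\infty$; and \hyperref[B3]{\Bthree} holds because the telescoping increments leave the long-run variance unchanged, $(\ss_m^{\diamond})^2=\ss_m^2>0$. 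Under this substitution $\widetilde V_j^{\diamond}$, $\widetilde Z_j^{\diamond}$ and ${\Delta}_{j,m}^{\diamond}$ become precisely the quantities $\widetilde V_j$, $\widetilde Z_j$, ${\Delta}_{j,m}$ of Lemma~\ref{lem_taylor_expansion_smooth_function_II}, so \textbf{(ii)} is its conclusion.

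Next, for \textbf{(i)} I would pass to the diamond quantities by the triangle inequality: $\|\E\ff(\xi_N\widetilde V_j)-\E\ff(\xi_N\widetilde Z_j)\|_1$ is at most $\|\E\ff(\xi_N\widetilde V_j)-\E\ff(\xi_N\widetilde V_j^{\diamond})\|_1+\|\E\ff(\xi_N\widetilde V_j^{\diamond})-\E\ff(\xi_N\widetilde Z_j^{\diamond})\|_1+\|\E\ff(\xi_N\widetilde Z_j^{\diamond})-\E\ff(\xi_N\widetilde Z_j)\|_1$, and the middle term is bounded by \textbf{(ii)} (this is exactly why the smoothed characteristic ${\Delta}_{j,m}^{\diamond}$, rather than ${\Delta}_{j,m}$, appears in the statement). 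For the first term, $\widetilde V_j^{\diamond}$ differs from $\widetilde V_j$ by passing to the starred coupling $\widetilde V_j^{\ast}$ (handled, as in the proof of Lemma~\ref{lem_taylor_expansion_smooth_function_II}, via Lemmas~\ref{lem_polylipschitz_distance_tilde} and~\ref{lem_polylipschitz_distance_moments_tilde} applied to $g(\cdot)=\ff(\xi_N\,\cdot)$ and its third-order Taylor polynomial) and by the independent increment $(H_{2jm+1}-H_{(2j-1)m+1})/\sqrt{2m}$, of mean zero and variance $\OO(m^{-1})$, absorbed by a second-order Taylor expansion; the resulting errors are of the orders already present in the bound. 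The third term is treated the same way, using that $\widetilde Z_j$ and $\widetilde Z_j^{\diamond}$ are the $F$-type variables produced by Lemma~\ref{lem_edge_for_FF} for $(X_k)$ and $(X_k^{\diamond})$, whose first four moments agree up to $\OO(m^{-1})$. Finally, for \textbf{(iii)}, Lemma~\ref{lem_bound_R1}~\textbf{(i)} together with $\|H_k\|_p/\sqrt{2m}\lesssim m^{-1/2}$ gives $\|\widetilde V_0^{\diamond}\|_2\lesssim m^{-1/2}$; since $\E\widetilde V_0^{\diamond}=0$, a second-order Taylor expansion of $\ff$ at $0$ yields $\|\E\ff(\xi_N\widetilde V_0^{\diamond})-\ff(0)\|_1\lesssim\xi_N^2\|\widetilde V_0^{\diamond}\|_2^2\lesssim\xi_N^2 m^{-1}$.

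The step I expect to be the main obstacle is part \textbf{(ii)}: one must check carefully that Assumption~\ref{ass_dependence} genuinely survives the passage to $(X_k^{\diamond})$, the non-degeneracy $(\ss_m^{\diamond})^2>0$ and the moment requirement on $H_k$ (hence the choice of $b$) being the points needing attention; once this is granted the rest is essentially automatic. A secondary, purely bookkeeping, difficulty in part \textbf{(i)} is that $|\xi_N|=|\xi|/\sqrt N$ can be as large as $\thicksim\sqrt m$ on $|\xi|\le T_n$, so the comparison of $\widetilde V_j$ with $\widetilde V_j^{\ast}$ must exploit the coupling through Lemmas~\ref{lem_polylipschitz_distance_tilde} and~\ref{lem_polylipschitz_distance_moments_tilde} rather than a crude Lipschitz estimate, and one must keep every error at order $(\xi_N^2+|\xi_N|^3)m^{-1}$.
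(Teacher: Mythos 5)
Your proposal follows essentially the same route as the paper's (very terse) proof: part \textbf{(ii)} by applying Lemma \ref{lem_taylor_expansion_smooth_function_II} to the smoothed process $X_k^{\diamond}$, part \textbf{(i)} by a triangle-inequality/Taylor comparison of $\widetilde V_j,\widetilde Z_j$ with their diamond counterparts (reducing to \textbf{(ii)}), and part \textbf{(iii)} via $\|\widetilde V_0^{\diamond}\|_p\lesssim m^{-1/2}$, $\E\widetilde V_0^{\diamond}=0$ and a second-order Taylor expansion at $0$; your explicit check that Assumption \ref{ass_dependence} (moments, coupling coefficients, non-degenerate long-run variance) is inherited by $(X_k^{\diamond})$ supplies detail the paper leaves implicit. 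The only caveat is that the identification of $\widetilde V_j^{\diamond}$, $\overline Z_j^{\diamond}$, ${\Delta}_{j,m}^{\diamond}$ with the corresponding blocks of the diamond process holds up to distribution and harmless index shifts (e.g.\ $\E_{\FF_m}X_k^{\diamond}=\E_{\FF_m}X_k$), which is the same level of informality as in the paper itself.
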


\begin{proof}[Proof of Lemma \ref{lem_taylor_expansion_smooth_function_diamond_II}]
For {\bf (i)} and {\bf (ii)} we may argue almost in the same manner as in the proof of Lemma \ref{lem_taylor_expansion_smooth_function_diamond} (resp. Lemma \ref{lem_taylor_expansion_smooth_function}). 
Similarly, for {\bf (iii)}, it suffices to note that $\|\widetilde{V}_{0}^{\diamond}\|_p \lesssim m^{-\frac{1}{2}}$, which follows from Lemma \ref{lem_bound_R1} {\bf (i)} and the triangle inequality. Hence
$\widetilde{\sigma}_1^2 \lesssim m^{-1}$, 
and the claim follows from a second order Taylor expansion at $x = 0$, using the independence of $(H_k)_{k \in \Z}$ and $\widetilde{V}_0$.
\end{proof}

Having established the preliminary modifications, we are \hypertarget{deltandia:eq37}{now} ready to establish the desired recursion for $\Delta_n^{\diamond}$.
\begin{lem}\label{lem_diamond_Sn_expansions_I}
Grant Assumption \ref{ass_dependence} for $p \in [3,4)$. Then for $N \gtrsim (\log n)^{\md}$, $\md > 2$, $2\bd + 2 < \ad$,
\begin{align*}
\sup_{x \in \R}\big|{\Delta}_n^{\diamond}(x)\big| \lesssim \oo\big(n^{1-\frac{p}{2}}\big) N^{\frac{p-3}{2}} (\log n)^{-\bd} + (\log n)^{2} N^{-1}\sup_{x \in \R}\big|{\Delta}_m^{\diamond}(x)\big|.
\end{align*}
\end{lem}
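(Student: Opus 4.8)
The plan is a two-step reduction, paralleling the proof of Proposition \ref{prop_m_dependence} but with $(X_k)$ replaced by the smoothed sequence $(X_k^\diamond)$, and with $\sup_x|\Delta_m^\diamond(x)|$ fed back into the estimate instead of the crude Berry--Esseen bound $m^{-1/2}$ of Lemma \ref{lem_taylor_expansion_smooth_function_I_BE}. First I would reduce to the integrated characteristic--function distance $\Ad_{T_n}^\diamond$. Choosing the scale $a$ in $\eqref{eq_g_ab}$ so small that $ab\leq\co_T$, the characteristic function $\E e^{\ic\xi S_n^\diamond/\sqrt n}=\E e^{\ic\xi S_n/\sqrt n}\,|\hat g_{a,b}(\xi/\sqrt n)|^2$ vanishes for $|\xi|\geq ab\sqrt n$ by $\eqref{eq_char_S_n_diamond_zero}$, and --- since $\overline Z_j^\diamond,\widetilde Z_j^\diamond$ carry the same smoothing at scale $\sqrt{2m}$ --- so does $\E e^{\ic\xi(\overline Z^\diamond+\widetilde Z^\diamond)}$. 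Hence Berry's smoothing inequality (cf. \cite{fellervolume2}, XVI.3), applied to the probability measures $\P_{S_n^\diamond/\sqrt n}$ and $\P_{\overline Z^\diamond+\widetilde Z^\diamond}$ with the cut--off sent to infinity, has a vanishing $1/b$--term and yields, using the $\diamond$--analogue of Lemma \ref{lem_edge_for_FF},
\begin{align*}
\sup_{x\in\R}|\Delta_n^\diamond(x)|\lesssim \sup_{x\in\R}(1+x^2)\big|\P(\overline Z^\diamond+\widetilde Z^\diamond\leq x)-\Psi_n^\diamond(x)\big|+\Ad_{T_n}^\diamond\lesssim m^{-1}+\Ad_{T_n}^\diamond,
\end{align*}
and since $m=n(\log n)^{-\md}$ and $p<4$, the term $m^{-1}$ is of strictly smaller order than $\oo\big(n^{1-\frac{p}{2}}\big)N^{\frac{p-3}{2}}(\log n)^{-\bd}$, so it remains to estimate $\Ad_{T_n}^\diamond$.

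Next I would estimate $\Ad_{T_n}^\diamond$ by running the argument of Proposition \ref{prop_m_dependence} for the $m$--dependent sequence $(X_k^\diamond)$. Conditioning on $\FF_m$, using $S_n^\diamond=\overline S_{n|m}^\diamond+\widetilde S_{n|m}^\diamond$ and the block decomposition $\eqref{defn_Vj_diamond}$, $\eqref{defn_Vj_diamond_II}$, split $\Ad_{T_n}^\diamond\leq\int_{-T_n}^{T_n}\big({\bf A}_m^\diamond(\xi)+{\bf B}_m^\diamond(\xi)\big)|\xi|^{-1}d\xi$, apply the product formula $\eqref{eq_prod_formula}$, the ``leave one out'' device, and Lemma \ref{lem_bound_prod_cond_char} --- which carries over to $(X_k^\diamond)$ since the telescoping $H$--increments perturb the relevant variances only by $\OO(m^{-1})$ --- to bound, for $\xi^2\leq\co_T^2 n$, both pieces by $N\,\|\varphi_{1|m}^\diamond(\xi_N)-\psi_1^\diamond(\xi_N)\|_1$ (plus a $\widetilde V_0$--contribution controlled through Lemma \ref{lem_taylor_expansion_smooth_function_diamond_II} {\bf (iii)}) times $\big(e^{-\co_\varphi\xi^2}+e^{-\co\sqrt N}\big)$. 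For the single--block difference, Lemmas \ref{lem_taylor_expansion_smooth_function_diamond} and \ref{lem_taylor_expansion_smooth_function_diamond_II} (with $p<4$) give
\begin{align*}
\|\varphi_{1|m}^\diamond(\xi_N)-\psi_1^\diamond(\xi_N)\|_1\lesssim |\xi_N|^3\tau_n^{3-p}(\log n)^{-\bd}\oo\big(m^{1-\frac{p}{2}}\big)+\tau_n^4(|\xi_N|^4+|\xi_N|^7)m^{-1}+\sup_{x}|\Delta_{1,m}^\diamond(x)|\big(\tau_n^4|\xi_N|^4+\tau_n^5|\xi_N|^5\big)+(|\xi_N|^2+|\xi_N|^3)m^{-1},
\end{align*}
and here, instead of invoking Lemma \ref{lem_taylor_expansion_smooth_function_I_BE}, I would exploit $\overline V_j^\diamond\stackrel{d}{=}S_m^\diamond/\sqrt{2m}$ and the moment matching of $\overline Z_j^\diamond$ to obtain, via the $\diamond$--analogue of Lemma \ref{lem_edge_for_FF} and the triangle inequality,
\begin{align*}
\sup_{x}|\Delta_{1,m}^\diamond(x)|\leq\sup_{y}\big|\P(S_m^\diamond\leq y\sqrt m)-\Psi_m^\diamond(y)\big|+\OO(m^{-1})=\sup_{y}|\Delta_m^\diamond(y)|+\OO(m^{-1}),
\end{align*}
which is exactly what produces the recursion.

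Finally I would integrate in $\xi$: with $\xi_N=\xi/\sqrt N$ and $m\thicksim n/N$, the factor $|\xi_N|^3=|\xi|^3 N^{-3/2}$ against $N|\xi|^{-1}e^{-\co_\varphi\xi^2}$ contributes $N^{-1/2}$, while $\oo(m^{1-\frac{p}{2}})=\oo(n^{1-\frac{p}{2}})N^{\frac{p}{2}-1}$ and $\tau_n^{3-p}(\log n)^{-\bd}\leq(\log n)^{-\bd}$ since $p\geq3$, so the first term integrates to $\oo(n^{1-\frac{p}{2}})N^{\frac{p-3}{2}}(\log n)^{-\bd}$; the $\sup_x|\Delta_{1,m}^\diamond(x)|$--term, using the $\tau_n^4|\xi_N|^4$ piece with $\tau_n^4\lesssim(\log n)^2$, integrates to $\lesssim(\log n)^2 N^{-1}\sup_x|\Delta_m^\diamond(x)|$, and every remaining contribution --- the $m^{-1}$--terms, the $\widetilde V_0$--term, and all boundary terms coming from the $e^{-\co\sqrt N}$ tail of Lemma \ref{lem_bound_prod_cond_char} multiplied by powers of $T_n\thicksim\sqrt n$ --- is of strictly smaller order than $\oo(n^{1-\frac{p}{2}})N^{\frac{p-3}{2}}(\log n)^{-\bd}$, because $2-\frac{p}{2}>0$ and, with $N\gtrsim(\log n)^{\md}$ and $\md>2$, $e^{-\co\sqrt N}$ decays faster than any power of $n$. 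Collecting these bounds yields the claim. The main obstacle is precisely this last bookkeeping step: one must check that no error term other than the two displayed ones survives at the scale $\oo(n^{1-\frac{p}{2}})N^{\frac{p-3}{2}}(\log n)^{-\bd}$, which is exactly where the hypotheses $\md>2$ (to kill the $e^{-\co\sqrt N}$ boundary terms) and $p<4$ (to dominate $m^{-1}$ and $\tau_n^4(|\xi_N|^4+|\xi_N|^7)m^{-1}$) are consumed; a secondary point is to verify that the auxiliary $\diamond$--lemmas indeed reproduce verbatim, for $(X_k^\diamond)$, the machinery of Sections \ref{sec_main_lemmas} and \ref{sec_proof_prop_m_dep}.
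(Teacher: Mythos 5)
Your argument is correct and is essentially the paper's own proof: Berry's smoothing inequality with the $\diamond$-tail annihilated via \eqref{eq_char_S_n_diamond_zero} (choosing $ab<\co_T$), then a rerun of the proof of Proposition \ref{prop_m_dependence} in which Lemma \ref{lem_taylor_expansion_smooth_function_diamond} {\bf (ii)} and Lemma \ref{lem_taylor_expansion_smooth_function_diamond_II} {\bf (ii)}, {\bf (iii)} replace Lemmas \ref{lem_taylor_expansion_smooth_function} and \ref{lem_taylor_expansion_smooth_function_II} so that $\sup_{x}|\Delta_{j,m}^{\diamond}(x)|$ enters instead of the crude $m^{-1/2}$ bound, followed by the $\diamond$-version of Lemma \ref{lem_edge_for_FF} to convert $\Delta_{j,m}^{\diamond}$ into $\Delta_m^{\diamond}+\OO(m^{-1})$ and the same bookkeeping in $\xi$. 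The one slip is your claim that $\E e^{\ic \xi(\overline{Z}^{\diamond}+\widetilde{Z}^{\diamond})}$ vanishes for $|\xi|>ab\sqrt{n}$ (the $\overline{Z}_j^{\diamond}$ are built by moment matching via $F$, not by adding the $H$-increments), but this is immaterial: the tail term in the smoothing inequality involves only the characteristic function of $S_n^{\diamond}$, which does vanish, and the Gaussian factor in $F$ makes the contribution of $\overline{Z}^{\diamond}+\widetilde{Z}^{\diamond}$ beyond $T_n$ negligible in any case.
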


\begin{proof}[Proof of Lemma \ref{lem_diamond_Sn_expansions_I}]
Routine computations reveal
\begin{align*}
\E S_n^2 = \E(S_n^{\diamond})^2 + \OO\big(1\big),
\end{align*}
and hence $s_n^{\diamond} \thicksim \ss_m$ by Lemma \ref{lem_sig_expressions_relations}. Consequently, the variance $(s_n^{\diamond})^2$ is uniformly bounded away from zero due to \hyperref[B3]{\Bthree} for $n \geq n_0$, $n_0$ large enough. We may thus apply Berry's smoothing inequality (cf. ~\cite[Lemma 2, XVI.3]{fellervolume2}). For $T_n^+ \geq T_n$ we then obtain
\begin{align*}
\sup_{x \in \R}\big|{\Delta}_n^{\diamond}(x)\big| \lesssim \Ad_{T_n}^{\diamond} + \sup_{x \in \R}(\Td_{T_n}^{T_n^+})^{\diamond}(x) + (T_n^+)^{-1}.
\end{align*}
However, selecting $a > 0$ such that $\co_T > a b$, we get from \eqref{eq_char_S_n_diamond_zero}
\begin{align*}
\sup_{x \in \R}\bigl|\Td_{T_n}^{T_n^+}(x)\bigr| \leq \int_{\co_T \sqrt{n} \leq |\xi| < T_n^+} \big|\E e^{\ic \xi H_0/\sqrt{n}}\big]\big|^2 \frac{1}{|\xi|} d \xi = 0.
\end{align*}
Setting $T_n^+ = \infty$ and $\tau_n = \co_{\tau} \sqrt{\log n}$, ($\co _{\tau} >0$ large enough), we may repeat the proof of Proposition \ref{prop_m_dependence}. The key difference is that we bound the difference of the (conditional) characteristic functions with the help of $\Delta_m^{\diamond}(x)$, setting up a recursion. More precisely, we use Lemma \ref{lem_taylor_expansion_smooth_function_diamond} {\bf (ii)} instead of Lemma \ref{lem_taylor_expansion_smooth_function} and Lemma \ref{lem_taylor_expansion_smooth_function_diamond_II} {\bf (ii)}, {\bf (iii)} instead of Lemma \ref{lem_taylor_expansion_smooth_function_II} {\bf (i)}, {\bf (ii)}. This gives the estimate
\begin{align*}
\sup_{x \in \R}\big|{\Delta}_n^{\diamond}(x)\big| &\lesssim \oo\big(n^{1-\frac{p}{2}}\big) N^{\frac{p-3}{2}} (\log n)^{-\bd}+(\log n)^{2}N^{-2}\sum_{j = 1}^N \sup_{x \in \R}\big|{\Delta}_{j,m}^{\diamond}(x)\big|,
\end{align*}
where ${\Delta}_{j,m}^{\diamond}(x)$ is defined in \eqref{defn_Delta_star_diamond}. However, $\sup_{x \in \R}|{\Delta}_{j,m}^{\diamond}(x)|$ is very close to $\sup_{x \in \R}|{\Delta}_{m}^{\diamond}(x)|$. Indeed, using Lemma \ref{lem_edge_for_FF} (an Edgeworth expansion for $\overline{Z}_j^{\diamond}$ and $\widetilde{Z}_j^{\diamond}$), it follows that
\begin{align*}
&\sup_{x \in \R}|{\Delta}_{j,m}^{\diamond}(x)| \lesssim m^{-1} + \sup_{x \in \R}|{\Delta}_{m}^{\diamond}(x)|, \quad 1 \leq j \leq N.
\end{align*}
Hence
\begin{align*}
(\log n)^{2}N^{-2}\sum_{j = 1}^N \sup_{x \in \R}\big|{\Delta}_{j,m}^{\diamond}(x)\big| \lesssim (\log n)^{2}n^{-1} + (\log n)^{2} N^{-2} \sup_{x \in \R}|{\Delta}_{m}^{\diamond}(x)|,
\end{align*}
and the claim follows.
\end{proof}

\begin{lem}\label{lem_diamond_Sn_expansions_II}
Grant Assumption \ref{ass_dependence} for $p \in [3,4)$. Then for $\delta > 0$ arbitrarily small
\begin{align*}
\sup_{x \in \R}\big|{\Delta}_n^{\diamond}(x)\big| \lesssim n^{1-\frac{p}{2} + \delta}.
\end{align*}
\end{lem}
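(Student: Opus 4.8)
The plan is to upgrade the recursive estimate of Lemma~\ref{lem_diamond_Sn_expansions_I} to the asserted bound by iterating it. Write $D_\ell:=\sup_{x\in\R}|\Delta_\ell^{\diamond}(x)|$, and recall that the $m$-dependence window in Proposition~\ref{prop_m_dep_it} is $m=m(n):=n(\log n)^{-\md}$, so that the number of blocks is $N\thicksim(\log n)^{\md}$. With this choice Lemma~\ref{lem_diamond_Sn_expansions_I} reads
\begin{align*}
D_n\ \le\ C_1\,\oo\big(n^{1-p/2}\big)(\log n)^{\md(p-3)/2-\bd}\ +\ C_1(\log n)^{\,2-\md}\,D_{m(n)},
\end{align*}
$C_1$ being its implicit constant; since $\md>2$ the factor in front of $D_{m(n)}$ is $o(1)$, and the inhomogeneous term is $\lesssim n^{1-p/2+\delta/2}$ because a fixed power of $\log n$ is absorbed into $n^{\delta/2}$. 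Fix $\delta>0$ and, in view of the estimates below, take $\md$ large depending only on $p$ and $\delta$; concretely, arrange $\gamma:=\md(2-\tfrac p2)-2>0$ and $(1-c)(\md-2)/\md\ge p/2-1-\delta/2$, where $c:=\tfrac12(2-\tfrac p2)$. Both are possible exactly because $p<4$: then $2-\tfrac p2>0$, hence $c>0$ and $1-c=\tfrac p4$, and $(1-c)(\md-2)/\md\to\tfrac p4\ge\tfrac p2-1-\tfrac\delta2$ as $\md\to\infty$.

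Next I would unfold the recursion along the scales $m_0=n$, $m_{j+1}=m(m_j)=m_j(\log m_j)^{-\md}$, using $m_j=n\prod_{i<j}(\log m_i)^{-\md}$. After $k$ steps,
\begin{align*}
D_n\ \lesssim\ \sum_{j=0}^{k-1}C_1^{\,j}\Big(\prod_{i<j}(\log m_i)^{2-\md}\Big)\oo\big(m_j^{1-p/2}\big)(\log m_j)^{\md(p-3)/2-\bd}\ +\ C_1^{\,k}\Big(\prod_{j<k}(\log m_j)^{2-\md}\Big)D_{m_k}.
\end{align*}
Two points need care. First, re-applying Lemma~\ref{lem_diamond_Sn_expansions_I} at scale $m_j$ requires passing to the $(m_{j+1}-1)$-dependent coarsening of the sequence; by \hyperref[B2]{\Btwo} one has $\|X_k-\E[X_k\,|\,\mathcal E_{k,k-m_{j+1}}]\|_p\lesssim m_{j+1}^{-2}$, so that $S_{m_j}^{\diamond}$ and its coarsened version differ by $\OO\big(m_{j}^{1/2}m_{j+1}^{-2}\big)=\OO\big(m_j^{-3/2}(\log m_j)^{2\md}\big)$ in $L^p$, which changes $D_{m_j}$ only by an amount negligible against $m_j^{1-p/2+\delta}$; I would fold this into the inhomogeneous term. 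Second, since $m_j^{1-p/2}=n^{1-p/2}\prod_{i<j}(\log m_i)^{\md(p/2-1)}$, the logarithms in the $j$-th inhomogeneous term collapse to $n^{1-p/2}\prod_{i<j}(\log m_i)^{-\gamma}\le n^{1-p/2}$, whence that term is $\lesssim C_1^{\,j}\,\oo(1)\,n^{1-p/2}(\log n)^{|\md(p-3)/2-\bd|}$.

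Finally I would choose $k=k(n)$ minimal with $m_k\le n^{c}$. Because $\log m_{j+1}=\log m_j-\md\log\log m_j$ decreases by at most $\md\log\log n$ per step, this forces $k\asymp\log n/\log\log n$; consequently $C_1^{\,k}=n^{o(1)}$, and $m_k\ge n^{c}(\log n)^{-\md}\to\infty$, so the recursion stays legitimate throughout and the crude bound $D_{m_k}\lesssim1$ (a distribution function minus a uniformly bounded function) is available. For $j<k$ one has $\log m_j\ge\log m_{k-1}>c\log n$, hence $\prod_{j<k}(\log m_j)^{2-\md}\le(c\log n)^{-k(\md-2)}$, and, using the lower bound $k\ge(1-c)\log n/(\md\log\log n)$ for the decaying $\log$-power and the upper bound $k\asymp\log n/\log\log n$ for the constants, the tail term is $\lesssim\big(C_1c^{-(\md-2)}\big)^{k}(\log n)^{-k(\md-2)}=n^{-(1-c)(\md-2)/\md+o(1)}\lesssim n^{1-p/2+\delta/2}$ by the choice of $\md$. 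Summing the $k=n^{o(1)}$ inhomogeneous terms, each $\lesssim n^{1-p/2+\delta/2}$, and adding the tail gives $D_n\lesssim n^{1-p/2+\delta}$. The main obstacle is precisely this balance: enough iterations are needed to push the accumulated contraction factor below $n^{p/2-1}$, yet the auxiliary scale $m_k$ must not collapse; the room to do both, together with the favourable telescoping of the $\log$-factors, is supplied entirely by the strict inequality $p<4$, with $\md$ taken large in terms of $\delta$, while the only bookkeeping subtlety is that $O(\log n/\log\log n)$ steps keep all constants of size $n^{o(1)}$.
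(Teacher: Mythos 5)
Your proposal is correct in substance, but it takes a genuinely different route from the paper's own proof of Lemma \ref{lem_diamond_Sn_expansions_II}. The paper chooses the number of blocks polynomially large, $N\thicksim n^{1-\md}$ with $\md<1$ close to one, and iterates Lemma \ref{lem_diamond_Sn_expansions_I} only a \emph{bounded} number of times, $\lfloor(1-\md)^{-1}\rfloor+1$: each step then contracts by the polynomially small factor $N^{-1}(\log n)^2$, the inhomogeneous terms form a geometric series whose ratio $N^{\frac p2-2}(\log n)^2$ is $o(1)$ precisely because $p<4$, the crude bound $\sup_x|\Delta^{\diamond}_{m}(x)|\lesssim 1$ at the bottom costs only $(N^{-1}(\log n)^2)^{\lfloor(1-\md)^{-1}\rfloor}\lesssim n^{1-\frac p2+\delta}$ (again $p<4$), and the $n^{\delta}$ loss comes from $N^{\frac{p-3}{2}}\le N^{2}=n^{2(1-\md)}$. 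You instead keep $N\thicksim(\log n)^{\md}$ poly-logarithmic with $\md$ large in terms of $p,\delta$ and run $\asymp\log n/\log\log n$ iterations down to scale $n^{c}$, $c=1-\frac p4$; this is essentially the scheme the paper reserves for $p\ge 4$ (Lemma \ref{lem_diamond_Sn_expansions_quad} {\bf (ii)}), transplanted to $p<4$, and your bookkeeping (the telescoping via $\gamma=\md(2-\frac p2)-2>0$, $C_1^{k}=n^{o(1)}$, and the tail exponent $(1-c)(\md-2)/\md\ge \frac p2-1-\frac\delta2$) does close, again only because $p<4$. What the paper's choice buys is that none of this near-critical balancing and $n^{o(1)}$ constant tracking is needed, since the target already carries the slack $n^{\delta}$; what your choice buys is a single scheme covering both $p<4$ and $p\ge4$, and consistency with the poly-log blocking used in Proposition \ref{prop_m_dep_it}.

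Two steps in your sketch are asserted rather than proved and deserve tightening. First, the re-blocking/coarsening: the $L^p$ bound $\|S_{m_j}-S_{m_j}^{\mathrm{coarse}}\|_p\lesssim m_j^{1/2}m_{j+1}^{-2}$ does not by itself control the change in $D_{m_j}$, which is a difference of Kolmogorov-type quantities; you need the standard smoothing argument against the Lipschitz function $\Psi^{\diamond}_{m_j}$ (shift by $\epsilon$, Markov for $\P(|S_{m_j}-S_{m_j}^{\mathrm{coarse}}|>\epsilon\sqrt{m_j})$) together with a comparison of the two Edgeworth polynomials via Lemma \ref{lem_psi_compare}. Optimizing $\epsilon$ gives an extra error of order $m_{j+1}^{-2p/(p+1)}+m_{j+1}^{-2}$, which is indeed $\ll m_j^{1-\frac p2+\delta}$ because $p<4$, so "fold it into the inhomogeneous term" is the right conclusion, but the one-line $L^p$ justification is not a proof (note the paper's own iteration faces the same re-blocking issue and says even less). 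Second, since you invoke Lemma \ref{lem_diamond_Sn_expansions_I} at $\asymp\log n/\log\log n$ scales, you need its implied constant and the little-$o$ term to be uniform over the coarsened sequences; this holds because you always coarsen the original sequence (nested windows), so \Bone--\Bthree{} are inherited with scale-independent constants exactly as verified at the start of the proof of Theorem \ref{thm_edge}, but this uniformity is what makes $C_1^{k}=n^{o(1)}$ meaningful and should be stated.
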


\begin{proof}[Proof of Lemma \ref{lem_diamond_Sn_expansions_II}]
Let $N \thicksim n^{1-\md}$, $\md < 1$ and $\co_{\md} = \lfloor (1-\md)^{-1} \rfloor$. Since $p < 4$, iterating Lemma \ref{lem_diamond_Sn_expansions_I}  \hypertarget{deltandia:eq38}{yields}
\begin{align*}
\sup_{x \in \R}\big|{\Delta}_n^{\diamond}(x)\big| &\lesssim \Big(\oo\big(n^{1-\frac{p}{2}}\big) N^{\frac{p-3}{2}} \sum_{k = 0}^{\co_{\md} + 1} \big((2 \co^{-1})^3 N^{\frac{p}{2}-2} (\log n)^2\big)^{k} \Big) + \big(N^{-1} (\log n)^2\big)^{\co_{\md}}\\&\lesssim n^{1 -\frac{p}{2}} N^{1/2} + \big(N^{-1} (\log n)^2\big)^{\co_{\md}} \lesssim n^{1-\frac{p}{2} + \delta},
\end{align*}
selecting $\md$ close enough to one.
\end{proof}

\begin{proof}[Proof of Proposition \ref{prop_m_dep_it}]
We repeat the proof of Proposition \ref{prop_m_dependence} verbatim with $\tau_n = \co_{\tau} \sqrt{\log n}$ ($\co_{\tau} >0$ large enough) and $N \thicksim n^{1-\md}$, $\md < 1$. As in the proof of Lemma \ref{lem_diamond_Sn_expansions_I}, the key difference is that we express bounds for (conditional) characteristic functions with the help of $\Delta_m^{\diamond}(x)$. We may then apply Lemma \ref{lem_diamond_Sn_expansions_II} to conclude the result. In more detail, compared to Proposition \ref{prop_m_dependence}, we use Lemma \ref{lem_taylor_expansion_smooth_function_diamond} {\bf (i)} instead of Lemma \ref{lem_taylor_expansion_smooth_function} and Lemma \ref{lem_taylor_expansion_smooth_function_diamond_II} {\bf (i)} instead of Lemma \ref{lem_taylor_expansion_smooth_function_II} {\bf (i)}. We still require Lemma \ref{lem_taylor_expansion_smooth_function_II} {\bf (ii)}. This yields the inequality
\begin{align}\label{eq_prop_m_dep_it_1}
\Ad_{T_n} &\lesssim \oo\big(n^{1-\frac{p}{2}}\big) (\log n)^{-\bd }N^{\frac{p-3}{2}} + N^{-1}(\log n)^{2} \sup_{x \in \R}\big|{\Delta}_m^{\diamond}(x)\big|.
\end{align}
An application of Lemma \ref{lem_diamond_Sn_expansions_II} now yields
\begin{align}\label{eq_prop_m_dep_it_2}
\sup_{x \in \R}\big|{\Delta}_m^{\diamond}(x)\big| \lesssim m^{1-\frac{p}{2} + \delta},
\end{align}
where $\delta > 0$ is arbitrarily small. Since $m = n^{\md}$ for $\md<1$ arbitrarily close to one, the claim follows by combining \eqref{eq_prop_m_dep_it_1} and \eqref{eq_prop_m_dep_it_2}.
\end{proof}

\section{Proof of Proposition \ref{prop_m_dep_it_quad}}\label{sec_proof_prop_quad}

The basic idea is again the iterative argument, previously used in the proof of Proposition \ref{prop_m_dep_it}. However, exploiting the fact that $p \geq 4$, we can refine it. As a first step, we show Lemma \ref{lem_taylor_expansion_smooth_function_quad} and Lemma \ref{lem_taylor_expansion_smooth_function_quad_II} below.

\begin{lem}\label{lem_taylor_expansion_smooth_function_quad}
Grant Assumption \ref{ass_dependence} for $p \geq 4$, and let $f$ be a smooth function such that $\sup_{x \in \R}|f^{(s)}(x)| \leq 1$ for $s = 0,\ldots,8$. Then for $\tau_n \geq {\co_{\tau}} \sqrt{\log n}$, ${\co_{\tau}} > 0$ sufficiently large 
\begin{align*}
&\text{{\bf (i)}} \,\, \max_{1 \leq j \leq N}\bigl\|\E_{\FF_m} \ff\bigl(\xi_N \overline{V}_{j}\bigr) - \E_{}\ff\bigl(\xi_N \overline{Z}_{j}\bigr)\bigr\|_1 \lesssim  \big(|\xi_N|^2 + |\xi_N|^3+|\xi_N|^4\big) m^{-1} \\& \quad + \tau_n^5(|\xi_N|^5 + |\xi_N|^8)m^{-1} + \sup_{x \in \R}\big|{\Delta}_{j,m}(x)\big|\bigl(\tau_n^5|\xi_N|^5 + |\tau_N|^6 |\xi_N|^6).\\
&\text{{\bf (ii)}} \,\, \text{We may replace ${\Delta}_{j,m}(x)$ with ${\Delta}_{j,m}^{\diamond}(x)$ in the above estimate.}\\
&\text{{\bf (iii)}} \,\, \text{If we replace ${\Delta}_{j,m}(x)$ with ${\Delta}_{j,m}^{\diamond}(x)$, this also bounds}  \\& \quad \quad \max_{1 \leq j \leq N}\bigl\|\E_{\FF_m} \ff\bigl(\xi_N \overline{V}_{j}^{\diamond}\bigr) - \E_{} \ff\bigl(\xi_N \overline{Z}_{j}^{\diamond}\bigr)\bigr\|_1.
\end{align*}
\end{lem}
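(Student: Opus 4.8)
The plan is to transcribe the proof of Lemma \ref{lem_taylor_expansion_smooth_function} with every cubic quantity promoted to a quartic one, exploiting that $p \geq 4$ makes the fourth conditional moment available. For part \textbf{(i)} I would Taylor-expand $f$ at $0$ to fourth order in both $\E_{\FF_m}\ff(\xi_N\overline{V}_j)$ and $\E\ff(\xi_N\overline{Z}_j)$. The zeroth and first order contributions vanish, and the second, third and fourth order ones collapse to $\tfrac{\xi_N^2}{2}(\overline{\sigma}_{j|m}^2 - \overline{\sigma}_j^2)\ff^{(2)}(0) + \tfrac{\xi_N^3}{6}(\overline{\kapp}_{j|m}^3 - \overline{\kapp}_j^3)\ff^{(3)}(0) + \tfrac{\xi_N^4}{24}(\overline{\nu}_{j|m}^4 - \overline{\nu}_j^4)\ff^{(4)}(0)$, which by Lemmas \ref{lem_sig_expansion}, \ref{lem_cum_expansion} and \ref{lem_quad_expansion} is $\lesssim (|\xi_N|^2 + |\xi_N|^3 + |\xi_N|^4)m^{-1}$ in $L^1$ (here one uses $\E\overline{Z}_j^4 = \overline{\nu}_j^4 + \OO(m^{-1})$ from \eqref{moment_conditions_global_FF}). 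This produces the first group of terms in the claim.

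It remains to control the fourth-order Taylor remainder $\tfrac{1}{3!}\int_0^1(1-t)^3\E_{\FF_m}\big[(\xi_N\overline{V}_j)^4(\ff^{(4)}(t\xi_N\overline{V}_j) - \ff^{(4)}(0))\big]\,dt$ and its $\overline{Z}_j$-analogue, for which I would run Steps 2--6 of the proof of Lemma \ref{lem_taylor_expansion_smooth_function} verbatim with $3$ replaced by $4$: truncate $\overline{V}_j$ at $\tau_n \gtrsim \sqrt{\log n}$ with the smooth cutoff $\hh_n$ of \eqref{defn_hn}, using the tail bound of Lemma \ref{th:Moritz} (applied with exponent $4$, legitimate under Assumption \ref{ass_dependence} with $p \geq 4$ via $\overline{V}_j^* \stackrel{d}{=} S_m/\sqrt{2m}$) to get $\E|\overline{V}_j|^4\ind(|\overline{V}_j|\geq\tau_n) \lesssim m^{-1}(\log n)^{-\bd}$ provided $2\bd+2<\ad$, which is absorbed into the $|\xi_N|^4 m^{-1}$ term; on the truncated part perform one further Taylor step to expose the quintic $g(x) = x^5\ff^{(5)}(st\xi_N x)\hh_n(x)$, whose first three derivatives obey $|g^{(s)}| \lesssim \tau_n^5(1 + |\xi_N|^3)$ on $\mathrm{supp}\,\hh_n$ (this is exactly why $\ff \in C^8$ is required), so Lemma \ref{lem_polylipschitz_distance} gives $\|\E_{\FF_m}[g(\overline{V}_j) - g(\overline{V}_j^*)]\|_1 \lesssim \tau_n^5(1+|\xi_N|^3)m^{-1}$ and hence the $\tau_n^5(|\xi_N|^5 + |\xi_N|^8)m^{-1}$ term after the $\xi_N^5$ prefactor; finally replace $\overline{V}_j^*$ by $\overline{Z}_j$ through the density-free identity \eqref{eq_thm_smooth_4_second}, the difference being $\lesssim \sup_x|{\Delta}_{j,m}(x)|\cdot\|g^{(1)}\|_{L^1([-\tau_n,\tau_n])}$, which yields the coefficient $\tau_n^5|\xi_N|^5 + |\tau_N|^6|\xi_N|^6$. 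The $\overline{Z}_j$-side remainder is negligible, of order $|\xi_N|^4 m^{-2}$, by the Fuk--Nagaev estimate of Lemma \ref{lem_Z_approx_and_BE} (its proof gives the analogous fourth-moment tail bound since $\E|A_k|^8 < \infty$ by the construction in Lemma \ref{lem_edge_for_FF}).

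For part \textbf{(iii)} I would apply the argument above to the process $X_k^{\diamond}$ of \eqref{defn_diamond_mod} in place of $X_k$: since $(H_k)_{k\in\Z}$ is i.i.d.\ and independent of $(X_k)_{k\in\Z}$, the $\diamond$-process satisfies Assumption \ref{ass_dependence} with the identical coefficients $\lambda_{k,p}$, and its Berry--Esseen tails are governed by the same Nagaev input, so the bound of part \textbf{(i)} holds verbatim with ${\Delta}_{j,m}^{\diamond}$ in place of ${\Delta}_{j,m}$. Part \textbf{(ii)} then follows by interpolation: a second-order Taylor expansion together with the independence of $H_{(2j-2)m+1},H_{(2j-1)m+1}$ from $\overline{V}_j^*$ (and of the corresponding increments from $\overline{Z}_j$) gives $\|\E_{\FF_m}\ff(\xi_N\overline{V}_j) - \E_{\FF_m}\ff(\xi_N\overline{V}_j^{\diamond})\|_1 \lesssim \xi_N^2 m^{-1}$ and the same for $\overline{Z}_j$, so the estimate of part \textbf{(iii)} transfers to $\|\E_{\FF_m}\ff(\xi_N\overline{V}_j) - \E\ff(\xi_N\overline{Z}_j)\|_1$ at the cost of an $\OO(\xi_N^2 m^{-1})$ already present in the bound. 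The main obstacle is purely the quartic/quintic power-counting through the two nested Taylor expansions and the cutoff width, together with the borderline (only logarithmically convergent) truncation integral at $p=4$, which is precisely why the hypothesis $2\bd+2<\ad$ enters; the remaining structural changes relative to the cubic case are just the extra smoothness $\ff\in C^8$ and the eighth-moment input ensuring the comparison measure from Lemma \ref{lem_edge_for_FF} has enough moments.
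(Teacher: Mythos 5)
Your proposal is correct and is essentially the paper's own argument: the paper retains the third-order expansion of $f$ at $0$ and instead Taylor-expands $g(x)=x^4\ff^{(4)}(st\xi_N x)\hh_n(x)$ to isolate the $x^4\ff^{(4)}(0)$ piece (handled via Lemma \ref{lem_quad_expansion} and the Lemma \ref{th:Moritz} truncation with the borderline logarithmic integral) and the quintic residual $\check g$ (handled via Lemma \ref{lem_polylipschitz_distance} with $\Co_g\lesssim\tau_n^5(1+|\xi_N|^3)$, hence $\ff\in C^8$, and the ${\Delta}_{j,m}$-comparison through \eqref{eq_thm_smooth_4_second}), which is just a regrouping of your fourth-order expansion at the outset and produces the identical error terms. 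Your handling of {\bf (ii)} and {\bf (iii)} — applying part {\bf (i)} to the $\diamond$-process and transferring back at cost $\OO(\xi_N^2 m^{-1})$ — is exactly the route the paper takes by referring to Lemma \ref{lem_taylor_expansion_smooth_function_diamond}.
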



\begin{proof}[Proof of Lemma \ref{lem_taylor_expansion_smooth_function_quad}]
The proof goes along the same lines as for Lemma \ref{lem_taylor_expansion_smooth_function}. The essential difference is a slightly different handling of function $g$, defined in \eqref{eq_lem_taylor_exp_g}. We first show {\bf (i)}. Using a Taylor expansion, we have
\begin{align*}
g(x) = h_n(x)\Big(x^4 \ff^{(4)}(0) &+  s t\xi_n x^5 \ff^{(5)}(0) \\&+ s t\xi_n x^5 \int_0^1(1-u)\big(\ff^{(5)}(s t u\xi_n x)-f^{(5)}(0)\big)d u \Big).
\end{align*}
Let
\begin{align}
\check{g}(x) = x^5 \int_0^1(1-u)\big(\ff^{(5)}(s t u\xi_n x)-\ff^{(5)}(0)\big)d u \, h_n(x).
\end{align}
Replacing $g$ with $\check{g}$, we now proceed as in \eqref{eq_lem_taylor_exp_10} plus the remaining steps in Lemma \ref{lem_taylor_expansion_smooth_function}. This yields the estimate
\begin{align}\label{eq_lem_taylor_4_eq_4}\nonumber
&\Bigl\|\E_{\FF_m} s t \xi_N^5  \check{g}\big(\overline{V}_j\big) - \E_{} s t \xi_N^5  \check{g}\big(\overline{Z}_j\big) \Big\|_1
\\&\lesssim m^{-1} \tau_n^5(|\xi_N|^5 + |\xi_N|^8) + \sup_{x \in \R}\big|{\Delta}_{j,m}(x)\big|\bigl(\tau_n^5|\xi_N|^5 + |\tau_N|^6 |\xi_N|^6).
\end{align}

Note the additional factor $\xi_N$ in this estimate, which is crucial. Next, using \eqref{eq_lem_taylor_exp_6} and arguing as in Step 2 of Lemma \ref{lem_taylor_expansion_smooth_function}, we obtain
\begin{align}
\Big\|\E_{\FF_m}(\xi_N \overline{V}_j)^4 \big(1-h_n(\overline{V}_j)\big) \Big\|_1\lesssim \xi_N^4 \oo\big(m^{-1}\big) (\log n)^{-{\bd}},
\end{align}
where $2\bd + 2 < \ad$. By Lemma \ref{lem_quad_expansion}\footnote{Recall that $\overline{Z}_j$ is independent of $\FF_m$}
\begin{align*}
\big\|\E_{\FF_m}\overline{V}_j^4 - \E_{\FF_m} \overline{Z}_j^4 \big\|_1 \lesssim m^{-1}.
\end{align*}
Hence piecing everything together, we arrive at
\begin{align}\label{eq_lem_taylor_4_exp_12} \nonumber
\Bigl\|\E_{\FF_m} (\xi_N\overline{V}_j)^{3} \ff^{(3)}\bigl(t \xi_N \overline{V}_j \bigr) - \E_{\FF_m} (\xi_N\overline{V}_j)^{3}\ff^{(3)}\bigl(0\bigr) \\ \nonumber \indent -\int_0^1 \E_{}\Bigl[(\xi_N \overline{Z}_j)^{4} \ff^{(4)}\big(st \xi_N \overline{Z}_j\big) \hh_n(\overline{Z}_j)\Bigr] d\,s\Bigr\|_1 \\ \nonumber \lesssim  |\xi_N|^3 \tau_n^{-1}\log(n)^{-\bd} \oo\big(m^{-1}\big) + \tau_n^5(|\xi_N|^5 + |\xi_N|^8)m^{-1} \\ +|\xi_N|^4 m^{-1} + \sup_{x \in \R}\big|{\Delta}_{j,m}(x)\big|\bigl(\tau_n^5|\xi_N|^5 + |\tau_N|^6 |\xi_N|^6).
\end{align}
We may now continue as in the proof of Lemma \ref{lem_taylor_expansion_smooth_function} to complete the proof. For {\bf (ii)} and {\bf (iii)}, we may argue as in Lemma \ref{lem_taylor_expansion_smooth_function_diamond} {\bf (i)} and {\bf (ii)}.
\end{proof}

\begin{lem}\label{lem_taylor_expansion_smooth_function_quad_II}
Grant Assumption \ref{ass_dependence} for $p \geq 4$, and let $f$ be a smooth function such that $\sup_{x \in \R}|f^{(s)}(x)| \leq 1$ for $s = 0,\ldots,8$. Then for $\tau_n \geq {\co_{\tau}} \sqrt{\log n}$, ${\co_{\tau}} > 0$ sufficiently large 
\begin{align*}
&\text{{\bf (i)}} \,\, \max_{1 \leq j \leq N}\bigl\|\E_{}\ff\bigl(\xi_N \widetilde{V}_{j}\bigr)  - \E_{}\ff\bigl(\xi_N \widetilde{Z}_{j}\bigr)\bigr\|_1 \lesssim \big(|\xi_N|^2 + |\xi_N|^3+|\xi_N|^4\big) m^{-1} \\& \quad \quad + \tau_n^5(|\xi_N|^5 + |\xi_N|^8)m^{-1} + \sup_{x \in \R}\big|{\Delta}_{j,m}(x)\big|\bigl(\tau_n^5|\xi_N|^5 + |\tau_N|^6 |\xi_N|^6).\\
&\text{{\bf (ii)}} \,\, \text{We may replace ${\Delta}_{j,m}(x)$ with ${\Delta}_{j,m}^{\diamond}(x)$ in the estimate above.}\\
&\text{{\bf (iii)}} \,\, \text{If we replace ${\Delta}_{j,m}(x)$ with ${\Delta}_{j,m}^{\diamond}(x)$, the same bound applies to}  \\&\quad \quad \max_{1 \leq j \leq N}\bigl\|\E_{}\ff\bigl(\xi_N \widetilde{V}_{j}^{\diamond}\bigr)  - \E_{}\ff\bigl(\xi_N \widetilde{Z}_{j}^{\diamond}\bigr)\bigr\|_1.
\end{align*}
\end{lem}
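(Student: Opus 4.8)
The plan is to follow the proof of Lemma \ref{lem_taylor_expansion_smooth_function_quad} essentially verbatim, the only structural change being that the conditionally independent blocks $\overline{V}_j$ are replaced by the genuinely $\P$-independent blocks $\widetilde{V}_j$, and the ``bar'' moment estimates by their ``tilde'' analogues. For \textbf{(i)} I would start from the Taylor expansion of $\ff$ at the origin as in \eqref{eq_lem_taylor_exp_5}, now for $\widetilde{V}_j$ and $\widetilde{Z}_j$: the first-order part contributes the $(|\xi_N|^2 + |\xi_N|^3 + |\xi_N|^4)m^{-1}$ terms by Lemma \ref{lem_widetilde_moments} together with Lemmas \ref{lem_sig_expansion}, \ref{lem_cum_expansion}, \ref{lem_quad_expansion} (so that $|\widetilde{\sigma}_j^2 - \overline{\sigma}_j^2|$, $|\widetilde{\kapp}_j^3 - \overline{\kapp}_j^3|$, $|\widetilde{\nu}_j^4 - \overline{\nu}_j^4| \lesssim m^{-1}$, with the $\overline{\cdot}$ quantities matching the moments of $\widetilde{Z}_j$ up to $\OO(m^{-1})$ by \eqref{moment_conditions_global_FF}). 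The residual is truncated with $\hh_n$ via \eqref{eq_lem_taylor_exp_6} exactly as in Step 2, using $\widetilde{V}_j^* \stackrel{d}{=} S_m/\sqrt{2m} \stackrel{d}{=} \overline{V}_j^*$ so that the Fuk--Nagaev bound of Lemma \ref{th:Moritz} applies verbatim; the function $g$ of \eqref{eq_lem_taylor_exp_g} is then split by the additional Taylor step into the degree-five piece $\check g$, which is precisely what manufactures the extra factor $|\xi_N|$ distinguishing this bound from Lemma \ref{lem_taylor_expansion_smooth_function_II} \textbf{(i)}.

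In the approximation of the $\check g$-residual, Lemma \ref{lem_polylipschitz_distance} is replaced by Lemma \ref{lem_polylipschitz_distance_tilde} and Lemma \ref{lem_polylipschitz_distance_moments} by Lemma \ref{lem_polylipschitz_distance_moments_tilde}; since those bounds depend on the test polynomial only through the constant $\Co_g$, the degree bump costs nothing beyond turning $\tau_n^4 \to \tau_n^5$ and $|\tau_N|^5 \to |\tau_N|^6$ in the $\sup_x|{\Delta}_{j,m}(x)|$-term, which is exactly the form claimed. The passage from the law of $\widetilde{V}_j^*$ to that of $\widetilde{Z}_j$, i.e.\ the analogue of \eqref{eq_lem_taylor_exp_11}--\eqref{eq_lem_taylor_exp_11.6}, is identical because $\widetilde{V}_j^* \stackrel{d}{=} \overline{V}_j^*$ and $\widetilde{Z}_j \stackrel{d}{=} \overline{Z}_j$, and the counterpart estimate for $\widetilde{Z}_j$ is \eqref{eq_lem_taylor_exp_13} unchanged, via Lemma \ref{lem_Z_approx_and_BE} and classical Edgeworth expansions; collecting everything gives \textbf{(i)}. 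For \textbf{(iii)} one takes $X_k = X_k^{\diamond}$ throughout and applies \textbf{(i)}, exactly as in Lemma \ref{lem_taylor_expansion_smooth_function_diamond} \textbf{(ii)}. For \textbf{(ii)} one combines \textbf{(iii)} with $\|\E_{}\ff(\xi_N \widetilde{V}_j) - \E_{}\ff(\xi_N \widetilde{V}_j^{\diamond})\|_1 \lesssim \xi_N^2 m^{-1}\E H_0^2 \lesssim \xi_N^2 m^{-1}$ (from \eqref{eq_char_S_n_diamond_zero}, a second-order Taylor expansion, and $\|H_0\|_2 < \infty$), and the same for $\widetilde{Z}_j$ versus $\widetilde{Z}_j^{\diamond}$, via the triangle inequality --- exactly as in Lemma \ref{lem_taylor_expansion_smooth_function_diamond} \textbf{(i)}; here one uses $\overline{V}_j^{\diamond} \stackrel{d}{=} \widetilde{V}_j^{\diamond}$ and $\overline{Z}_j^{\diamond} \stackrel{d}{=} \widetilde{Z}_j^{\diamond}$.

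The only point requiring a moment's care --- rather than a genuine obstacle --- is to confirm that the extra $x$ in $\check g$ passes through Lemma \ref{lem_polylipschitz_distance_tilde} (equivalently Lemma \ref{lem_polylipschitz_distance_moments_tilde}) without degrading the $m^{-1}$ rate, so the gain of a factor $|\xi_N|$ is genuine, and that the $\tau_n$- and $\tau_N$-exponents change only as indicated. Since the ``tilde'' polylipschitz lemmas are insensitive to the degree of the polynomial beyond the constant $\Co_g$ and deliver precisely the $\Co_g m^{-1}$ bound used in the ``bar'' case, this is immediate, and no idea beyond those already present in Lemmas \ref{lem_taylor_expansion_smooth_function_quad} and \ref{lem_taylor_expansion_smooth_function_diamond_II} is needed.
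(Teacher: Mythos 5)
Your proposal is correct and follows essentially the same route as the paper, whose own proof of this lemma is simply to repeat the argument of Lemma \ref{lem_taylor_expansion_smooth_function_quad} with the $\P$-independent blocks $\widetilde{V}_j$ and the tilde analogues (Lemmas \ref{lem_widetilde_moments}, \ref{lem_polylipschitz_distance_tilde}, \ref{lem_polylipschitz_distance_moments_tilde}), handling \textbf{(ii)} and \textbf{(iii)} exactly as in Lemma \ref{lem_taylor_expansion_smooth_function_diamond} \textbf{(i)}, \textbf{(ii)}. Your write-up just makes explicit the substitutions the paper leaves implicit.
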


\begin{proof}[Proof of Lemma \ref{lem_taylor_expansion_smooth_function_quad_II}]
We may proceed as in the proof of Lemma \ref{lem_taylor_expansion_smooth_function_quad}.
\end{proof}

Lemmas \ref{lem_taylor_expansion_smooth_function_quad} and \ref{lem_taylor_expansion_smooth_function_quad_II} allow us to establish the key recursion, given in the next lemma.

\begin{lem}\label{lem_diamond_Sn_expansions_quad}
Grant Assumption \ref{ass_dependence} for $p \geq 4$. \hypertarget{deltandia:eq40}{Then} for $N \thicksim (\log n)^{\md}$, $\md \geq 5$,
\begin{description}
  \item[(i)] $\sup_{x \in \R}\big|{\Delta}_n^{\diamond}(x)\big| \lesssim n^{-1}N + (\log n)^{\frac{5}{2}}N^{-\frac{3}{2}}\sup_{x \in \R}\big|{\Delta}_m^{\diamond}(x)\big|$.
  \item[(ii)] $\sup_{x \in \R}\big|{\Delta}_n^{\diamond}(x)\big| \lesssim n^{-1} N$.
\end{description}
\end{lem}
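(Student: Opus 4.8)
The plan is to transcribe the proof of Lemma~\ref{lem_diamond_Sn_expansions_I} into the regime $p\geq 4$, replacing the characteristic-function estimates for $p\in[3,4)$ by their quartic counterparts (Lemmas~\ref{lem_taylor_expansion_smooth_function_quad} and~\ref{lem_taylor_expansion_smooth_function_quad_II}), which carry an \emph{extra} power of $\xi_N$ and thereby produce $N^{-3/2}$ in place of $N^{-1}$ in the recursion.

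For \textbf{(i)}: exactly as in Lemma~\ref{lem_diamond_Sn_expansions_I}, one first notes $\E S_n^2 = \E(S_n^{\diamond})^2 + \OO(1)$, so $s_n^{\diamond}\thicksim\ss_m$ is bounded away from zero by \hyperref[B3]{\Bthree} for $n$ large, and Berry's smoothing inequality applies. Choosing $a>0$ with $\co_T> ab$ and using \eqref{eq_char_S_n_diamond_zero}, the Berry--Esseen tail $(\Td_{T_n}^{T_n^+})^{\diamond}$ vanishes for every $T_n^+\geq T_n$, so letting $T_n^+\to\infty$ reduces the claim to bounding $\Ad_{T_n}^{\diamond}$. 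I would then split $\Ad_{T_n}^{\diamond}$ into a conditional part $\mathbf{A}_m$ and an unconditional part $\mathbf{B}_m$ as in the proof of Proposition~\ref{prop_m_dependence}, expand each product of (conditional) characteristic functions via \eqref{eq_prod_formula}, apply the ``leave one out'' trick together with Lemma~\ref{lem_bound_prod_cond_char} for the retained block of factors, and bound the single remaining factor difference by Lemma~\ref{lem_taylor_expansion_smooth_function_quad}~\textbf{(iii)} and Lemma~\ref{lem_taylor_expansion_smooth_function_quad_II}~\textbf{(iii)} with $\tau_n=\co_{\tau}\sqrt{\log n}$. The decisive gain over Lemmas~\ref{lem_taylor_expansion_smooth_function}/\ref{lem_taylor_expansion_smooth_function_II} is the additional factor $\xi_N$: after integrating $\int_{-T_n}^{T_n}(\cdot)|\xi|^{-1}d\xi$ against the Gaussian-type factor $e^{-\co_{\varphi}\xi^2}+e^{-\co\sqrt{N}}$, inserting the prefactor $N$ from the product expansion, and substituting $\xi_N=\xi/\sqrt{N}$, the term carrying $\sup_x|\Delta_{j,m}^{\diamond}(x)|$ acquires the coefficient $\tau_n^5 N^{1-5/2}\thicksim(\log n)^{5/2}N^{-3/2}$ (the five powers of $\tau_n$ producing $(\log n)^{5/2}$), while the lowest-order contribution $|\xi_N|^2m^{-1}$ integrates to $\OO(m^{-1})\thicksim\OO(n^{-1}N)$ (using $n=2Nm$), which dominates; the moment mismatch $|\E S_n^3-\E\overline S_{n|m}^3-\E\widetilde S_{n|m}^3|\lesssim n^{1/2}N^{1/2}$ from Lemma~\ref{lem_cum_expansion} (here $p\geq4$) and the remaining $m^{-1}$-terms are of the same or smaller order. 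Finally Lemma~\ref{lem_edge_for_FF} gives $\sup_x|\Delta_{j,m}^{\diamond}(x)|\lesssim m^{-1}+\sup_x|\Delta_m^{\diamond}(x)|$ uniformly in $j$, and collecting everything yields the recursion in \textbf{(i)}.

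For \textbf{(ii)}: I would iterate \textbf{(i)}, or equivalently run a strong induction on $n$ with hypothesis $\sup_x|\Delta_n^{\diamond}(x)|\leq C n^{-1}N$. Since $m=n/(2N)$ and $\log m\sim\log n$, the block count at scale $m$ is again $\asymp N$, so the feedback term is at most $(\log n)^{5/2}N^{-3/2}\cdot C m^{-1}N = 2C(\log n)^{5/2}N^{1/2}n^{-1}$; for $\md\geq5$ and the implied constant in $N\thicksim(\log n)^{\md}$ chosen large (so that $4(\log n)^{5/2}\leq N^{1/2}$), this is $\leq\tfrac12 C n^{-1}N$, and the induction closes against the $n^{-1}N$ inhomogeneity. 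The base case (bounded $n$, equivalently the scale reached after $\OO(\log n/\log\log n)$ iterations) is handled by the trivial bound $\sup_x|\Delta_n^{\diamond}|\lesssim 1$, or, if a sharper starting point is preferred, by Proposition~\ref{prop_m_dependence}, which holds for all $p\geq3$ and applies verbatim to the $\diamond$-process.

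The main obstacle I anticipate is the careful bookkeeping of logarithmic factors across the (non-constant) number of iterations: one must check that the five powers of $\tau_n\thicksim\sqrt{\log n}$ generated by the quartic Taylor estimates combine with $N^{-3/2}$ (rather than the $N^{-1}$ of the $p<4$ case) in exactly the way that makes the recursion of \textbf{(i)} contract for $\md\geq5$ and telescope to the clean bound $n^{-1}N$ of \textbf{(ii)}. This is precisely where the hypothesis $p\geq4$ enters — through the extra $\xi_N$ in Lemmas~\ref{lem_taylor_expansion_smooth_function_quad} and~\ref{lem_taylor_expansion_smooth_function_quad_II} and the fourth-moment matching of Lemma~\ref{lem_quad_expansion} — and it is the only point at which the argument departs from a routine transcription of the $p<4$ proof.
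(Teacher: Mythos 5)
Your proposal is correct and follows essentially the same route as the paper: part \textbf{(i)} is obtained by rerunning the argument of Lemma \ref{lem_diamond_Sn_expansions_I} (i.e.\ the smoothed version of Proposition \ref{prop_m_dependence}'s proof) with Lemmas \ref{lem_taylor_expansion_smooth_function_quad} \textbf{(iii)} and \ref{lem_taylor_expansion_smooth_function_quad_II} \textbf{(iii)} supplying the extra factor of $\xi_N$ that turns $N^{-1}$ into $\tau_n^5 N^{-3/2}$, and part \textbf{(ii)} follows by iterating \textbf{(i)} down to a trivial base scale, exactly as the paper does with its geometric sum over $\co_n=\lfloor\log n/\log 2N\rfloor$ iterations (your strong induction is just a repackaging of that sum, with the same contraction condition $N\gtrsim(\log n)^5$). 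The only cosmetic omission is that, as in the paper, one still needs Lemma \ref{lem_taylor_expansion_smooth_function_diamond_II} \textbf{(iii)} for the degenerate block $\widetilde{V}_0^{\diamond}$ inside the $\mathbf{B}_m$ part, but this does not affect the argument.
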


\begin{proof}[Proof of Lemma \ref{lem_diamond_Sn_expansions_quad}]
To obtain {\bf (i)}, we argue as in the proof of Lemma \ref{lem_diamond_Sn_expansions_I}. The key difference is that we use Lemma \ref{lem_taylor_expansion_smooth_function_quad} {\bf (iii)} and Lemma \ref{lem_taylor_expansion_smooth_function_quad_II} {\bf (iii)} instead of Lemma \ref{lem_taylor_expansion_smooth_function_diamond} {\bf (ii)} and Lemma \ref{lem_taylor_expansion_smooth_function_diamond_II} {\bf (ii)} (we still require Lemma \ref{lem_taylor_expansion_smooth_function_diamond_II} {\bf (iii)}) to establish the recursion.
For {\bf (ii)}, we may follow Lemma \ref{lem_diamond_Sn_expansions_II}. Using {\bf (i)} repeatedly, we get, with $\tau_n \thicksim \sqrt{\log n}$, $\co_n = \lfloor \frac{\log n}{\log 2 N}\rfloor$, $N \geq \co \tau_n^{10}$, $\co > 0$ large enough,
\begin{align*}
\sup_{x \in \R}\big|{\Delta}_n^{\diamond}(x)\big| \lesssim n^{-1} N \sum_{k = 0}^{\co_n + 1} \big(\tau_n^5 N^{-\frac{3}{2}}\big)^k (2N)^k + \big(\tau_n^5 N^{-\frac{3}{2}}\big)^{\co_n} \lesssim n^{-1} N.
\end{align*}
\end{proof}

\begin{proof}[Proof of Proposition \ref{prop_m_dep_it_quad}]
We argue as in the proof of Proposition \ref{prop_m_dep_it}, and repeat the proof of Proposition \ref{prop_m_dependence} verbatim with $\tau_n = \co_{\tau} \sqrt{\log n}$ ($\co_{\tau} >0$ large enough) and $N = \co (\log n)^{\md}$, $\md = 5$, $\co >0$ large enough. The difference is that we use Lemma \ref{lem_taylor_expansion_smooth_function_quad} {\bf (ii)}, Lemma \ref{lem_taylor_expansion_smooth_function_quad_II} {\bf (ii)} instead of Lemma \ref{lem_taylor_expansion_smooth_function_diamond} {\bf (i)}, Lemma \ref{lem_taylor_expansion_smooth_function_diamond_II} {\bf (i)}. We still use Lemma \ref{lem_taylor_expansion_smooth_function_II} {\bf (ii)}. This yields the inequality
\begin{align}\nonumber
\Ad_{T_n} &\lesssim n^{-1} N + (\log n)^5 N^{-\frac{3}{2}} \sup_{x \in \R}\big|{\Delta}_m^{\diamond}(x)\big|.
\end{align}
An application of Lemma \ref{lem_diamond_Sn_expansions_quad} {\bf (ii)} then yields the claim.
\end{proof}

\section{Proofs of Section \ref{sec_main}}\label{sec_proof_main}

\begin{proof}[Proof of Theorem \ref{thm_edge}]
We carry out most of the arguments for general $p \geq 3$ so we can recycle the arguments for subsequent proofs. We first show
\begin{align}\label{eq_thm_edge_1}
\sup_{x \in \R}\big|\Delta_n(x)\big| \lesssim  \oo\big(n^{-1/2}\log(n)^{-\bd}\big) + \Cd_{T_n}.
\end{align}
Recall that for $i \geq j$, we have $\mathcal{E}_{i,j} = \sigma\big(\varepsilon_i, \varepsilon_{i-1}, \ldots, \varepsilon_j \big)$ and \hypertarget{XKM:eq41}{let}
\begin{align}\label{defn_X_km:2}
X_{km} = \E\big[X_k\big|\mathcal{E}_{k,k-m}\big], \quad k \in \Z.
\end{align}
Obviously, \hyperref[A1]{\Aone} now implies \hyperref[B1]{\Bone} for $(X_{km})_{k \in \Z}$ due to Jensen's inequality. Next, we show that \hyperref[A2]{\Atwo} implies \hyperref[B2]{\Btwo}. It suffices to consider the case $1 \leq k \leq m$ since $X_{km} = X_{km}^*$ for $k > m$. Observe that (almost surely)
\begin{align*}
X_{km}^* = \E\big[X_{k}^{*}\big| \sigma(\mathcal{E}_{k,1}, \mathcal{E}_{0,-m}^*)\big],
\end{align*}
and
\begin{align*}
 \E\big[X_{k}^{**}\big| \sigma(\mathcal{E}_{k,1}, \mathcal{E}_{0,-m}^*)\big] = \E\big[X_{k}^{}\big| \mathcal{E}_{k,1}\big] = \E\big[X_{k}^{*}\big| \mathcal{E}_{k,k-m}\big].
\end{align*}
Then by the triangle, Jensen's inequality and the above
\begin{align*}
\bigl\|X_{km}-X_{km}^*\big\|_p &= \bigl\|\E\big[X_{k} - X_{k}^{*} + X_{k}^{*} \big| \mathcal{E}_{k,k-m}\big]\\ & \indent -\E\big[X_{k}^{*} - X_{k}^{**} + X_{k}^{**} \big| \sigma(\mathcal{E}_{k,1}, \mathcal{E}_{0,-m}^*)\big]\big\|_p\\& \leq 2\big\|X_{k} - X_{k}^* \big\|_p.
\end{align*}
Next, using the same argument as in the proof of Theorem 1 in ~\cite{Wu_fuk_nagaev}, it follows that (with $S_{nm} = \sum_{k = 1}^n X_{km}$)
\begin{align}\label{eq_thm_edge_2}
\big\|S_n - S_{nm} \big\|_p \lesssim \sqrt{n} m^{-2} \sum_{k = m}^{\infty}k^2 \big\|X_k-X_k^*\big\|_p \lesssim \sqrt{n} m^{-2}.
\end{align}
Using Cauchy-Schwarz inequality, \eqref{eq_thm_edge_2} and Lemma \ref{lem_wu_original}, we obtain
\begin{align}\label{eq_thm_edge_2.1}
n^{-1}\big|\E S_{n}^2 - \E S_{nm}^2 \big| &\lesssim n^{-1}\|S_n- S_{nm}\|_2\|S_n + S_{nm}\|_2 & \lesssim m^{-2}.
\end{align}
Similarly, by H\"{o}lder's inequality and Lemma \ref{lem_wu_original}, we have
\begin{align}\nonumber
\big|\E S_n^3 - \E S_{nm}^3\big| &\lesssim \big\|S_n - S_{nm}\big\|_3\big(\big\|S_{n}\big\|_3^2 +\big\|S_{n}\big\|_3 \big\|S_{nm}\big\|_3 + \big\|S_{nm}\big\|_3^2 \big) \\&\label{eq_thm_edge_2.2} \lesssim \sqrt{n} m^{-2} n \lesssim \sqrt{n}
\end{align}
for $m$ large enough (e.g. $m = \sqrt{n}$). Relation \eqref{eq_thm_edge_2.1} together with Lemma \ref{lem_sig_expressions_relations} and \hyperref[A3]{\Athree} shows that $0 < \co_0 \leq \ss_m^2 \leq \co_1 < \infty$ for all $m \geq m_0$, $m_0$ large enough, and hence \hyperref[B3]{\Bthree} holds. We have thus established the validity of Assumption \ref{ass_dependence} for $(X_{k,m})_{k \in \Z}$, $m$ sufficiently large. To continue with the proof, we note
\begin{align}\nonumber \label{eq_thm_edge_2.3}
\sup_{x \in \R}\big|\P\bigl(S_n \leq x \sqrt{n}\bigr) - {\Psi}_{n}\big(x\big) \big| &\leq \sup_{x \in \R}\big|\P\bigl(S_n \leq x \sqrt{n}\bigr) - \P\big(\overline{Z} + \widetilde{Z} \leq x\big) \big| \\&+ \sup_{x \in \R}\big|\P\big(\overline{Z} + \widetilde{Z} \leq x\big) - {\Psi}_{n}\big(x\big) \big|,
\end{align}
where $\overline{Z}$, $\widetilde{Z}$ are defined with respect to $(X_{k,m})_{k \in \Z}$. We first show
\begin{align}\label{eq_thm_edge_3.1}
\sup_{x \in \R}\bigl|\P\bigl(S_n \leq x \sqrt{n}\bigr) - \P\big(\overline{Z} + \widetilde{Z} \leq x\big) \bigr| \lesssim \oo\big(n^{-1/2}\log(n)^{-\bd}\big) + \Cd_{T_n}.
\end{align}
To this end, we observe that by Lipschitz continuity and \eqref{eq_thm_edge_2}, we have
\begin{align}\label{eq_thm_edge_3}
\bigl|\E e^{\ic \xi S_n/\sqrt{n}} - \E e^{\ic \xi S_{nm}/\sqrt{n}} \bigr| \lesssim |\xi|m^{-2}.
\end{align}
From Berry's classical smoothing inequality (cf. ~\cite[Lemma 1, XVI.3]{fellervolume2}) we derive, using \eqref{eq_thm_edge_3},
\begin{align}\label{thm_edge_smoothing} \nonumber
\sup_{x \in \R}\bigl|\P\bigl(S_n \leq x \sqrt{n}\bigr) &- \P\big(\overline{Z} + \widetilde{Z} \leq x\big) \bigr| \\& \nonumber \lesssim \int_{-T_n}^{T_n}\Bigl|\E e^{\ic \xi S_n/\sqrt{n}}  - \E e^{\ic \xi (\overline{Z}+ \widetilde{Z})}\Bigr| \frac{1}{|\xi|}\, d\xi + \Cd_{T_n}\\& \lesssim \Ad_{T_n} + \frac{T_n}{m^{2}} + \Cd_{T_n}.
\end{align}
Here, $\Ad_{T_n}, \overline{Z}$, $\widetilde{Z}$ are defined with respect to $(X_{k,m})_{k \in \Z}$, while $\Cd_{T_n}$ is defined with respect to $(X_{k})_{k \in \Z}$. Proposition \ref{prop_m_dependence} then yields \eqref{eq_thm_edge_3.1}, selecting $m$ large enough (e.g. $m = n^{\frac{7}{8}}$). Having in mind \eqref{eq_thm_edge_2.3}, it remains \hypertarget{Psinmx:eq42}{to} show
\begin{align}\label{eq_thm_edge_3.2}
\sup_{x \in \R}\big|\Psi_n(x) - \P\big(\overline{Z} + \widetilde{Z} \leq x\big)\big| = \oo\big(n^{-1/2}\log(n)^{-\bd}\big).
\end{align}
To this end, we define $\Psi_{n,m}(x)$ in analogy to \eqref{defn_PSI_m} with respect to $S_{n,m}$. Then
\begin{align*}
\sup_{x \in \R}\big|\P\big(\overline{Z} + \widetilde{Z} \leq x\big) - {\Psi}_{n}\big(x\big) \big| &\leq \sup_{x \in \R}\big|\P\big(\overline{Z} + \widetilde{Z} \leq x\big) - {\Psi}_{n,m}\big(x\big) \big| \\&+ \sup_{x \in \R}\big|\Psi_{n,m}\big(x\big) - {\Psi}_{n}\big(x\big) \big|.
\end{align*}
By Lemma \ref{lem_edge_for_FF}\footnote{Note: $\Psi_n$ in Lemma \ref{lem_edge_for_FF} becomes $\Psi_{n,m}$}, the first term on the RHS is bounded by $m^{-1}$. For the second term, it suffices to show that the second and third moments differ by at most $m^{-1}$ due to Lemma \ref{lem_psi_compare}. However, we have already shown this in \eqref{eq_thm_edge_2.1} and \eqref{eq_thm_edge_2.2}, and obtained the even stronger result
\begin{align}\label{eq_thm_edge_ss_bound}
\sup_{x \in \R}\big|{\Psi}_{n,m}\big(x\big) - {\Psi}_n\big(x\big)\big| \lesssim n^{-1},
\end{align}
and hence \eqref{eq_thm_edge_3.2} holds. But \eqref{eq_thm_edge_3.2} with \eqref{eq_thm_edge_3.1} yields \eqref{eq_thm_edge_1} via \eqref{eq_thm_edge_2.3}, which completes the first part of the proof. Since {\bf (ii)} $\Rightarrow$ {\bf (i)} is obvious, it remains to show {\bf (i)} $\Rightarrow$ {\bf (ii)}. To this end, denote by $v_T$ the smoothing density
\begin{align}\label{defn_vT}
v_T(t) = \frac{1}{\pi}\frac{1 - \cos(T t)}{1 + t^2 T}, \quad x \in \R, T \in \N.
\end{align}
Recall $\Delta_n(x) = \P\bigl(S_n \leq x \sqrt{n} \bigr) - \Psi_n(x)$, and \hypertarget{deltaNT:eq43}{put}
\begin{align}\label{defn_DeltanT}
\Delta_n^T(x) = \int_{\R} \Delta_n(x-t) v_T(t) dt.
\end{align}
Since $v_T(x)$ is the density of a probability measure, we obtain 
\begin{align}\label{eq_thm_optimal_1}
\sup_{x \in \R}|\Delta_n^T(x)| \leq \sup_{x \in \R}|\Delta_n(x)|.
\end{align}
Moreover, by Fourier inversion
\begin{align}\label{eq_thm_optimal_1.5}
\Delta_n^T(x) = \frac{1}{2 \pi} \int_{-T}^T e^{-\ic \xi x } \frac{\E e^{\ic \xi S_n/\sqrt{n}} - \E e^{\ic \xi (\overline{Z} + \widetilde{Z})}}{- \ic \xi}\Big(1 - \frac{|\xi|}{T} \Big) d \xi.
\end{align}
Setting $T=n$, $T_n \thicksim \sqrt{n}$ and using \eqref{eq_thm_edge_3} together with Proposition \ref{prop_m_dependence}, the reverse triangle inequality gives
\begin{align}\label{eq_thm_optimal_1.7} \nonumber
|\Delta_n^T(x)| &\gtrsim \Bigl|\int_{T_n \leq |\xi| \leq n} e^{-\ic \xi x}\E\bigl[e^{\ic \xi S_n/\sqrt{n}} \bigr] \Bigl(1 - \frac{|\xi|}{n}\Bigr) \frac{1}{\xi} d \, \xi \Bigr| - \oo\big(n^{-\frac{1}{2}}\log(n)^{-\bd}\big)\\&= |\Td_{T_n}^n(x)| - \oo\big(n^{-\frac{1}{2}}\log(n)^{-\bd}\big).
\end{align}
Combining this with inequality (\ref{eq_thm_optimal_1}), we arrive at
$$
\oo\bigl(n^{-\frac{1}{2}}\bigr) = \sup_{x \in \R}|\Delta_n(x)| \geq \sup_{x \in \R}|\Delta_n^T(x)|  \gtrsim \sup_{x \in \R}|\Td_{T_n}^n(x)| - \oo\big(n^{-\frac{1}{2}}\log(n)^{-\bd}\big).
$$
This implies
\begin{align*}
\Cd_{T_n} \lesssim \oo\bigl(n^{-\frac{1}{2}}\bigr) + n^{-1} = \oo\bigl(n^{-\frac{1}{2}}\bigr),
\end{align*}
which completes the proof.
\end{proof}

\begin{proof}[Proof of Theorem \ref{thm_edge_frac}]
We can argue as in the proof of Theorem \ref{thm_edge}. The main difference is that we use Proposition \ref{prop_m_dep_it} instead of Proposition \ref{prop_m_dependence}. 
\end{proof}

\section{Proofs of Section \ref{sec:appl:I}}\label{sec:proof:applI}

The method of proof for the results of Section \ref{sec:appl:I} is of relevance for those of Section \ref{sec_wasserstein}, hence we supply the corresponding arguments first.

\begin{lem}\label{lem_nonunif}
Assume that Assumption \ref{ass_dependence_main} holds for $0 \leq q < \ad-2$. Then there exists a constant $\co > 0$ such that for any $x \in \R$
\begin{align*}
\bigl|\Delta_n(x) \bigr| \leq \frac{\co}{1 + |x|^{q}}\Big(\oo(n^{-\frac{1}{2}}) + (\log n)^{\frac{q}{2}}\Cd_{T_n}\Big),
\end{align*}
where $T_n = \co_T n^{1/2}$ with $\co_T > 0$ sufficiently small. 
If $3 < p  < 4$, then for any $q>0$ there exists $\delta > 0$ (arbitrarily small) such that
\begin{align*}
\bigl|\Delta_n(x) \bigr| \leq \frac{\co}{1 + |x|^{q}}\Big(n^{-\frac{p}{2}+1+\delta} + (\log n)^{\frac{q}{2}}\Cd_{T_n}\Big).
\end{align*}
\end{lem}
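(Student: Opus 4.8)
The plan is to bound $(1+|x|^q)\,|\Delta_n(x)|$ by splitting the real line at $|x| = \co_0\sqrt{\log n}$, for a sufficiently large absolute constant $\co_0$ fixed below: on the \emph{bulk} $|x| \le \co_0\sqrt{\log n}$ I will feed in the uniform bounds of Theorems \ref{thm_edge} and \ref{thm_edge_frac}, and on the \emph{tail} $|x| > \co_0\sqrt{\log n}$ I will estimate $\P(|S_n| > |x|\sqrt n)$ and the tail of the formal expansion $\Psi_n$ separately and directly.

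On the bulk, $1 + |x|^q \lesssim (\log n)^{q/2}$. For the first assertion ($p \ge 3$), since $q < \ad - 2$ I can pick $\bd$ with $q/2 \le \bd$ and $2\bd + 2 < \ad$, so that Theorem \ref{thm_edge} gives $\sup_y |\Delta_n(y)| \lesssim \oo(n^{-1/2})(\log n)^{-\bd} + \Cd_{T_n}$ and hence, multiplying by $(\log n)^{q/2}$ and using $(\log n)^{q/2 - \bd} \lesssim 1$,
\[
(1+|x|^q)\,|\Delta_n(x)| \lesssim \oo(n^{-1/2}) + (\log n)^{q/2}\,\Cd_{T_n}.
\]
For the second assertion ($3 < p < 4$) I instead invoke Theorem \ref{thm_edge_frac} with a smaller exponent $\delta' < \delta$ and absorb the factor $(\log n)^{q/2}$ into $n^{\delta - \delta'}$, getting $(1+|x|^q)\,|\Delta_n(x)| \lesssim n^{1-p/2+\delta} + (\log n)^{q/2}\Cd_{T_n}$.

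On the tail it suffices, by applying the same reasoning to $(-X_k)_{k\in\Z}$ (which satisfies Assumption \ref{ass_dependence_main} with identical constants), to treat $x > \co_0\sqrt{\log n}$ and to bound $|\Delta_n(x)| \le \P(S_n > x\sqrt n) + |1 - \Psi_n(x)|$. For the expansion tail, Remark \ref{rem_replace_s_sqrt_n} together with \Athree{} shows $s_n^2$ is bounded away from $0$, while $|\kapp_n^3| \lesssim n^{-1/2}$ by Lemma \ref{lem_S_n_third_expectation}{\bf (i)}; thus $|1-\Psi_n(x)| \lesssim (1+x^2)\phi(x/s_n)$, and choosing $\co_0$ large (depending only on $\ss$) makes $(1+|x|^q)\,|1-\Psi_n(x)| \lesssim (1+|x|^{q+2})e^{-x^2/(2s_n^2)} \lesssim n^{-K}$ for any fixed $K$, negligible against either target rate. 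For the probability tail I enlarge $\co_0$ so that $\co_0 \ge \Co$, the constant in Lemma \ref{th:Moritz}; then $x\sqrt n > \co_0\sqrt{n\log n} \ge \Co\sqrt{n\log n}$, so Lemma \ref{th:Moritz} gives $\P(S_n > x\sqrt n) = \oo\big(n^{1-p/2}x^{-p}(\log(x\sqrt n))^{-\ad/2}\big)$, and since $\log(x\sqrt n) \ge \tfrac12\log n$ and the surviving factor $|x|^{q-p}$ is uniformly bounded on $\{|x| > \co_0\sqrt{\log n}\}$ in the range of $q$ relevant here (it is decreasing when $q \le p$), one gets $(1+|x|^q)\,\P(S_n > x\sqrt n) = \oo\big(n^{1-p/2}(\log n)^{-\ad/2}\big)$; for the first assertion this is $\oo(n^{-1/2})$ because $p \ge 3$, and for the second it is $\lesssim n^{1-p/2+\delta}$. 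Combining the bulk and tail estimates gives $(1+|x|^q)\,|\Delta_n(x)| \lesssim \oo(n^{-1/2}) + (\log n)^{q/2}\Cd_{T_n}$ in the first case and $\lesssim n^{1-p/2+\delta} + (\log n)^{q/2}\Cd_{T_n}$ in the second, which is the claim.

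The main obstacle is the tail regime: one needs a polynomially weighted (by $|x|^q$) bound on $\P(|S_n| > |x|\sqrt n)$ that remains of order $\oo(n^{-1/2})$ (resp.\ $n^{1-p/2+\delta}$) after multiplication, and this is precisely what the generalized Nagaev inequality of Lemma \ref{th:Moritz} delivers — crucially, its range of validity $\{y > \Co\sqrt{n\log n}\}$ matches the natural split point $|x| = \co_0\sqrt{\log n}$ once $y = |x|\sqrt n$. The remaining delicate bookkeeping is the coordination of logarithmic factors — the $(\log n)^{q/2}$ produced on the bulk must be swallowed either by the slack $(\log n)^{-\bd}$ (which is what forces $q < \ad - 2$) or by the $n^{\delta}$ slack in the fractional regime — together with a single choice of $\co_0$ simultaneously dominating $\Co$ and making the Gaussian factor beat every fixed polynomial power of $n$.
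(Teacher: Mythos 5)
Your proposal is correct and follows essentially the same route as the paper: split at $|x| \asymp \sqrt{\log n}$, use the uniform bounds of Theorems \ref{thm_edge} and \ref{thm_edge_frac} on the bulk at the cost of a factor $(\log n)^{q/2}$ (with $q < \ad-2$ absorbed by the choice of $\bd$), and control the tail by Lemma \ref{th:Moritz} together with Gaussian tail bounds for $\Psi_n$. Your writeup is in fact somewhat more explicit than the paper's (symmetry for $x<0$, the choice of $\co_0$, and the parenthetical caveat that the surviving factor $|x|^{q-p}$ is only bounded when $q\le p$, a point the paper's proof also passes over silently).
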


\begin{proof}[Proof of Lemma \ref{lem_nonunif}]
By Lemma \ref{th:Moritz}
\begin{align*}
\P\big(|S_n| \geq x \sqrt{\ss^2 n} \big) \lesssim n^{-\frac{p}{2} + 1}x^{-\frac{p}{2}}
\end{align*}
for $x \geq \co \sqrt{\log n}$ and $\co > 0$ sufficiently large. One readily verifies an analogous bound for $\Psi_n(x)$ using standard Gaussian tail bounds. On the other hand, Theorem \ref{thm_edge} implies
\begin{align*}
\bigl|\Delta_n(x) \bigr| \lesssim \frac{(\log n)^{\frac{q}{2}}}{1 + |x|^{q}}\Big(\oo\big(n^{-\frac{1}{2}}\log(n)^{-\bd}\big) + \Cd_{T_n}\Big), \quad |x| \leq \co \sqrt{\log n},
\end{align*}
hence the first claim. The second follows in the same manner, using Theorem \ref{thm_edge_frac} instead of Theorem \ref{thm_edge}.
\end{proof}

\begin{proof}[Proof of Theorem \ref{thm_smooth}]
To lighten the notation, we assume without loss of generality that $s_n^2 = 1$ in the sequel. Note first that due to \eqref{eq_derivative_condition_1} and $|f(0)| < \infty$ by continuity
\begin{align}\label{eq_thm_smooth_1}
\bigl|f(x)\bigr| \leq \big|f(0)\big| + |x|\sup_{y \in \R}\bigl|f^{(1)}(y)\bigr|.
\end{align}
It follows that $\E\bigl[|f(S_n/\sqrt{n})|\bigr] < \infty$ is well-defined. For $a > 0$, $b \geq 6$ even, let $H_k \stackrel{d}{=} G_{a,b}$ be a sequence of i.i.d. random variables, independent of $(X_k)_{k \in \Z}$, where $G_{a,b}$ is defined in \eqref{eq_g_ab}. Put $f_n\big(x\big) = f\big(x/\sqrt{n}\big)$. A Taylor expansion yields
\begin{align*}
&f_n\bigl(S_n + H_{n} - H_0\big) - f_n\big(S_n\big) = (H_{n} - H_0)f_n^{(1)}\bigl(S_n\bigr)
\\& + (H_{n} - H_0) \int_0^1 \Big(f_n^{(1)}\bigl(S_n + t (H_{n} - H_0)\bigr) - f_n^{(1)}\bigl(S_n\bigr) \Big)\,dt.
\end{align*}
Fix $K > 1$ and consider the interval $\mathrm{I}_K = [-K,K]$. Since $f^{(1)}$ restricted to $\mathrm{I}_K$ is uniformly continuous, for any $\eta > 0$ there exists $\delta_K > 0$ such that $|f^{(1)}(x)-f^{(1)}(y)| < \eta$ for $|x-y| < \delta_K$ and $x,y \in \mathrm{I}_K$. Let
\begin{align*}
\mathcal{A}_{K} = \big\{S_n \in \sqrt{n} \mathrm{I}_{K-1}, \big\}, \quad \mathcal{B}_{\delta_K} = \big\{|H_n - H_0| < \sqrt{n}\delta_K \big\}.
\end{align*}
By Lemma \ref{lem_wu_original}, it follows that
\begin{align*}
\P\big((\mathcal{A}_{K} \cup \mathcal{B}_{\delta_K})^c\big) &\leq \P\big(|S_n| \geq (K-1) \sqrt{n} \big) + \P\big(|H_n - H_0| \geq \sqrt{n}\delta_K \big) \\&\lesssim K^{-p} + n^{\frac{-b+2}{2}} \delta_K^{-b+2}.
\end{align*}

Since $\sup_{x \in \R}|f_n^{(1)}(x)| \leq n^{-1/2}\mathrm{D}_f$, the above yields
\begin{align*}
&\sqrt{n}\Big\|\int_0^1 \Big(f_n^{(1)}\bigl(S_n + t (H_{n} - H_0)\bigr) - f_n^{(1)}\bigl(S_n\bigr) \Big)\,dt\Big\|_2 \\&\lesssim \sqrt{ \P\big((\mathcal{A}_{K} \cup \mathcal{B}_{\delta_K})^c\big)} + \eta \lesssim \sqrt{K^{-p} + n^{\frac{-b+2}{2}} \delta_K^{-b+2}} + \eta.
\end{align*}
In particular, by appropriate choices of $\eta, K,\delta_K$, we get
\begin{align*}
\Big\|\int_0^1 \Big(f_n^{(1)}\bigl(S_n + t (H_{n} - H_0)\bigr) - f_n^{(1)}\bigl(S_n\bigr) \Big)\,dt\Big\|_2 = \oo\big(n^{-1/2}\big)
\end{align*}
as $n \to \infty$. Since $\E[H_0] = 0$ and $\|H_n - H_0\|_2 < \infty$, the above and Cauchy-Schwarz inequality give
\begin{align}
\bigl|\E\bigl[f_n(S_n + H_{n} - H_0) - f_n(S_n)\bigr] \bigr| & = \oo\big(n^{-\frac{1}{2}}\big).
\end{align}
Recall ${X}_k^{\diamond} = X_k + H_{k} - H_{k-1}$, ${S}_n^{\diamond} = \sum_{k = 1}^n {X}_k^{\diamond} = S_n + H_n - H_0$, and that for a real valued random variable $Y$
\begin{align}\label{eq_thm_smooth_4}
\E\bigl[f(Y) - f(0)\bigr] = \int_0^{\infty}f^{(1)}(y) \P\bigl(Y \geq y \bigr)dy - \int_{-\infty}^{0}f^{(1)}(y)\P\bigl(Y \leq y \bigr)dy.
\end{align}
Next, we note that straightforward computations give
\begin{align*}
(s_n^{\diamond})^2 = s_n^2 + \OO\big(n^{-1}\big), \quad (\kapp_n^{\diamond})^3 = \kapp_n^3 + \OO\big(n^{-1}\big).
\end{align*}
Hence Lemma \ref{lem_psi_compare} yields
\begin{align}\label{eq:thm:smooth:4.2}
\sup_{x \in \R}(x^2+1)\big|\Psi_n^{\diamond}(x) - \Psi_n(x) \big| \lesssim n^{-1}.
\end{align}
Since $\sup_{x \in \R}|f^{(1)}(x)| \leq \mathrm{D}_f$, \eqref{eq:thm:smooth:4.2} and Lemma \ref{lem_nonunif} yield
\begin{align*}
&\Big|\int_{0}^{\infty}f^{(1)}(y)\Big(\P\bigl({S}_n^{\diamond} \geq y \sqrt{n} \bigr) - \big(1- \Psi_n^{}(y)\big)\Big)dy \Big|\\& \lesssim \Big|\int_{0}^{\infty}f^{(1)}(y)\Big(\P\bigl({S}_n^{\diamond} \geq y \sqrt{n} \bigr) - \big(1- \Psi_n^{\diamond}(y)\big)\Big)dy \Big| + \frac{1}{n}
\\&\lesssim \oo\big(n^{-\frac{1}{2}}\big) + \big(\log n\big)^{\frac{q}{2}} \Cd_{T_n}, \quad q > 1,
\end{align*}
where $T_n = \co_T \sqrt{n}$. Selecting $a > 0$ such that $\co_T > a b$, we get from \eqref{eq_thm_smooth_fourier}
\begin{align}\label{eq_thm_smooth_5.5}
\sup_{x \in \R}\bigl|\Td_{T_n}^{n}(x)\bigr| \leq \int_{\co_T \sqrt{n} \leq |\xi| \leq n} \big|\E e^{\ic \xi n^{-\frac{1}{2}} H_0} \big|^2 \frac{1}{|\xi|} d \xi = 0.
\end{align}
Hence
\begin{align*}
&\Big|\int_{0}^{\infty}f^{(1)}(y)\Big(\P\bigl({S}_n^{\diamond} \geq y \sqrt{n} \bigr) - \big(1- \Psi_n^{}(y)\big)\Big)dy \Big| = \oo\big(n^{-\frac{1}{2}}\big).
\end{align*}
In an analogous manner, one establishes
\begin{align*}
\Big|\int_{-\infty}^{0}f^{(1)}(y)\Big(\P\bigl({S}_n^{\diamond} \leq y \sqrt{n} \bigr) &- \big(1- \Psi_n^{}(y)\big)\Big)dy \Big|  = \oo\big(n^{-\frac{1}{2}}\big).
\end{align*}
Piecing everything together, the claim follows.
\end{proof}

\begin{proof}[Proof of Theorem \ref{thm_hoelder}]
We proceed as in the proof of Theorem \ref{thm_smooth}. A Taylor expansion yields
\begin{align*}
&\big|\E\big[f_n(S_n + H_{n} - H_0) - f_n(S_n)\big]\big| \lesssim \mathrm{L} n^{-s/2} \E|H_{n} - H_0|^{s}.
\end{align*}
Using Lemma \ref{lem_nonunif}, the claim now follows as in Theorem \ref{thm_smooth}.
\end{proof}

\section{Proofs of Section \ref{sec_wasserstein}}\label{sec_proof_wasser}

\begin{proof}[Proof of Theorem \ref{thm_wasserstein}]
In broad brushes, the proof is very similar to the one of Theorem \ref{thm_edge}. The main difference is that we replace '$\sup$'s with integrals. Recall the well-known representation
\begin{align}\label{wasserstein_rep_3}
W_1\big(F,G\big) = \int_{\R}\big|F(x) - G(x)\big| d x
\end{align}
for distribution functions $F$, $G$ and let $\tau_n = \co_{\tau} \sqrt{\log n}$, $\co_{\tau} > 0$ large enough. Then by Lemma \ref{th:Moritz}
\begin{align}\label{eq_thm_wasserstein_3}\nonumber
&\int_{|x| > \tau_n} \bigl|\P\bigl(S_n \leq x \sqrt{n}\bigr) - \P\bigl(L_n \leq x \sqrt{n}\bigr) \bigr| dx \\&\leq \int_{x > \tau_n}\Big(\P\big(|S_n| > x \sqrt{n}\big) + \P\big(|L_n| > x \sqrt{n}\big)\Big) d x \lesssim n^{-\frac{p}{2}+1}.
\end{align}
On the other hand, proceeding similar as in \eqref{thm_edge_smoothing} (we integrate, instead of taking the supremum), we get
\begin{align}\label{thm_wasser_smoothing} \nonumber
\int_{-\tau_n}^{\tau_n} \bigl|\P\bigl(S_n \leq x \sqrt{n}\bigr) - {\Psi}_n\big(x\big) \bigr| dx &\lesssim \tau_n \Ad_{T_n} + \tau_n T_n m^{-2} + \Id_{T_n} \\&\lesssim \oo\big(n^{-\frac{1}{2}}\big) (\log n)^{-\bd + \frac{1}{2}} + \Id_{T_n}
\end{align}

for $m$ large enough (e.g. $m = n^{\frac{7}{8}}$). In addition, since $\E L_n^2 = s_n^2$, $\E L_n^3 = \kapp_n^3$, $L_n$ admits the Edgeworth expansion (c.f. ~\cite{petrov_book_sums})
\begin{align}\label{eq_thm_wasserstein_edge_L}
\sup_{x \in\R} \big(|x|^3+1\big)\big|\P\big(L_n \leq x \sqrt{n}\big) - \Psi_n\big(x\big) \big| \lesssim n^{-1}.
\end{align}
It follows that
\begin{align}\label{eq_thm_wasserstein_4}
\int_{-\tau_n}^{\tau_n} \bigl|\Psi_n(x) - \P\big(L_n \leq x \sqrt{n}\big) \bigr| dx \lesssim  n^{-1}.
\end{align}
Combining \eqref{eq_thm_wasserstein_3}, \eqref{thm_wasser_smoothing} and \eqref{eq_thm_wasserstein_4}, the triangle inequality gives
\begin{align}\label{eq_thm_wasserstein_5}
W_1\big(\P_{S_n/\sqrt{n}}, \MM_n\big) \leq \oo\big(n^{-\frac{1}{2}}\big) (\log n)^{\frac{1}{2} - \bd} + \Id_{T_n}.
\end{align}
Next, we establish the bound
\begin{align}\label{eq_thm_wasserstein_1}
W_1\big(\P_{S_n/\sqrt{ n}}, \MM_n\big) \lesssim n^{-\frac{1}{2}}.
\end{align}
To this end, we use Representation \ref{wasserstein_2}. As in the proof of Theorem \ref{thm_smooth}, let $(H_k)_{k \in \Z}$ be an i.i.d. sequence with $H_k \stackrel{d}{=} G_{a,b}$, $b = 6$, $a > 0$, ${S}_n^{\diamond} = S_n + H_n - H_0$. Applying \eqref{eq_thm_wasserstein_5}, we get
\begin{align}
W_1\big(\P_{S_n^{\diamond}/\sqrt{n}}, \MM_n^{\diamond}\big) \lesssim \oo\big(n^{-\frac{1}{2}}\big) (\log n)^{\frac{1}{2} - \bd} + \Id_{T_n}^{\diamond},
\end{align}
where $\MM_n^{\diamond}$ is defined in the obvious way. Using \eqref{eq_thm_smooth_5.5}, we deduce $\Id_{T_n}^{\diamond} = 0$ for $a > 0$ small enough.
On the other hand, by Representation \ref{wasserstein_2} and Lipschitz-continuity
\begin{align*}
&W_1\big(\P_{S_n^{\diamond}/\sqrt{n}},\P_{S_n^{}/\sqrt{n}} \big) = \sup_{f \in \mathcal{H}_1^1}\big|\E f\big({S}_n^{\diamond}/\sqrt{n}\big)\big] - \E f\big({S}_n^{}/\sqrt{n}\big)\big]\big| \\&\lesssim n^{-\frac{1}{2}}\big\|H_n - H_0\big\|_1 \lesssim n^{-\frac{1}{2}},
\end{align*}
and the same argument also gives $W_1\big(\MM_n, \MM_n^{\diamond}\big)\lesssim n^{-1/2}$. The triangle inequality, in view of the above estimates, then yields
\begin{align*}
W_1\big(\P_{S_n^{}/\sqrt{n}},\MM_n \big) &\leq W_1\big(\P_{S_n^{\diamond}/\sqrt{n}},\P_{S_n^{}/\sqrt{n}} \big) + W_1\big(\P_{S_n^{\diamond}/\sqrt{n}},\MM_n^{\diamond} \big) + W_1\big(\MM_n, \MM_n^{\diamond}\big)\\&\lesssim n^{-1/2}.
\end{align*}
Together with \eqref{eq_thm_wasserstein_5}, we have thus established
\begin{align*}
W_1\big(\P_{S_n/\sqrt{\ss^2 n}}, \Psi_n\big) \lesssim \oo\big(n^{-\frac{1}{2}}\big)(\log n)^{\frac{1}{2} - \bd} + n^{-\frac{1}{2}} \wedge \Id_{T_n}.
\end{align*}
Since {\bf (ii)} $\Rightarrow$ {\bf (i)} is obvious, it remains to show {\bf (i)} $\Rightarrow$ {\bf (ii)}. To this end, recall the smoothing density $v_T$ and $\Delta_n^T$, given in \eqref{defn_vT} and \eqref{defn_DeltanT}. Note that particularly due to \eqref{eq_thm_wasserstein_edge_L}, we have
\begin{align*}
W_1\big(\P_{S_n/\sqrt{n}} \ast v_T, \MM_n \ast v_T\big) \geq \int_{\R}\big|\Delta_n^T(x)\big|dx - \OO\big(n^{-1}\big).
\end{align*}
Since $W_1$ is an ideal metric (or by Representation \eqref{wasserstein_rep_3} and Tonelli), we have 
\begin{align}
W_1\big(\P_{S_n/\sqrt{ n}} \ast v_T, \MM_n \ast v_T\big) \leq W_1\big(\P_{S_n/\sqrt{ n}}, \MM_n\big).
\end{align}
Together with relation \eqref{eq_thm_optimal_1.5} and $T=n$ this implies
\begin{align*}
\oo\big(n^{-\frac{1}{2}}\big) &= W_1\big(\P_{S_n/\sqrt{n}}, \MM_n\big) \geq W_1\big(\P_{S_n/\sqrt{n}} \ast v_n, \MM_n \ast v_n\big) \\&\geq \int_{-\tau_n}^{\tau_n}\big|\Delta_n^n(x)\big|dx - \OO\big(n^{-1}\big)  \gtrsim \int_{-\tau_n}^{\tau_n}\big|\Td_{T_n}^n(x)\big|dx - \oo\big(n^{-\frac{1}{2}}\big)\log(n)^{\frac{1}{2}-\bd}.
\end{align*}
Since $\ad > 3$ by assumption we may select $\bd > 1/2$ and conclude
\begin{align*}
\Id_{T_n} \leq \oo\big(n^{-\frac{1}{2}}\big) + 2\tau_n n^{-1} = \oo\big(n^{-\frac{1}{2}}\big).
\end{align*}
\end{proof}

\begin{proof}[Proof of Theorem \ref{thm_wasserstein_frac}]
We can argue as in the proof of Theorem \ref{thm_wasserstein}, based on Propositions \ref{prop_m_dependence} and \ref{prop_m_dep_it_quad}.
\end{proof}

\begin{proof}[Proof of Corollary \ref{cor_clt_wasser}]
Lemma \ref{lem_S_n_third_expectation} implies $|\kapp_n^3| \lesssim n^{-1/2}$. Using the Edgeworth expansion \eqref{eq_thm_wasserstein_edge_L}, we obtain the bound
\begin{align*}
\int_{\R}\big|\Phi(x/s_n) - \P(L_n \leq x \sqrt{n})\big| d x \lesssim n^{-1/2},
\end{align*}
and hence from Representation \eqref{wasserstein_rep_3}, it follows that $W_1(\Phi, \MM_n)\lesssim n^{-1/2}$. The triangle inequality and Theorem \ref{thm_wasserstein} then yield the claim.
\end{proof}

\begin{proof}[Proof of Corollary \ref{cor_clt_Lp}]
Due to Theorem \ref{thm_edge} ($|\kappa_n^3| \lesssim n^{-1/2}$ by Lemma \ref{lem_S_n_third_expectation}) we have
\begin{align*}
\sup_{x \in \R}\big|\P\big(S_n \leq x \sqrt{n}\big) - \Phi\big(x/s_n\big) \big| \lesssim n^{-1/2}.
\end{align*}
Using Representation \eqref{wasserstein_rep_3}, we thus obtain
\begin{align*}
&\int_{\R}\big|\P(S_n \leq x \sqrt{n}) - \Phi(x/s_n)\big|^q d x \\&\leq \sup_{x \in \R}\big|\P(S_n \leq x \sqrt{n}) - \Phi(x)\big|^{q-1}  \int_{\R}\big|\P(S_n \leq x \sqrt{n}) - \Phi(x)\big| d x \\&\lesssim n^{-\frac{q-1}{2}} W_1\big(\P_{S_n/\sqrt{n}}, \Phi \big).
\end{align*}
Corollary \ref{cor_clt_wasser} now yields the result.
\end{proof}

\newpage

\begin{center}
\begin{table}
\begin{tabular}{l | l}
 Notation  \hypertarget{table:fin}{Table} XI & Page References \\
   \quad & \quad \\
\hline
  $ S_n $  ~\hyperlink{sn:eq2}{2}, \hyperlink{sn:eq15}{22}, \hyperlink{sn:eq16}{23} \qquad  \qquad  \qquad \qquad \qquad \qquad   &   $\Psi_n(x)$ ~\hyperlink{psin:eq2}{2}, \hyperlink{psin:eq3}{3},  \hyperlink{psin:eq5}{6},  \hyperlink{psin:eq9}{14}        \qquad  \qquad  \qquad \qquad  \\
    $\Cd_{a}$  ~\hyperlink{ca:eq5}{7}   & $\Lambda_{q,p}$  ~ \hyperlink{lambdaqp:eq5}{6}, \hyperlink{lamdaqp:eq12}{19} \\
  $ \mathcal{G}_m $  ~\hyperlink{gm:eq2}{3}  &  $\lambda_{k,p}$ ~ \hyperlink{littlelamkp:eq3}{4}, \hyperlink{littlelamkp:eq5}{6}, \hyperlink{littlelamkp:eq16}{23} \\
   $\Id_{T_n}$  ~ \hyperlink{scttn:eq4}{4}, \hyperlink{scttn:eq10}{10} &  $\|\cdot\|_p$ ~ \hyperlink{normp:eq4}{6} \\
    $\mathcal{A}^{c}$ ~ \hyperlink{scAc:eq14}{22}  &  $\tilde{\kappa}_j$ ~ \hyperlink{tilkappaj:eq13}{20}, \hyperlink{tilkappaj:eq23}{30}    \\
    $s_n^2$ ~ \hyperlink{snsquare:eq5}{6}, \hyperlink{snsquare:eq15}{22}  &  $\kappa_n^3$ ~ \hyperlink{kappan3:eq5}{6}, \hyperlink{kappan3:eq10}{10} \\
    $\Td_{a}^{b}(x)$ ~ \hyperlink{TTab:eq5}{7} & $T_n$ ~\hyperlink{bigtn:eq6}{7} \\
     $\mathcal{E}_k$ ~ \hyperlink{EEk:eq15}{22} & $\tilde{\nu}_j$ ~ \hyperlink{tildeNuj:eq13}{20}, \hyperlink{tildeNuj:eq23}{30} \\
    $\bar{\sigma}_j $ ~ \hyperlink{barsigmajj:eq13}{20}, \hyperlink{barsigmajj:eq23}{30} &    $\ss^2 $ ~ \hyperlink{sss:eq5}{6}  \\
    $\Delta_n(x)$ ~ \hyperlink{Deltasubn:eq6}{7}   & $\kappa^3$ ~\hyperlink{kappa3:eq6}{7}  \\
    $H_k$ ~ \hyperlink{bigHk:eq7}{8}   &  $\bd$ ~ \hyperlink{BBD:eq6}{7} \\
      $\mathcal{E}_k^{*}$ ~ \hyperlink{scEkstar:eq15}{22}    & $\bar{\nu}_j$ ~ \hyperlink{barNuj:eq13}{20}, \hyperlink{barNuj:eq23}{30} \\
      $\widetilde{V}_j^{*}$ ~ \hyperlink{VVj:eq18}{25}  & $R_1^{(l,**)}$ ~ \hyperlink{RR1:eq18}{25} \\
    $D_f$ ~ \hyperlink{bigdf:eq9}{14} & $\phi(\cdot)$ ~ \hyperlink{littlephi:eq5}{7}  \\
     $W_1(\cdot,\cdot)$ ~ \hyperlink{Wone:eq10}{9}  & $\Id_a$ ~ \hyperlink{TTa:eq10}{10} \\
      $\mathcal{F}_m^{*}$ ~ \hyperlink{scFmstar:eq22}{29}   & $\mathcal{F}_m$ ~ \hyperlink{FFm:eq12}{19}  \\
     $\E_{\mathcal{H}}[\cdot]$ ~ \hyperlink{EEH:eq12}{20} &  $\tilde{\sigma}_j$ ~ \hyperlink{tilsigmajj:eq13}{20}, \hyperlink{tilsigmajj:eq23}{30} \\
     $ \Ad_{T_n} $ ~ \hyperlink{UUTn:eq14}{21}  &  $X_k^{*}$ ~ \hyperlink{bigXkstar:eq5}{6}, \hyperlink{bigXkstar:eq15}{22}      \\
   $\mathcal{S}_{kj}$ ~ \hyperlink{scSkj:eq17}{24}& $\overline{V}_j^{*}$ ~\hyperlink{BVJS:eq17}{25}, \hyperlink{BVJS:eq22}{29}, \hyperlink{BVJS:eq23}{30}, \hyperlink{BVJS:eq32}{39} \\
     $\overline{V}_1^{(*,>l)}$ ~ \hyperlink{barV1:eq18}{25} & $ \overline{V}_1^{(**,>l)}$ ~ \hyperlink{barvv1:eq18}{25} \\
     $\xi_k^{(l,\prime)}$ ~ \hyperlink{xikell:eq29}{36}  &  $X_k^{\prime}$ ~ \hyperlink{Xkprime:eq29}{36} \\
      $M_k$ ~ \hyperlink{bigMk:eq10}{10} &  $\mathcal{E}_k^{**}$ ~ \hyperlink{EEKK:eq15}{22} \\
      $ \Delta_{j,m}(x)$ ~ \hyperlink{DeltaJMX:eq24}{31}, \hyperlink{DeltaJMX:eq26}{33}, \hyperlink{DeltaJMX:eq27}{34} & $\mathcal{E}_{i,j}$ ~ \hyperlink{sceij:eq29}{36} \\
      $X_{k,m}$ ~ \hyperlink{bigXkm:eq29}{36} & $\theta_{k,p}$ ~ \hyperlink{thetaKPP:eq29}{36}, \hyperlink{thetaKPP:eq32}{39} \\
      $\dot{S}_n$ ~ \hyperlink{dotSn:eq30}{37} & $\breve M_{n,l}$ ~ \hyperlink{breMNL:eq31}{38} \\
      $ \Theta_{j,p} $ ~ \hyperlink{Thetajp:eq29}{36}  &  $\dot{M}_{i,l}$ ~ \hyperlink{dotMIL:eq30}{37} \\
      $\breve \mu_{l}$ ~ \hyperlink{bremuL:eq31}{38} &  ${X}_k^{\diamond}$ ~ \hyperlink{XXKdm:eq35}{42} \\
      $S_n^{\diamond}$ ~ \hyperlink{SNDM:eq35}{42} &   $R_j$ ~ \hyperlink{bigrj:eq13}{20}   \\
       $\Delta_{j,m}^{\diamond}$ ~ \hyperlink{deltaDM:eq36}{43} & $X_{k,m}^{*}$ ~ \hyperlink{XKM:eq41}{48} \\
       $\Psi_{n,m}(x)$ ~ \hyperlink{Psinmx:eq42}{49} & $\Delta_n^{T}(x)$ ~ \hyperlink{deltaNT:eq43}{50} \\
        $\varphi_{j|m}(x)$ ~ \hyperlink{phiJCM:eq33}{40} & $\bar{\sigma}_{j\vert m}$ ~ \hyperlink{sigmaJCM:eq13}{20} \\
       $\Phi(\cdot)$ ~ \hyperlink{Phi:eq5}{7} &  $\mathcal{H}_{L}^s$ ~ \hyperlink{schls:eq9}{14}  \\
       $\ss_m^2 $ ~ \hyperlink{ssm2:eq12}{19}  &  $\MM_n$ ~ \hyperlink{MMn:eq10}{10} \\
       $\mathds{G}_n$ ~ \hyperlink{GGn:eq11}{10} & $e_j$ ~ \hyperlink{littleek:eq12}{19} \\
       $\overline{S}_{n\vert m}$ ~ \hyperlink{barSNM:eq12}{20} & $\widetilde{S}_{n\vert m}$ ~ \hyperlink{tilSNM:eq12}{20} \\
       $U_j$ ~ \hyperlink{biguj:eq13}{20} &   $\Delta_n^{\diamond}(x)$ ~ \hyperlink{deltandia:eq36}{43}, \hyperlink{deltandia:eq37}{44}, \hyperlink{deltandia:eq38}{45}, \hyperlink{deltandia:eq40}{47} \\
       $\bar{\kappa}_{j\vert m}$ ~ \hyperlink{kappaJM:eq13}{20}  & $\bar{\nu}_{j\vert m}$ ~ \hyperlink{nuJM:eq13}{20} \\
       $\overline{Z}$ ~\hyperlink{barZ:eq14}{21} & $\widetilde{Z}$ ~\hyperlink{tildeZ:eq14}{21} \\
           \hline
\end{tabular}
\end{table}
\end{center}

\newpage

\begin{center}
\begin{table}\label{table:notation}
\begin{tabular}{l | l}

\hline
        $\alpha$ ~ \hyperlink{ALPHA:eq11}{10}  \quad \qquad \qquad \qquad \qquad  \qquad  \qquad  \qquad  &  $\beta$ ~ \hyperlink{BETA:eq11}{10}  \qquad \qquad \qquad   \qquad \qquad \qquad \qquad  \\
        $\lambda_{l,p}$ ~ \hyperlink{lambdaLPP:eq13}{11}    & $\md$ ~ \hyperlink{darkm:eq15}{21}  \\
        $v_n$ ~ \hyperlink{VN:eq18}{24}  &  $g(x)$ ~ \hyperlink{GX:eq26}{32} \\
        $\ad$ ~ \hyperlink{BAD:eq5}{6}, \hyperlink{BAD:eq6}{7} &  $U_k$ ~ \hyperlink{USUBK:eq7}{8}  \\
        $\overline{W}_j$ ~ \hyperlink{OWJ:eq28}{34} &  $F(x)$ ~ \hyperlink{FXX:eq29}{35}  \\
        $\overline{Z}_j$ ~ \hyperlink{OZJ:eq29}{35} & $X_k^{(l,\prime)}$ ~ \hyperlink{XKLP:eq30}{36} \\
        $L_n$ ~ \hyperlink{LSUBN:eq11}{10}   &  $\tau_n$ ~ \hyperlink{TAUSUBN:eq11}{10}  \\
         $\dot{X}_i$ ~ \hyperlink{dotXi:eq30}{37} &  $\hat{g}_{a,b}(t)$~\hyperlink{ghatab:eq35}{42}    \\
       $G_{a,b}$ ~ \hyperlink{Gab:eq35}{42}  &  $\breve v_{l}$ ~\hyperlink{brevenul:eq31}{38} \\
        $\dot{\Theta}_{n,q}$ ~ \hyperlink{Thetanq:eq30}{37}  &    $\dot{S}_{n,m}$ ~ \hyperlink{dotSnm:eq30}{37} \\
        $[s]$ ~ \hyperlink{smid:eq10}{14}   &  $\varphi_H(t)$ ~ \hyperlink{phiHt:eq7}{8} \\
                $X_k$ ~ \hyperlink{XSUBK:eq5}{6}  &   $\mathds{V}_n$ ~ \hyperlink{VVN:eq11}{9}  \\
                $\xi_{k}^{(l,*)}$ ~ \hyperlink{xiklstar:eq13}{11}, \hyperlink{xiklstar:eq15}{22} & $X_k^{(l,*)}$ ~ \hyperlink{xkell:eq15}{22} \\
       $h_n(x)$ ~ \hyperlink{hnx:eq24}{31} &  $A_k$ ~ \hyperlink{bigAk:eq28}{35} \\
  \hline
\end{tabular}
\end{table}
\end{center}

\section{Proofs of Section \ref{sec:applicationII}}\label{sec:proof:applicationsII}

\begin{proof}[Proof of Theorem \ref{thm:high:linear:stat}]
Due to Theorem \ref{thm_edge_four}, it suffices to establish
\begin{align}\label{eq:thm:high:lin}
\Cd_{T_n} \lesssim n^{-(\alpha +1)/2}.
\end{align}

Observe that as in the proof of Theorem \ref{thm_edge_four}, we may assume that $(X_k)_{k \in \Z}$ is $m$-dependent with $m = n^{\mathfrak{m}}$, $\mathfrak{m} < 1$. By independence and $|e^{\mathrm{i}x}|= 1$, we have
\begin{align*}
\Big|\E e^{\mathrm{i} \xi S_n/\sqrt{n}} \Big| \leq \Big|\E e^{\mathrm{i} \xi \sum_{i \in \mathcal{I}} S_{ni}/\sqrt{n}} \Big|. 
\end{align*}
By Burkholder's inequality and Assumption \ref{ass:tail}, we have
\begin{align*}
\big\|\sum_{i \in \mathcal{I}} \theta_i (X_{ki} - X_{ki}^{\ast})\big\|_p^2 \lesssim \sum_{i \in \mathcal{I}} \theta_i^2 \big\|X_{ki} - X_{ki}^{\ast} \big\|_p^2.
\end{align*}
Consider now the sum $S_{n\mathcal{I}} = \sum_{k = 1}^n \sum_{i \in \mathcal{I}} \theta_i X_{ki}/\Theta_{\mathcal{I}}$, where $\Theta_{\mathcal{I}}^2 = \sum_{i \in \mathcal{I}} \theta_i^2$. By the above, we may apply Lemma \ref{lem_bound_prod_cond_char}, which yields
\begin{align*}
&\Big|\E e^{\mathrm{i} \xi \sum_{i \in \mathcal{I}} S_{ni}/\sqrt{n}} \Big| = \Big|\E e^{\mathrm{i} \xi \Theta_{\mathcal{I}}  S_{n\mathcal{I}}/\sqrt{n}} \Big| \\&\lesssim e^{-\co_{\varphi} \xi^2} + e^{-\sqrt{N/32}\log 8/7}, \quad \xi^2 \Theta_{\mathcal{I}}^2 \leq {\co_T} n
\end{align*}
for constants $\co_{\varphi}, \co_T > 0$. Due to  \hyperref[T3]{\Tthree}, this in turn yields that for some constant $\co > 0$
\begin{align*}
\Cd_{T_n} \lesssim e^{-\co \log^{\beta} n}  + e^{-\sqrt{N/32}\log 8/7} \log n +  n^{-(\alpha +1)/2} \lesssim n^{-(\alpha + 1)/2},
\end{align*}
and hence the validity of \eqref{eq:thm:high:lin}.
\end{proof}

\section*{Acknowledgements}

We would like to thank the reviewer for a careful reading
of the manuscript and valuable remarks. Moreover, we thank
Kasun Fernando Akurugodage, Christophe Cuny and Florence Merlev\`{e}de for
insightful comments.

\end{document}